\def\B{\mathcal{B}}
\def\cE{\mathcal{E}}
\def\cF{\mathcal{F}}
\def\\rho{\mathcal{\rho}}
\def\cP{\mathcal{P}}
\def\N{\mathbb N}
\def\Z{\mathbb Z}
\def\R{\mathbb R}
\def\T{\mathbb T}
\def\\rho\rho{\mathbf{\rho}}
\def\HH{\mathbf{H}}
\def\eps{\varepsilon}
\def\Im{{\rm Im\,}}
\DeclareMathOperator{\esup}{ess\,sup}
\DeclareMathOperator{\einf}{ess\,inf}
\DeclareMathOperator{\eosc}{ess\,osc}
\def\weak{\rightharpoonup}
\DeclareMathAlphabet{\mathup}{OT1}{\familydefault}{m}{n}
\newcommand{\dx}[1]{\mathop{}\!\mathup{d} #1}
\newcommand{\intom}[1]{\int_{\Omega} #1 \dx{x}}
\newcommand{\iintom}[1]{\iint_{\Omega\times \Omega} #1 \dx{x} \dx{y}}
\newcommand{\Sym}{\mathrm{Sym}}
\DeclarePairedDelimiter{\abs}{\lvert}{\rvert}
\DeclarePairedDelimiter{\norm}{\lVert}{\rVert}
\DeclarePairedDelimiter{\bra}{(}{)}
\DeclarePairedDelimiter{\pra}{[}{]}
\DeclarePairedDelimiter{\set}{\{}{\}}
\DeclarePairedDelimiter{\skp}{\langle}{\rangle}
\newcommand{\Leb}{\ensuremath{{L}}}
\newcommand{\SobH}{\ensuremath{{H}}}
\newtheorem{theorem}{Theorem}
\newtheorem{proposition}[theorem]{Proposition}
\newtheorem{lemma}[theorem]{Lemma}
\newtheorem{corollary}[theorem]{Corollary}
\theoremstyle{definition}
\newtheorem{remark}[theorem]{Remark}
\newtheorem{defn}[theorem]{Definition}
\DeclareMathOperator{\Ima}{Im}
\newcommand{\cb}[1]{{#1}}
\newcommand{\cre}[1]{{#1}}
\newenvironment{tenumerate}[1][]
  {\enumerate[label=\textup(\alph*\textup),ref=(\alph*),#1]}
  {\endenumerate}
\newcommand{\Dref}[1]{\dodref\Cref#1\relax}
\def\dodref#1#2!#3\relax{#1{#2}\ref{#2!#3}}
\mathchardef\mhyphen="2D
\numberwithin{equation}{section}
\numberwithin{figure}{section}
\numberwithin{theorem}{section}
\title{Phase transitions for nonlinear nonlocal aggregation-diffusion equations }
\author{Jos\'e A. Carrillo}
\address[A1]{Mathematical Institute, University of Oxford, Oxford OX2 6GG, UK}
\author{Rishabh S. Gvalani}
\address[A2]{Department of Mathematics, Imperial College London, London SW7 2AZ, UK}
\subjclass[2010]{35K65, 35Q70 (Primary), and 35B32 (Secondary)} 
\email[A1,A2]{carrillo@maths.ox.ac.uk\textsuperscript{$\ast$}, rg1314@ic.ac.uk}
\thanks{JAC was partially supported by EPSRC grant number EP/P031587/1 and the Advanced Grant Nonlocal-CPD (Nonlocal PDEs for Complex Particle Dynamics: Phase Transitions, Patterns and Synchronization) of the European Research Council Executive Agency (ERC) under the European Union's Horizon 2020 research and innovation programme (grant agreement No. 883363). RSG was funded by an Imperial College President's PhD Scholarship, partially through EPSRC Award Ref. 1676118. Part of this work was carried out at the {\emph ``Junior Trimester Programme in Kinetic Theory''} held at the Hausdorff Research Institute for Mathematics, Bonn. RSG is grateful to the institute for its hospitality. }
\begin{document}
\maketitle
\begin{abstract}
We are interested in studying the stationary solutions and phase transitions of aggregation equations with degenerate diffusion of porous medium-type, with exponent $1 < m < \infty$.  We  first prove the existence of possibly infinitely many bifurcations from the spatially homogeneous steady state. We then focus our attention on the associated free energy proving existence of minimisers and even uniqueness for sufficiently weak interactions. In the absence of uniqueness, we show that the system exhibits phase transitions: we classify values of $m$ and interaction potentials $W$ for which these phase transitions are continuous or discontinuous. Finally, we comment on the limit $m \to \infty$ and the influence that the presence of a phase transition has on this limit.
\end{abstract}


 \setstretch{1.2}

\section{Introduction}

In this work, we deal with the properties of the set of stationary states and long-time asymptotics for a general class of nonlinear aggregation-diffusion equations of the form
\begin{align}
\begin{cases}
\partial_t \rho = \beta^{-1} \Delta \rho^m + \nabla \cdot (\rho \nabla W \star \rho)  & (x,t) \in \Omega \times (0,T] \\ 
\rho(x,0)=\rho_0(x) \in \Leb^2(\Omega) \cap \Leb^m(\Omega) \cap \cP(\Omega) & x \in \Omega
\end{cases} \, ,
\label{eq:agdif1}
\end{align}
where $1 < m <\infty$ is the nonlinear diffusivity exponent of porous medium type \cite{Va}, $\beta>0$ measures the relative strength between repulsion (by nonlinear diffusion) and attraction-repulsion (by the nonlocal aggregation terms), and $W \in C^2(\Omega)$ is the attractive-repulsive interaction potential. Here $\Omega$ denotes the $d$-dimensional torus $\T^d$ having side length $L>0$, with $\cP(\Omega)$ being the set of Borel probability measures on $\Omega$, and $\Leb^m(\Omega)$ the set of $m$-power integrable functions on $\Omega$. Notice that for $m=1$ we recover the linear diffusion case which is related to certain nonlocal Fokker-Planck equations, also referred to as McKean--Vlasov equations in the probability community. These equations also share the feature of being gradient flows of free energy functionals of the form
\cb{
\begin{align}\label{fenergy}
\cF_\beta^m (\rho) := 
\begin{cases}
\displaystyle\frac{\beta^{-1}}{m-1} \intom{\rho^m(x)}- \frac{\beta^{-1}}{m-1} + \frac{1}{2}\iintom{W(x-y) \rho(x) \rho(y)}, & m>1 \\&
\\
\displaystyle\beta^{-1} \intom{(\rho \log \rho)(x)} + \frac{1}{2}\iintom{W(x-y) \rho(x) \rho(y)}, & m=1
\end{cases}
\end{align}
}
for $\rho \in \Leb^m(\Omega) \cap \cP(\Omega)$, as discussed extensively in the literature \cite{JKO,Otto,Villani,cmcv-03,AGS}. We refer to \cite{CCY} for a recent survey of this active field of research. \cre{Note that although we have included the free energy for $m=1$ in~\eqref{fenergy}, we will mostly be dealing with case $m>1$ in this article. We will only discuss the case $m=1$ as a limiting case of the energies $\cF_\beta^m$ as $m \to 1$. The case $m=1$ is treated in more detail in~\cite{CGPS19}.}

Aggregation-diffusion equations such as \eqref{eq:agdif1} naturally appear in mathematical biology \cite{BCM,VS,CMSTT,BDZ,BCDPZ} and mathematical physical contexts \cite{Oe,P07,FP08,BV} as the typical mean-field limits of interacting particle systems of the form
\begin{align}
dX^i_t = -\frac{1}{ N} \sum\limits_{i \neq j}^N\nabla W^N(X^i_t -X^j_t) \dx{t} + \sqrt{2 \beta_2^{-1}}\, dB^i_t \, ,
\label{eq:lang}
\end{align}
where $W^N=\frac1{\beta_1} \varphi^N +  W$ and 
$
 \varphi^N (x) = N^{\xi}  \varphi(N^{\xi/d}x), \mbox{ for all } x\in \R^d \,.
$
Here, $\varphi$ is a the typical localized repulsive potential, for instance a Gaussian, and $0<\xi<1$. Notice that due to the choice of $\xi$, the shape of the potential gets squeezed to a Dirac Delta at 0 slower than the typical relative particle distance $N^{-1/d}$. \cb{Also, $\beta_2^{-1} \geq 0$ is the strength of the independent Brownian motions driving each particle.} We refer to \cite{Oe,P07,BV} for the case of quadratic diffusion  $m=2$ with $\beta_1=\beta$, $\nu=0$, and to \cite{FP08} for related particle approximations for different exponents $m$. The McKean--Vlasov equation $m=1$ is obtained for the particular case $\beta_1=+\infty$ and $\beta_2=\beta$, being the inverse temperature of the system for the linear case, and its derivation is classical for regular interaction potentials $W$, see for instance \cite{Sz}. 

Analysing the set of stationary states of the aggregation-diffusion equation \eqref{eq:agdif1} and their properties depending on $\beta$, the relative strength of repulsion by local nonlinear diffusion and attraction-repulsion by nonlocal interactions, is a very challenging problem. As with the linear case, the flat state 
\cre{
\begin{align}
\rho_\infty:=L^{-d}=\abs{\Omega}^{-1} \, ,
\label{flat}
\end{align}
}
 is always a stationary solution of the system. The problem lies in constructing nontrivial stationary solutions and minimisers. In the linear diffusion case $m=1$, we refer to \cite{CP10,CGPS19} where  quite a complete picture of the appearance of bifurcations and of continuous and discontinuous phase transitions is present, under suitable assumptions on the interaction potential $W$. Bifurcations of stationary solutions depending on a parameter are usually referred in the physics literature as phase transitions \cite{Daw83}. In this work we make a distinction between the two: referring to the existence of nontrivial stationary solutions as bifurcations and the existence of nontrivial minimisers of $\cF_\beta^m$ as phase transtions. Particular instances of phase transitions related to aggregation-diffusion equations with linear diffusion have been recently studied for the case of the Vicsek-Fokker-Planck equation on the sphere \cite{DFL,FL} and the approximated homogeneous Cucker-Smale approximations in the whole space \cite{Tu,BCCD,ABCD}. We also refer to~\cite{Sch85} where the problem was studied on a bounded domain for the Newtonian interaction, and to~\cite{Tam84} where the problem was studied on the whole space with a confining potential.

However, there are no general results in the literature for the nonlinear diffusion case \eqref{eq:agdif1}, $m>1$, except for the particular case of $m=2$, $d=1$, with $W$ given by the fundamental solution of the Laplacian with no flux boundary conditions (the Newtonian interaction) recently studied in \cite{CCWWZ2019}. Despite the simplicity of the setting in \cite{CCWWZ2019}, this example revealed how complicated phase transitions for nonlinear diffusion cases could be. The authors showed that infinitely many discontinuous phase transitions occur for that particular problem. Let us mention that the closer result in the periodic setting is \cite{CKY13}, where the authors showed that no phase transitions occur for small values of $\beta$, when the flat state is asymptotically stable, for $m \in(1,2]$.

Our main goal is thus to develop a theory for the stationary solutions and phase transitions of \eqref{eq:agdif1} for general interactions $W \in C^2(\Omega)$ and nonlinear diffusion in the periodic setting, something that has not been previously studied in the literature. This paper can be thought of as an extension of the results in~\cite{CGPS19} to the setting of nonlinear diffusion.  Considering this, we need to define appropriately the notion of phase transition for the case $m \in (1, \infty)$, as done in~\cite{CP10} for the linear case $m=1$. 

Note that, unlike in the linear setting, the $\Leb^1(\Omega)$ topology is not the natural topology to define phase transitions. It seems that for $m>1$ the correct topology to work in is $\Leb^\infty(\Omega)$ (cf. \cref{def:tp,topology} below). For our results we will often require compactness of minimisers in this topology. One possible way of obtaining this compactness is via control of the H\"older norms of the stationary solutions of \eqref{eq:agdif1}. In \cref{exreg} we briefly comment on the existence of solutions to \eqref{eq:agdif1} before proceeding to the proof of H\"older regularity. Since this is a key element of the subsequent results and the proof of H\"older regularity for such equations is not in the literature we include the proof in full detail in \cref{exreg}. It relies on the so-called method of intrinsic scaling introduced by DiBenedetto for the porous medium equation (cf. ~\cite{DiB79}), which is a version of the De Giorgi--Nash--Moser iteration adapted to the setting of degenerate parabolic equations. We make modifications to the method to deal with the presence of the nonlocal drift term $\nabla \cdot(\rho \nabla W \star \rho)$. We remark here that the proof of this result is completely independent of the rest of the paper. In a first reading, readers more interested in the properties of stationary solutions and phase transitions might choose to skip the proof and continue to~\cref{bif}. \cb{As a consequence of the proof of H\"older regularity, we also obtain uniform-in-time equicontinuity of the solutions away from the initial datum in~\cref{cor:holder}.}

After the proof of the H\"older regularity we proceed to~\cref{bif}, where we discuss the local bifurcations of stationary solutions from the flat state $\rho_\infty$. In~\cref{thm:bif}, we \cre{provide conditions on the interaction potential $W$ and on the parameter  $\beta=\beta_*$}, such that $(\rho_\infty,\beta_*)$ is a bifurcation point using the Crandall--Rabinowitz theorem (cf.~\cref{thm:cr}). In fact for certain choices of $W$ one can show that there exist infinitely many such bifurcation points. We then move on to~\cref{transitions}, where we prove the existence and regularity of minimisers $\cF_\beta^m$. We also show that, for $\beta$ small enough, the flat state is the unique minimiser of the energy for $m \in (1,\infty]$, thus extending the result of~\cite{CKY13}. \cb{In~\cref{ltb}, we use the uniform equicontinuity in time obtained in~\cref{cor:holder} to prove that solutions of~\eqref{eq:agdif1} converge to $\rho_\infty$ in $\Leb^\infty(\Omega)$ whenever it is the unique stationary solution.}  We show that, as in the linear case, the notion of $H$-stability (cf.~\cref{def:Hstab}), provides a sharp criterion for the existence or non-existence of phase transitions. We then proceed, in~\cref{lem:ctp@cs,lem:exnu,prop:strat}, to provide sufficient conditions for the existence of continuous or discontinuous phase transitions, where the proofs rely critically on the H\"older regularity obtained in~\cref{exreg}. We also provide general conditions on $W$ for the existence of discontinuous phase transitions. We conclude the section by showing that $m \in[2,3]$ all non-$H$-stable potentials $W$ are associated with discontinuous phase transitions of $\cF_\beta^m$, while for $m=4$ we can construct a large class of $W$ that lead to continuous phase transitions of $\cF_\beta^m$. 
We summarise our results below:
\begin{enumerate}
\item The proof of H\"older regularity of the weak solutions of \eqref{eq:agdif1} can be found in~\cref{holder} and the preceding lemmas of~\cref{exreg}.
\item The result on the existence  of local bifurcations  of the stationary solutions is contained in~\cref{thm:bif}.
\item The results  on phase transitions are spread out throughout~\cref{transitions}. \cb{The result on the long-time behaviour of solutions before or in the absence of a phase transition can be found in~\cref{ltb}.} The main result on the existence of discontinuous transition points is~\cref{thm:dctp} while the explicit conditions for a continuous transition point can be found in~\cref{m=4}.
\item In~\cref{mesa}, we treat the mesa limit $m \to \infty$. The $\Gamma$-convergence of the sequence of energies $\cF_\beta^m$  to some limiting free energy $\cF^\infty$ as $m \to \infty$ can be found in~\cref{thm:mesa}. We then provide a characterisation of the minimisers of the limiting variational problem in terms of the size of the domain and the potential $W$ in~\cref{thm:mesapt}.
\end{enumerate}
In~\cref{numexp}, we display the results of some numerical experiments which we hope will shed further light on the theoretical results, while also providing us with some conjectures about the behaviour of the system in settings not covered by the theory.


\section{Preliminaries and notation}
 As mentioned earlier, we denote by $\cP(\Omega)$ the space of all Borel probability measures on $\Omega$ with $\rho$ the generic element which we will often associate with its density $\rho(x) \in \Leb^1(\Omega)$, if it exists. We use the standard notation of $\Leb^p(\Omega)$ and $ \SobH^s(\Omega)$ for the Lebesgue and periodic $\Leb^2$-Sobolev spaces, respectively.  We denote by the $C^k(\Omega),C^\infty(\Omega)$ the space of $k$-times ($k \in \N$) continuously differentiable and smooth functions, respectively.  

Given any function in $f \in \Leb^2(\Omega)$ we define its Fourier transform as 
\begin{align}
\hat{f}(k)= \skp{f, e_k}_{\Leb^2(\Omega)}, \qquad k \in \Z^d
\end{align} 
where
\begin{align}\label{e:def:wk}
e_k(x)= N_k\prod\limits_{i=1}^d e_{k_i}(x_i), \qquad\text{ where } \qquad
e_{k_i}(x_i)=
  \begin{cases}
    \cos\left(\frac{2 \pi k_i}{L} x_i\right) &  k_i>0, \\
    1 & k_i=0,  \\
    \sin\left(\frac{2 \pi k_i}{L} x_i\right)  & k_i<0, \\
  \end{cases}
\end{align}
and $N_k$ is defined as
\begin{align}\label{NkTk}
N_k:=\frac{1}{L^{d/2}}\prod\limits_{i=1}^d \left(2-\delta_{k_i , 0} \right)^{\frac{1}{2}}= :\frac{\Theta(k)}{L^{d/2}} \, .
\end{align}
Using this we have the following representation of the convolution of two functions $W, f  \in \Leb^2(\Omega)$ where $W$ is even along every coordinate
\begin{align}
(W \star f)(y)= \sum\limits_{k \in \N^d}\hat{W}(k) \frac{1}{N_k} \sum\limits_{\sigma \in \Sym_k(\Lambda)}\hat{f}(\sigma(k))e_{\sigma(k)}(y)  \, .
\end{align}
where $\Sym_k(\Lambda)=\Sym(\Lambda)/H_k$.  $\Sym(\lambda)$ represents the symmetric group of the product of two-point spaces, $\Lambda=\{1,-1\}^d$, which acts on $\Z^d$ by pointwise multiplication, i.e. $(\sigma(k))_i=\sigma_i k_i, k \in \Z^d, \sigma \in \Sym(\Lambda)$.  $H_k$ is a normal subgroup of $\Sym(\Lambda)$ defined as follows 
\begin{align}
H_k:= \set*{\sigma \in \Sym(\Lambda): \sigma(k)=k} \,.
\end{align}
We need to quotient out $H_k$ as there might be some repetition of terms in the sum $\sum_{\sigma \in \Sym(\Lambda)}$ if $k \in \N^d$ is such that $k_i=0$ for some $i \in \set{1, \dots,d}$. Another expression that we will use extensively in the sequel is the Fourier expansion of the following bilinear form
\begin{align}\label{Fourier:Interaction}
\iint\limits_{\Omega \times \Omega} \! W(x-y)f(x) f(y) \dx{x} \dx{y} = \sum\limits_{k \in \N^d} \hat{W}(k)\frac{1}{N_k}\sum\limits_{\sigma \in \Sym_k(\Lambda)}|\hat{f}(\sigma(k))|^2 \, .
\end{align}
The following notion will play an important role in the subsequent analysis.
\begin{defn}\label{def:Hstab}
A potential $W \in \Leb^2(\Omega)$ is said to be $H$-stable denoted by $W \in \HH_s$ if
\begin{align}
\hat{W}(k) \geq 0,\quad  \forall k \in \Z^d, k \neq0  \,.
\end{align}
If this does not hold, we denote this by $W \in \HH_s^c$. \cre{ The above condition is equivalent to the following  inequality holding true for all $\eta \in \Leb^2(\Omega)$ : 
\begin{align}
\iintom{W(x-y) \eta(x) \eta(y)} \geq 0 \, .
\end{align}
Furthermore, if $\eta ,W \not \equiv 0$, we have that
\begin{align}
\iintom{W(x-y) \eta(x) \eta(y)} > 0 \, .
\end{align}
}
\end{defn}


\section{Existence and regularity of solutions}\label{exreg}
We are interested in solutions of the following nonlinear-nonlocal PDE
\begin{align}
\begin{cases}
\partial_t \rho = \beta^{-1} \Delta \rho^m + \nabla \cdot (\rho \nabla W \star \rho)  & (x,t) \in \Omega \times (0,T] \\ 
\rho(x,0)=\rho_0(x) \in \Leb^2(\Omega)\cap \Leb^m(\Omega) \cap \cP(\Omega) & x \in \Omega
\end{cases} \, ,
\label{eq:agdif}
\end{align}
where \cre{$1 < m <\infty$, $\beta>0$}, and $W \in C^2(\Omega)$ is even along every co-ordinate and has mean zero. It is not immediately clear what the correct notion of solution for the above PDE is, as it need not possess classical solutions.  We introduce the appropriate notion of solution in the following definition.
\begin{defn}\label{thm:weak1}
A weak solution of~\eqref{eq:agdif} is a bounded, measurable function
\[
\rho \in C([0,T]; \Leb^2(\Omega)) 
\]
with
\[
\rho^m \in \Leb^2([0,T]; \SobH^1(\Omega)) \, ,
\]
such that
\begin{align}
&\left.\intom{\rho(x,t) \phi(x,t)}\right\lvert^T_0 \\&+ \int_{0}^T \intom{\bra*{- \rho(x,t) \phi_t(x,t) + \beta^{-1}m\rho^{m-1}(x,t)\nabla \rho(x,t) \cdot \nabla \phi(x,t)+ \rho \nabla( W \star \rho(x,t)) \cdot \nabla \phi(x,t) }} \dx{t} =0 \, , \label{eq:weakform}
\end{align}
for all $\phi \in \SobH^1([0,T];\Leb^2(\Omega)) \cap \Leb^2([0,T]; \SobH^1_0(\Omega))$  and $\rho(x,0)=\rho_0(x)$.
\end{defn}
\begin{theorem}
Given  $\rho_0 \in \Leb^2(\Omega) \cap \Leb^m(\Omega) \cap \cP(\Omega)$, there exists a unique weak solution of~\eqref{eq:agdif}. Furthermore $\rho(\cdot,t) \in \cP(\Omega)$ for all $t \geq 0$. 
\end{theorem}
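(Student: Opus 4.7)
The plan is to build a weak solution by approximation and then obtain uniqueness from the monotonicity of the degenerate diffusion combined with the smoothness of the nonlocal drift, which will be the mildest nonlinear perturbation since $W\in C^2(\Omega)$ forces $\nabla W\star\rho$ to inherit $C^1$ regularity uniformly in $\rho\in\cP(\Omega)$ via Young's inequality.

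\textbf{Existence via regularisation and fixed point.} I would first introduce a regularised equation, for instance by mollifying the initial datum and replacing $\rho^m$ with $(\rho+\varepsilon)^m-\varepsilon^m$ so that the diffusion is uniformly non-degenerate, and then decouple the nonlocal term via a Schauder fixed-point argument: for a given $\tilde\rho\in C([0,T];\Leb^2(\Omega))$ the equation
\[
\partial_t \rho = \beta^{-1}\Delta\bigl((\rho+\varepsilon)^m-\varepsilon^m\bigr)+\nabla\cdot\bigl(\rho\,\nabla W\star\tilde\rho\bigr)
\]
is a linear (in $\tilde\rho$) quasilinear parabolic equation with $C^1$ drift, whose classical solvability is standard. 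The map $\tilde\rho\mapsto\rho$ is continuous and compact (using parabolic regularity plus Aubin--Lions), and maps the non-negative cone of fixed mass into itself (by the divergence structure and by a maximum-principle truncation argument, since the drift is a given smooth vector field). Schauder then yields an approximate solution $\rho^\varepsilon$.

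\textbf{A priori estimates and passage to the limit.} I would test \eqref{eq:agdif} formally against $1$, $\rho$ and $\frac{m}{m-1}\rho^{m-1}$ (i.e.\ the time derivative of the free energy) to obtain: (i) mass conservation $\int_\Omega\rho^\varepsilon\,dx=1$; (ii) an $\Leb^\infty_t\Leb^2_x$ bound using the $\Leb^\infty$ bound on $\nabla W\star\rho^\varepsilon$ and a Gronwall argument; and most importantly (iii) the entropy/energy identity
\[
\frac{d}{dt}\cF_\beta^m(\rho^\varepsilon)+\intom{\rho^\varepsilon\abs*{\beta^{-1}\frac{m}{m-1}\nabla(\rho^\varepsilon)^{m-1}+\nabla W\star\rho^\varepsilon}^2}=0,
\]
which after absorbing the drift contribution (using $\|\nabla W\star\rho^\varepsilon\|_\infty\le\|\nabla W\|_\infty$) gives uniform control on $\nabla(\rho^\varepsilon)^{(m+1)/2}$ in $\Leb^2_{t,x}$, hence on $(\rho^\varepsilon)^m$ in $\Leb^2_t\SobH^1_x$. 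Interpreting \eqref{eq:agdif} then gives a uniform bound on $\partial_t\rho^\varepsilon$ in, say, $\Leb^2_t\SobH^{-1}_x$, so Aubin--Lions produces a strongly convergent subsequence in $C([0,T];\Leb^2(\Omega))$. All nonlinear terms in the weak formulation pass to the limit: the diffusion by weak convergence of $\nabla(\rho^\varepsilon)^m$, and the drift by strong $\Leb^2$ convergence of $\rho^\varepsilon$ together with the uniform $C^1$ bound on $\nabla W\star\rho^\varepsilon$. Non-negativity and unit mass persist in the limit, so $\rho(\cdot,t)\in\cP(\Omega)$ for every $t\ge 0$.

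\textbf{Uniqueness.} For two weak solutions $\rho_1,\rho_2$ with the same initial datum, the main obstacle is to cleanly separate the degenerate diffusion from the nonlocal coupling. I would test the difference equation against $(-\Delta)^{-1}(\rho_1-\rho_2)$ (the periodic Neumann Green's function applied to the mean-zero difference), i.e.\ work in the $\SobH^{-1}(\Omega)$ norm. The diffusion contributes
\[
-\intom{(\rho_1^m-\rho_2^m)(\rho_1-\rho_2)}\le 0
\]
by monotonicity of $s\mapsto s^m$, while the drift term splits as $(\rho_1-\rho_2)\nabla W\star\rho_1 + \rho_2\nabla W\star(\rho_1-\rho_2)$ and is controlled in $\SobH^{-1}$ using $\|\nabla W\star(\rho_1-\rho_2)\|_{\Leb^\infty}\le\|W\|_{C^2}\|\rho_1-\rho_2\|_{\SobH^{-1}}$ together with the $\Leb^\infty_t\Leb^2_x$ bounds on $\rho_i$ that follow from the a priori estimates. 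Gronwall's inequality then yields $\rho_1=\rho_2$. The hardest technical point is justifying the test function $(-\Delta)^{-1}(\rho_1-\rho_2)$ at the regularity available in \cref{thm:weak1}; this is standard for PME-type equations and can be handled by a mollification-in-time argument or by first restricting attention to the approximate solutions and closing the estimate after the limit.
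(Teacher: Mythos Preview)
The paper does not actually prove this theorem: immediately after stating it, the authors write that ``the proof of this result is classical and we will not include it,'' noting only that it ``relies on regularisation techniques which remove the degeneracy in the problem'' and that ``the meat of the matter is proving estimates uniform in the regularisation parameter,'' with references to \cite{BCL09,BS10}. Your proposal follows precisely this classical route---regularise the degeneracy, obtain uniform a~priori estimates (mass, $\Leb^2$, energy dissipation), and pass to the limit via compactness---so at the level of strategy you are aligned with what the paper invokes.

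One point in your uniqueness sketch deserves care. When you split the drift as $(\rho_1-\rho_2)\nabla W\star\rho_1 + \rho_2\nabla W\star(\rho_1-\rho_2)$ and test against $(-\Delta)^{-1}(\rho_1-\rho_2)$, the second piece is fine: with $W\in C^2$ one can shift a derivative and bound $\|\nabla W\star(\rho_1-\rho_2)\|_{\Leb^2}\lesssim \|D^2W\|_{\Leb^1}\|\rho_1-\rho_2\|_{\SobH^{-1}}$ (your $\Leb^\infty$ claim is slightly too strong but the $\Leb^2$ version suffices since $\rho_2$ is bounded by the definition of weak solution). The first piece, however, gives a term like $\int(\rho_1-\rho_2)(\nabla W\star\rho_1)\cdot\nabla(-\Delta)^{-1}(\rho_1-\rho_2)$, which after Cauchy--Schwarz produces $\|\rho_1-\rho_2\|_{\Leb^2}$ rather than $\|\rho_1-\rho_2\|_{\SobH^{-1}}$, and this does not close a Gronwall in $\SobH^{-1}$ on its own. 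To fix this you either (i) use the boundedness of weak solutions (built into \cref{thm:weak1}) together with a more quantitative lower bound from the monotone diffusion term to absorb $\|\rho_1-\rho_2\|_{\Leb^2}^2$, or (ii) switch to the $\Leb^1$-contraction framework used in \cite{BCL09,BS10}, which handles the transport term more naturally. Either route works; just be aware that the bare $\SobH^{-1}$ estimate as written has a gap at that cross-term.
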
 
The proof of this result is classical and we will not include it. It relies on regularisation techniques which remove the degeneracy in the problem. The meat of the matter is proving estimates uniform in the regularisation parameter. We refer to~\cite{BCL09,BS10} for proofs of this result with $W \in C^2(\Omega)$. 

We turn our attention to the regularity of solutions of~\eqref{eq:agdif}. The proof is based on the method of intrinsic scaling introduced by DiBenedetto for the porous medium equation~\cite{DiB79,UrB08}. It is also similar in spirit to the proof in~\cite{KZ18} where regularity was proved for a degenerate diffusion equation posed on $\R^d$ with a potentially singular drift term. \cb{We also direct the readers to~\cite{HZ19} where H\"older regularity was proven for drift-diffusion equations with sharp conditions on the drift term using a different strategy of proof.} Since we will mainly be concerned with stationary solutions we assume for the time being that there exists some universal constant $M>0$ such that $\norm{\rho}_{\Leb^\infty(\Omega_T)} \leq M$, where $\Omega_T$ is the parabolic domain $\Omega_T:=\Omega \times[0,T]$ and $\Omega_\infty:=\Omega \times[0,\infty)$. We first state the result regarding H\"older regularity.
\begin{theorem}\label{holder}
Let $\rho$ be a weak solution of~\eqref{eq:agdif} with initial datum $\rho_0 \in \Leb^\infty(\Omega) \cap \cP(\Omega)$, such that $\norm{\rho}_{\Leb^\infty(\Omega_T)} \leq M< \infty$. Then $\rho$ is H\"older continuous with exponent $a \in (0,1)$ dependent on the data, $m$, $d$, $W$, and $\beta$. Moreover, the H\"older exponent $a$ depends continuously on $\beta$ for $\beta>0$.
\end{theorem}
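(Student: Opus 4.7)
The plan is to adapt DiBenedetto's method of intrinsic scaling for the porous medium equation to accommodate the nonlocal drift. The key observation is that since $W \in C^2(\Omega)$ and $\norm{\rho}_{\Leb^\infty(\Omega_T)} \leq M$, the vector field $v := -\nabla W \star \rho$ belongs to $\Leb^\infty(\Omega_T)$ with a bound depending only on $\norm{\nabla W}_\infty$ and $M$. Thus the nonlocal drift acts, quantitatively, like a bounded vector field times $\rho$, which places us in a regime tractable by the intrinsic scaling machinery, with only routine modifications of the standard estimates.

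First I would establish local energy (Caccioppoli-type) estimates for the truncations $(\rho-k)_\pm$. Testing the weak formulation against $(\rho-k)_\pm \zeta^2$ with a smooth space-time cutoff $\zeta$, and using the identity $m\rho^{m-1}\nabla\rho \cdot \nabla(\rho-k)_\pm = \frac{4m}{(m+1)^2}\abs{\nabla(\rho^{(m+1)/2}-k^{(m+1)/2})_\pm}^2$ modulo lower-order terms, I would derive bounds of the form
\begin{align*}
\sup_{t} \intom{\zeta^2 (\rho-k)_\pm^2} + \beta^{-1} \iint \zeta^2 \rho^{m-1} \abs{\nabla(\rho-k)_\pm}^2 \lesssim \iint (\rho-k)_\pm^2 (\abs{\nabla\zeta}^2 + \zeta\abs{\partial_t\zeta}) + \mathrm{drift\ terms},
\end{align*}
where the drift contribution is absorbed via Young's inequality thanks to $\norm{v}_\infty \leq \norm{\nabla W}_\infty M$. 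A parallel logarithmic estimate for $\log((\omega + k) / (\omega + k - (\rho-k)_\pm))$ would also be derived, as it is essential for propagating positivity in time.

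Next I would set up the intrinsic cylinders $Q(\theta R^2, R) := B_R(x_0) \times (t_0 - \theta R^2, t_0)$ with $\theta \sim (\omega/4)^{2-m}$, where $\omega = \eosc_{Q} \rho$. This rescaling in time compensates for the degeneracy of the diffusion coefficient $\rho^{m-1}$ in the region where $\rho$ is small. The core of the argument is the alternative: in the intrinsic cylinder either (a) the measure of $\{\rho > \mu^+ - \omega/2\}$ is small, in which case a De Giorgi iteration using the Caccioppoli estimates yields $\eosc_{Q_{R/2}} \rho \leq (1 - \eta)\omega$; or (b) this measure is large, in which case the logarithmic estimate together with an expansion-of-positivity argument reduces the oscillation from below. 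Tracking constants carefully through this dichotomy gives the geometric decay $\eosc_{Q_{\lambda R}} \rho \leq \sigma \omega$ for some universal $\sigma, \lambda \in (0,1)$, which by a standard iteration lemma converts to H\"older continuity with exponent $a = \log\sigma / \log\lambda$.

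The main obstacle will be verifying that the nonlocal drift does not spoil the intrinsic scaling. The rescaling $\tilde\rho(y,s) = \rho(x_0 + Ry,\, t_0 + \theta R^2 s)$ turns the drift into $\theta R \nabla W \star \rho$, and since $\theta R \to 0$ when $R \to 0$ (for $m > 1$ with $\omega$ bounded), this term becomes subcritical with respect to the diffusion in the limit; quantitatively it contributes factors that can be absorbed provided $R$ is taken small enough depending on $M, \norm{\nabla W}_\infty$, and $\beta$. This bookkeeping also yields the continuous dependence of $a$ on $\beta$, since $\sigma$ and $\lambda$ depend on $\beta$ through the coefficient $\beta^{-1}$ in the diffusion and on the size of the absorbed drift terms, and these dependences are manifestly continuous for $\beta > 0$.
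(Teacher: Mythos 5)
Your sketch follows the same route as the paper: Caccioppoli and logarithmic estimates for truncations, intrinsic cylinders to offset the degeneracy of $\rho^{m-1}$, a two-case alternative leading to a reduction of oscillation, and a geometric iteration. Two points, however, will not close as written.

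First, the intrinsic time scaling $\theta \sim (\omega/4)^{2-m}$ is not the right one for $\partial_t\rho = \beta^{-1}\Delta\rho^m + \dots$; the correct choice (and the one used in the paper) is $\theta \sim \omega^{1-m}$. In non-divergence form the diffusion coefficient is $\beta^{-1}m\rho^{m-1}\sim\omega^{m-1}$, so one must stretch time by the reciprocal factor $\omega^{1-m}$ to render the cylinder non-degenerate. Your exponent $2-m$ (which is the $p$-Laplacian scaling $\omega^{2-p}$) is off by one: for $1<m<2$ it would \emph{shrink} the cylinder in time as $\omega\to0$, the opposite of what is required, and for $m>2$ it would stretch too slowly, so the De Giorgi iteration after rescaling would not produce a scale-free recursive inequality $X_{n+1}\lesssim b^n X_n^{1+2/(d+2)}$.

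Second, the statement that ``$\theta R\to0$ when $R\to0$ (for $m>1$ with $\omega$ bounded)'' is too cavalier: in the oscillation-decay loop $\omega$ shrinks along with $R$, so with $\theta=\omega^{1-m}$ the coefficient $\theta R=\omega^{1-m}R$ does not tend to zero without an explicit coupling of the two scales. The paper resolves this by the dichotomy $\alpha\omega^{m-1}>R^\eps$ or not: if the inequality fails, then $\omega\lesssim R^{\eps/(m-1)}$ and H\"older continuity holds trivially with exponent $\eps/(m-1)$; if it holds, the drift contributions in both the Caccioppoli and logarithmic estimates can be absorbed. Crucially this must be preserved at every step of the iteration, which forces the choice $R_k=c_0^k R_0$, $\omega_k=\sigma^k\omega_0$ with $c_0=\tfrac12\sigma^{(m-1)/\eps}\nu_0/2$ so that $\omega_k^{1-m}R_k^\eps<\alpha$ uniformly in $k$. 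Without spelling out this coupling the drift error terms degenerate as $k\to\infty$ and the iteration does not close. The remainder of your argument (the two-alternative structure, the use of the $V^2$ embedding, and the continuity of $a$ in $\beta$) is in agreement with the paper.
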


We also have the following consequence of the above result:
\cb{
\begin{corollary}\label{cor:holder}
Let $\rho$ be a weak solution of~\eqref{eq:agdif} with initial datum $\rho_0 \in \Leb^\infty(\Omega) \cap \cP(\Omega)$, such that $\norm{\rho}_{\Leb^\infty(\Omega_\infty)} \leq M< \infty$. Then, for some $C>0$, it holds that 
\begin{align}
\abs{\rho(y,t_1)-\rho(x,t_2)} \leq C_{h} \bra*{d_{\T^d}(x,y) + \abs{t_1-t_2}^{1/2}}^a \, ,
\end{align}
for all $x,y \in \T^d$ and $0<C<t_1<t_2<\infty$. Note that the constants $C_h$ and $a$ are independent of $x,y$ and $t_1,t_2$.
\end{corollary}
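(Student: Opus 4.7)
The plan is to combine the time-translation invariance of~\eqref{eq:agdif} with the local nature of the H\"older estimate produced in~\cref{holder}. Since~\eqref{eq:agdif} is autonomous in $t$, for any $\tau>0$ the shifted function $\rho^\tau(x,t) := \rho(x,t+\tau)$ is again a weak solution of~\eqref{eq:agdif} on $\Om \times [0,\infty)$, now with initial datum $\rho(\cdot,\tau) \in \Leb^\infty(\Om) \cap \cP(\Om)$ and still obeying $\norm{\rho^\tau}_{\Leb^\infty(\Om \times [0,\infty))} \leq M$. All data needed to apply~\cref{holder} to $\rho^\tau$ is thus independent of $\tau$.

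The key observation is that the intrinsic scaling proof of~\cref{holder} in fact delivers a \emph{local} oscillation estimate: on any parabolic cylinder $Q_r(x_0,t_0) \subset \Om \times (0,T]$ sitting at positive parabolic distance $\delta$ from the initial slice $\Om \times \set{0}$, the resulting H\"older constant and exponent depend only on $M$, $m$, $d$, $\beta$, $W$, and $\delta$. They are independent of the time horizon $T$ and of the initial datum beyond its $\Leb^\infty$ norm, because each step of the De Giorgi--Nash--Moser iteration --- the local energy inequality, the measure-shrinking lemma, and the oscillation reduction --- uses only quantities computed inside the chosen cylinder, together with the global $\Leb^\infty$ bound, which is preserved under translation in time.

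Granting this, the corollary follows by a short translation argument. Fix $C>0$ and take $t_1,t_2 \in (C,\infty)$ with $t_1 \leq t_2$. In the regime $\abs{t_2-t_1} \leq 1$, set $\tau := t_1 - C/2 > 0$ and apply the local estimate to $\rho^\tau$ on a cylinder of fixed size, e.g.\ $\T^d \times [C/4, C/4 + 2]$, whose parabolic distance to its own initial slice is bounded below by a fixed function of $C$. Translating the resulting oscillation bound back by $\tau$ yields
\[
\abs{\rho(y,t_1) - \rho(x,t_2)} \leq C_h \bra*{d_{\T^d}(x,y) + \abs{t_1-t_2}^{1/2}}^a ,
\]
with $C_h$ and $a$ depending only on $M$, $m$, $d$, $\beta$, $W$, and $C$, but not on $\tau$, and hence not on $t_1,t_2$. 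The complementary regime $\abs{t_2-t_1} > 1$ is immediate since $0 \leq \rho \leq M$ everywhere, at the cost of enlarging $C_h$.

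The main obstacle is the first step: one must revisit the proof of~\cref{holder} and verify that its constants are genuinely local, in particular that the nonlocal drift $\nabla \cdot (\rho\, \nabla W \star \rho)$, which a priori couples the chosen cylinder to all of $\Om$, enters the iteration only through the fixed quantities $\norm{\nabla W}_{\Leb^\infty}$ and the global bound $M$. Once this is in hand, the time-translation/covering argument is essentially automatic.
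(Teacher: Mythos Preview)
Your proposal is correct and follows essentially the same approach as the paper. The paper's own proof is a one-line remark that the constant $C_h$ in~\eqref{holdernorm} is uniform in time as long as one stays a fixed distance $C>0$ away from the initial slice; your time-translation argument is precisely the clean way to justify that observation, and your identification of the key point --- that the nonlocal drift enters the iteration only through $\norm{\nabla W}_{\Leb^\infty}$, $\norm{D^2 W}_{\Leb^\infty}$, and the global bound $M$, all of which are invariant under time shifts --- is exactly right and is indeed how the proof in~\cref{sec:proofofholder} handles the drift.
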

}
\cb{We remind the reader that the above results are used to obtain the desired regularity and compactness of minimisers in~\cref{compactness} and the equicontinuity in time of solutions for the long-time behaviour result in~\cref{ltb}, although they are of independent interest by themselves. The proof of~\cref{holder,cor:holder} can be found in~\cref{sec:proofofholder}.} 
\section{Characterisation of stationary solutions and bifurcations}\label{bif}

Now that we have characterised the notion of solution for~\eqref{eq:agdif} we study the associated stationary problem which is given by
\begin{align}
\beta^{-1} \Delta \rho^m + \nabla \cdot (\rho \nabla W \star \rho)=0 \,,  \qquad x \in \Omega 
\label{eq:sagdif}
\end{align}
with the notion of solution identical to the one defined in~\cref{thm:weak1}. One can immediately see that 
$\rho_\infty$ (cf.~\eqref{flat}) is a solution to~\eqref{eq:sagdif} for all $\beta>0$. As mentioned earlier, \eqref{eq:agdif} and \eqref{eq:sagdif} are intimately associated to the free energy functional $\cF_\beta^m : \cP(\Omega) \to (-\infty,+\infty]$ which is defined as
\begin{align}
\cF_\beta^m (\rho):= 
\begin{cases}
\displaystyle \frac{\beta^{-1}}{m-1} \intom{\rho^m(x)}- \frac{\beta^{-1}}{m-1} + \frac{1}{2}\iintom{W(x-y) \rho(x) \rho(y)}, & m>1 \\&
\\
\displaystyle \beta^{-1} \intom{(\rho \log \rho)(x)} + \frac{1}{2}\iintom{W(x-y) \rho(x) \rho(y)}, & m=1
\end{cases}
\, ,
\end{align}
whenever the above quantities are finite and as $+\infty$ otherwise. We will often use the shorthand notation $S_\beta^m(\rho):=\frac{\beta^{-1}}{m-1} \intom{\rho^m(x)}- \frac{\beta^{-1}}{m-1}$ and $S_\beta:=\beta^{-1} \intom{(\rho \log \rho)(x)}$ for the entropies and $\cE(\rho):=\frac{1}{2}\iintom{W(x-y) \rho(x) \rho(y)}$
for the interaction energy. We will also drop the superscript $m$ and just use $\cF_\beta(\rho)$ whenever $m=1$. 

Another object that will play an important role in the analysis below is the following self-consistency equation
\begin{align}
\beta^{-1}\frac{m}{m-1}\rho^{m-1} +  W \star \rho =C  \, ,
\end{align}
for some constant $C>0$. We discuss how the above equation,  solutions of~\eqref{eq:sagdif}, and $\cF_\beta^m(\rho)$ are related to each other for the case $m>1$ in the following proposition. (the case $m=1$ is discussed in~\cite{CGPS19} and the proofs are essentially identical)
\begin{proposition}
Let $\rho \in \cP(\Omega) \cap \Leb^m(\Omega)$ and fix $\beta>0,m>1$. Then the following statements are equivalent
\begin{enumerate}
\item $\rho$ is a weak solution of ~\eqref{eq:sagdif}
\item $\rho$ is a critical point of $\cF_\beta^m$, i.e. the metric slope  $\abs{\partial \cF_\beta^m }(\rho)$ is $0$.
\item For every connected component $A$ of its support $\rho$ satisfies the self-consistency equation, i.e.
\begin{align}
\beta^{-1}\frac{m}{m-1}\rho^{m-1} + W \star \rho =C(A, \rho)  \label{eq:sc}
\end{align}
with $C(A,\rho)$ given by
\cre{\[
C(A,\rho)= \beta^{-1}\frac{m}{\abs{A}(m-1)}\norm{\rho}_{L^{m-1}(A)}^{m-1} + \frac{1}{\abs{A}}\int_{A} W \star \rho(x) \dx{x} \, .
\]
}
\end{enumerate}
\label{prop:tfae}
\end{proposition}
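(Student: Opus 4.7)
The plan is to pass through the effective potential
\begin{equation*}
\mu := \beta^{-1} \frac{m}{m-1} \rho^{m-1} + W \star \rho,
\end{equation*}
and to establish the pair of equivalences $(1)\Leftrightarrow(3)$ and $(2)\Leftrightarrow(3)$. The key identity is $\beta^{-1} \Delta \rho^m = \nabla \cdot \bigl( \rho \nabla \bigl(\beta^{-1} \tfrac{m}{m-1} \rho^{m-1}\bigr) \bigr)$, which recasts~\eqref{eq:sagdif} as $\nabla \cdot(\rho \nabla \mu) = 0$ in the distributional sense. Since $\rho^m \in \SobH^1(\Omega)$ by the definition of weak solution and $W \star \rho \in C^2(\Omega)$ because $W \in C^2(\Omega)$, the flux $\rho \nabla \mu = \beta^{-1} \nabla \rho^m + \rho \nabla(W \star \rho)$ is well-defined in $\Leb^2(\Omega;\R^d)$.

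For $(1) \Leftrightarrow (3)$, I would test the equation against $\mu$ itself in order to obtain $\int_\Omega \rho \abs{\nabla \mu}^2 \dx{x} = 0$; since $\mu$ need not globally lie in $\SobH^1(\Omega)$ when $\rho$ vanishes, I would instead localize to the open sets $\{\rho > \varepsilon\}$, where the equation is uniformly non-degenerate and $\nabla \mu$ is meaningful, and then let $\varepsilon \downarrow 0$ using the Hölder continuity provided by~\cref{holder}. This yields $\nabla \mu = 0$ $\rho$-a.e., and the continuity of $\rho$ then forces $\mu$ to be constant on each connected component $A$ of the open set $\{\rho > 0\}$; averaging this constant over $A$ produces exactly the formula for $C(A, \rho)$. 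For the converse, if $\mu \equiv C(A, \rho)$ on every such component, then $\rho \nabla \mu = 0$ a.e.\ (as $\rho = 0$ outside its support while $\nabla \mu = 0$ on the interior of the support), whence $\nabla \cdot(\rho \nabla \mu) = 0$ as distributions.

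For $(2) \Leftrightarrow (3)$, I would appeal to the Wasserstein gradient-flow framework of~\cite{AGS}. The functional $\cF_\beta^m$ decomposes as an internal energy $S_\beta^m$ with convex superlinear density $s \mapsto \tfrac{1}{m-1}(s^m - s)$, plus the interaction energy $\cE$ with $C^2$ kernel, and the standard computations of~\cite{AGS,cmcv-03} identify its first variation with $\mu$ and the strong subdifferential at $\rho$ with the vector field $\nabla \mu$, giving
\begin{equation*}
\abs{\partial \cF_\beta^m}(\rho) = \left( \int_\Omega \abs{\nabla \mu}^2 \rho \dx{x} \right)^{1/2}
\end{equation*}
whenever finite. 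Hence $\abs{\partial \cF_\beta^m}(\rho) = 0$ is equivalent to $\nabla \mu = 0$ $\rho$-almost everywhere, which, by the analysis of the previous paragraph, is equivalent to~(3). The main obstacle throughout is the degeneracy of the diffusion on $\{\rho = 0\}$: one must handle $\mu$ and its gradient carefully near the boundary of the support, and the uniform Hölder regularity from~\cref{holder} is the crucial ingredient that makes these localization arguments close.
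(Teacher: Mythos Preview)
Your proposal is correct and follows the standard route for this class of results; indeed, the paper does not give its own proof, remarking only that ``the proofs are essentially identical'' to the $m=1$ case treated in~\cite{CGPS19}, and your outline---rewriting the stationary equation as $\nabla\cdot(\rho\nabla\mu)=0$, invoking the metric-slope identification from~\cite{AGS}, and using continuity of $\rho$ to pass from $\nabla\mu=0$ $\rho$-a.e.\ to constancy on connected components---is exactly that strategy adapted to nonlinear diffusion. One small point of care: in the implication $(2)\Rightarrow(3)$ you appeal to the H\"older regularity of~\cref{holder}, but that theorem is stated for weak solutions of~\eqref{eq:agdif}; the clean way to close the loop is to observe that $\abs{\partial\cF_\beta^m}(\rho)=0$ already gives $\rho\nabla\mu=0$ in $\Leb^2$, hence $(1)$ holds, and then invoke~\cref{holder} via $(1)$ before extracting $(3)$.
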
 
\cre{
\begin{remark}\label{notnorm}
We have used the notation
\begin{align}
\norm{\rho}_{L^{m-1}(A)}= \bra*{\int_A \rho^{m-1}(x) \dx{x}}^{\frac{1}{m-1}} \, ,
\end{align}
for $1 < m < \infty$, even though this is not a norm for $1 < m <2$.
\end{remark}
\begin{remark}\label{mto1}
Note that if a stationary solution $\rho$ is fully supported then the constant $C(A,\rho)=C(\Omega,\rho)$ reduces to
\[
C(\Omega,\rho)= \beta^{-1}\frac{m}{\abs{\Omega}(m-1)}\norm{\rho}_{L^{m-1}(A)}^{m-1} \, ,
\]
where we have used the fact that $W$ has mean zero. We can now formally pass to the limit $m \to 1$ to obtain
\begin{align}
\beta^{-1} \log \rho + W \star \rho = \frac{\beta^{-1}}{\abs{\Omega}}\intom{\log \rho}\, .
\end{align}
The solutions of the above equation are studied in detail in~\cite{CGPS19}.
\end{remark}
}

Now that we have various equivalent characterisations of stationary solutions of~\eqref{eq:agdif}, we proceed to state and prove the main result of this section regarding the existence of bifurcations from the uniform state $\rho_\infty$ (cf.~\eqref{flat}). Before doing this however we need to introduce some relevant notions. We denote by $\SobH^n_0(\Omega)$ the homogeneous $\SobH^n(\Omega)$ space and by $\SobH^n_{0,s}(\Omega)$ the closed subspace of $\SobH^n_0(\Omega)$ consisting of functions which are even along every coordinate (pointwise a.e.). Note that the $\set{e_k}_{k \in \N^d, k \neq 0}$ form an orthogonal basis for $\SobH^n_{0,s}(\Omega)$. We then introduce the following map $F : \SobH^n_{0,s}(\Omega) \times \R_+ \to \SobH^n_{0,s}(\Omega)$ for $n>d/2$ which is given by
\begin{align}
F(\eta,\beta):= \beta^{-1} \frac{m}{m-1}(\rho_\infty+ \eta)^{m-1} + W \star \eta - \beta^{-1}\frac{m}{\abs{\Omega}(m-1)}\norm{\rho_\infty + \eta}_{L^{m-1}(\Omega)}^{m-1} \, \label{eq:scp}.
\end{align} 
Note that if $F(\eta,\beta)=0$ then the pair $(\rho_\infty+ \eta,\beta)$ satisfies~\eqref{eq:sc} on all of $\Omega$. If one can show that $(\rho_\infty + \eta)(x) \geq 0, \forall x \in \Omega$ then we have found a bonafide stationary solution of~\eqref{eq:agdif} by the equivalency established in~\cref{prop:tfae}. Thus, we would like to study the bifurcations of the map $F$ from its trivial branch $(0,\beta)$ . To this order we compute its Fr\'echet derivatives around $0$ as follows:
\begin{align}
D_{\eta}F(0,\beta)(e_1)=&\beta^{-1}m \rho_\infty^{m-2} e_1 +  W\star e_1 \\
 D^2_{\eta \beta}F(0,\beta)(e_1)=&-\beta^{-2}m \rho_\infty^{m-2} e_1 \\
  D^2_{\eta \eta}F(0,\beta)(e_1,e_2)=&\beta^{-1}m(m-2) \rho_\infty^{m-3} e_1 e_2 -\beta^{-1}\frac{m(m-2)}{\abs{\Omega}} \rho_\infty^{m-3} \intom{e_1 e_2 } \\
  D^3_{\eta \eta \eta}F(0,\beta)(e_1,e_2,e_3)=&\beta^{-1}m(m-2) (m-3)\rho_\infty^{m-4} e_1 e_2 e_3-\beta^{-1}\frac{m(m-2)(m-3)}{\abs{\Omega}} \rho_\infty^{m-4} \intom{e_1 e_2 e_3 } \, ,
\end{align}
for some $e_1,e_2,e_3 \in \SobH^n_{0,s}(\Omega)$.  We then have the following result:

\begin{theorem}[Existence of bifurcations]\label{thm:bif}
Consider the map $F : \SobH^n_{0,s}(\Omega) \times \R_+ \to \SobH^n_{0,s}(\Omega)$ for $n>d/2$ as defined in~\eqref{eq:scp} with its trivial branch $(0,\beta)$. Assume there exists $k^* \in \N^d, k^* \not \equiv0$ such that the following two conditions are satisfied
\begin{enumerate}
\item $\hat{W}(k^*) <0$
\item $\operatorname{card}\set*{k \in \N^d, k \not \equiv0: \frac{\hat{W}(k)}{\Theta(k)} = \frac{\hat{W}(k^*)}{\Theta(k^*)}}=1$ \, .
\end{enumerate}
Then, $(0,\beta_*)$ is a bifurcation point of $(0,\beta)$ with
\begin{align}
\beta_*=-\frac{m \rho_\infty^{m-3/2} \Theta(k^*)} { \hat{W}(k^*)} \, ,
\end{align}
i.e. there exists a neigbourhood $N$ of $(0,\beta_*)$ and a curve $(\eta(s),\beta(s)) \in N, s \in(-\delta,\delta), \delta>0$ such that $F(\eta(s),s)=0$. The branch $\eta(s)$ has the form
\begin{align}
\eta(s)= s e_{k^*} + r(s e_{k^*},\beta(s)) \, ,
\end{align}
where $\norm{r}_{\SobH^n_{0,s}(\Omega)}=o(s)$ as $s \to 0$. Additionally, we have that $\beta'(0)=0$ and 
\begin{align}
\beta''(0)= \frac{\beta_* (m-2)(m-3)}{3 \rho_\infty^2} \intom{e_{k^*}^4} \, .
\end{align}
\end{theorem}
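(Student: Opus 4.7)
The plan is to invoke the Crandall--Rabinowitz theorem (cf.~\cref{thm:cr}) on $F$ at the candidate point $(0,\beta_*)$ and then to read off $\beta'(0),\beta''(0)$ by differentiating the branch equation $F(\eta(s),\beta(s)) = 0$ at $s=0$. First I would verify the structural prerequisites. Since $n > d/2$, the Sobolev embedding $\SobH^n_{0,s}(\Omega) \hookrightarrow C(\Omega)$ together with the algebra property of $\SobH^n(\Omega)$ makes $\eta \mapsto (\rho_\infty + \eta)^{m-1}$ a $C^3$ Nemitski map in any neighbourhood of $0$ on which $\rho_\infty + \eta$ remains positive, so $F$ is $C^3$. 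Evenness in each coordinate and the built-in mean-zero projection keep the image inside $\SobH^n_{0,s}(\Omega)$, and plugging $\eta = 0$ shows $F(0,\beta) = 0$ for every $\beta > 0$, giving the trivial branch.

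Next I would analyse the linearisation $L_\beta := D_\eta F(0,\beta)$. Because $W \in C^2(\Omega)$, convolution with $W$ is compact on $\SobH^n_{0,s}(\Omega)$, so $L_\beta$ is a compact perturbation of $\beta^{-1} m \rho_\infty^{m-2}\,\Id$ and is Fredholm of index zero. The family $\{e_k\}_{k \in \N^d \setminus \{0\}}$ simultaneously diagonalises $L_\beta$: using the convolution formula in the preliminaries, each $e_k$ is an eigenfunction, and the eigenvalue attached to $e_k$ vanishes precisely at the value of $\beta$ given by the formula in the statement. Assumption~(1) makes $\beta_* > 0$, and assumption~(2) forces $\ker L_{\beta_*} = \mathrm{span}\{e_{k^*}\}$ within $\SobH^n_{0,s}(\Omega)$, so $\codim \mathrm{Range}(L_{\beta_*}) = 1$. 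The identity
\[
D^2_{\eta\beta}F(0,\beta_*)(e_{k^*}) = -\beta_*^{-2} m \rho_\infty^{m-2}\, e_{k^*}
\]
is a non-zero scalar multiple of the kernel generator and is therefore not in $\mathrm{Range}(L_{\beta_*}) = (\ker L_{\beta_*})^{\perp}$, securing the transversality hypothesis. Crandall--Rabinowitz then produces the branch $\eta(s) = s e_{k^*} + r(s)$, $\beta(s)$ with $\norm{r(s)}_{\SobH^n_{0,s}(\Omega)} = o(s)$.

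To extract $\beta'(0)$, I would differentiate $F(\eta(s),\beta(s)) = 0$ twice at $s = 0$ and pair with $e_{k^*}$ in $\Leb^2(\Omega)$. Using $\eta'(0) = e_{k^*}$ and that $F(0,\beta) \equiv 0$ kills the $\beta$-only derivatives, this yields
\[
2\beta'(0)\,\skp{D^2_{\eta\beta}F(0,\beta_*)(e_{k^*}),e_{k^*}} + \skp{D^2_{\eta\eta}F(0,\beta_*)(e_{k^*},e_{k^*}),e_{k^*}} = 0.
\]
Inserting the formula for $D^2_{\eta\eta}F$, the $\Leb^2$-pairing produces only $\intom{e_{k^*}^3}$ and $\intom{e_{k^*}}\cdot\intom{e_{k^*}^2}$; both vanish because $e_{k^*}$ is a tensor product of pure sines and cosines of non-constant modes, whose first and third powers average to zero on $\T^d$. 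The transversality coefficient is non-zero, so $\beta'(0) = 0$.

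Differentiating once more, using $\beta'(0) = 0$ to discard the cross-terms and $F(0,\beta) \equiv 0$ to discard the $\beta$-only derivatives, the third-order projection onto $e_{k^*}$ becomes
\[
3\beta''(0)\,\skp{D^2_{\eta\beta}F(0,\beta_*)(e_{k^*}),e_{k^*}} + \skp{D^3_{\eta\eta\eta}F(0,\beta_*)(e_{k^*},e_{k^*},e_{k^*}),e_{k^*}} + 3\skp{D^2_{\eta\eta}F(0,\beta_*)(e_{k^*},\eta''(0)),e_{k^*}} = 0,
\]
where $\eta''(0)$ is the unique (up to $\ker L_{\beta_*}$) Lyapunov--Schmidt correction solving $L_{\beta_*}\eta''(0) = -D^2_{\eta\eta}F(0,\beta_*)(e_{k^*},e_{k^*})$. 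Expanding $D^3_{\eta\eta\eta}$ using the explicit formula produces the cubic local term $\intom{e_{k^*}^4}$ together with a non-local piece that vanishes thanks to $\intom{e_{k^*}^3} = 0$, and rearranging yields the stated expression for $\beta''(0)$. The main obstacle is the bookkeeping in this last step: one must show that $\skp{D^2_{\eta\eta}F(0,\beta_*)(e_{k^*},\eta''(0)),e_{k^*}}$ contributes nothing beyond what is already captured, and this reduces to a Fourier orthogonality computation after decomposing $e_{k^*}^2$ in the $\{e_k\}$ basis via trigonometric product-to-sum identities, combined with the cancellation of the non-local correction in $D^2_{\eta\eta}F$ when tested against mean-zero functions.
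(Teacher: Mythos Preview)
Your overall strategy mirrors the paper's: invoke Crandall--Rabinowitz after checking that $L_{\beta_*}=D_\eta F(0,\beta_*)$ is Fredholm of index zero with one-dimensional kernel (via diagonalisation in the $\{e_k\}$ basis) and that the transversality condition holds, then read off $\beta'(0)$ and $\beta''(0)$. The verification of the Crandall--Rabinowitz hypotheses and the argument that $\beta'(0)=0$ from $\intom{e_{k^*}^3}=0$ are essentially identical to the paper's.

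The divergence is in the computation of $\beta''(0)$. The paper simply quotes the formula~\cite[I.6.11]{kielhofer2006bifurcation} and evaluates it, writing
\[
\beta''(0)=-\frac{\skp{D^3_{\eta\eta\eta}F(0,\beta_*)(e_{k^*},e_{k^*},e_{k^*}),e_{k^*}}}{3\skp{D^2_{\eta\beta}F(0,\beta_*)(e_{k^*}),e_{k^*}}}
\]
with no Lyapunov--Schmidt correction visible. You instead differentiate the branch equation a third time and (correctly, from that route) pick up the extra term $3\skp{D^2_{\eta\eta}F(0,\beta_*)(e_{k^*},\eta''(0)),e_{k^*}}$, and then assert it vanishes by Fourier orthogonality.

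That last assertion is the gap, and it does not hold. After discarding the non-local piece (which does die because $\intom{e_{k^*}}=0$), the correction reduces to $\beta_*^{-1}m(m-2)\rho_\infty^{m-3}\intom{e_{k^*}^2\,\eta''(0)}$. By~\cref{trig}, $e_{k^*}^2-\abs{\Omega}^{-1}$ is supported exactly on the modes $j\in P_2(k^*)$. But $\eta''(0)$, determined by $L_{\beta_*}\eta''(0)=-D^2_{\eta\eta}F(0,\beta_*)(e_{k^*},e_{k^*})$, is supported on the \emph{same} set $P_2(k^*)$, since the right-hand side is and $L_{\beta_*}$ is diagonal. Their $L^2$ pairing is therefore
\[
-\beta_*^{-1}m(m-2)\rho_\infty^{m-3}\sum_{j\in P_2(k^*)}\frac{c_j^2}{\lambda_j},\qquad \lambda_j=\beta_*^{-1}m\rho_\infty^{m-2}+N_j^{-1}\hat W(j)\neq 0,
\]
which is generically non-zero for $m\neq 2$ and depends on $\hat W(j)$ for $j\in P_2(k^*)$. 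So no orthogonality cancellation occurs. If you want to recover exactly the formula in the statement via direct differentiation, you would need an independent reason to drop this term; otherwise you should follow the paper and invoke the cited Kielh\"ofer identity as a black box rather than rederive it.
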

\begin{proof}
The proof of this theorem relies on the Crandall--Rabinowitz theorem (cf.~\cref{thm:cr}). Note that $F \in C^2(\SobH^n_{0,s}(\Omega) \times \R_+; \SobH^n_{0,s}(\Omega))$. Thus, we need to show that: (a) $D_{\eta}F(0,\beta_*): \SobH^n_{0,s}(\Omega) \to \SobH^n_{0,s}(\Omega)$ is Fredholm with index zero and has a one-dimensional kernel and (b) for any $e \in \ker(D_{\eta}F(0,\beta_*)), e \neq 0$ it holds that $D^2_{\eta \beta}F(0,\beta_*)(e) \notin \Im(D_{\eta}F(0,\beta_*))$.

For (a) we first note that $D_{\eta}F(0,\beta_*)$ is a compact perturbation of the identity as the operator $W \star e$ is compact on $\SobH^n_{0,s}$.	It follows then that it is a Fredholm operator. Note that the functions $\set{e_k}_{k \in \N^d, k \neq 0}$ diagonalise the operator $D_{\eta}F(0,\beta_*)$. Indeed, we have
\begin{align}
D_{\eta}F(0,\beta_*)\bra{e_k}=& \bra*{\beta_*^{-1} m \rho_\infty^{m-2} + \frac{1}{N_k}\hat{W}(k)}e_k \\
=& \bra*{\beta_*^{-1} m \rho_\infty^{m-2} + \rho_\infty^{-1/2}\frac{\hat{W}(k)}{\Theta(k)}}e_k \, .
\end{align}
Note that if the conditions (1) and (2) in the statement of the theorem are satisfied it follows, using the expression for $\beta_*$, that $D_{\eta}F(0,\beta_*)\bra{e_k}=0$ if and  only if $k = k^*$. Thus, we have that $\ker(D_{\eta}F(0,\beta_*))= \textrm{span}\bra{e_{k^*}}$. This completes the verification of the condition (1) in~\cref{thm:cr}.

For condition (2) in~\cref{thm:cr}, we note again by the diagonalisation of $D_{\eta}F(0,\beta_*)$ that $\Im(D_{\eta}F(0,\beta_*))= \set*{\textrm{span}(e_{k^*})}^{\perp}$. Thus, we have that
\cb{
\begin{align}
D^2_{\eta \beta}F(0,\beta_*)(e_{k^*})=&-\beta_*^{-2}m \rho_\infty^{m-2} e_{k^*} \notin  \Im(D_{\eta}F(0,\beta_*)) \, .
\end{align}
}
We can now compute the derivatives of the branch. Using the identity~\cite[I.6.3]{kielhofer2006bifurcation}, it follows that $\beta'(0)=0$ if $D^2_{\eta\eta}F(0,\beta_*)(e_{k^*},e_{k^*}) \in \Ima \bra*{D_\eta F(0,\beta_*) }$. Thus, it is sufficient to check that
\cb{
\begin{align}
\skp*{D^2_{\eta\eta}F(0,\beta_*)(e_{k^*},e_{k^*}),e_{k^*}}=  \skp*{\beta_*^{-1}m(m-2) \rho_\infty^{m-3} e_{k^*}^2,e_{k^*} } =0 \, ,
\end{align}
}
where the last inequality follows by using the expression for $e_{k^*}^2$ from~\cref{trig} and orthogonality of the basis
$\set{e_k}_{k \in \N^d}$. Here $\skp{\cdot,\cdot}$ denotes the dual pairing in $\SobH^n_{0,s}$. Thus, we have that $\beta'(0)=0$. Finally we can compute $\beta''(0)$ by using~\cite[I.6.11]{kielhofer2006bifurcation} to obtain
\begin{align}
\beta''(0)=& -\frac{\skp*{D^3_{\eta \eta \eta}F(0,\beta_*)(e_{k^*},e_{k^*},e_{k^*}),e_{k^*}}}{3 \skp*{D^2_{\eta \beta}F(0,\beta_*)(e_{k^*}),e_{k^*}}} \\=&\frac{\beta_*^{-1}m(m-2)(m-3) \rho_\infty^{m-4} \intom{e_{k^*}^4}}{3\beta_*^{-2} m \rho_\infty^{m-2}} \\&
=\frac{\beta_* (m-2)(m-3)}{3 \rho_\infty^2} \intom{e_{k^*}^4} \, .
\end{align}
This completes the proof of the theorem.
\end{proof}

\begin{remark}
Since $\SobH^n_{0,s}(\Omega)$ is continuously embedded in $C^0(\Omega)$ it follows that for the branch of solutions $\rho_\infty + \eta(s)$ found in~\cref{thm:bif} are in fact strictly positive for $s$ sufficiently small and are thus stationary solutions by the result of~\cref{prop:tfae}. Any interaction potential $W(x)$ such that infinitely many $k$ satisfy the conditions of~\cref{thm:bif} will have infinitely many bifurcation points $(0,\beta_k)$ from the trivial branch. A typical example would a be a potential for which the map $k \mapsto \hat{W}(k)$ is strictly negative and injective.
\end{remark}

\begin{remark}\label{critical}
Note that $\beta''(0)>0$  for all $m \in (1,2) \cup (3,\infty)$. This means that the branch turns to the right, i.e. it is supercritical. On the other hand if $m \in (2,3)$, then $\beta''(0)<0$. This means that the branch turns to the left, i.e. it is subcritical. If $m \in \set{2,3}$ we have that $\beta''(0)=0$. The relation of this phenomenon to the minimisers of the free energy will be discussed in~\cref{special}.
\end{remark}

\section{Minimisers of the free energy and phase transitions}\label{transitions}
The nontrivial stationary solutions found as a result of the bifurcation analysis in the previous section need not correspond to minimisers of the free energy, $\cF_\beta^m(\rho)$. Indeed, we do not know yet if minimisers even exist. We start first by proving the existence of minimisers of $\cF_\beta^m$. We then show that for $\beta$ sufficiently small $\cF_\beta^m$ has a unique minimiser, namely $\rho_\infty$ (cf.~\eqref{flat}). 

The natural question to ask then is if this scenario changes for larger values of $\beta$. We provide a rigorous definition by which this change can be characterised via the notion of a transition point and define two possible kinds of transition points, continuous and discontinuous. We then provide necessary and sufficient conditions on $W$ for the existence of a transition point and sufficient conditions for the existence of continuous and discontinuous transition points. 

We start with a technical lemma that provides us with some useful a priori bounds on the minimisers of $\cF_\beta^m$.

\begin{lemma}[$\Leb^{\infty}(\Omega)$-bounds]
Assume $\beta>0,m>1$. Then there exists some $B_{\beta,m}>0$, such that if $\rho \in \cP(\Omega)$ with $\norm{\rho}_{\Leb^\infty(\Omega)}>B_{\beta,m}$, then there exists $\bar{\rho} \in \cP(\Omega)$ with $\norm{\bar{\rho}}_{\Leb^\infty(\Omega)}\leq B_{\beta,m}$ with
\[
\cF_{\beta}^m(\bar{\rho}) < \cF_{\beta}^m(\rho) \,.
\]
\label{lem:eqbound}
\end{lemma}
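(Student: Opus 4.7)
The plan is to construct $\bar\rho$ explicitly by truncating $\rho$ at a level $B$ on its superlevel set and redistributing the excess mass onto the set where $\rho$ is small. I will then show that, for $B$ large enough, the resulting decrease of $\int \rho^m\,dx$ strictly dominates the worst-case increase in interaction energy.

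Concretely, fix $B>0$ to be chosen later and suppose $\|\rho\|_{L^\infty(\Omega)}>B$. Set $E:=\{\rho>B\}$, $F:=\{\rho<B/2\}$, and $M:=\int_E(\rho-B)\,dx>0$. By Chebyshev, $|\{\rho\geq B/2\}|\leq 2/B$, so $|F|\geq|\Omega|-2/B$; taking $B\geq 8\rho_\infty$ yields both $|F|\geq|\Omega|/2$ and $M/|F|\leq 2\rho_\infty\leq B/4$. I define
\[
\bar\rho := B\,\chi_E + \rho\,\chi_{E^c\setminus F} + \left(\rho + \tfrac{M}{|F|}\right)\chi_F,
\]
which is a probability density satisfying $\|\bar\rho\|_{L^\infty(\Omega)}\leq B$ and $\|\bar\rho-\rho\|_{L^1(\Omega)}=2M$.

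For the entropy, the tangent inequality for $t\mapsto t^m$ gives $\rho^m\geq B^m+mB^{m-1}(\rho-B)$ on $E$, so $\int_E(\bar\rho^m-\rho^m)\,dx\leq -mB^{m-1}M$. On $F$, the monotone bound $(\rho+\delta)^m-\rho^m\leq m(\rho+\delta)^{m-1}\delta$ together with $\rho+M/|F|\leq 3B/4$ gives $\int_F(\bar\rho^m-\rho^m)\,dx\leq m(3B/4)^{m-1}M$, while $\bar\rho=\rho$ on $E^c\setminus F$. Summing,
\[
\int_\Omega (\bar\rho^m-\rho^m)\,dx \leq -m\,c_m\,B^{m-1}M, \qquad c_m:=1-(3/4)^{m-1}>0.
\]
For the interaction term, the symmetric identity $2[\cE(\bar\rho)-\cE(\rho)]=\int_\Omega(\bar\rho-\rho)\bigl(W\star(\bar\rho+\rho)\bigr)\,dx$ together with $\|W\star(\bar\rho+\rho)\|_{L^\infty}\leq 2\|W\|_{L^\infty}$ yields $|\cE(\bar\rho)-\cE(\rho)|\leq 2M\|W\|_{L^\infty}$. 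Combining the two estimates,
\[
\cF_\beta^m(\bar\rho)-\cF_\beta^m(\rho) \leq M\left[-\tfrac{m c_m\beta^{-1}}{m-1}B^{m-1} + 2\|W\|_{L^\infty}\right],
\]
which is strictly negative once $B^{m-1}>2(m-1)\beta\|W\|_{L^\infty}/(mc_m)$. Setting $B_{\beta,m}$ to be the maximum of $8\rho_\infty$ and $\bigl[2(m-1)\beta\|W\|_{L^\infty}/(mc_m)\bigr]^{1/(m-1)}$ completes the proof.

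The hard part is the entropy estimate: a uniform redistribution over all of $E^c$ would produce a cost of order $mB^{m-1}M$ that exactly matches the gain on $E$, leaving no net improvement. Restricting the redistribution to $F=\{\rho<B/2\}$ is essential, since it keeps the post-redistribution density below $3B/4$; the resulting factor $(3/4)^{m-1}<1$ opens the strict gap needed to absorb the interaction-energy perturbation. A side benefit of this approach is that the construction works uniformly in $\rho$, so the threshold $B_{\beta,m}$ depends only on $\beta$, $m$, $|\Omega|$ and $\|W\|_{L^\infty}$, exactly as the statement requires.
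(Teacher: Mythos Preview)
Your proof is correct and takes a genuinely different route from the paper's. The paper argues by a case split on how much mass sits above level $B$: if at least half the mass is there, the entropy alone forces $\cF_\beta^m(\rho)>\cF_\beta^m(\rho_\infty)$ and one takes $\bar\rho=\rho_\infty$; otherwise the paper discards $\rho\chi_{\{\rho\geq B\}}$ entirely and renormalises the remainder to unit mass, then compares term by term. Your single truncate-and-redistribute construction avoids the dichotomy and yields a cleaner quantitative gap: the key observation that redistributing only onto $F=\{\rho<B/2\}$ produces the factor $(3/4)^{m-1}$ is exactly what makes the entropy gain strictly dominate, and it gives a tidier explicit constant $B_{\beta,m}$ than the paper's. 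The paper's approach has the mild advantage that its Case~1 competitor $\rho_\infty$ is canonical (useful later when one wants compactness of minimising sequences), but for the bare statement your argument is more direct. One cosmetic point: with $B_{\beta,m}$ defined as the maximum, the bracket in your final display could vanish; this is harmless because your tangent inequality on $E$ is in fact strict (by strict convexity of $t\mapsto t^m$, since $\rho>B$ there), so the energy comparison is strict regardless.
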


\begin{proof}
We start by noting that the following bounds hold
\begin{align}
S_\beta^m(\rho) \geq & \frac{\beta^{-1}}{m-1}\bra*{\frac{1}{\abs*{\Omega}}}^{m-1} - \frac{\beta^{-1}}{m-1} \label{eq:entlb}\\
\cE(\rho) \geq & -\frac{1}{2}\norm{W_{-}}_{\Leb^{\infty}(\Omega)} \, \label{eq:ielb}.
\end{align}
We divide our analysis into two cases. For $B>0$ and $\rho \in \cP(\Omega)$ let
\begin{align}
\B_{B}:= \set*{x \in \Omega: \rho  \geq B} \, ,
\end{align}
and 
\[
\varepsilon_B= \int_{\B_B} \rho \dx{x} \, .
\]

\paragraph{\bf Case 1:} $(\rho,B)$ s.t. $\varepsilon_B\geq \frac{1}{2}$

We then have the following bounds on the entropy.
\begin{align}
S_\beta^m(\rho) =& \frac{\beta^{-1}}{m-1}\bra*{\int_{\B_B} \rho^m \dx{x} + \int_{\B_B^c} \rho^m \dx{x}}  - \frac{\beta^{-1}}{m-1}\\
\geq & \frac{\beta^{-1} B^{m-1}}{2(m-1)}  - \frac{\beta^{-1}}{m-1} \, .
\end{align}
It follows then that we have the following bound on the free energy.
\begin{align}
\cF_{\beta}^m(\rho) \geq  \frac{\beta^{-1} B^{m-1}}{2(m-1)} -\frac{1}{2}\norm{W_{-}}_{\Leb^\infty(\Omega)}  - \frac{\beta^{-1}}{m-1}\, .
\end{align}
If we define a constant $B_1$ as follows
\begin{align}
B_1(m,\beta):= \bra*{\frac{2}{\abs{\Omega}^{m-1}} + \beta(m-1)  \norm{W_{-}}_{\Leb^\infty(\Omega)}}^{1/(m-1)} \, ,
\end{align}
such that for $B>B_1$, $1/\abs{\Omega}$ has a lower value of the free energy than $\rho$.
 
\paragraph{\bf Case 2:} $(\rho,B)$ s.t. $\varepsilon_B < \frac{1}{2}$

We write $\rho= \rho_B + \rho_r$, where $\rho_B:= \rho \cdot \chi_{\B_B}$
and $\rho_r:=\rho-\rho_B$. We then have the following bound on the entropy.
\begin{align}
S_\beta^m(\rho) \geq S_\beta^m(\rho_r) + \frac{\beta^{-1}B^{m-1}}{m-1}\varepsilon_B \geq S_\beta^m(\rho_r) \, .
\end{align}
We can assume without loss of generality that $\cF_\beta^m(\rho)< \cF_\beta^m(\rho_\infty)$, otherwise the proof is complete. It follows then that 
\[
\cE(\rho) < \cE(\rho_\infty)\,, \qquad S_\beta^m(\rho_r) + \frac{\beta^{-1}}{m-1}\leq S_\beta^m(\rho) + \frac{\beta^{-1}}{m-1} \leq
\frac{1}{2}\norm{W_-}_{\Leb^\infty(\Omega)} + \frac{\beta^{-1}}{(m-1)\abs{\Omega}^{m-1}} := s_*(m,\beta) \, .
\]

By expanding $\cE(\rho)$, the following estimate can be obtained
\begin{align}
\cE(\rho_r) < \cE(\rho_\infty) + \frac{1}{2}\norm{W_-}_{\Leb^\infty(\Omega)}:= e_* \,,
\end{align}
where we have used the fact that $\varepsilon_B <1/2$. Define $\bar{\rho_r} :=
(1-\varepsilon_B)^{-1} \rho_r \in \cP(\Omega)$. We have 
\begin{align}
S_\beta^m(\rho)-S_\beta^m(\bar{\rho_r}) \geq &  S_\beta^m(\rho_r) + \frac{\beta^{-1}B^{m-1}}{m-1}\varepsilon_B -\frac{\beta^{-1}}{m-1}(1- \varepsilon_B)^{-m}\intom{\rho_r^m} + \frac{\beta^{-1}}{m-1} \\
\geq&\varepsilon_B\pra*{\frac{\beta^{-1}B^{m-1}}{m-1}- \bra*{\frac{(1-\varepsilon_B)^{-m}-1}{\varepsilon_B}}s_*(m,\beta)} \,.
\end{align}
One can control the second term in the brackets as follows
\begin{align}
\bra*{\frac{(1-\varepsilon_B)^{-m}-1}{\varepsilon_B}}s_*(m,\beta)
\leq \max \bra*{m +\frac{m (m+1)(1-\delta)^{-m-2}\delta}{2}, \frac{2^m -1}{\delta} }s_*(m,\beta ) \, ,
\end{align}
for any $\delta< 1$. Setting $\delta=\frac{1}{2}$, we obtain
\begin{align}
\bra*{\frac{(1-\varepsilon_B)^{-m}-1}{\varepsilon_B}}s_*(m,\beta)
\leq m(1+(m+1)2^{m}) s_*(m,\beta) \,.
\end{align}
Similarly, for the interaction energy we can compute the difference as follows
\begin{align}
\cE(\rho)- \cE(\bar{\rho_r}) =& \cE(\rho)-\cE(\rho_r)+ \cE(\rho_r)-\cE(\bar{\rho_r}) \\
\geq & - \frac{1}{2}\norm{W_-}_{\Leb^\infty(\Omega)} \varepsilon_B + \cE(\rho_r)
\bra*{\frac{\varepsilon_B^2-2 \eps_B}{(1-\varepsilon_B)^2}} \\
\geq & \varepsilon_B \pra*{\bra*{\frac{\varepsilon_B-2}{(1-\varepsilon_B)^2}}\cE(\rho_r)-\frac{1}{2}\norm{W_-}_{\Leb^\infty(\Omega)}} \,.
\end{align}
Using the fact that $\varepsilon_B< 1/2$ we can obtain
\begin{align}
\cE(\rho)- \cE(\bar{\rho_r}) \geq &  \varepsilon_B \pra*{-8 e_*- \frac{1}{2}\norm{W_-}_{\Leb^\infty(\Omega)}} \, .
\end{align}
Now, we can define a second constant as follows
\cb{
\begin{align}
B_2(\beta,m):= \pra*{ (m-1)\beta\bra*{m\bra*{1+ 2^{m}(m+1)}s_*(m,\beta ) +8 e_*+ \frac{1}{2}\norm{W_-}_{\Leb^\infty(\Omega)}} }^{1/(m-1)}\,,
\end{align}
}
such that for $B>B_2$, $\bar{\rho_r}$ has a lower value of the free energy
than $\rho$.
We now set our constant as follows
\begin{align}
B_{\beta,m}:= \max \bra*{B_1(\beta,m),2B_2(\beta,m)} \, ,
\end{align}
and set $\bar{\rho}$ to either be $(1/\abs{\Omega})$ or $\bar{\rho_r}$. The constant $2$ in front of $B_2(\beta,m)$ follows from the fact that $\bar{\rho_r}$ has been normalised.
\end{proof}
The expression for the constant $B_{\beta,m}$ is explicit as a result of which we can even obtain some uniform control in $m$.
\begin{corollary}\label{cor:bbound}
Let $(\beta,m) \in (0,C)\times [1+ \eps,\infty)=:A \subset (0,\infty) \times
(1,\infty)$ for some $C,\eps>0$. Then $B^\star:=\sup_A B_{\beta,m} < \infty$.
\end{corollary}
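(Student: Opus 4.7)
The plan is to argue directly from the explicit formulas for $B_1(\beta,m)$ and $B_2(\beta,m)$ derived in the proof of~\cref{lem:eqbound}, checking that each piece of $B_{\beta,m} = \max(B_1, 2B_2)$ remains uniformly bounded as $(\beta,m)$ ranges over $A = (0,C) \times [1+\eps,\infty)$. The central observation is that on $A$ the exponent $1/(m-1)$ lies in $(0, 1/\eps]$, so the only way these quantities could blow up is through the bases growing in $m$ or $\beta$.

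I would split $A$ at $m=2$. On the slice $m \in [1+\eps, 2]$ the exponent $1/(m-1) \in [1, 1/\eps]$ is bounded and each quantity appearing inside $B_1^{m-1}$ and $B_2^{m-1}$ is uniformly bounded (since $\abs{\Omega}^{m-1} \geq \min(\abs{\Omega}^\eps,\abs{\Omega}) > 0$, $2^m \leq 4$, $\beta < C$, and all polynomial prefactors in $m$ are bounded on a compact $m$-interval); hence $B_1$ and $B_2$ are bounded on this slice. On the slice $m > 2$ the exponent $1/(m-1) \in (0,1)$, so the sub-additivity inequality $(a+b)^{1/(m-1)} \leq a^{1/(m-1)} + b^{1/(m-1)}$ applies.

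Applied to $B_1(\beta,m) = \bra*{2\abs{\Omega}^{-(m-1)} + \beta(m-1)\norm{W_-}_{\Leb^\infty(\Omega)}}^{1/(m-1)}$, sub-additivity yields the bound $2^{1/(m-1)}/\abs{\Omega} + (\beta\norm{W_-}_{\Leb^\infty(\Omega)})^{1/(m-1)}\,(m-1)^{1/(m-1)}$; both summands are uniformly bounded, using $\beta < C$ and the fact that $(m-1)^{1/(m-1)}$ is bounded on $[1,\infty)$. Applied to $B_2^{m-1}$, after expanding $s_*(m,\beta) = \tfrac12\norm{W_-}_{\Leb^\infty(\Omega)} + \beta^{-1}(m-1)^{-1}\abs{\Omega}^{-(m-1)}$ and noting that $e_*$ is independent of $(\beta,m)$, one obtains a finite sum of terms of the forms $c_W\, p(m)\, \beta\, 2^m$, $c_W\, p(m)\, 2^m\abs{\Omega}^{-(m-1)}$, and $c_W\, p(m)\, \beta$ for suitable polynomials $p$. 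Iterated sub-additivity then reduces matters to bounding $(2^m)^{1/(m-1)} \leq 2^{1+1/\eps}$, $(\abs{\Omega}^{-(m-1)})^{1/(m-1)} = \abs{\Omega}^{-1}$, and $p(m)^{1/(m-1)}$, the latter being continuous on $[1+\eps,\infty)$ with limit $1$ at infinity, hence uniformly bounded.

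The only mild obstacle is the tension between the shrinking exponent $1/(m-1) \to 0$ and the bases that grow exponentially in $m$, namely $2^m$ and, when $\abs{\Omega}<1$, also $\abs{\Omega}^{-(m-1)}$; the corollary works precisely because these bases are \emph{exponential} in $m$, so their $1/(m-1)$-th roots converge to finite limits. The hypotheses $\beta < C$ and $m \geq 1+\eps$ rule out the only remaining potential sources of divergence, yielding $B^\star = \sup_A B_{\beta,m} < \infty$.
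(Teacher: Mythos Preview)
The paper does not actually give a proof of this corollary; it is stated immediately after \cref{lem:eqbound} as a direct consequence of the explicit formulas for $B_1(\beta,m)$ and $B_2(\beta,m)$, with the sentence ``The expression for the constant $B_{\beta,m}$ is explicit as a result of which we can even obtain some uniform control in $m$.'' Your argument is exactly the verification the paper leaves to the reader, and it is correct.

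One small point worth tightening: in the compact slice $m\in[1+\eps,2]$ you write that ``each quantity appearing inside $B_1^{m-1}$ and $B_2^{m-1}$ is uniformly bounded''. Strictly speaking $s_*(m,\beta)$ contains a term $\beta^{-1}(m-1)^{-1}\abs{\Omega}^{-(m-1)}$ which is \emph{not} bounded as $\beta\to 0$; what is bounded is the full product $(m-1)\beta\, m(1+2^m(m+1))\, s_*(m,\beta)$, because the factor $(m-1)\beta$ exactly cancels the $\beta^{-1}(m-1)^{-1}$ in $s_*$. You carry out this cancellation explicitly in the $m>2$ slice, so the fix is simply to invoke the same expansion in the compact slice as well.
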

We now proceed to the existence result for minimisers of $\cF_\beta^m$.
\begin{theorem}[Existence of minimisers]\label{thm:exm}
Fix $\beta>0$ and $m>1$, then $\cF_\beta^m: \cP(\Omega) \to (-\infty,+\infty]$ has a minimiser $\rho^* \in \cP(\Omega) \cap\Leb^\infty(\Omega)$. Additionally we have that
\[
\norm{\rho^*}_{\Leb^\infty(\Omega)} \leq B_{\beta,m} \, .
\]
\end{theorem}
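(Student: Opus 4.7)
The plan is to apply the direct method of the calculus of variations, using the a priori $\Leb^\infty$-bound from \cref{lem:eqbound} to secure the necessary compactness.

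First I would check that $\cF_\beta^m$ is bounded below. This follows directly from the two elementary lower bounds \eqref{eq:entlb}--\eqref{eq:ielb} established at the start of the proof of \cref{lem:eqbound}, namely $S_\beta^m(\rho) \geq \beta^{-1}(m-1)^{-1}(\abs{\Omega}^{1-m}-1)$ and $\cE(\rho)\geq -\tfrac{1}{2}\norm{W_-}_{\Leb^\infty(\Omega)}$. Hence $\inf_{\cP(\Omega)} \cF_\beta^m > -\infty$ and a minimising sequence $\{\rho_n\} \subset \cP(\Omega) \cap \Leb^m(\Omega)$ exists.

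Next I would invoke \cref{lem:eqbound} to replace, if necessary, each $\rho_n$ with a density $\bar{\rho}_n$ of strictly smaller energy satisfying $\norm{\bar{\rho}_n}_{\Leb^\infty(\Omega)}\leq B_{\beta,m}$. The resulting sequence is still minimising and now uniformly bounded in $\Leb^\infty(\Omega)$. By the Banach--Alaoglu theorem, after passing to a subsequence (not relabelled) we have $\bar{\rho}_n \weakstar \rho^*$ in $\Leb^\infty(\Omega)$; equivalently, since $\Omega$ has finite measure, $\bar{\rho}_n \weak \rho^*$ weakly in every $\Leb^p(\Omega)$ with $1 \leq p < \infty$. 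Testing against the constant function $1$ gives $\int \rho^* = 1$, and testing against indicator functions of measurable sets (or using that weak-$\ast$ limits in $\Leb^\infty$ of nonnegative functions are nonnegative) yields $\rho^* \geq 0$ a.e. Hence $\rho^* \in \cP(\Omega) \cap \Leb^\infty(\Omega)$ with $\norm{\rho^*}_{\Leb^\infty(\Omega)} \leq B_{\beta,m}$.

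It remains to show $\cF_\beta^m(\rho^*) \leq \liminf_{n} \cF_\beta^m(\bar{\rho}_n)$. For the entropy part $S_\beta^m$, the functional $\rho \mapsto \intom{\rho^m}$ is convex on $\Leb^m(\Omega)$ (since $t \mapsto t^m$ is convex for $m>1$), so it is weakly lower semicontinuous in $\Leb^m(\Omega)$. For the interaction energy $\cE$, I would argue that $W \star \bar{\rho}_n \to W \star \rho^*$ uniformly on $\Omega$: indeed, since $W \in C^2(\Omega)$ and $\norm{\bar{\rho}_n}_{\Leb^\infty}$ is uniformly bounded, the family $\{W \star \bar{\rho}_n\}$ is uniformly bounded in $C^2(\Omega)$, hence precompact in $C^0(\Omega)$ by Arzel\`a--Ascoli, while weak convergence of $\bar{\rho}_n$ in $\Leb^1(\Omega)$ identifies any cluster point as $W\star \rho^*$. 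Combining uniform convergence of $W\star\bar{\rho}_n$ with weak convergence of $\bar{\rho}_n$ in $\Leb^1(\Omega)$ shows
\begin{align}
\cE(\bar{\rho}_n) = \tfrac{1}{2}\intom{(W \star \bar{\rho}_n)(x)\,\bar{\rho}_n(x)} \longrightarrow \tfrac{1}{2}\intom{(W \star \rho^*)(x)\,\rho^*(x)} = \cE(\rho^*).
\end{align}
Adding the lower semicontinuous and continuous pieces, $\cF_\beta^m(\rho^*) \leq \liminf_n \cF_\beta^m(\bar{\rho}_n) = \inf \cF_\beta^m$, so $\rho^*$ is a minimiser with the claimed $\Leb^\infty$-bound.

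No step poses a serious obstacle: the $\Leb^\infty$-bound from \cref{lem:eqbound} is the crucial input that upgrades what would otherwise be only $\Leb^m$-weak compactness to $\Leb^\infty$-weak-$\ast$ compactness, and this in turn is what makes the nonlocal interaction term behave continuously rather than merely lower semicontinuously.
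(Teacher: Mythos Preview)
Your proof is correct and follows essentially the same approach as the paper: both use the direct method with the $\Leb^\infty$-bound from \cref{lem:eqbound} to obtain weak-$\ast$ compactness in $\Leb^\infty$, then combine weak lower semicontinuity of the convex entropy with continuity of the interaction energy. The only cosmetic difference is that the paper asserts continuity of $\cE$ directly and invokes a standard result (\cite[Theorem 3.7]{Bre11}) for weak lower semicontinuity of the convex part, whereas you spell out the continuity of $\cE$ via Arzel\`a--Ascoli on $W\star\bar{\rho}_n$.
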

\begin{proof}
We note first that, from~\eqref{eq:entlb} and~\eqref{eq:ielb}, $\cF_\beta^m$ is bounded below on $\cP(\Omega)$. Let $\set{\rho_n}_{n \in \N}$ be a minimising sequence. Note that by~\cref{lem:eqbound} we can pick this sequence such that $\norm{\rho_n}_{\Leb^\infty(\Omega)} \leq B_{\beta,m}$. By the Banach--Alaoglu theorem we have a subsequence $\set{\rho_{n_k}}_{k \in \N}$ and measure $\rho^* \in \Leb^\infty(\Omega)$ such that
\begin{align}
\rho_{n_k} \rightharpoonup \rho^* \textrm{ in weak-$*$ } \Leb^\infty(\Omega) \, .
\end{align}
Furthermore, we can find another subsequence (which we do not relabel), such that
\begin{align}
\rho_{n_k} \rightharpoonup \rho^* \textrm{ in weak } \Leb^2(\Omega) \, .
\end{align} 
Note that $\rho^*$ is nonnegative a.e. and also has mass one. Thus, $\rho^* \in \cP(\Omega) \cap \Leb^\infty(\Omega)$. The proof would be complete if we can show lower semicontinuity of $\cF_\beta^m$ in weak $\Leb^2(\Omega)$. Note that for $W \in C^2(\Omega)$, $\cE(\rho)$ is continuous. On the other hand, $S_\beta^m(\rho)$ is convex and lower semicontinuous in the $\Leb^2(\Omega)$ topology. It follows from fairly classical results (cf.~\cite[Theorem 3.7]{Bre11}) that $\cF_\beta^m$ is also weakly lower semicontinuous. This concludes the proof of existence of minimisers. The bound simply follows from the fact that norms are lower semicontinuous under weak-$*$ convergence.
\end{proof}
\begin{lemma}[Regularity and compactness of minimisers]\label{compactness}
Let $\rho_\beta \in \cP(\Omega)$ be a minimiser of $\cF_\beta^m(\rho)$. Then $\rho_\beta$ is H\"older continuous with exponent $a \in (0,1)$ given by~\cref{holder}, where $a$ depends continuously on $\beta$. Let $\set{\rho_\beta}_{\beta \in I}$ be a family of such minimisers, where $I \subset \R_+$ is some bounded interval. Then the family $\set{\rho_\beta}_{\beta \in I}$  is relatively compact in $C^0(\Omega)$.
\end{lemma}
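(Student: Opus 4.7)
The plan is to first show that any minimiser $\rho_\beta$ of $\cF_\beta^m$ has zero metric slope, hence by the equivalence in~\cref{prop:tfae} it is a weak stationary solution of~\eqref{eq:sagdif}. Regarded as constant in time, $\rho_\beta$ is then a weak solution of~\eqref{eq:agdif} in the sense of~\cref{thm:weak1}. Moreover, \cref{thm:exm} supplies $\norm{\rho_\beta}_{\Leb^\infty(\Omega)} \leq B_{\beta,m}$, which is precisely the $\Leb^\infty$-hypothesis needed to invoke~\cref{holder}. The H\"older regularity of $\rho_\beta$ with some exponent $a \in (0,1)$ depending continuously on $\beta$ then follows immediately.

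For the compactness statement, fix a bounded interval $I \subset \R_+$. \cref{cor:bbound} yields
\[
B^\star := \sup_{\beta \in I} B_{\beta,m} < \infty,
\]
so that $\norm{\rho_\beta}_{\Leb^\infty(\Omega)} \leq B^\star$ uniformly in $\beta \in I$. Inspecting the proof of~\cref{holder}, the constants produced by the intrinsic scaling iteration depend on $\beta$, $m$, $d$, $W$ and the $\Leb^\infty$ bound $M$, and can be taken continuous in $\beta$ for $\beta > 0$; combined with the uniform $\Leb^\infty$ bound $B^\star$ this yields a uniform lower bound $a^\star := \inf_{\beta \in I} a(\beta) > 0$ on the exponent together with a uniform bound on the H\"older semi-norm. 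The Arzel\`a--Ascoli theorem then delivers the relative compactness of $\set{\rho_\beta}_{\beta \in I}$ in $C^0(\Omega)$.

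The main obstacle I anticipate is establishing uniformity not only of the H\"older exponent but also of the H\"older semi-norm over $I$. This requires tracking the constants in the De~Giorgi--Nash--Moser-type iteration behind~\cref{holder} and verifying that they depend continuously on $\beta \in (0,\infty)$ once a uniform $\Leb^\infty$ bound is fixed; boundedness of $I$ then converts continuous dependence into the uniform estimates needed for Arzel\`a--Ascoli. The only delicate issue is behavior near $\beta = 0$, which is controlled by the fact that $B_{\beta,m}$ remains bounded there by~\cref{cor:bbound}.
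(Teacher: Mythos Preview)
Your proposal is correct and follows essentially the same approach as the paper: apply \cref{prop:tfae} and the $\Leb^\infty$ bound from \cref{thm:exm} to feed into \cref{holder}, then use \cref{cor:bbound} and the continuous $\beta$-dependence of the H\"older constants to obtain uniform equicontinuity and equiboundedness over $I$, and conclude via Arzel\`a--Ascoli. Your choice $a^\star = \inf_{\beta \in I} a(\beta)$ is in fact the natural one (a function that is $a$-H\"older on a bounded domain is automatically $a'$-H\"older for any $a' \le a$), and your explicit attention to uniformity of the H\"older semi-norm is a point the paper treats somewhat tersely.
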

\begin{proof}
The proof of the first statement follows simply by applying~\cref{prop:tfae} and~\cref{holder} with $M=B_{\beta,m}$. For the second statement,  let $\bar{I}$ be the closure of $I$. Then applying~\eqref{holdernorm} for some $x,y \in \T^d$, we have that
\begin{align}
\abs{\rho_\beta(x)-\rho_\beta(y)} \leq C_h d_{\T^d}(x,y)^a \, ,
\end{align}
where $a=a(\beta),C_h=C_h(\beta)$. Setting $a^\star= \max_{\bar{I}}a(\beta)$ and $B^\star$ to be as in~\cref{cor:bbound}, we have
that
\begin{align}
\abs{\rho_\beta(x)-\rho_\beta(y)} \leq C_h^\star d_{\T^d}(x,y)^{a^\star} \, ,
\end{align}
where $C_h^\star$ is some new constant depending on $B^\star$, $m$, $d$, and $W$. Thus, the family $\set{\rho_\beta}_{\beta \in I}$ is equicontinuous. It is clearly equibounded from~\cref{cor:bbound}. Applying the Arzel\`a--Ascoli theorem, the result follows.
\end{proof}
Now that we have shown existence and regularity of minimisers we show that for $\beta$ small or $W \in \HH_s$ minimisers of $\cF_\beta^m$ are unique and given by $\rho_\infty$. To show this we start with the following lemma which shows positivity of stationary solutions for $\beta$ sufficiently small.
\cb{\begin{lemma}\label{lem:posest}
There exists an $\delta>0$ depending on $m$ and $W$, such that for all $\beta<\delta$ it holds that if $\rho \in \cP(\Omega) \cap \Leb^m(\Omega)$ is a stationary solution of~\eqref{eq:agdif}, then $\rho(x) \geq \frac{1}{2 \abs{\Omega}}$ for all $x \in \Omega$.
\end{lemma}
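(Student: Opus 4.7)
The plan is to combine the self-consistency equation of \cref{prop:tfae} with the continuity provided by \cref{holder}, and argue in two stages. First I would show that for $\beta$ small every stationary solution $\rho$ is strictly positive on $\Omega$, so that the self-consistency equation holds globally with a single constant; then I would use this, together with the intermediate value theorem, to derive the pointwise lower bound $\tfrac{1}{2\abs{\Omega}}$.

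For the first stage I proceed by contradiction: assume $\rho(z_0)=0$ for some $z_0\in\Omega$. By H\"older continuity, the self-consistency equation extends to the closure of each connected component $A$ of $\mathrm{supp}(\rho)$. Either $\mathrm{supp}(\rho)=\Omega$ -- so there is a single component and a zero of $\rho$ inside it -- or $\mathrm{supp}(\rho)$ is a proper closed subset of the torus, in which case each component has non-empty boundary on which $\rho$ vanishes by continuity. In both cases, for every component $A$ there is a zero $z\in\bar A$ of $\rho$; evaluating the self-consistency equation there fixes $C(A,\rho)=W\star\rho(z)$, so
\[
\rho^{m-1}(x)=\frac{(m-1)\beta}{m}\bra*{W\star\rho(z)-W\star\rho(x)}\quad\text{for } x\in A.
\]
The bound $\norm{W\star\rho}_{\Leb^\infty(\Omega)}\le\norm{W}_{\Leb^\infty(\Omega)}$, valid for any probability density on the torus, together with $\rho\equiv 0$ off $\mathrm{supp}(\rho)$, yield $\norm{\rho}_{\Leb^\infty(\Omega)}\le\bra*{2(m-1)\beta\norm{W}_{\Leb^\infty(\Omega)}/m}^{1/(m-1)}$. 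Since $\int_\Omega\rho=1$ forces $\norm{\rho}_{\Leb^\infty(\Omega)}\ge\abs{\Omega}^{-1}$, this is contradictory for $\beta$ below an explicit threshold $\delta_1$ depending only on $m$, $W$, and $\abs{\Omega}$.

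Once $\rho>0$ on $\Omega$, the self-consistency equation holds globally with a single constant $C$. Integrating over $\Omega$ and using that $W$ has mean zero fixes $C$ and yields
\[
\rho^{m-1}(x)=\frac{1}{\abs{\Omega}}\int_\Omega \rho^{m-1}(y)\,\dx{y}-\frac{(m-1)\beta}{m}\,W\star\rho(x).
\]
Continuity of $\rho$ with mean $\abs{\Omega}^{-1}$ then supplies, by the intermediate value theorem, some $x_\ast\in\Omega$ with $\rho(x_\ast)=\abs{\Omega}^{-1}$. Subtracting the self-consistency relation at $x$ and at $x_\ast$ cancels the integral term and produces
\[
\rho^{m-1}(x)\ge\abs{\Omega}^{-(m-1)}-\frac{2(m-1)\beta}{m}\norm{W}_{\Leb^\infty(\Omega)},
\]
which exceeds $(2\abs{\Omega})^{-(m-1)}$ for $\beta$ below a second explicit threshold $\delta_2$. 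Taking $\delta=\min(\delta_1,\delta_2)$ yields the claim.

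The main subtlety will lie in the first stage: handling the case $\mathrm{supp}(\rho)\subsetneq\Omega$, where the constant $C(A,\rho)$ varies over components of the support, so the $\Leb^\infty$ bound must be derived on each component individually and then aggregated using that $\rho$ vanishes off the support. The H\"older regularity from \cref{holder} is used throughout -- to pass the self-consistency relation to boundary points of the components, to make pointwise sense of zeros of $\rho$, and to apply the intermediate value theorem in the second stage.
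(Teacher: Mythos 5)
Your proof is correct, but it follows a genuinely different route from the paper. The paper's argument is a single step: it differentiates the self-consistency relation to obtain a global Lipschitz bound
\[
\norm{\nabla \rho^{m-1}}_{\Leb^\infty(\Omega)} \leq \beta\tfrac{m-1}{m}\norm{\nabla W}_{\Leb^\infty(\Omega)},
\]
combines it with the elementary fact $\max_{\Omega}\rho \geq \abs{\Omega}^{-1}$, and propagates from the maximum point, so the threshold $\delta$ is governed by $\norm{\nabla W}_{\Leb^\infty}\cdot\operatorname{diam}(\Omega)$. Your argument instead never differentiates $W\star\rho$; you work directly with the undifferentiated self-consistency equation, first subtracting it between a boundary zero and an interior point of each component to force positivity, then subtracting it between an arbitrary point and an IVT-point $x_\ast$ where $\rho(x_\ast)=\abs{\Omega}^{-1}$, so the threshold is governed by $\norm{W}_{\Leb^\infty}$. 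Your two-stage structure is slightly longer but makes the treatment of the support components explicit and sidesteps the mollification argument that the paper only gestures at when justifying the $L^\infty$ bound on $\nabla\rho^{m-1}$ across the free boundary $\partial\,\mathrm{supp}(\rho)$; the paper's version, by contrast, gets the conclusion in one pass and does not need a separate positivity step. Both are correct applications of~\cref{prop:tfae} combined with the regularity from~\cref{holder}.
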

\begin{proof}
Note that if $\rho \in \cP(\Omega) \cap \Leb^m(\Omega)$ is stationary, then, by~\cref{prop:tfae}, it satisfies on each connected component $A$ of its support
\begin{align}
\beta^{-1}\frac{m}{m-1}\rho^{m-1} + W \star \rho =C(A, \rho)  
\end{align}
with $C(A,\rho)$ given by
\[
C(A,\rho)= \beta^{-1}\frac{m}{\abs{A}(m-1)}\norm{\rho}_{L^{m-1}(A)}^{m-1} + \frac{1}{\abs{A}}\int_{A} W \star \rho(x) \dx{x} \, .
\]
Thus, we have that $\rho \in \Leb^\infty(\Omega)$. Using a mollification argument and \eqref{eq:sc}, one can then obtain the following bound
\begin{align}
\norm{\nabla \rho^{m-1}}_{\Leb^\infty(\Omega)} \leq\beta \frac{m-1}{m} \norm{\nabla W \star \rho}_{\Leb^\infty(\Omega)} \leq  \beta \frac{m-1}{m} \norm{\nabla W}_{\Leb^\infty(\Omega)} \, .
\end{align}
By~\cref{holder}, it follows that $\rho$ is $a$-H\"older continuous. Note further that we have that 
\begin{align}
\max_{x \in \Omega} \rho(x) \geq \abs{\Omega}^{-1} \, ,
\end{align}
Thus, we can choose $\beta$ to be small enough, dependent on $m$ and $W$, and apply the bound to argue that
\begin{align}
\min_{x \in \Omega}\rho^{m-1} \geq 2^{1-m}\abs{\Omega}^{1-m} \, .
\end{align}
Thus, the result follows.
\end{proof}
}

\cb{
We can now use the positivity estimate of~\cref{lem:posest} to prove that for $\beta$ sufficiently small stationary solutions of~\eqref{eq:agdif} (and thus minimisers of $\cF_\beta^m$) are unique. This improves the result of~\cite{CKY13}, in which uniqueness is proved only for $1 < m \leq 2$.
\begin{lemma}\label{ssunique}
For $\beta \ll 1$ and $m \in (1,\infty)$, $\rho_\infty$ is unique stationary solution of~\eqref{eq:agdif} and minimiser of the free energy,~$\cF_\beta^m$.
\end{lemma}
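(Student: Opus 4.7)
The plan is to combine the lower bound from~\cref{lem:posest} with the self-consistency equation to turn uniqueness into a coercivity inequality, and then use~\cref{prop:tfae} to deduce uniqueness of minimisers from uniqueness of stationary solutions.

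First, let $\rho \in \cP(\Omega) \cap \Leb^m(\Omega)$ be any stationary solution. By~\cref{lem:posest}, for $\beta$ smaller than some $\delta>0$ we have $\rho \geq \frac{1}{2\abs{\Omega}}$ on $\Omega$, so the support of $\rho$ is all of $\Omega$ and~\eqref{eq:sc} with $A=\Omega$ holds globally. Since $W\in C^2(\Omega)$, the equation $\rho^{m-1}=\frac{(m-1)\beta}{m}(C-W\star\rho)$ shows that $\rho$ is continuous and bounded. Next, I would extract a two-sided $\Leb^\infty$ bound that pinches around $\rho_\infty$: integrating the self-consistency equation (using that $W$ has mean zero) pins down $C$ in terms of $\int\rho^{m-1}$, and subtracting the equation evaluated at the max and min of $\rho$ yields
\begin{equation*}
\max_{x}\rho^{m-1}-\min_{x}\rho^{m-1} \leq \tfrac{2(m-1)\beta}{m}\norm{W}_{\Leb^\infty(\Omega)}.
\end{equation*}
Combined with $\min\rho\leq\rho_\infty\leq\max\rho$ (as $\int\rho=1$ and $\rho$ is continuous), this shows $\rho$ takes values in an interval $[a_\beta,A_\beta]$ with $a_\beta,A_\beta\to\rho_\infty$ as $\beta\to 0$.

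The core step is to subtract the self-consistency equation for $\rho_\infty$ (for which $C=\beta^{-1}\frac{m}{m-1}\rho_\infty^{m-1}$) from that of $\rho$, multiply by $\eta:=\rho-\rho_\infty$, and integrate. Because $\int\eta=0$, the constant on the right-hand side is annihilated, leaving
\begin{equation*}
\beta^{-1}\tfrac{m}{m-1}\intom{(\rho^{m-1}-\rho_\infty^{m-1})\eta} = -\intom{\eta\,(W\star\eta)}.
\end{equation*}
The mean value theorem applied to $x\mapsto x^{m-1}$ on $[a_\beta,A_\beta]$ gives a uniform lower bound $(\rho^{m-1}-\rho_\infty^{m-1})\eta \geq c_m\,\eta^2$ with $c_m>0$ depending only on $m$ and $\rho_\infty$ for $\beta$ small (one takes $c_m=(m-1)a_\beta^{m-2}$ when $m\geq 2$ and $c_m=(m-1)A_\beta^{m-2}$ when $1<m<2$). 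On the right-hand side, the Fourier expansion~\eqref{Fourier:Interaction} yields
\begin{equation*}
\abs*{\intom{\eta\,(W\star\eta)}} \leq C_W\,\norm{\eta}_{\Leb^2(\Omega)}^2,\qquad C_W:=\sup_{k\in\N^d,\,k\neq 0}\tfrac{\abs{\hat W(k)}}{N_k}<\infty.
\end{equation*}
Putting the two estimates together produces
\begin{equation*}
\bra*{c_m\tfrac{m}{(m-1)\beta}-C_W}\norm{\eta}_{\Leb^2(\Omega)}^2 \leq 0,
\end{equation*}
and choosing $\beta$ small enough that the bracket is strictly positive forces $\eta=0$, i.e.\ $\rho=\rho_\infty$.

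Finally, any minimiser of $\cF_\beta^m$ is a critical point and therefore a stationary solution in the sense of~\eqref{eq:sagdif} by~\cref{prop:tfae}; hence the uniqueness of stationary solutions just established transfers to uniqueness of minimisers. The main obstacle in the argument is not the coercivity inequality itself but securing the uniform two-sided bounds on $\rho$: the lower bound is handed to us by~\cref{lem:posest}, but the upper bound (needed to make $c_m$ uniformly positive when $1<m<2$, where $x^{m-2}$ is decreasing) must be extracted directly from the self-consistency equation as above, and it is this step that fixes the precise smallness threshold on $\beta$.
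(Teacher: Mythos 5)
Your proof is correct but takes a genuinely different route from the paper's. The paper's argument is a gradient estimate in $\Leb^\infty$: it starts from $\norm{\nabla \rho^{m-1}}_{\Leb^\infty} \leq \beta \tfrac{m-1}{m}\norm{W}_{\Leb^1}\norm{\nabla\rho}_{\Leb^\infty}$, divides out a factor of $\rho^{m-2}$ using either the lower bound from \cref{lem:posest} (for $m\geq 2$) or a Jensen-type upper bound on $\rho$ (for $1<m<2$), and closes a contraction in $\norm{\nabla\rho}_{\Leb^\infty}$ to conclude $\nabla\rho=0$. You instead subtract the self-consistency equations for $\rho$ and $\rho_\infty$, multiply by $\eta=\rho-\rho_\infty$, integrate, exploit $\int\eta=0$ to kill the constants, and close an $\Leb^2$ coercivity inequality via the mean value theorem on $x\mapsto x^{m-1}$ and the Fourier expansion~\eqref{Fourier:Interaction}. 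Your derivation of the two-sided bound by evaluating~\eqref{eq:sc} at the extrema of $\rho$ is a clean replacement for the paper's Jensen step and works uniformly in $m$. The payoff of your approach is that the smallness threshold on $\beta$ is expressed in terms of the spectral quantity $\sup_{k\neq 0}|\hat W(k)|/N_k$, which makes the repulsion-vs-attraction mechanism transparent and mirrors the coercivity arguments used for the $m=1$ McKean--Vlasov case; the paper's approach gives a threshold in $\norm{W}_{\Leb^1}$ and is perhaps closer to the pointwise regularity theory used elsewhere in the paper. Neither threshold dominates the other in general, and both ultimately rely on \cref{lem:posest} and \cref{prop:tfae} in the same way.
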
 
\begin{proof}
Assume $\rho \in \cP(\Omega) \cap \Leb^m(\Omega)$ is a stationary solution of~\eqref{eq:agdif}. Then, we can apply the same argument as in the proof of~\cref{lem:posest} to obtain
\begin{align}
\norm{\nabla \rho^{m-1}}_{\Leb^\infty(\Omega)}\leq \beta \frac{m-1}{m} \norm{\nabla W \star \rho}_{\Leb^\infty(\Omega)} \leq\beta \frac{m-1}{m} \norm{ W}_{\Leb^1(\Omega)}\norm{ \nabla \rho}_{\Leb^\infty(\Omega)}  \, .
\end{align}
It follows that
\begin{align}
\norm{(m-1)\rho^{m-2}\nabla \rho}_{\Leb^\infty(\Omega)} \leq\beta \frac{m-1}{m} \norm{ W}_{\Leb^1(\Omega)}\norm{ \nabla \rho}_{\Leb^\infty(\Omega)} \, .
\label{eq:gradgrad}
\end{align} 
Let us now assume that $\beta<\delta$, where $\delta$ is the constant from the statement of~\cref{lem:posest}. Furthermore, if $1<m<2$ the constant $C(\Omega,\rho)$ in~\cref{prop:tfae} can be controlled as follows
\begin{align}
C(\Omega,\rho) \leq \beta^{-1}\frac{m}{\abs{\Omega}(m-1)}\intom{\rho^{m-1}}  \leq \beta^{-1}\frac{m}{\abs{\Omega}(m-1)} \, ,
\end{align}
where in the last step we have applied Jensen's inequality. Thus, we have 
\begin{align}
\abs{\rho(x)} \leq \bra*{ \beta \frac{m-1}{m} \norm{W}_{\Leb^\infty{(\Omega)}} + \frac{1}{\abs{\Omega}} }^{1/(m-1)} \,
\end{align}
for all $x \in \Omega$. Thus, for $1< m< 2$, we can apply the above bound to~\eqref{eq:gradgrad} to obtain
\begin{align}
\norm{\nabla \rho}_{\Leb^\infty(\Omega)} \leq \frac{\beta}{m} \bra*{ \beta \frac{m-1}{m} \norm{W}_{\Leb^\infty{(\Omega)}} + \frac{1}{\abs{\Omega}} }^{(2-m)/(m-1)} \norm{ W}_{\Leb^1(\Omega)}\norm{ \nabla \rho}_{\Leb^\infty(\Omega)} \, .
\end{align}
If $\beta$ is sufficiently small, we have that $\norm{\nabla \rho}_{\Leb^\infty(\Omega)}=0$. Thus, $\rho=\rho_\infty$ for $\beta$ sufficiently small. Similarly for $2 \leq m < \infty$, we can apply the bound from~\cref{lem:posest} to obtain
\begin{align}
\norm{\nabla \rho}_{\Leb^\infty(\Omega)}\leq \frac{\beta}{m} 2^{2-m}\abs{\Omega}^{2-m} \norm{ W}_{\Leb^1(\Omega)}\norm{ \nabla \rho}_{\Leb^\infty(\Omega)} \, .
\end{align}
Applying a similar argument as before, we have that, for $\beta \ll 1$, $\rho=\rho_\infty$. Thus, for $\beta \ll 1$, $\rho_\infty$ is the unique stationary solution of~\eqref{eq:agdif} and, by~\cref{prop:tfae}, the unique minimiser of $\cF_\beta^m$.
\end{proof}
}
We also have the following result on uniqueness of minimisers when $W \in \HH_s$. 
\begin{theorem}\label{minunique}
Let $W \in \HH_s$ and $m \in (1,\infty)$. Then $\cF_\beta^m(\rho)$ has a unique minimiser $\rho=\rho_\infty$. 
\end{theorem}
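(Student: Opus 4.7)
The plan is to use the $H$-stability assumption to show that the interaction energy is globally minimised at $\rho_\infty$, and then combine this with the strict minimisation of the entropy at $\rho_\infty$ (by Jensen) to force uniqueness. The proof should be essentially immediate once those two one-sided inequalities are in place.

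First, since $W$ has mean zero,
\[
\cE(\rho_\infty) = \frac{1}{2\abs{\Omega}^2}\iintom{W(x-y)} = 0.
\]
The pointwise characterisation of $H$-stability recorded in~\cref{def:Hstab} then gives $\cE(\rho) \geq 0 = \cE(\rho_\infty)$ for every $\rho \in \Leb^2(\Omega) \cap \cP(\Omega)$, and in particular for any minimiser of $\cF_\beta^m$, which by~\cref{thm:exm} and~\cref{lem:eqbound} lies in $\cP(\Omega) \cap \Leb^\infty(\Omega) \subset \Leb^2(\Omega)$.

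Next, I would handle the entropy term by Jensen's inequality applied to the strictly convex function $t \mapsto t^m$ (recalling $m>1$), which yields
\[
\intom{\rho^m} \geq \abs{\Omega}\bra*{\frac{1}{\abs{\Omega}}\intom{\rho}}^m = \abs{\Omega}^{1-m},
\]
with equality if and only if $\rho$ is a.e.\ constant, i.e., $\rho = \rho_\infty$. Hence $S_\beta^m(\rho) \geq S_\beta^m(\rho_\infty)$, with strict inequality unless $\rho = \rho_\infty$.

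Combining the two estimates,
\[
\cF_\beta^m(\rho) = S_\beta^m(\rho) + \cE(\rho) \geq S_\beta^m(\rho_\infty) + \cE(\rho_\infty) = \cF_\beta^m(\rho_\infty),
\]
and equality forces $S_\beta^m(\rho) = S_\beta^m(\rho_\infty)$, hence $\rho = \rho_\infty$. This gives uniqueness. No serious obstacle should arise; the only subtlety --- invoking the $H$-stability characterisation on an arbitrary minimiser (rather than only on a priori $\Leb^2$ functions) --- is settled by the uniform $\Leb^\infty$-bound on minimisers provided by~\cref{lem:eqbound,thm:exm}, so the Fourier/positivity statement of~\cref{def:Hstab} applies without approximation.
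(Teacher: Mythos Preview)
Your proof is correct and is essentially the same as the paper's argument: both use Jensen's inequality (with the strict convexity of $t\mapsto t^m$) to get $S_\beta^m(\rho)>S_\beta^m(\rho_\infty)$ for $\rho\neq\rho_\infty$, together with $\cE(\rho)\geq 0=\cE(\rho_\infty)$ from $H$-stability, to conclude $\cF_\beta^m(\rho)>\cF_\beta^m(\rho_\infty)$. The paper additionally records a linear-interpolation convexity computation for $\cF_\beta^m$ before giving this direct comparison, but that step is not actually needed once the strict Jensen inequality is invoked, so your streamlined version loses nothing.
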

\begin{proof}
We first consider the case in which $W \in \HH_s$.  We write the linear interpolant as $\rho_t= \rho_0 + t \eta$ where $\eta= \rho_1-\rho_0$ where $\rho_0,\rho_1 \in \cP(\Omega)$ with $\cF_\beta^m(\rho_0),\cF_\beta^m(\rho_1)<\infty$. Differentiating with respect to $t$ twice we obtain that
\begin{align}
\frac{\dx^2}{\dx{t}^2}\cF_\beta^m(\rho_t) =\beta^{-1} \intom{m \rho_t^{m-2} \eta^2} +\iintom{W(x-y) \eta(x) \eta(y)} \, .
\end{align}
For $W \in \HH_s$ the above expression is strictly positive. Thus, $\cF_\beta^m(\rho_t)$ is a convex function, from which it follows that $\cF_\beta^m$ must have unique minimisers. We further argue that the minimiser must be $\rho_\infty$. Indeed, we have for any $\cP(\Omega) \ni \rho \neq \rho_\infty$ that
\begin{align}
\cF_\beta^m(\rho) =& S_\beta^m (\rho) + \cE(\rho) \\ >& S_\beta^m (\rho_\infty) + \cE(\rho) \\\geq & S_\beta^m(\rho_\infty)= \cF_\beta^m(\rho_\infty) \, ,   
\end{align}
where the first inequality follows from Jensen's inequality and the second one from the fact that $W \in \HH_s$ and~\cref{def:Hstab}.
\end{proof}

We know now from~\cref{ssunique}, that for $\beta \ll 1$, $\rho_\infty$ is the unique minimiser of $\cF_\beta^m$ and stationary solution of~\eqref{eq:agdif}. \cb{We now present the following result on the long-time behaviour of~\eqref{eq:agdif} in this regime:
\begin{theorem}[Long-time behaviour]\label{ltb}
Let $\rho$ be a weak solution of~\eqref{eq:agdif} with initial datum $\rho_0 \in \Leb^\infty(\Omega) \cap \cP(\Omega)$. Assume that $\beta$ and $W$ are such that $\rho_\infty$ is the unique stationary solution of~\eqref{eq:agdif} (and, therefore, the unique minimiser of $\cF_\beta^m$). Then, it holds that
\begin{align}
\lim_{t \to \infty} \norm{\rho(\cdot,t)-\rho_\infty}_{\Leb^\infty(\Omega)}=0 \, .
\end{align}  
\end{theorem}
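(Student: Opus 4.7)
The plan is to combine the gradient flow structure of $\cF_\beta^m$ with the uniform equicontinuity of~\cref{cor:holder} to extract subsequential limits, and then use the uniqueness hypothesis to identify these limits as $\rho_\infty$. First, I would establish that the solution remains uniformly bounded in $\Leb^\infty(\Omega)$ for all $t>0$. Since $\rho(\cdot,t) \in \cP(\Omega)$ and $W \in C^2(\Omega)$, the drift $\nabla W \star \rho$ is bounded uniformly in time by $\norm{\nabla W}_{\Leb^\infty(\Omega)}$, so the standard $\Leb^\infty$-propagation estimates for porous medium equations with bounded drift (essentially the $L^\infty$ a priori bound that underlies the intrinsic scaling used in~\cref{exreg}) yield some $M>0$, depending only on $\norm{\rho_0}_{\Leb^\infty(\Omega)}$ and the data, such that $\norm{\rho}_{\Leb^\infty(\Omega_\infty)} \leq M$. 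This places us in the hypotheses of~\cref{cor:holder}, so $\rho$ is equicontinuous on $\T^d \times [1,\infty)$ with a uniform Hölder exponent and constant.

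Next, the gradient flow structure gives that $t \mapsto \cF_\beta^m(\rho(t))$ is nonincreasing, with dissipation
\begin{align}
\frac{\dx{}}{\dx{t}}\cF_\beta^m(\rho(t)) = -\intom{\rho \abs*{\nabla\bra*{\tfrac{m}{m-1}\beta^{-1}\rho^{m-1} + W\star \rho}}^2} =: -D(\rho(t)).
\end{align}
Since $\cF_\beta^m$ is bounded below by~\eqref{eq:entlb}--\eqref{eq:ielb}, we get $\int_0^\infty D(\rho(s))\dx{s} < \infty$ and $\cF_\beta^m(\rho(t))$ converges to some finite limit $F_\infty$.

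I would then take an arbitrary sequence $t_n \to \infty$ and look at the time-shifted solutions $\rho_n(x,t) := \rho(x, t_n + t)$ on a fixed window $[0,T]$. By the uniform $\Leb^\infty$ bound and the uniform Hölder regularity in space and time from~\cref{cor:holder}, the Arzelà--Ascoli theorem yields a subsequence converging in $C^0(\T^d \times [0,T])$ to some limit $\rho_\star(x,t)$; passing to the limit in the weak formulation (the nonlinear diffusion term passes via uniform convergence of $\rho_n^m$ and the nonlocal drift is linear in $\rho$ times the smooth kernel $\nabla W$) shows $\rho_\star$ is a weak solution on $[0,T]$. On the other hand,
\begin{align}
\int_0^T D(\rho_\star(s))\dx{s} \leq \liminf_{n\to\infty} \int_{t_n}^{t_n+T} D(\rho(s))\dx{s} = 0,
\end{align}
by lower semicontinuity and the integrability of $D(\rho(\cdot))$. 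Therefore $\rho_\star(\cdot,t)$ is stationary for a.e.\ $t$, and by the hypothesis $\rho_\star \equiv \rho_\infty$.

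Finally, evaluating at $t=0$ gives $\rho(\cdot, t_n) \to \rho_\infty$ uniformly along this subsequence. Since the sequence $t_n$ was arbitrary, a standard subsequence-of-subsequence argument yields $\rho(\cdot,t) \to \rho_\infty$ in $C^0(\T^d)$, hence in $\Leb^\infty(\Omega)$. The main technical obstacle is the uniform-in-time $\Leb^\infty$ estimate, which is what unlocks~\cref{cor:holder}; once that is in place, the gradient-flow-plus-compactness argument is essentially automatic, and the key conceptual input is that uniqueness of the stationary state rules out any nontrivial $\omega$-limit point.
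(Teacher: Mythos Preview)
Your strategy matches the paper's almost exactly: derive a uniform-in-time $\Leb^\infty$ bound, invoke \cref{cor:holder} for equicontinuity, use the energy dissipation of the gradient flow to show any subsequential limit along $t_n\to\infty$ has zero dissipation, identify it with $\rho_\infty$ by uniqueness, and upgrade to $\Leb^\infty$ convergence via Arzel\`a--Ascoli.

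Two points deserve more care. First, the uniform $\Leb^\infty$ bound is not a byproduct of the intrinsic scaling in \cref{exreg}; that argument \emph{assumes} $\norm{\rho}_{\Leb^\infty(\Omega_T)}\leq M$ as a hypothesis. The paper obtains the bound separately by a Moser-type iteration: testing with $p\rho^{p-1}$, using the Sobolev inequality on $\T^d$ and Young's inequality to absorb the drift into the diffusion, applying Gr\"onwall, and then sending $p\to\infty$. Your appeal to ``standard propagation estimates'' is correct in spirit, but the mechanism you cite is the wrong one.

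Second, the inequality $\int_0^T D(\rho_\star)\leq \liminf_n \int_{t_n}^{t_n+T} D(\rho)$ is not immediate from uniform convergence of $\rho_n$, since $D$ involves gradients. The paper sidesteps this by working in the Wasserstein framework and invoking the lower semicontinuity of the metric slope $\abs{\partial\cF_\beta^m}$ from \cite{AGS}. Your more classical route can be made to work (e.g.\ via joint convexity of $(j,\rho)\mapsto\int\abs{j}^2/\rho$ together with weak convergence of the fluxes, or more simply by using the energy--dissipation equality for the limit solution $\rho_\star$ and the fact that $\cF_\beta^m(\rho(t_n))$ and $\cF_\beta^m(\rho(t_n+T))$ both converge to the same limit $F_\infty$), but it needs to be said.
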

\begin{proof}
We start by showing that if $\rho_0 \in \Leb^\infty(\Omega) \in \cP(\Omega)$, then $\norm{\rho}_{\Leb^\infty(\Omega_\infty)} \leq M < \infty$. We choose as a test function in the weak formulation, $\phi=p \rho^{p-1}$, for some $p>1$. Note that we can justify this choice by mollifying $\phi$ and then passing to the limit. We then obtain from~\eqref{eq:weakform} the following expression
\begin{align}
\left.\intom{\rho^p}\right\lvert^T_0 + \int_{0}^T \intom{\bra*{\beta^{-1}m\rho^{m-1}\nabla \rho(x,t) \cdot \nabla \phi(x,t)+ \rho(x,t) \nabla( W \star \rho)(x,t) \cdot \nabla \phi(x,t) }}\dx{t} =0 \, .
\end{align}
Plugging in the value of $\phi$ on the right hand side and integrating by parts, we obtain
\begin{align}
\norm{\rho(\cdot,T)}_{\Leb^p(\Omega)}^p =& \norm{\rho_0}_{\Leb^p(\Omega)}^p \\& + \int_0^T \bra*{-\frac{4\beta^{-1} pm (p-1)}{(m+p-1)^2}\intom{\abs*{\nabla \rho(x,t)^{\frac{m+p-1}{2}} }^2}} \dx{t} \\&+\int_0^T \bra*{(p-1)\intom{\bra{\Delta W \star\rho(x,t)}\rho(x,t)^p}  }\dx{t}
\end{align}
Applying the Lebesgue differentiation theorem, we obtain that for $t$ a.e., it holds that
\begin{align}
\frac{\dx{}}{\dx{t}}\norm{\rho(\cdot,t)}_{\Leb^p(\Omega)}^p =& -\frac{4\beta^{-1} pm (p-1)}{(m+p-1)^2}\intom{\abs*{\nabla \rho(x,t)^{\frac{m+p-1}{2}} }^2} + (p-1)\intom{\bra{\Delta W \star\rho(x,t)}\rho(x,t)^p}  \\
\leq&  -\frac{4\beta^{-1} pm (p-1)}{(m+p-1)^2}\intom{\abs*{\nabla \rho(x,t)^{\frac{m+p-1}{2}} }^2} + (p-1)\norm{\Delta W }_{\Leb^\infty{\Omega}}\norm{\rho(\cdot,t)}_{\Leb^p(\Omega)}^p \, .
\label{pregronwall}
\end{align}
Note that we can control the second term on the right hand side of the above expression as follows
\begin{align}
\norm{\rho(\cdot,t)}_{\Leb^p(\Omega)}^p \leq& \norm{\rho(\cdot,t)}_{\Leb^1(\Omega)}^{p \theta} \norm{\rho(\cdot,t)}_{\Leb^{\frac{(m+p-1)d}{d-2}}(\Omega)}^{p (1-\theta)} \\
=& \norm{\rho(\cdot,t)}_{\Leb^{\frac{(m+p-1)d}{d-2}}(\Omega)}^{p (1-\theta)} \, ,
\end{align}
where we have used the fact that $1<p <\frac{ (m+p-1) d}{d-2}$ and the constant $\theta \in (0,1)$ is given by
\begin{align}
\theta = \frac{(m-1)d+2p}{((m+p-2)d+2)} \, .
\end{align}
 We now apply the Sobolev inequality on the torus, to obtain
\begin{align}
\norm{\rho(\cdot,t)}_{\Leb^p(\Omega)}^p &\leq \norm{\rho(\cdot,t)-1 +1 }_{\Leb^{\frac{(m+p-1)d}{d-2}}(\Omega)}^{p (1-\theta)} \\
&\leq 2^{p(1-\theta)-1} \bra*{ \norm*{\rho(\cdot,t)-1 }_{\Leb^{\frac{(m+p-1)d}{d-2}}(\Omega)}^{p (1-\theta)} +1   } \\
& \leq 2^{p(1-\theta)-1} \bra*{ \bra*{C_d\norm*{\nabla \rho(\cdot,t)^{\frac{m+p-1}{2}} }_{\Leb^{2}(\Omega)}}^{\frac{2p (1-\theta)}{m+p-1}} +1 } \\
&= \frac{1}{2}\bra*{\bra*{2^{\frac{m+p-1}{2}}C_d \norm*{\nabla \rho(\cdot,t)^{\frac{m+p-1}{2}} }_{\Leb^{2}(\Omega)} }^{\frac{2p (1-\theta)}{m+p-1}} + 2^{p(1-\theta)}} \, .
\end{align}
Note that the constant $C_d$ in the above estimate depends only on dimension and is independent of $p>1$. We set $q_1:=(m+p-1)/(p(1-\theta))$ and $q_2:=q_1/(q_1-1)$. Note that from the definition of $\theta$ we have 
\begin{align}
q_1=\frac{m+p-1}{p(1-\theta)}=\frac{(m+p-2)d +2}{d(p-1)} >1 \, .
\end{align}
Thus, we have that
\begin{align}
q_2= \frac{q_1}{q_1-1} =\frac{(m+p-2)d +2}{(m-1)d+2} \, .
\end{align}
We can thus apply Young's inequality with $q_1,q_2$ to obtain
\begin{align}
\norm{\rho(\cdot,t)}_{\Leb^p(\Omega)}^p &\leq \frac{1}{2}\bra*{ C_{p,m,\beta}\norm{\nabla \rho(\cdot,t)^{\frac{m+p-1}{2}}}_{\Leb^2(\Omega)}^2 +\frac{ 2^{p(1-\theta)q_2}C_d^{2q_1^{-1}q_2}}{C_{p,m,\beta} q_2 q_1}+ 2^{p(1-\theta)}}
\label{youngs}
\end{align}
where $C_{p,m,\beta}>0$ is given by
\begin{align}
 C_{p,m,\beta} :=\frac{4 \beta^{-1}pm(p-1)}{(m+p-1)^2 \norm{\Delta W}_{\Leb^\infty(\Omega)}(p-1)} \,.
\end{align}
Multiplying through by $\norm{\Delta W}_{\Leb^\infty(\Omega)}(p-1)$, we can apply the estimate in~\eqref{youngs} to~\eqref{pregronwall} to obtain
\begin{align}
\frac{\dx{}}{\dx{t}}\norm{\rho(\cdot,t)}_{\Leb^p(\Omega)}^p \leq&  - (p-1)\norm{\Delta W }_{\Leb^\infty\bra{\Omega}}\norm{\rho(\cdot,t)}_{\Leb^p(\Omega)}^p  \\&+  \norm{\Delta W}_{\Leb^\infty(\Omega)}(p-1) \bra*{\frac{ 2^{p(1-\theta)q_2}C_d^{2q_1^{-1}q_2}}{C_{p,m,\beta} q_2 q_1}+ 2^{p(1-\theta)}}  \, .
\end{align}
Applying Gr\"onwall's inequality, we obtain that
\begin{align}
\norm{\rho(\cdot,t)}_{\Leb^p(\Omega)}^p \leq e^{-(p-1)\norm{\Delta W }_{\Leb^\infty\bra{\Omega}}t}\norm{\rho_0}_{\Leb^p(\Omega)}^p + \bra*{\frac{ 2^{p(1-\theta)q_2}C_d^{2q_1^{-1}q_2}}{C_{p,m,\beta} q_2 q_1}+ 2^{p(1-\theta)}} \, .
\end{align}
It follows that
\begin{align}
\norm{\rho(\cdot,t)}_{\Leb^p(\Omega)} \leq& \bra*{e^{-(p-1)\norm{\Delta W }_{\Leb^\infty\bra{\Omega}}t}\norm{\rho_0}_{\Leb^p(\Omega)}^p + \bra*{\frac{ 2^{p(1-\theta)q_2}C_d^{2q_1^{-1}q_2}}{C_{p,m,\beta} q_2 q_1}+ 2^{p(1-\theta)}} }^{1/p} \\
\leq& 3^{1/p} \max \set*{\norm{\rho_0}_{\Leb^\infty(\Omega)},\frac{ 2^{(1-\theta)q_2}C_d^{2p^{-1}q_1^{-1}q_2}}{C_{p,m,\beta}^{1/p} q_2^{1/p} q_1^{1/p}}, 2^{(1-\theta)} } \, .
\end{align}
Note now that
\begin{align}
\frac{ 2^{(1-\theta)q_2}C_d^{2p^{-1}q_1^{-1}q_2}}{C_{p,m,\beta}^{1/p} q_2^{1/p} q_1^{1/p}} \lesssim 1 
\end{align}
as $p \to \infty$. It follows then that we can find a constant $M$ dependent on $\norm{\rho_0}_{\Leb^\infty(\Omega)}$, $d$, $\beta$, and $m$ but independent of $t$ and $p$ such that
\begin{align}
\norm{\rho(\cdot,t)}_{\Leb^p(\Omega)} \leq M \, ,
\end{align}
for all $t \in [0,\infty)$. Passing to the limit as $p \to \infty$, it follows that
\begin{align}
\norm{\rho}_{\Leb^\infty(\Omega_\infty)} \leq M \, ,
\label{eq:ubounded}
\end{align}
for all $t \in [0,\infty)$. We can now apply~\cref{holder} to argue that the solution $\rho(x,t)$ is H\"older continuous with some exponent $a \in(0,1)$. Furthermore, we can apply~\cref{cor:holder}, to argue that 
\begin{align}
\abs{\rho(y,t_1)-\rho(x,t_2)} \leq C_{h} \bra*{d_{\T^d}(x,y) + \abs{t_1-t_2}^{1/2}}^a \, ,
\label{eq:equicont}
\end{align}
for all $x,y \in \T^d$ and $0<C<t_1<t_2<\infty$. Consider now the solution semigroup $S_t: Z_E \to Z_E, t \geq 0$ associated to the evolution in~\eqref{eq:agdif}, where $$
Z_E= \set*{\rho \in \cP(\Omega): \cF_{\beta}^m(\rho) \leq E} \, ,
$$
for some $E \in \R$. We make $Z_E$ into a complete metric space by equipping it with the $d_2(\cdot,\cdot)$ Wasserstein distance. The fact that it is complete follows from the fact that $\cF_\beta^m$ is lower semicontinuous with respect to convergence in $d_2(\cdot,\cdot)$.  Note that the family of mappings $\set{S_t}_{t \geq 0}$ forms a metric dynamical system in the sense of~\cite[Definition 9.1.1]{CH98}. This follows from the fact (cf.~\cite[Theorem 11.2.8]{AGS}) the evolution defines a gradient flow $\rho \in C([0,\infty);Z_{E_0})$ in $\cP(\Omega)$ in the sense of~\cite[Definition 11.1.1]{AGS} where $E_0= \cF_\beta^m(\rho_0)$.  We now define the $\omega$-limit set associated to the initial datum $\rho_0 \in \Leb^\infty(\Omega) \cap \cP(\Omega)$, as follows
\begin{align}
\omega(\rho_0):= \set*{\rho_* \in Z_{E_0}: \lim_{n \to \infty}d_2\bra{S_{t_n}(\rho_0), \rho_*} = 0, t_n \to \infty} \, .
\end{align}
Since the metric space $Z_{E_0}$ is compact, it follows that the set $\bigcup_{t \geq 0}S_t(\rho_0)$ is relatively compact in $Z_{E_0}$. Applying~\cite[Theorem 9.1.8]{CH98}, we have that $\omega(\rho_0) \neq \emptyset$ and
$$
\lim_{t \to \infty} d_2(\rho(\cdot,t),\omega(\rho_0))=\lim_{t \to \infty} d_2(S_t(\rho_0),\omega(\rho_0))=0 \, ,
$$
where $\rho(\cdot,t)$ is the unique solution of~\eqref{eq:agdif} with initial datum $\rho_0 \in \cP(\Omega) \cap \Leb^\infty(\Omega)$. We now need to show that $\omega(\rho_0)$ is contained in the set of stationary solutions of~\eqref{eq:agdif}. Assume $\rho_* \in \omega(\rho_0)$, then there exists a time-diverging sequence $t_n \to \infty$ such that 
$$
\lim_{n \to \infty}d_2(\rho(\cdot,t_n),\rho_*)= \lim_{n \to \infty}d_2(S_{t_n}(\rho_0),\rho_*)=0 \, .
$$
Since the solution $\rho(\cdot,t)$ is gradient flow of the free energy $\cF_\beta^m$  with respect to the $d_2(\cdot,\cdot)$ distance on $\cP(\Omega)$, it follows that the following energy-dissipation equality holds true for all $t \in [0,\infty)$ (cf.~\cite[Theorem 11.1.3]{AGS})
\begin{align}
\cF_\beta^m(\rho_0)- \cF_\beta^m(\rho(\cdot,t)) = \int_0^t \abs{\partial \cF_\beta^m}^2(\rho(\cdot,s)) \dx{s} \, ,
\label{eq:EDI}
\end{align}
where $\abs{\partial \cF_\beta^m}: \cP(\Omega) \to (-\infty,+\infty]$ is the metric slope of $\cF_{\beta}^m$ and is given by
\begin{align}
\abs{\partial \cF_\beta^m}(\rho): = \bra*{\intom{\abs*{\beta^{-1} \frac{\nabla \rho^m}{\rho} + W \star \rho}^2 \rho}}^{1/2} \, .
\end{align}
Bounding the energy from below and then passing to the limit as $t \to \infty$ in~\eqref{eq:EDI}, we obtain
\begin{align}
\int_0^\infty \abs{\partial \cF_\beta^m}^2(\rho(\cdot,s)) \dx{s} \leq -\min_{\rho \in \cP(\Omega)}\cF_\beta^m(\rho) + \cF_\beta^m(\rho_0) \leq C\, . 
\label{eq:dissbounded}
\end{align}
We now consider the time-diverging sequence $t_n \to \infty$ and the sequence of curves $\set{\rho_n}_{n \in \N} \in C([0,1];Z_{E_0})$ with $\rho_n(\cdot,t)=\rho(\cdot,t_n+t)$. For each $n \in \N$, we have that
\begin{align}
d_2(\rho_n(\cdot,t_1),\rho_n(\cdot,t_2)) \leq& \frac{L}{\sqrt{2}} \norm{\rho_n(\cdot,t_1)-\rho_n(\cdot,t_2)}_{\Leb^1(\Omega)}^{1/2}\\ \leq& \frac{L}{\sqrt{2}} \norm{\rho_n(\cdot,t_1)-\rho_n(\cdot,t_2)}_{\Leb^\infty(\Omega)}^{1/2} \leq C_h^{1/2} \frac{L}{\sqrt{2}} \abs{t_1 -t_2}^{a/4} \, ,
\end{align}
for all $t_1,t_2 \in [0,1]$, where in the last step we have used~\eqref{eq:equicont}. We can thus apply the generalised Arzel\'a--Ascoli/Aubin--Lions compactness theorem (cf.~\cite[Proposition 3.3.1]{AGS}) to argue that there exists a curve $\mu \in C([0,1];Z_{E_0})$ such that
$\rho_n(\cdot,t)$ converges to $\mu(\cdot,t)$, in the sense of weak convergence of  probability measures, for all $t \in [0,1]$. Furthermore, from the lower semicontinuity of $\abs{\partial \cF_\beta^m}$ (cf.~\cite[Theorem 5.4.4]{AGS}) and Fatou's lemma, we have that
\begin{align}
\int_0^1 \abs{\partial \cF_\beta^m}^2(\mu(\cdot,s)) \dx{s} \leq& \liminf_{n \to \infty} \int_0^1 \abs{\partial \cF_\beta^m}^2(\rho_n(\cdot,s)) \dx{s} \\
= & \liminf_{n \to \infty} \int_{t_n}^{t_n+1} \abs{\partial \cF_\beta^m}^2(\rho(\cdot,s)) \dx{s} =0 \, ,
\end{align}
where in the last step we have used~\eqref{eq:dissbounded}. It follows that $\abs{\partial \cF_\beta^m}(\mu(\cdot,t))=0$ for $t$ a.e.  Thus, since $\mu$ is continuous, we can find a sequence of times $m \in \N$, $t_m \to 0$, such that $\abs{\partial \cF_\beta^m}(\mu(\cdot,t_m))=0$ and $d_2(\mu(\cdot,t_m),\mu(\cdot,0)) \to 0$ as $m \to \infty$. Note further that $\mu(\cdot,0)= \lim_{n\to \infty} \rho(\cdot,t_n)=\rho_*$. From the lower semicontinuity of $\abs{\partial \cF_\beta^m}(\cdot)$ we have that
\begin{align}
\abs{\partial \cF_\beta^m}(\rho_*)=\abs{\partial \cF_\beta^m}(\mu(\cdot,0))=0 \, .
\end{align}
Applying~\cref{prop:tfae}, it follows that $\rho_* \in Z_{E_0} \subset \cP(\Omega) \cap \Leb^m(\Omega)$ is necessarily a stationary solution of~\eqref{eq:agdif}. Since $\rho_\infty$ is the unique stationary solution, it follows that 
\begin{align}
\lim_{t \to \infty}d_2(\rho(\cdot,t), \rho_\infty)=0 \, .
\label{eq:d2convergence}
\end{align}
However, from~\eqref{eq:ubounded} and~\eqref{eq:equicont}, we know that, for any time-diverging sequence $t_n \to \infty$, $\set{\rho(\cdot,t_n)}_{n \in \N}$ has a convergent subsequence in $\Leb^\infty(\Omega)$, whose limit must be $\rho_\infty$ by~\eqref{eq:d2convergence}. Since the limit is unique, it follows that
\begin{align}
\lim_{t \to \infty}\norm{\rho(\cdot,t)-\rho_\infty}_{\Leb^\infty(\Omega)}=0.
\end{align}
\end{proof}
\begin{remark}
We remark that the technique used in the proof of~\cref{ltb} can be adapted to study the asymptotic properties of general gradient flows in the space of probability measures. These ideas will be expanded upon in a future work (cf.~\cite{CGW20}).
\end{remark}
}

From~\cref{minunique}, it is  also immediately clear that $W \in \HH_s^c$ is a necessary condition for the existence of a nontrivial minimiser at higher values of the parameter $\beta$. \cre{Indeed,~\cref{minunique} tells us that if $W \in \HH_s$ then minimisers of $\cF_\beta^m$ are unique and are given by $\rho_\infty$.} Before we discuss this any further, we introduce a notion of transition point that allows us to capture a change in the set of minimisers. 

 \begin{defn}[Transition point]\label{def:tp}
 A parameter value $\beta_c>0$ is said to be a transition point of $\cF_\beta^m$ if the following conditions are satisfied.
\begin{enumerate}
\item For $\beta<\beta_c$, $\rho_\infty$ is the unique minimiser of $\cF_\beta^m$.
\item At $ \beta=\beta_c$, $\rho_\infty$ is a minimiser of $\cF_\beta^m$.
\item For $\beta>\beta_c$, there exists $\cP(\Omega) \ni \rho_\beta\neq \rho_\infty$, such that $\rho_\beta$ is a minimiser of $\cF_\beta^m$.
\end{enumerate}
\end{defn}
We further classify transition points into discontinuous and continuous transition points.
\begin{defn}[Continuous and discontinuous transition points]\label{def:cdtp}
A transition point $\beta_c$  of $\cF_\beta^m$ is said to be a continuous transition point if
\begin{enumerate}
\item At $ \beta=\beta_c$, $\rho_\infty$ is the unique minimiser of $\cF_\beta^m$.
\item For any family of minimisers $\set{\rho_\beta}_{\beta>\beta_c}$ it holds that
\[
\limsup_{\beta \to \beta_c^+} \norm*{\rho_\beta- \rho_\infty}_{\Leb^\infty(\Omega)} =0 \,.
\]
\end{enumerate}
A transition point $\beta_c>0$ of $\cF_\beta^m$ which is not continuous is said to be discontinuous.
\end{defn}
It turns out that $W \in \HH_s^c$ is in fact a sufficient condition for the existence of a transition point. This result is analogous to the result in case $m=1$ discussed in~\cite{GP70,CP10,CGPS19}.
\begin{proposition}\label{prop:tp}
Assume $W \in \HH_s^c$. Then there exists some parameter value $0< \beta_c \leq \beta_\sharp^m$  with $\beta_\sharp^m$ defined as 
\[
\beta_\sharp^m:= -\frac{m \rho_\infty^{m-3/2} } { \min_{k \in \N^d, k \neq 0}\dfrac{\hat{W}(k)}{\Theta(k)}} \, ,
\]
such that $\beta_c$ is a transition point of $\cF_\beta^m$. Thus, $W \in \HH_s^c$ is a necessary and sufficient condition for the existence of a transition point.
\end{proposition}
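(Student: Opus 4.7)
The plan is to introduce the critical parameter
\[
\beta_c := \inf\set*{\beta > 0 : \exists\, \rho \in \cP(\Omega),\, \rho \neq \rho_\infty,\, \cF_\beta^m(\rho) < \cF_\beta^m(\rho_\infty)} \, ,
\]
and then verify, in turn, the bound $\beta_c \leq \beta_\sharp^m$ and the three defining conditions of~\cref{def:tp}. The strict positivity $\beta_c > 0$ follows immediately from~\cref{ssunique}, which supplies an entire neighbourhood of $0$ on which $\rho_\infty$ is the unique minimiser of $\cF_\beta^m$.

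For the upper bound I would feed an explicit competitor into the infimum. Since $W \in \HH_s^c$, I pick $k^* \in \N^d \setminus \set{0}$ realising $\min_{k \neq 0} \hat{W}(k)/\Theta(k) < 0$ and consider the one-parameter family $\rho_\eps := \rho_\infty + \eps e_{k^*}$, which lies in $\cP(\Omega)$ for $\abs{\eps}$ small enough. Using the zero mean of $e_{k^*}$ and of $W$, the normalisation $\intom{e_{k^*}^2} = 1$, and the eigenvalue relation $W \star e_{k^*} = (\hat{W}(k^*)/N_{k^*}) e_{k^*}$ already exploited in the proof of~\cref{thm:bif}, a second-order Taylor expansion in $\eps$ yields
\[
\cF_\beta^m(\rho_\eps) - \cF_\beta^m(\rho_\infty) = \frac{\eps^2}{2}\bra*{\beta^{-1} m \rho_\infty^{m-2} + \frac{\hat{W}(k^*)}{N_{k^*}}} + O(\eps^3) \, .
\]
Substituting $1/N_{k^*} = \rho_\infty^{-1/2}/\Theta(k^*)$ and the definition of $\beta_\sharp^m$ rewrites the bracket as $m \rho_\infty^{m-2}\bra{\beta^{-1} - (\beta_\sharp^m)^{-1}}$, which is strictly negative for every $\beta > \beta_\sharp^m$. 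Hence for such $\beta$ the state $\rho_\eps$ beats $\rho_\infty$ for $\abs{\eps}$ small and nonzero, so $\beta_c \leq \beta_\sharp^m$.

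To verify~\cref{def:tp}, I would exploit the affine-in-$\beta^{-1}$ decomposition $\cF_\beta^m(\rho) = \beta^{-1} \tilde{S}(\rho) + \cE(\rho)$, where $\tilde{S}(\rho) := \frac{1}{m-1}\bra{\intom{\rho^m} - 1}$ satisfies $\tilde{S}(\rho) > \tilde{S}(\rho_\infty)$ for $\rho \neq \rho_\infty$ by Jensen's inequality. For condition (1), if a second minimiser $\rho' \neq \rho_\infty$ existed at some $\beta_1 < \beta_c$, the equality $\beta_1^{-1}(\tilde{S}(\rho') - \tilde{S}(\rho_\infty)) + (\cE(\rho') - \cE(\rho_\infty)) = 0$ forces $\cE(\rho') < \cE(\rho_\infty)$, and for any $\beta \in (\beta_1, \beta_c)$ the same affine expression becomes strictly negative, contradicting the definition of $\beta_c$. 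Condition (2) follows from continuity of $\beta \mapsto \cF_\beta^m(\rho) - \cF_\beta^m(\rho_\infty)$ on $(0, \infty)$ for each fixed $\rho \in \Leb^m(\Omega)$, which lets the inequality on $(0, \beta_c)$ pass to $\beta = \beta_c$; it is trivial otherwise. Condition (3) combines~\cref{thm:exm} (existence of a minimiser at every $\beta$) with the fact that, by the very definition of $\beta_c$, $\rho_\infty$ is no longer a minimiser once $\beta > \beta_c$. Finally, the \emph{necessary and sufficient} assertion packages the sufficiency just proved with the necessity already contained in~\cref{minunique}.

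The main obstacle is not the perturbation calculation (which is routine once the Fourier conventions from~\cref{thm:bif} are reused) but the uniqueness of the minimiser for $\beta < \beta_c$: this does \emph{not} follow tautologically from the infimum definition, which rules out only strictly lower energies and not equal ones. The affine dependence of $\cF_\beta^m$ on $\beta^{-1}$ is what lets a hypothetical second minimiser be tilted into a strictly lower-energy competitor on any slightly larger $\beta$ still below $\beta_c$, producing the required contradiction; once this observation is in place, everything else reduces to standard continuity/compactness invocations of~\cref{thm:exm,ssunique,minunique}.
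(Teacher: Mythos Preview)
The proposal is correct and follows the paper's own competitor-expansion strategy (the perturbation $\rho_\infty + \eps e_{k^\sharp}$) to establish $\beta_c \leq \beta_\sharp^m$; your explicit verification of the three conditions of~\cref{def:tp} via the affine-in-$\beta^{-1}$ structure fills in what the paper leaves implicit here, and the monotonicity/tilting argument you isolate is exactly the content of the paper's later~\cref{lem:Fcont}. One small expository point: in your handling of condition~(3), the phrase ``by the very definition of $\beta_c$'' understates that the monotonicity argument is needed once more to pass from the existence of \emph{some} $\beta' > \beta_c$ in the defining set to \emph{every} $\beta > \beta_c$ lying in it---but you have already set up that machinery in your treatment of condition~(1), so this is not a gap.
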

\begin{proof}
Consider the measure $\rho^\eps=\rho_\infty + \eps e_{k^\sharp} \in \cP(\Omega)$ for $0 < \eps \ll 1$ where $k^\sharp \in \N^d$ is defined as 
\begin{align}
k^\sharp:= \arg\min\limits_{k \in \N^d, k \neq 0}\frac{\hat{W}(k)}{\Theta(k)} \, . 
\end{align}
if it is defined uniquely. If not we pick any  $k^\sharp$ that realises the minimum of the above expression.  We now consider an expansion of the energy $\cF_\beta^m(\rho^\eps)$ around $\rho^\eps$ which we will use repeatedly throughout the rest of this section. We Taylor expand around $\rho_\infty$ to obtain
 \begin{align}
\cF_\beta^m(\rho^\eps)=& \cF_\beta^m(\rho_\infty) + \bra*{\beta^{-1}m \rho_\infty^{m-2} + \rho_\infty^{-1/2}\frac{\hat{W}(k^\sharp)}{\Theta(k^\sharp)} }\frac{\eps^2}{2}\norm{e_{k^\sharp}}_{\Leb^2(\Omega)}^2 \\
&+ \beta^{-1}m(m-2)\frac{\eps^{3}}{6}\intom{f^{m-3} e_{k^\sharp}^3} , 
 \end{align} 
 where the function $f(x) \in \bra{\rho_\infty, \rho^\eps(x)}$. For $\eps>0$ small enough, the highest order term can be controlled as follows
 \begin{align}
\cF_\beta^m(\rho^\eps)\leq & \cF_\beta^m(\rho_\infty) + \bra*{\beta^{-1}m \rho_\infty^{m-2} + \rho_\infty^{-1/2}\frac{\hat{W}(k^\sharp)}{\Theta(k^\sharp)} }\frac{\eps^2}{2}\norm{e_{k^\sharp}}_{\Leb^2(\Omega)}^2 \\
&+ \cb{\beta^{-1}m(m-2)N_{k^\sharp}^{3}\frac{\eps^{3}}{6}  \norm{f}_{\Leb^\infty(\Omega)}^{m-3}\abs{\Omega}}\\
=&\cF_\beta^m(\rho_\infty) + \bra*{\beta^{-1}m \rho_\infty^{m-2} + \rho_\infty^{-1/2}\frac{\hat{W}(k^\sharp)}{\Theta(k^\sharp)} }\frac{\eps^2}{2}\norm{e_{k^\sharp}}_{\Leb^2(\Omega)}^2 \\
&+ o(\eps^2) \, .
 \end{align} 
 For $\beta>\beta_\sharp^m$, the second order term in the above expression has a negative sign. Thus, for $\eps>0$ sufficiently small we have that $\cF_\beta^m(\rho^\eps)< \cF_\beta^m(\rho_\infty) $. Since, by \cref{thm:exm}, minimisers of $\cF_\beta^m$ exists for all $\beta>0$, it follows that for all $\beta>\beta_\sharp^m$ there exist nontrivial minimisers of the free energy. Thus, there exists some $\beta_c\leq\beta_\sharp^m$ which is a transition point of the free energy $\cF_\beta^m(\rho)$. 
\end{proof}
\begin{remark}
We note here that the $\beta_\sharp^m$ defined in the statement of~\cref{prop:tp} corresponds exactly to the point of critical stability of the uniform state $\rho_\infty$, i.e. if the stationary problem is linearised about $\rho_\infty$, then $\beta_\sharp^m$ corresponds to the value of the parameter at which the first eigenvalue of  the linearised operator crosses the imaginary axis. 
\end{remark}
Before attempting to \cre{provide conditions for the existence of} continuous and discontinuous transition points we define the function $\mathbf{F}^m:(0,\infty) \to \R$
\begin{align}
\mathbf{F}^m(\beta):= \min_{\rho \in \cP(\Omega)}\cF_\beta^m \, .
\end{align}
\begin{lemma}
For all $\beta>0$, the function $\mathbf{F}^m$ is continuous. Assume further that there exists $\beta'>0$ and $\cP(\Omega)\ni\rho_{\beta'} \neq \rho_\infty $ such that $\cF_{\beta'}^m(\rho_{\beta'})=\mathbf{F}^m(\beta')$. Then for all $\beta>\beta'$, $\cF_\beta^m(\rho_\infty)> \mathbf{F}^m(\beta)$. 
\label{lem:Fcont}
\end{lemma}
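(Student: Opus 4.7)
The plan is to handle the two assertions separately. For continuity of $\mathbf{F}^m$, fix $\beta>0$ and a sequence $\beta_n \to \beta$, staying in some bounded interval $I \subset (0,\infty)$ with $\beta$ in its interior. For the upper bound, pick a minimiser $\rho_\beta$ of $\cF_\beta^m$ (which exists by~\cref{thm:exm}) and use it as a test function:
\[
\mathbf{F}^m(\beta_n) \leq \cF_{\beta_n}^m(\rho_\beta) = \cF_\beta^m(\rho_\beta) + (\beta_n^{-1}-\beta^{-1})\bra*{\frac{1}{m-1}\intom{\rho_\beta^m}-\frac{1}{m-1}} \to \mathbf{F}^m(\beta),
\]
so $\limsup_n \mathbf{F}^m(\beta_n) \leq \mathbf{F}^m(\beta)$.

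For the lower bound, pick minimisers $\rho_{\beta_n}$ of $\cF_{\beta_n}^m$. By~\cref{lem:eqbound,cor:bbound}, we may assume $\|\rho_{\beta_n}\|_{\Leb^\infty(\Omega)} \leq B^\star$ uniformly. By Banach--Alaoglu and the argument of~\cref{thm:exm}, up to a subsequence $\rho_{\beta_n} \rightharpoonup \rho^* \in \cP(\Omega) \cap \Leb^\infty(\Omega)$ weakly in $\Leb^2(\Omega)$. The same uniform $\Leb^\infty$ bound gives
\[
\abs*{\cF_\beta^m(\rho_{\beta_n}) - \cF_{\beta_n}^m(\rho_{\beta_n})} = \abs{\beta^{-1}-\beta_n^{-1}}\abs*{\frac{1}{m-1}\intom{\rho_{\beta_n}^m}-\frac{1}{m-1}} \to 0,
\]
and by weak lower semicontinuity of $\cF_\beta^m$ in $\Leb^2(\Omega)$ (again from~\cref{thm:exm}),
\[
\mathbf{F}^m(\beta) \leq \cF_\beta^m(\rho^*) \leq \liminf_n \cF_\beta^m(\rho_{\beta_n}) = \liminf_n \mathbf{F}^m(\beta_n).
\]
Combining the two bounds yields continuity.

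For the second assertion, I would exploit the affine dependence of $\cF_\beta^m$ on $\beta^{-1}$. Since $\rho_\infty = \abs{\Omega}^{-1}$ and $W$ has mean zero, $\cE(\rho_\infty)=0$, and Jensen's inequality gives $\intom{\rho_{\beta'}^m} > \abs{\Omega}^{1-m} = \intom{\rho_\infty^m}$ strictly, because $\rho_{\beta'} \neq \rho_\infty$. Writing the difference at $\beta'$:
\[
0 \geq \cF_{\beta'}^m(\rho_{\beta'}) - \cF_{\beta'}^m(\rho_\infty) = \frac{(\beta')^{-1}}{m-1}\bra*{\intom{\rho_{\beta'}^m}-\abs{\Omega}^{1-m}} + \cE(\rho_{\beta'}),
\]
which in particular forces $\cE(\rho_{\beta'})<0$. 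Now for any $\beta > \beta'$, since $\beta^{-1} < (\beta')^{-1}$ and the first bracket is strictly positive,
\[
\cF_{\beta}^m(\rho_{\beta'}) - \cF_{\beta}^m(\rho_\infty) = \frac{\beta^{-1}}{m-1}\bra*{\intom{\rho_{\beta'}^m}-\abs{\Omega}^{1-m}} + \cE(\rho_{\beta'}) < \cF_{\beta'}^m(\rho_{\beta'}) - \cF_{\beta'}^m(\rho_\infty) \leq 0,
\]
so $\mathbf{F}^m(\beta) \leq \cF_\beta^m(\rho_{\beta'}) < \cF_\beta^m(\rho_\infty)$, as required.

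The only subtle point is the lower semicontinuity half of the continuity proof, which relies crucially on the uniform $\Leb^\infty$ compactness provided by~\cref{lem:eqbound,cor:bbound}; without this one would not have a candidate limit $\rho^*$ to compare against $\mathbf{F}^m(\beta)$. The monotonicity argument in the second part is then essentially a one-line consequence of Jensen's inequality and the linear dependence of $\cF_\beta^m$ on $\beta^{-1}$.
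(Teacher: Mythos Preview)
Your proof is correct. The second assertion is argued in essentially the same way as the paper: both use $\rho_{\beta'}$ as a competitor at $\beta>\beta'$, combine strict Jensen for the entropy with the affine dependence on $\beta^{-1}$, and conclude $\cF_\beta^m(\rho_{\beta'})<\cF_\beta^m(\rho_\infty)$.

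For continuity, however, you take a different route. The paper gets both directions by the same symmetric trick: for $\beta_1,\beta_2$ close, test $\cF_{\beta_2}^m$ against a minimiser $\rho_{\beta_1}$ to obtain
\[
\mathbf{F}^m(\beta_2)\le \mathbf{F}^m(\beta_1)+\tfrac{1}{m-1}(\beta_2^{-1}-\beta_1^{-1})\intom{(\rho_{\beta_1}^m-\rho_{\beta_1})},
\]
and then swap the roles of $\beta_1,\beta_2$; the uniform $\Leb^\infty$ bound on minimisers makes the error term vanish as $\beta_2\to\beta_1$. This is entirely elementary and avoids any compactness or lower semicontinuity. You instead use the ``direct method'' template: the upper bound by a fixed test function, the lower bound by extracting a weak-$\Leb^2$ limit of minimisers and invoking weak lower semicontinuity of $\cF_\beta^m$. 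Both work; the paper's argument is shorter and more self-contained, while yours is the more standard variational pattern and makes explicit why the uniform $\Leb^\infty$ control from~\cref{lem:eqbound,cor:bbound} is really what drives continuity. One small remark: your lower-semicontinuity step is literally written for a subsequence; to conclude for the full sequence you should note (as is routine) that every subsequence admits a further subsequence along which the argument applies, yielding $\liminf_n\mathbf{F}^m(\beta_n)\ge\mathbf{F}^m(\beta)$.
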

\begin{proof}
We note that for $0<\beta\leq\beta_c$ (where $\beta_c$ is possibly $+\infty$) we have that $\mathbf{F}^m(\beta)= \cF_\beta^m(\rho_\infty)$ which is clearly a continuous function of $\beta$. Let $\beta_2>\beta_1>\beta_c$ (if $\beta_c<\infty$, else we are done) and let $\rho_{\beta_1}$ be the minimiser of $\cF_{\beta_1}^m$. Note however due to the structure of the free energy we have that
\begin{align}
\mathbf{F}^m(\beta_2) \leq & \cF_{\beta_2}^m(\rho_{\beta_1})  \\
=&\cF_{\beta_1}^m(\rho_{\beta_1})+ \frac{1}{m-1}(\beta_2^{-1}-\beta_1^{-1})\intom{\rho_{\beta_1}^m-\rho_{\beta_1}}\\=& \mathbf{F}^m(\beta_1) + \frac{1}{m-1}(\beta_2^{-1}-\beta_1^{-1})\intom{\rho_{\beta_1}^m-\rho_{\beta_1}} \, .
\end{align}
 \cb{To obtain continuity of $\mathbf{F}^m$, note that the steps of the above equation would still hold with $\beta_1$ and $\beta_2$ exchanged. Using that $\rho_{\beta_1}$ and $\rho_{\beta_2}$ are uniformly bounded by~\cref{thm:exm}, one has the desired continuity.}

Assume now that $\cF_{\beta'}^m(\rho_{\beta'})= \cF_{\beta'}^m(\rho_\infty)$ and let $\beta>\beta'$. We then have that
\begin{align}
\mathbf{F}^m(\beta) \leq & \cF_{\beta}^m(\rho_{\beta'})  \\
=&\cF_{\beta'}^m(\rho_{\beta'})+\frac{1}{m-1} (\beta^{-1}-\beta'^{-1})\intom{\rho_{\beta'}^m-\rho_{\beta'}}\\\leq & \cF_{\beta'}^m(\rho_\infty) + \frac{1}{m-1}(\beta^{-1}-\beta'^{-1})\intom{\rho_{\beta'}^m- \rho_{\beta'}}\\
< & \cF_{\beta'}^m(\rho_\infty) +\frac{1}{m-1} (\beta^{-1}-\beta'^{-1})\intom{\rho_\infty^m- \rho_\infty} =  \cF_{\beta}^m(\rho_\infty)  \, .
\end{align}
\end{proof}

We will now try and refine our \cre{descriptions} of discontinuous and continuous transition points in analogy with the results in~\cite{CP10,CGPS19}.
\begin{lemma}\label{lem:ctp@cs}
If a transition point $\beta_c>0$ is continuous, then $\beta_c=\beta_\sharp^m$.
\end{lemma}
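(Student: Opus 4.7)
The plan is to combine~\cref{prop:tp}, which already yields $\beta_c \leq \beta_\sharp^m$, with a contradiction argument that rules out $\beta_c < \beta_\sharp^m$. So suppose for contradiction that $\beta_c < \beta_\sharp^m$, and let $\set{\rho_\beta}_{\beta > \beta_c}$ be the family of nontrivial minimisers granted by~\cref{def:tp}. Continuity of the transition (cf.~\cref{def:cdtp}) gives $\norm{\rho_\beta - \rho_\infty}_{\Leb^\infty(\Omega)} \to 0$ as $\beta \to \beta_c^+$. Setting $\eta_\beta := \rho_\beta - \rho_\infty$, which is mean zero, the $\Leb^\infty$-convergence ensures that $\rho_\beta \geq \rho_\infty/2 > 0$ pointwise for $\beta$ close enough to $\beta_c$.

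The idea is then to Taylor expand the free energy around $\rho_\infty$ and exploit that, strictly below $\beta_\sharp^m$, the Hessian of $\cF_\beta^m$ at $\rho_\infty$ is coercive. I would write
\begin{align*}
\cF_\beta^m(\rho_\beta) - \cF_\beta^m(\rho_\infty) = \tfrac{1}{2}Q_\beta[\eta_\beta] + R_\beta(\eta_\beta),
\end{align*}
where the quadratic form is
\begin{align*}
Q_\beta[\eta] := \beta^{-1} m \rho_\infty^{m-2}\intom{\eta^2} + \iintom{W(x-y)\eta(x)\eta(y)},
\end{align*}
and $R_\beta(\eta_\beta)$ collects the cubic and higher-order entropy remainder coming from $\intom{(\rho_\infty+\eta_\beta)^m - \rho_\infty^m - m\rho_\infty^{m-1}\eta_\beta - \tfrac{m(m-1)}{2}\rho_\infty^{m-2}\eta_\beta^2}$. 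By~\eqref{Fourier:Interaction}, $Q_\beta$ diagonalises in the orthonormal basis $\set{e_k}$ with eigenvalue $\beta^{-1} m \rho_\infty^{m-2} + \rho_\infty^{-1/2}\hat W(k)/\Theta(k)$ on each mode of index $k \in \N^d$, $k \neq 0$. The definition of $\beta_\sharp^m$ is precisely the value at which the smallest such eigenvalue (attained at $k^\sharp$) vanishes, so for any $\beta < \beta_\sharp^m$ every eigenvalue is bounded below by $m\rho_\infty^{m-2}\bra*{\beta^{-1} - (\beta_\sharp^m)^{-1}}$. Hence $Q_\beta[\eta_\beta] \geq c_\beta \norm{\eta_\beta}_{\Leb^2(\Omega)}^2$ with $c_\beta > 0$ uniform on any compact subinterval of $(\beta_c, \beta_\sharp^m)$.

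On the other hand, since $\rho_\beta$ is uniformly bounded above and bounded away from $0$ for $\beta$ near $\beta_c$, the mean-value remainder estimate on $\rho \mapsto \rho^m$ yields $\abs{R_\beta(\eta_\beta)} \leq C \norm{\eta_\beta}_{\Leb^\infty(\Omega)} \norm{\eta_\beta}_{\Leb^2(\Omega)}^2$ with $C$ independent of such $\beta$. Fix $\beta \in \bra*{\beta_c, (\beta_c + \beta_\sharp^m)/2}$ close enough to $\beta_c$ that $C\norm{\eta_\beta}_{\Leb^\infty(\Omega)} < c_\beta/4$; the two estimates combine to give
\begin{align*}
\cF_\beta^m(\rho_\beta) - \cF_\beta^m(\rho_\infty) \geq \tfrac{c_\beta}{4}\norm{\eta_\beta}_{\Leb^2(\Omega)}^2 > 0,
\end{align*}
since $\eta_\beta \not\equiv 0$. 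This contradicts the minimality of $\rho_\beta$, so $\beta_c \geq \beta_\sharp^m$ and equality follows.

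The main delicacy is the Taylor remainder estimate for $m \in (1,2)$, where the map $\rho \mapsto \rho^m$ fails to be $C^2$ at $\rho = 0$; this is resolved precisely by the uniform positivity $\rho_\beta \geq \rho_\infty/2$ supplied by the $\Leb^\infty$-convergence in the definition of a continuous transition point, on which the map is smooth and the cubic bound on $R_\beta$ holds with a constant uniform in $\beta$.
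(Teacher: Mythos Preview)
Your proof is correct and follows essentially the same route as the paper's: both argue by contradiction from $\beta_c<\beta_\sharp^m$, Taylor expand $\cF_\beta^m$ about $\rho_\infty$, use the Fourier diagonalisation~\eqref{Fourier:Interaction} to show the quadratic form is coercive for $\beta<\beta_\sharp^m$, and control the cubic remainder by $\norm{\eta_\beta}_{\Leb^\infty(\Omega)}\norm{\eta_\beta}_{\Leb^2(\Omega)}^2$ via the $\Leb^\infty$-smallness of $\eta_\beta$. Your treatment is in fact slightly more careful than the paper's in one respect: you explicitly invoke the uniform lower bound $\rho_\beta\geq\rho_\infty/2$ to justify the remainder estimate when $m<3$ (where $f^{m-3}$ would otherwise be unbounded), whereas the paper bounds $f$ only from above by $B_{\beta,m}$, which strictly speaking suffices only for $m\geq3$.
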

\begin{proof}
We know already from~\cref{prop:tp} that $\beta_c \leq \beta_\sharp^m$. Let us assume that $\beta_c<\beta_\sharp^m$. We know from~\cref{def:cdtp} that $\rho_\infty$ is the unique minimiser of $\cF_{\beta_c}^m$. Additionally for any sequence of minimisers $\set{\rho_\beta}_{\beta>\beta_c}$ we know that 
\[
\limsup_{\beta \to \beta_c^+} \norm{\rho_\beta-\rho_\infty}_{\Leb^\infty(\Omega)}=0 \, .
\]
Consider such a sequence and set $\eta_\beta=\rho_\beta-\rho_\infty$. For $\beta>\beta_c$, we expand the free energy about $\rho_\infty$ as follows
\begin{align}
\cF_\beta^m(\rho_\beta)&
=\cF_\beta^m(\rho_\infty) + \beta^{-1}m \rho_\infty^{m-2} \frac{\norm{\eta_\beta}_{\Leb^2(\Omega)}^2}{2}+ \frac{\rho_\infty^{-1/2}}{2}\sum\limits_{k \in \N^d} \frac{\hat{W}(k)}{\Theta(k)}\sum\limits_{\sigma \in \Sym_k(\Lambda)}|\hat{\eta_\beta}(\sigma(k))|^2  \\
&- \frac{\beta^{-1}m(m-2)}{6}\intom{f^{m-3} \eta_\beta^3} \, .
\end{align}
where $f(x) \in \bra{\rho_\infty,\rho_\beta(x)}$ and can be bounded by $\norm{\rho_\beta}_{\Leb^\infty(\Omega)}\leq B_{\beta,m}\leq B$ from the result of~\cref{thm:exm} and~\cref{cor:bbound}. Additionally we can control $\dfrac{\hat{W}(k)}{\Theta(k)}$ to obtain the following bound 
\begin{align}
\cF_\beta^m(\rho_\beta)\geq &  \cF_\beta^m(\rho_\infty) + \bra*{\beta^{-1}m \rho_\infty^{m-2} + \rho_\infty^{-1/2}\min_{k \in \N^d, k \neq 0}\frac{\hat{W}(k)}{\Theta(k)}}\frac{\norm{\eta_\beta}_{\Leb^2(\Omega)}^2}{2} \\&- \frac{\beta^{-1}m(m-2)}{6}B^{m-3}\norm{\eta_\beta}_{\Leb^3(\Omega)}^3 \, .
\end{align}
Note that due to the fact that $\norm{\eta_\beta}_{\Leb^\infty(\Omega)} \to 0$ as $\beta\to \beta_c^+$, we have that $\norm{\eta_\beta}_{\Leb^3(\Omega)}^3$ is $o(\norm{\eta_\beta}_{\Leb^2(\Omega)}^2)$, i.e. $\norm{\eta_\beta}_{\Leb^3(\Omega)}^3 \leq \norm{\eta_\beta}_{\Leb^\infty(\Omega)}\norm{\eta_\beta}_{\Leb^2(\Omega)}^2$. This leaves us with
\begin{align}
\cF_\beta^m(\rho_\beta)\geq &  \cF_\beta^m(\rho_\infty) + \bra*{\beta^{-1}m \rho_\infty^{m-2} + \rho_\infty^{-1/2}\min_{k \in \N^d, k \neq 0}\frac{\hat{W}(k)}{\Theta(k)}}\frac{\norm{\eta_\beta}_{\Leb^2(\Omega)}^2}{2} \\&- o\bra*{\norm{\eta_\beta}_{\Leb^2(\Omega)}^2} \, .
\end{align}

Since $\beta_c<\beta_\sharp^m$, the term in the brackets is positive close to $\beta_c$ we obtain a contradiction as $\rho_\beta$ is a nontrivial minimiser of $\cF_\beta^m$. Thus, we must have that $\beta_c=\beta_\sharp^m$.
\end{proof} 
From~\cref{def:cdtp},  we see that some $\beta_c>0$ is a discontinuous transition point if it violates either (or both) of the conditions (1) and (2). In the following lemma, we will show that if (2) is violated then (1) is as well.
\begin{lemma}\label{lem:exnu}
Assume $\beta_c>0$ is a discontinuous transition point of the energy $\cF_{\beta}^m$ and that for some family of minimisers $\set{\rho_\beta}_{\beta>\beta_c}$ it holds that
\[
\limsup_{\beta \to \beta_c^+} \norm{\rho_{\beta}-\rho_\infty}_{\Leb^\infty(\Omega)} \neq 0 \, .
\]
Then there exists $\cP(\Omega)\ni \rho_{\beta_c} \neq \rho_\infty$ such that:
\begin{enumerate}
	\item $\mathbf{F}^m_{\beta_c}=\cF_{\beta_c}^m(\rho_{\beta_c})= \cF_{\beta_c}^m(\rho_\infty)$.
	\item $S_{\beta_c}^m(\rho_{\beta_c})> S_{\beta_c}^m(\rho_\infty)$ and $\cE(\rho_{\beta_c})< \cE(\rho_\infty)=0$.
\end{enumerate}
\end{lemma}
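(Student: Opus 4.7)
The plan is to extract a limit point from the family $\{\rho_\beta\}_{\beta > \beta_c}$ and show that it inherits the minimising property at $\beta_c$. First, by hypothesis there exists a sequence $\beta_n \to \beta_c^+$ and $\delta>0$ such that $\norm{\rho_{\beta_n}-\rho_\infty}_{\Leb^\infty(\Omega)} \geq \delta$ for all $n$. By~\cref{compactness} applied to the bounded interval $I=(\beta_c,\beta_c+1)$, the family $\{\rho_{\beta_n}\}$ is relatively compact in $C^0(\Omega)$, so up to a subsequence $\rho_{\beta_n} \to \rho_{\beta_c}$ uniformly for some $\rho_{\beta_c} \in \cP(\Omega)$. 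The uniform lower bound $\norm{\rho_{\beta_n}-\rho_\infty}_{\Leb^\infty(\Omega)} \geq \delta$ passes to the limit, hence $\rho_{\beta_c} \neq \rho_\infty$.

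Next I would verify $(1)$. By definition of transition point, $\rho_\infty$ is a minimiser at $\beta_c$, so $\mathbf{F}^m(\beta_c) = \cF_{\beta_c}^m(\rho_\infty)$. Using the continuity of $\mathbf{F}^m$ from~\cref{lem:Fcont} together with $\cF_{\beta_n}^m(\rho_{\beta_n}) = \mathbf{F}^m(\beta_n)$, I get $\cF_{\beta_n}^m(\rho_{\beta_n}) \to \mathbf{F}^m(\beta_c)$. On the other hand, since $\rho_{\beta_n} \to \rho_{\beta_c}$ uniformly and $\beta_n \to \beta_c$, the joint continuity of $(\beta,\rho) \mapsto \cF_\beta^m(\rho)$ on the uniformly bounded set (the entropy part is a continuous function of $\rho$ in $C^0$ with uniform $\Leb^\infty$ bound and of $\beta$, while the interaction part is continuous in $\rho$ in the weak topology for $W \in C^2(\Omega)$) yields $\cF_{\beta_n}^m(\rho_{\beta_n}) \to \cF_{\beta_c}^m(\rho_{\beta_c})$. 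Comparing the two limits gives
\begin{align}
\cF_{\beta_c}^m(\rho_{\beta_c}) = \mathbf{F}^m(\beta_c) = \cF_{\beta_c}^m(\rho_\infty),
\end{align}
which is $(1)$.

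Finally, $(2)$ follows by comparing the entropy and interaction contributions separately. Since the function $s \mapsto s^m$ is strictly convex for $m>1$, Jensen's inequality gives $S_{\beta_c}^m(\rho) > S_{\beta_c}^m(\rho_\infty)$ for every $\rho \in \cP(\Omega)$ with $\rho \neq \rho_\infty$; applied to $\rho_{\beta_c}$ this yields $S_{\beta_c}^m(\rho_{\beta_c}) > S_{\beta_c}^m(\rho_\infty)$. Combining this strict inequality with the equality $\cF_{\beta_c}^m(\rho_{\beta_c}) = \cF_{\beta_c}^m(\rho_\infty)$ from $(1)$ and using $\cF = S + \cE$ forces $\cE(\rho_{\beta_c}) < \cE(\rho_\infty)$. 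Since $W$ has mean zero on $\Omega$, $\cE(\rho_\infty) = \tfrac{1}{2}\rho_\infty^2 \intom{\intom{W(x-y)}\dx{y}} = 0$, giving the claim.

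The main technical step is the compactness/continuity argument used to pass to the limit: one must ensure that minimisers for $\beta$ near $\beta_c$ remain uniformly bounded in $\Leb^\infty$ (supplied by~\cref{cor:bbound}) and enjoy uniform H\"older estimates (supplied by~\cref{compactness}) so that Arzel\`a--Ascoli gives a $C^0$-convergent subsequence along which the free energy is continuous. Everything else is a direct combination of the previously established lemmas.
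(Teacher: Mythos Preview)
Your proposal is correct and follows essentially the same approach as the paper's proof: extract a $C^0$-convergent subsequence via the compactness of minimisers (\cref{compactness}), pass to the limit in the free energy using its continuity in the $C^0$ topology together with the continuity of $\mathbf{F}^m$ (\cref{lem:Fcont}), and deduce (2) from Jensen's inequality. Your write-up is in fact slightly more explicit than the paper's in justifying why $\rho_{\beta_c}\neq\rho_\infty$ (via the uniform lower bound $\delta$) and why $\cE(\rho_\infty)=0$.
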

\begin{proof}
Consider a sequence of points $\set{\beta_n}_{n \in \N} >\beta_c$ and $\beta_n \to \beta_c $ as $n \to \infty$. We know that the set of minimisers $\set{\rho_{\beta_n}}_{n \in \N} $ is compact in $C^0(\Omega) \cap \cP(\Omega)$ from~\cref{compactness}. Thus,  there exists a subsequence $\rho_{\beta_{n}} \in \set{\rho_\beta}_{\beta>\beta_c} $ (which we do not relabel) and a limit $\rho_{\beta_c} \in \cP(\Omega) \cap C^0(\Omega)$ such that 
\[
\lim_{n \to \infty} \norm{\rho_{\beta_{n}} - \rho_{\beta_c}}_{C^0(\Omega)} =0 \, .
\]
From the statement of the lemma we know that $\rho_{\beta_c} \neq \rho_\infty$. All that remains is to show that $\rho_{\beta_c}$ is a minimiser of $\cF_{\beta_c}^m$. We first note that $\lim_{n \to \infty}\cF_{\beta_n}(\rho_{\beta_n})= \cF_{\beta_c}(\rho_{\beta_c})$. This follows from the fact that the interaction energy $\cE$ is continuous on $C^0(\Omega) \cap \cP(\Omega)$ for $W \in C^2(\Omega)$ and the entropy $S_{\beta}^m$ is essentially an $\Leb^m$-norm and is thus also controlled by the $C^0(\Omega)$ topology. Finally we use the result of~\cref{lem:Fcont} to note that
\begin{align}
\cF_{\beta_c}(\rho_{\beta_c}) =& \lim_{n \to \infty}\cF_{\beta_n}(\rho_{\beta_n}) \\=&
\lim_{n \to \infty}\mathbf{F}^m(\beta_n)= \mathbf{F}^m(\beta_c) \, ,
\end{align} 
which completes the proof of (1). The proof of (2) follows immediately from the fact that $\rho_\infty$ is the unique minimiser of $S_\beta^m(\rho)$ on $\cP(\Omega)$ (which is a consequence of Jensen's inequality).
\end{proof}
\begin{remark}\label{topology}
The above lemma tells us that we have not lost much by defining discontinuous transition points with respect to the $\Leb^\infty(\Omega)$ norm since the transition points obtained are discontinuous with respect to the $\Leb^p(\Omega)$ norm as well for all $p \in [1,\infty]$.
Indeed if we consider the sequence constructed in the proof of~\cref{lem:exnu} $\set{\rho_{\beta_n}}_{n \in \N}$ it follows that
\begin{align}
\lim_{n \to \infty} \norm{\rho_{\beta_n}-\rho_{\beta_c}}_{\Leb^p(\Omega)} \leq \abs{\Omega}^{1/p}\lim_{n \to \infty} \norm{\rho_{\beta_n}-\rho_{\beta_c}}_{C_0(\Omega)} =0 \, ,
\end{align}
where $\rho_{\beta_c}$ is the limiting object btained in the proof of~\cref{lem:exnu}. Thus, $\limsup\limits_{\beta \to \beta_c^+} \norm{\rho_{\beta}-\rho_\infty}_{\Leb^p(\Omega)} \neq 0$ for all $p \in [1,\infty]$.
\end{remark}

In the following proposition we outline the strategy we will use to provide sufficient conditions for the existence of continuous and discontinuous transition points.
\begin{proposition}\label{prop:strat}
Assume that $W \in \HH_s^c$ so that there exists a transition point $\beta_c>0$ of $\cF_\beta^m$. Then:
\begin{tenumerate}
\item If $\rho_\infty$ is the unique minimiser of $\cF_{\beta_\sharp^m}^m$, then $\beta_c=\beta_\sharp^m$
is a continuous transition point. \label{prop:strat!a}
\item If $\rho_\infty$ is not a minimiser of $\cF_{\beta_\sharp^m}^m$, then $\beta_c<\beta_\sharp^m$
is a discontinuous transition point. \label{prop:strat!b}
\end{tenumerate}
\end{proposition}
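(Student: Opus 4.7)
The plan is to derive both parts of the proposition as direct consequences of the preceding structural results, in particular Proposition~\ref{prop:tp}, Lemma~\ref{lem:Fcont}, Lemma~\ref{compactness}, and Lemma~\ref{lem:ctp@cs}.

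For part \ref{prop:strat!a}, I would first use Proposition~\ref{prop:tp} to get the upper bound $\beta_c \leq \beta_\sharp^m$. To get equality, I would argue by contradiction: if $\beta_c < \beta_\sharp^m$, then condition (3) of \cref{def:tp} applied at $\beta = \beta_\sharp^m$ produces a minimiser of $\cF_{\beta_\sharp^m}^m$ distinct from $\rho_\infty$, contradicting the uniqueness hypothesis. This establishes both $\beta_c = \beta_\sharp^m$ and condition (1) of \cref{def:cdtp}. For condition (2), I would proceed by contradiction as well: suppose there is a family of minimisers $\set{\rho_\beta}_{\beta>\beta_c}$ and a sequence $\beta_n \to \beta_c^+$ with $\norm{\rho_{\beta_n} - \rho_\infty}_{\Leb^\infty(\Omega)}$ bounded away from zero. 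Lemma~\ref{compactness} provides relative compactness in $C^0(\Omega)$, so up to a subsequence $\rho_{\beta_n} \to \rho_* \in \cP(\Omega) \cap C^0(\Omega)$ with $\rho_* \neq \rho_\infty$. As in the proof of Lemma~\ref{lem:exnu}, the interaction energy is continuous on $C^0(\Omega)$ and the entropy is continuous in this topology on uniformly $\Leb^\infty$-bounded sequences, so $\cF_{\beta_n}^m(\rho_{\beta_n}) \to \cF_{\beta_c}^m(\rho_*)$. Combined with continuity of $\mathbf{F}^m$ from Lemma~\ref{lem:Fcont}, this yields $\cF_{\beta_c}^m(\rho_*) = \mathbf{F}^m(\beta_c)$, so $\rho_*$ is a nontrivial minimiser at $\beta_c = \beta_\sharp^m$, contradicting the hypothesis.

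For part \ref{prop:strat!b}, the strategy is shorter: if $\rho_\infty$ is not a minimiser of $\cF_{\beta_\sharp^m}^m$, then condition (2) of \cref{def:tp} fails at $\beta_\sharp^m$, so $\beta_\sharp^m$ cannot itself be a transition point. Combined with $\beta_c \leq \beta_\sharp^m$ from Proposition~\ref{prop:tp}, this forces the strict inequality $\beta_c < \beta_\sharp^m$. The contrapositive of Lemma~\ref{lem:ctp@cs} then says $\beta_c$ cannot be a continuous transition point, so it must be discontinuous in the sense of \cref{def:cdtp}.

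The arguments are mostly assembly of existing lemmas; the only step that requires care is the passage to the limit in part \ref{prop:strat!a}, where one must justify that the $C^0$-limit $\rho_*$ of the minimisers is itself a minimiser at $\beta_c$. This relies simultaneously on the uniform $\Leb^\infty$ bound from \cref{cor:bbound}, the H\"older equicontinuity from \cref{compactness} (which feeds the Arzel\`a--Ascoli step), and the continuity of $\mathbf{F}^m$ from \cref{lem:Fcont}; none of these is difficult individually, but together they are what allow the continuity statement to be reduced to the uniqueness assumption at $\beta_\sharp^m$.
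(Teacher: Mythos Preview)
Your proposal is correct and follows essentially the same route as the paper. The only cosmetic difference is that, for condition (2) in part~\ref{prop:strat!a}, the paper invokes Lemma~\ref{lem:exnu} directly to produce the nontrivial minimiser at $\beta_c=\beta_\sharp^m$, whereas you re-derive its conclusion inline via compactness and continuity of $\mathbf{F}^m$; since you yourself note the argument is ``as in the proof of Lemma~\ref{lem:exnu}'', you could simply cite that lemma and shorten the proof.
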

\begin{proof}
For the proof of~\Dref{prop:strat!a} we note that $\beta_c$ already satisfies condition (1) of~\cref{def:cdtp}. All we need to show is that it satisfies condition (2). Assume $\beta_c< \beta_\sharp^m$, then by the very definition of a transition point we would have a contradiction since $\rho_\infty$ is the unique minimiser of $\cF_\beta^m$ at $\beta=\beta_\sharp^m$. It follows then that $\beta_c=\beta_\sharp^m$. Assume now that condition (2) of~\cref{def:cdtp} is violated, i.e. there exists a family of minimisers
$\set{\rho_\beta}_{\beta>\beta_\sharp^m}$ such that 
\[
\lim_{\beta\to \beta_\sharp^{m +}} \norm{\rho_\beta-\rho_\infty}_{\Leb^\infty(\Omega)} \neq 0 \, .
\]
By~\cref{lem:exnu} it follows that there exists $\cP(\Omega)\ni\rho_{\beta_\sharp^m} \neq \rho_\infty$  which minimises $\cF_{\beta_\sharp^m}^m$. This is a contradiction.

For~\Dref{prop:strat!b}, we note that since $\rho_\infty$ is not a minimiser at $\beta=\beta_\sharp^m$ by~\cref{def:tp} and~\cref{prop:tp} it follows that $\beta_c<\beta_\sharp$. Thus, by~\cref{lem:ctp@cs}, $\beta_c$ is a discontinuous transition point.
\end{proof}
The next theorem provides conditions on the Fourier modes of $W(x)$ for the existence of discontinuous transition points. It can be thought of as the analogue for the case of nonlinear diffusion.
\begin{theorem}\label{thm:dctp}
Assume $W \in \HH_s^c$ and \cb{$m \neq 2$}. Define, for some $\delta>0$, the  set $K^\delta$ as follows
\[
K^{\delta}:=\left\{k' \in \N^d\setminus\set{\mathbf{0}}: \frac{\hat{W}(k')}{\Theta(k')}\leq \min_{k \in \N^d\setminus\set{\mathbf{0}}} \frac{\hat{W}(k)}{\Theta(k)} +\delta  \right\}
\]
We define $\delta_*$ to be the smallest value, if it exists, of $\delta$ for which the following condition is satisfied:

\begin{equation}\label{eq:c1}
  \text{there exist } k^a,k^b, k^c \in K^{\delta_*} , \text{ such that } k^a=k^b + k^c \,  \tag{A1}. 
\end{equation}
\cb{We remark that two of the modes in the above expression can be repeated. For example, we could have $k^a=2, k_b=1,k_c=1$.}  Then if $\delta_*$ is sufficiently small, $\cF_\beta^m$ exhibits a discontinuous transition point at some $\beta_c<\beta_\sharp$. 
\end{theorem}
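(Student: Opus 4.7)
The plan is to invoke \cref{prop:strat}\ref{prop:strat!b}: it suffices to exhibit a competitor $\bar\rho \in \cP(\Omega)$ with
\[
\cF_{\beta_\sharp^m}^m(\bar\rho) < \cF_{\beta_\sharp^m}^m(\rho_\infty),
\]
which forces $\rho_\infty$ not to be a minimiser at $\beta_\sharp^m$ and thereby yields a discontinuous transition point $\beta_c < \beta_\sharp^m$. The competitor will be built as a small perturbation along the resonant triple provided by \eqref{eq:c1}.

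I would set $\bar\rho = \rho_\infty + \eps \eta$ with
\[
\eta = a_a e_{k^a} + a_b e_{k^b} + a_c e_{k^c},
\]
where $a_a, a_b, a_c \in \R$ are coefficients to be chosen, and $\eps>0$ is small. Mass preservation is automatic since the $e_{k^i}$ are orthogonal to the constants. Taylor expanding $\cF_{\beta_\sharp^m}^m$ at $\rho_\infty$, using the Fourier expansion \eqref{Fourier:Interaction} for the interaction part and the pointwise Taylor expansion of $\rho \mapsto \rho^m/(m-1)$ for the entropy part, gives
\begin{align}
\cF_{\beta_\sharp^m}^m(\bar\rho)-\cF_{\beta_\sharp^m}^m(\rho_\infty)
&= \frac{\eps^2}{2}\sum_{i\in\{a,b,c\}} a_i^2\,\norm{e_{k^i}}_{\Leb^2(\Omega)}^2 \rho_\infty^{-1/2}\pra*{\frac{\hat{W}(k^i)}{\Theta(k^i)}-\frac{\hat{W}(k^\sharp)}{\Theta(k^\sharp)}}\\
&\quad+ \frac{\eps^3}{6}(\beta_\sharp^m)^{-1} m(m-2)\rho_\infty^{m-3} \intom{\eta^3} + O(\eps^4),
\end{align}
where I used the formula for $\beta_\sharp^m$ to cancel the entropic second-order term against the $\hat W(k^\sharp)/\Theta(k^\sharp)$ contribution. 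By the definition of $K^{\delta_*}$, each bracketed factor in the quadratic part is nonnegative and bounded above by $\delta_*$, so the quadratic term is at most $C \delta_* \eps^2$ with $C$ depending only on $\rho_\infty$ and $\sum a_i^2$.

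The next step, which I expect to be the main obstacle, is to choose $a_a, a_b, a_c$ so that the cubic integral $\intom{\eta^3}$ is nonzero and can be given either sign; since $m\neq 2$ the prefactor $(\beta_\sharp^m)^{-1} m(m-2)\rho_\infty^{m-3}$ is nonzero, so a sign choice then makes the cubic contribution strictly negative. The cubic integral expands as a sum of $\intom{e_{k^i} e_{k^j} e_{k^l}}$; the trigonometric product-to-sum identities (cf.\ \cref{trig}) show that, modulo sign ambiguities handled by the symmetrisation in \eqref{e:def:wk}, such an integral is nonzero precisely when one index equals the sum (or a signed combination) of the other two. The hypothesis $k^a = k^b + k^c$ provides exactly such a triple, yielding a nonzero $\intom{e_{k^a} e_{k^b} e_{k^c}}$ up to a positive constant; any spurious diagonal contributions (from $\intom{e_{k^i}^2 e_{k^j}}$ when $k^j=2k^i$ happens to hold) can be absorbed because they are polynomial in the $a_i$ with lower-order degree in the "cross" coefficient $a_a a_b a_c$, so one can perturb the $a_i$ slightly to keep the total strictly nonzero and of prescribed sign. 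I would record this as a short lemma on trigonometric triple integrals before the main computation.

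Given the sign-corrected cubic, the remainder of the argument is a balancing estimate: write
\[
\cF_{\beta_\sharp^m}^m(\bar\rho)-\cF_{\beta_\sharp^m}^m(\rho_\infty) \le C \delta_* \eps^2 - c \eps^3 + C'\eps^4,
\]
with $c>0$ independent of $\delta_*$. Choosing $\eps \sim \delta_*^{1/2}$ (or any $\eps$ with $\delta_* \ll \eps \ll c/C'$) makes the right-hand side strictly negative provided $\delta_*$ is sufficiently small; nonnegativity of $\bar\rho$ is ensured by taking $\eps$ smaller still, which is compatible with the previous constraint since $\eta \in \Leb^\infty(\Omega)$. This produces the required competitor $\bar\rho$, and \cref{prop:strat}\ref{prop:strat!b} then gives the discontinuous transition point $\beta_c < \beta_\sharp^m$.
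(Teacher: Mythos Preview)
Your proposal is correct and follows essentially the same route as the paper: invoke \rdref{prop:strat!b} by exhibiting a competitor at $\beta_\sharp^m$ built from the resonant modes, Taylor expand, bound the quadratic defect by $\delta_*$, and use the nonvanishing cubic term guaranteed by \eqref{eq:c1} to beat it when $\eps\sim\delta_*^{1/2}$.

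The one substantive difference is in how the sign of the cubic contribution is arranged. You work with three modes $k^a,k^b,k^c$ and free coefficients $a_a,a_b,a_c$, then argue that one can tune these so that $\intom{\eta^3}$ has the sign opposite to $m(m-2)$. The paper instead takes the competitor $\rho_\infty\bigl(1+\gamma(m)\,\eps\sum_{k\in K^{\delta_*}}e_k\bigr)$ with all modes in $K^{\delta_*}$ weighted equally and a global sign $\gamma(m)=\operatorname{sgn}(2-m)$. The point is that the triple-product identity makes $\intom{(\sum_{k\in K^{\delta_*}}e_k)^3}$ a sum of nonnegative terms, strictly positive under \eqref{eq:c1}; then $\gamma(m)^3 m(m-2)<0$ automatically, with no case analysis on accidental resonances of the type $k^j=2k^i$. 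Your handling of those accidental resonances (``lower-order degree in the cross coefficient'') is a bit loose---the spurious terms have the same homogeneity in the $a_i$ as the genuine triple---but the conclusion is still correct since one can always flip the sign of a single $a_i$ to flip the dominant cross contribution. The paper's device simply sidesteps this bookkeeping.
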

\begin{proof}
We know already from~\cref{prop:tp} that the system possesses a transition point $\beta_c$. We are going to use \Dref{prop:strat!b} and construct a competitor $\rho \in \cP(\Omega)$ which has a lower value of the free energy than $\rho_\infty$ at $\beta=\beta_\sharp^m$. Define the function
\begin{align}
\gamma(m):=
\begin{cases}
1 & m <2 \\
-1 & m \geq 2
\end{cases}
\end{align}
and let
\begin{align}
\rho^\eps= \rho_\infty\bra[\bigg]{1 +\gamma(m) \eps \sum_{k \in K^{\delta_*} }e_{k}} \in \cP(\Omega) \ ,
\end{align}
for some $\eps >0$, sufficiently small. We denote by $\abs*{K^{\delta_*}}$ the cardinality of $K^{\delta_*}$, which is necessarily finite as $W \in \Leb^2(\Omega)$. Expanding about the free energy about $\rho_\infty$ we obtain
\begin{align}
\cF_{\beta_\sharp^m}^m(\rho^\eps)\leq & \cF_{\beta_\sharp^m}^m(\rho_\infty) + \abs*{K^{\delta_*}} \bra*{(\beta_\sharp^m)^{-1}m \rho_\infty^{m-2} + \rho_\infty^{-1/2}\min_{k \in \N^d \setminus\set{0}}\frac{\hat{W}(k)}{\Theta(k)} + \rho_\infty^{-1/2}\delta_* }\frac{\eps^2}{2}\norm{e_k}_{\Leb^2(\Omega)}^2 \\
&+ (\beta_\sharp^m)^{-1}\gamma(m)^3m(m-2) \rho_\infty^{m-3} \frac{\eps^{3}}{6}\intom{\bra*{\sum_{k \in K^{\delta_*}} e_k}^3} \\&+ (\beta_\sharp^m)^{-1} m(m-2)(m-3)  \frac{\eps^{4}}{24}\intom{f^{m-4} \bra*{\sum_{k \in K^{\delta_*}} e_k}^4} ,
\end{align} 
where the function $f(x) \in \bra{\rho_\infty, \rho^\eps(x)}$. We use the definition of $\beta_\sharp^m$ and control the highest order term in the same manner as~\cref{prop:tp} to simplify the expansion as follows:
\begin{align}
\cF_{\beta_\sharp^m}^m(\rho^\eps)\leq & \cF_{\beta_\sharp^m}^m(\rho_\infty) + \abs*{K^{\delta_*}} \bra*{\rho_\infty^{-1/2}\delta_* }\frac{\eps^2}{2} \\
&+ (\beta_\sharp^m)^{-1}\gamma(m)^3m(m-2) \rho_\infty^{m-3} \frac{\eps^{3}}{6}\intom{\bra*{\sum_{k \in K^{\delta_*}} e_k}^3} + o(\eps^3),
\end{align} 

Setting $\eps=\delta_*^{\frac{1}{2}}$ (if $\delta_* >0$, otherwise we stop here), we obtain
\begin{align}
\cF_{\beta_\sharp^m}^m(\rho^\eps)\leq &\cF_{\beta_\sharp^m}^m(\rho_\infty) + (\beta_\sharp^m)^{-1}\gamma(m)^3m(m-2) \rho_\infty^{m-3} \frac{\delta_*^{3/2}}{3}\intom{\bra*{\sum_{k \in K^{\delta_*}} e_k}^3} + \abs*{K^{\delta_*}}\rho_\infty^{-1/2}\frac{\delta_*^{2}}{2}+ o(\delta_*^{\frac{3}{2}})  \, .
\label{eq:defup}
\end{align} 
One can now check that under condition~\eqref{eq:c1}, it holds that
\[
  \intom{
\bra[\bigg]{\sum\limits_{k \in K^{\delta_*}}e_{k}}^3 } > a>0 \, ,
\]
 where the constant $a$ is independent of $\delta_*$. Indeed, the cube of the sum of $n$ numbers $a_i$, $i=1, \dots, n$ consists of only three types of terms, namely: $a_i^3$, $a_i^2 a_j$ and $a_i a_j a_k$. Setting the $a_i=w_{s(i)}$, with $s(i) \in K^{\delta_*}$, one can check that the first type of term will always integrate to zero.  \cb{The sum of the other two will take nonzero and in fact positive values if and only if condition~\eqref{eq:c1} is satisfied.}  This follows from the fact that 
 \[
 \int_{-\pi}^{\pi} \,\cos({\ell x})\cos(m x) \cos(nx) \! \dx{x}= \frac{\pi}{2}\bra*{\delta_{\ell+m,n}+\delta_{m+n,\ell} + \delta_{n+\ell,m}}\, .
\]
Also the term $\gamma(m)^3m(m-2)$ is always negative.  Thus, for $\delta_*$ sufficiently small, considering the fact that $|K^{\delta_*}| \geq 2$ and is nonincreasing as $\delta_*$ decreases, $\rho^\eps$ has smaller free energy and $\rho_\infty$ is not a minimiser at $\beta=\beta_\sharp^m$.
\end{proof}
\cb{
\begin{remark}\label{rem:special}
The case $m=2$ is special, as transition points for any $W \in \HH_s^c$ are necessarily discontinuous. This case will be treated in detail in~\cref{special}.
\end{remark}
}
The following lemma shows that discontinuous transitions are stable in $m$.
\begin{lemma}
Assume $W \in \HH_s^c$  such that $\cF^{m'}_\beta$ has a discontinuous transition point and $\beta_c^{m'} < \beta_\sharp^{m'}$. Then for $m \in (m'- \eps,m'+\eps )$ (or $m \in [1,1+\eps)$ for $m'=1$) for some $\eps>0$ small enough, $\cF_\beta^m$ has a discontinuous transition point at some $\beta_c^m<\beta_\sharp^m$.
\end{lemma}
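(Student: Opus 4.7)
The plan is to reduce everything to \Dref{prop:strat!b}: it suffices to show that for every $m$ in a small neighbourhood of $m'$ the flat state $\rho_\infty$ fails to minimise $\cF_{\beta_\sharp^m}^m$, since that proposition then delivers the claimed discontinuous transition point $\beta_c^m<\beta_\sharp^m$.

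First I would extract a strict competitor at $m=m'$. Since $\beta_c^{m'}<\beta_\sharp^{m'}$, pick any $\beta'\in(\beta_c^{m'},\beta_\sharp^{m'})$. By~\cref{def:tp} there is a minimiser $\rho_{\beta'}\neq\rho_\infty$ of $\cF_{\beta'}^{m'}$, and applying~\cref{lem:Fcont} with $\beta=\beta_\sharp^{m'}>\beta'$ yields $\cF_{\beta_\sharp^{m'}}^{m'}(\rho_\infty)>\mathbf{F}^{m'}(\beta_\sharp^{m'})$. A minimiser $\tilde\rho$ of $\cF_{\beta_\sharp^{m'}}^{m'}$ (furnished by~\cref{thm:exm} for $m'>1$, or by the analogous result from~\cite{CGPS19} for $m'=1$) therefore satisfies $\|\tilde\rho\|_{\Leb^\infty}\leq B^\star$ via~\cref{cor:bbound} and
\[
\cF_{\beta_\sharp^{m'}}^{m'}(\tilde\rho)<\cF_{\beta_\sharp^{m'}}^{m'}(\rho_\infty).
\]

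Next I would track the scalar
\[
g(m):=\cF_{\beta_\sharp^m}^m(\tilde\rho)-\cF_{\beta_\sharp^m}^m(\rho_\infty)
\]
and show $g$ is continuous at $m'$. The map $m\mapsto\beta_\sharp^m$ is continuous by its explicit formula, the interaction part $\cE(\tilde\rho)-\cE(\rho_\infty)$ is $m$-independent, so the whole question reduces to continuity of the entropy difference. For $m'>1$ one shrinks $\eps$ so that $(m'-\eps,m'+\eps)\subset(1,\infty)$ and applies dominated convergence to $\tilde\rho^m$ using the uniform bound $\tilde\rho\leq B^\star$. For $m'=1$ one uses $(x^m-x)/(m-1)\to x\log x$ combined with the pointwise positive lower bound on $\tilde\rho$ coming from the exponential self-consistency relation at $m=1$ (cf.~\cref{mto1}), so that the integrand is uniformly dominated by an integrable multiple of $|\log\tilde\rho|$.

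Since $g(m')<0$ and $g$ is continuous at $m'$, the inequality $g(m)<0$ persists on a full neighbourhood $(m'-\eps,m'+\eps)$, respectively $[1,1+\eps)$ when $m'=1$. For each such $m$, $\rho_\infty$ is not a minimiser of $\cF_{\beta_\sharp^m}^m$, and \Dref{prop:strat!b} closes the argument. The step I expect to require most care is the continuity of the entropy across $m=1$ in the $m'=1$ case: the formula for $S_\beta^m$ changes character there, so one has to produce an integrable dominating function that works simultaneously for $m=1$ and for $m$ slightly greater than $1$, and it is precisely the positivity and boundedness of the McKean--Vlasov minimiser that makes the dominated-convergence argument go through.
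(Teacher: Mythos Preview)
Your proposal is correct and follows essentially the same route as the paper: fix a competitor $\tilde\rho$ that strictly beats $\rho_\infty$ at $\beta_\sharp^{m'}$, use continuity of $m\mapsto\cF_{\beta_\sharp^m}^m(\tilde\rho)-\cF_{\beta_\sharp^m}^m(\rho_\infty)$ (via $\beta_\sharp^m\to\beta_\sharp^{m'}$ and dominated convergence for the entropy with $\tilde\rho\in\Leb^\infty$), and invoke \Dref{prop:strat!b}. You are in fact slightly more careful than the paper in one place: you spell out, via \cref{lem:Fcont}, why the inequality at $\beta_\sharp^{m'}$ is \emph{strict}, whereas the paper simply asserts $\delta>0$; your argument through an intermediate $\beta'\in(\beta_c^{m'},\beta_\sharp^{m'})$ is the right way to see this.
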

\begin{proof}
We start with the case $m'>1$. Denote by $\rho^* \in C^0(\Omega) \cap \cP(\Omega)$ the nontrivial minimiser of $\cF_{\beta_\sharp^{m'}}^{m'}(\rho)$. We know that
\begin{align}
\cF_{\beta_\sharp^{m'}}^{m'}(\rho_\infty) - \cF_{\beta_\sharp^{m'}}^{m'}(\rho^*)= \delta >0 \, .
\end{align}
It would be sufficient for the purposes of this proof to show that such a nontrivial minimiser exists for 
$\cF_{\beta_\sharp^{m}}^{m}$  for $m$ close enough to $m'$. Choosing $\rho^*$ to be the competitor state, we have
\begin{align}
\cF_{\beta_\sharp^m}^m(\rho_\infty) - \cF_{\beta_\sharp^m}^m(\rho^*)= & \cF_{\beta_\sharp^{m'}}^{m'}(\rho_\infty) - \cF_{\beta_\sharp^{m'}}^{m'}(\rho^*)  \\&+ \frac{(\beta_\sharp^m)^{-1}}{m-1}\frac{1}{\abs{\Omega}^{m-1}} - \frac{(\beta_\sharp^m)^{-1}}{m-1} 
- \frac{(\beta_\sharp^{m'})^{-1}}{m'-1}\frac{1}{\abs{\Omega}^{m'-1}} + \frac{(\beta_\sharp^{m'})^{-1}}{m'-1} 
\\&  + \bra*{ \frac{(\beta_\sharp^{m'})^{-1}}{m'-1}\int_\Omega (\rho^*)^{m'} \dx{x} -\frac{(\beta_\sharp^{m'})^{-1}}{m'-1} -\frac{(\beta_\sharp^m)^{-1}}{m-1}\int_\Omega \rho^m \dx{x} +\frac{(\beta_\sharp^m)^{-1}}{m-1} } \\
=&\delta + \frac{(\beta_\sharp^m)^{-1}}{m-1}\frac{1}{\abs{\Omega}^{m-1}} - \frac{(\beta_\sharp^m)^{-1}}{m-1} 
- \frac{(\beta_\sharp^{m'})^{-1}}{m'-1}\frac{1}{\abs{\Omega}^{m'-1}} + \frac{(\beta_\sharp^{m'})^{-1}}{m'-1} 
\\&  + \bra*{ \frac{(\beta_\sharp^{m'})^{-1}}{m'-1}\int_\Omega (\rho^*)^{m'} \dx{x} -\frac{(\beta_\sharp^{m'})^{-1}}{m'-1} -\frac{(\beta_\sharp^m)^{-1}}{m-1}\int_\Omega \rho^m \dx{x} +\frac{(\beta_\sharp^m)^{-1}}{m-1} } 
\end{align}
Since $\beta_\sharp^m \to \beta_\sharp^{m'}$ and $(m-1)^{-1}(a^m -1) \to(m'-1)^{-1}(a^{m'} -1) , a \geq 0$ as $m \to m'$, it follows, using the fact that $\rho* \in C^0(\Omega)$, that we can choose $m$ close enough to $m'$ so that the above term is strictly positive. We then have that for $m \in (m'-\eps,m'+\eps)$ for some $\eps>0$ small enough, $\rho_\infty$ is not a minimiser of the free energy $\cF_{\beta_\sharp^m}^m(\rho)$. By~\Dref{prop:strat!b}, it follows that $\cF^m_\beta$ possesses a discontinuous transition point at some $\beta_c^m < \beta_\sharp^{m}$. The case $m'=1$ can be treated similarly.
\end{proof}

In the following proposition, we single out some special values of $m$ at which one always finds a discontinuous transition point for $W \in \HH_s^c$.
\begin{proposition}\label{special}
Assume $W \in \HH_s^c$ such that $\beta_c$ is a transition point of $\cF_\beta^m$. Then if $m \in [2,3]$, $\beta_c$ is a discontinuous transition point. Specifically for the case $m=2$
we have that
\begin{enumerate}
\item $\beta_c^2=\beta_\sharp^2$
\item There exists a one parameter family of minimiser $\set{\rho_\alpha}_{\alpha \in [0,\abs{\Omega}^{-1/2}\Theta(k^\sharp)^{-1}]}$ of $\cF_{\beta_\sharp^2}^2$ with $\rho_0=\rho_\infty$.
\end{enumerate}
\end{proposition}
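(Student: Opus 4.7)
The proof naturally splits according to whether $m=2$ or $m \in (2,3]$. The case $m=2$ is special because $\cF_\beta^2$ is, up to a constant, an exact quadratic form in $\eta := \rho - \rho_\infty$: using $\intom{\eta}=0$ and that $W$ has mean zero, one has $\cF_\beta^2(\rho_\infty + \eta) - \cF_\beta^2(\rho_\infty) = \beta^{-1}\norm{\eta}_{\Leb^2(\Omega)}^2 + \tfrac12\iintom{W(x-y)\eta(x)\eta(y)}$. I would diagonalise this form in the orthonormal basis $\{e_k\}$, where the eigenvalues are of the form $2\beta^{-1} + \rho_\infty^{-1/2}\hat{W}(k)/\Theta(k)$ (up to conventions) and vanish exactly along $\mathrm{span}(e_{k^\sharp})$ at $\beta=\beta_\sharp^2$. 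This immediately yields uniqueness of $\rho_\infty$ as minimiser for $\beta<\beta_\sharp^2$, a family $\rho_\alpha := \rho_\infty + \alpha e_{k^\sharp}$ of minimisers at $\beta=\beta_\sharp^2$ with admissible range determined by $\rho_\alpha \geq 0$, and existence of nontrivial minimisers for $\beta>\beta_\sharp^2$ from~\cref{thm:exm}. Combined with $\norm{e_{k^\sharp}}_{\Leb^\infty(\Omega)} = N_{k^\sharp} = \Theta(k^\sharp)\abs{\Omega}^{-1/2}$, the positivity constraint cuts the family to $\alpha \in [0,\abs{\Omega}^{-1/2}\Theta(k^\sharp)^{-1}]$, establishing (2). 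Non-uniqueness at $\beta_c^2=\beta_\sharp^2$ then directly violates condition~(1) of~\cref{def:cdtp}, so the transition is discontinuous.

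For $m\in(2,3]$ I would apply~\Dref{prop:strat!b} by producing $\rho \in \cP(\Omega)$ with $\cF_{\beta_\sharp^m}^m(\rho) < \cF_{\beta_\sharp^m}^m(\rho_\infty)$. For $m\in(2,3)$, take the single-mode perturbation $\eta = \alpha e_{k^\sharp}$ with $\alpha>0$ small and Taylor expand. The quadratic piece vanishes by the definition of $\beta_\sharp^m$; the cubic coefficient $\tfrac{\beta^{-1}m(m-2)}{6}\rho_\infty^{m-3}\intom{e_{k^\sharp}^3}$ vanishes because $k^\sharp \in \N^d\setminus\{0\}$ forces at least one factor $\cos^3(2\pi k_i^\sharp x_i/L)$ whose integral over $[0,L]$ is zero; and the quartic coefficient $\tfrac{\beta^{-1}m(m-2)(m-3)}{24}\rho_\infty^{m-4}\intom{e_{k^\sharp}^4}$ is strictly negative since $m-3<0$ while $\intom{e_{k^\sharp}^4}>0$. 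Controlling the fifth-order Taylor remainder (uniform near $\rho_\infty$) shows this quartic dominates for $\alpha$ small, producing the desired competitor.

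The genuinely delicate case is $m=3$, where the quartic coefficient itself is zero and in fact the Taylor expansion of $\rho^3$ terminates at cubic order. Here I would use the two-mode Ansatz $\eta = \alpha e_{k^\sharp} + \gamma e_{2k^\sharp}$, which is legitimate since $k^\sharp \in \N^d$ implies $2k^\sharp \in \N^d$. The quadratic form reduces to $\tfrac{Q}{2}\gamma^2$ with $Q := 3\rho_\infty\beta_\sharp^{3,-1} + \rho_\infty^{-1/2}\hat{W}(2k^\sharp)/\Theta(2k^\sharp) \geq 0$ (since $k^\sharp$ minimises $\hat{W}/\Theta$), while in the cubic $\intom{(\alpha e_{k^\sharp} + \gamma e_{2k^\sharp})^3}$ only the mixed term $3\alpha^2\gamma\intom{e_{k^\sharp}^2 e_{2k^\sharp}}$ survives: the pure cubes vanish as before, $\intom{e_{k^\sharp}e_{2k^\sharp}^2}=0$ follows from $\cos\theta \cdot \cos^2(2\theta) = \tfrac12\cos\theta + \tfrac14\cos(3\theta) + \tfrac14\cos(5\theta)$, and the surviving integral is strictly positive because $\cos^2\theta\cdot\cos(2\theta) = \tfrac12\cos(2\theta) + \tfrac14 + \tfrac14\cos(4\theta)$. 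Optimising in $\gamma$ when $Q>0$, or simply choosing its sign when $Q=0$, gives a negative energy difference of order $\alpha^4$, so~\Dref{prop:strat!b} again yields a discontinuous transition. The main obstacle is precisely this $m=3$ step: losing the quartic cushion forces one to couple the modes $k^\sharp$ and $2k^\sharp$, and one must carefully identify which among the four cubic monomial integrals survives, together with its sign.
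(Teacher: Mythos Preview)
Your proof is correct and, for $m=2$ and $m\in(2,3)$, essentially follows the paper's line: exact quadratic diagonalisation in the former, single-mode competitor with negative quartic coefficient in the latter.

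For $m=3$ you take a different route. You construct a two-mode competitor $\eta = \alpha e_{k^\sharp} + \gamma e_{2k^\sharp}$ and exploit the surviving cubic cross-term $\intom{e_{k^\sharp}^2 e_{2k^\sharp}}>0$ to obtain a \emph{strict} inequality $\cF_{\beta_\sharp^3}^3(\rho) < \cF_{\beta_\sharp^3}^3(\rho_\infty)$, whence \Dref{prop:strat!b} gives $\beta_c<\beta_\sharp^3$. The paper's argument is simpler: it notes that for $m=3$ the energy is an \emph{exact} cubic polynomial in $\eps$ along $\rho^\eps = \rho_\infty + \eps e_{k^\sharp}$, and since the quadratic term vanishes at $\beta_\sharp^3$ while $\intom{e_{k^\sharp}^3}=0$, one has $\cF_{\beta_\sharp^3}^3(\rho^\eps) = \cF_{\beta_\sharp^3}^3(\rho_\infty)$ identically, so $\rho_\infty$ cannot be the \emph{unique} minimiser at $\beta_\sharp^3$. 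This already violates condition~(1) of \cref{def:cdtp} and, combined with \cref{lem:ctp@cs}, forces discontinuity. The paper's approach thus treats $m=2$ and $m=3$ uniformly with a single-mode perturbation and exact equality; your approach requires mode coupling but buys the sharper conclusion $\beta_c<\beta_\sharp^3$, which the paper does not assert for $m=3$. A minor point: your trigonometric identities for the mixed cubic integrals are stated in $d=1$; in general dimensions the required facts follow cleanly from \cref{trig} via $2k^\sharp\in P_2(k^\sharp)$ and $k^\sharp\notin P_2(2k^\sharp)$.
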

\begin{proof}
We will try again to show that we have a competitor at $\beta_\sharp^m$. We start with the case $2<m<3$. Consider the competitor
\[
\rho^\eps=\rho + \eps e_{k^\sharp}
\]
for $\eps>0$ and small and $k^\sharp :=\arg \min_{k \in \N^d \setminus \set{0}}\hat{W}(k)/\Theta(k) $ if it is uniquely defined or any one such $k$ if it is not. Expanding the energy upto fifth order and noting that second order terms vanish we obtain
\begin{align}
\cF_{\beta_\sharp^m}^m(\rho^\eps)=& \cF_{\beta_\sharp^m}^m(\rho_\infty)  
+ (\beta_\sharp^m)^{-1} m(m-2) \rho_\infty^{m-3} \frac{\eps^{3}}{3!}\intom{e_{k^\sharp}^3} \\&+ (\beta_\sharp^m)^{-1} m(m-2)(m-3)\rho_\infty^{m-4}  \frac{\eps^{4}}{4!}\intom{ e_{k^\sharp}^4} \\
&+ (\beta_\sharp^m)^{-1} m(m-2)(m-3)(m-4)  \frac{\eps^{5}}{5!}\intom{ f^{m-5} e_{k^\sharp}^5} ,
\end{align}
where the function $f(x) \in \bra{\rho_\infty, \rho^\eps(x)}$. We again bound the highest order term as in~\cref{prop:tp} and use the fact that $\intom{e_k^3}=0$ for any $k \in \N^d \setminus \set{0}$ to obtain
\begin{align}
\cF_{\beta_\sharp^m}^m(\rho^\eps)=& \cF_{\beta_\sharp^m}^m(\rho_\infty)  +(\beta_\sharp^m)^{-1} m(m-2)(m-3)\rho_\infty^{m-4}  \frac{\eps^{4}}{4!}\intom{ e_{k^\sharp}^4} + o(\eps^4) \, .
\end{align}
Since $m(m-2)(m-3)$ is negative for $m \in(2,3)$, for $\eps>0$ sufficiently small, we have shown that $\rho_\infty$ is no longer the minimiser of $\cF_{\beta_\sharp^m}^m$. The result follows by~\Dref{prop:strat!b}: we have a discontinuous transition point at some $\beta_c<\beta_\sharp^m$.

We now consider the case $m=2,3$. Using the same expansion we have that
\begin{align}
\cF_{\beta_\sharp^2}^2(\rho^\eps) = \cF_{\beta_\sharp^2}^2(\rho_\infty) \qquad \cF_{\beta_\sharp^3}^3(\rho^\eps) = \cF_{\beta_\sharp^3}^3(\rho_\infty) \, .
\end{align}
Thus, $\rho_\infty$ is not the unique minimiser of $\cF_{\beta_\sharp^m}^m$ for $m=2,3$. It then follows from~\cref{def:tp} that there must be a discontinuous transition point at $\beta_c^m \leq \beta_\sharp^m$.

Consider now the convex interpolant $\rho_t:= (1-t) \rho_0 + t\rho_1, t \in(0,1) $  for $\rho_0,\rho_1 \in \cP(\Omega)$ such that $\cF_\beta^2(\rho_0), \cF_\beta^2(\rho_1)< \infty$. We then have that
\begin{align}
\frac{\dx^2}{\dx{t}^2}\cF_\beta^2(\rho_t)= &2 \beta^{-1} \intom{  \eta^2} +\iintom{W(x-y) \eta(x) \eta(y)} \\
\geq & \bra*{2 \beta^{-1} + \min_{k \in \N^d \setminus \set{0}} \frac{\hat{W}(k)}{\Theta(k)}} \norm{\eta}_{\Leb^2(\Omega)}^2 \, .
\end{align}
Note that the above expression is strictly positiove if $\beta<\beta_\sharp^2$. Thus, $\cF_\beta^2$ is strictly convex for $\beta <\beta_\sharp^2$ and has only one minimiser, namely, $\rho_\infty$. Since the function $\mathbf F$ is continuous (cf.~\cref{lem:Fcont}), it follows that $\beta_c^2=\beta_\sharp^2$ for all $W \in \HH_s^c$. Furthermore, $\rho_\alpha= \rho_\infty + \alpha e_{k^\sharp}$ form a one-parameter family of minimisers of $\cF_{\beta_\sharp^2}^2$ for $\alpha \in [0,\abs{\Omega}^{-1/2}\Theta(k^\sharp)^{-1}]$.
\end{proof}
We conclude the section by discussing the existence of continuous transition points. We show that for $m=4$
one can construct a large class of potentials for which the transition point $\beta_c$ is continuous. We start with the following proposition.
\begin{proposition}\label{trig}
Let $k \in \N^d$ be such that $k \not \equiv 0$ and let $k_i \in \N, i=1, \dots, d$ be such that
\begin{align}
k=\begin{pmatrix}
k_1 & \dots &k_d 
\end{pmatrix}
\, .
\end{align}
Then we have:
\begin{align}
e_k^2= \sum_{j \in P_2(k)} c_j e_j + c_{0}e_0 \, ,
\end{align} 
where 
\begin{align}
P_2(k):=\set{ j \in \N^d,j\not \equiv 0, j_i \in \set{2k_i,0}}\, , \, c_j= \frac{\rho_\infty}{N_j } \textrm{ and } c_0 =\frac{\rho_\infty}{N_0 } \, .
\end{align}
Similarly
\begin{align}
e_k^3=  \sum_{\ell \in P_3(k)} c_{\ell}e_{\ell} +c_{k}e_k
\end{align}
with
\begin{align}
P_3(k):=\set{\ell \in \N^d,\ell \neq k, \ell_i \in \set{3k_i,k_i}}\, , \, c_\ell= \frac{\rho_\infty^2}{N_{k^\sharp}N_\ell }(3)^{\abs*{\set{\ell_i:\ell_i=k_i}}} \textrm{ and } c_k  =  \frac{\rho_\infty^2}{N_{k^\sharp}N_\ell }(3)^{d} \, .
\end{align}
Note that $P_2(k) \cap P_3(k)=\emptyset$. Similarly, we have that
\begin{align}
\bra*{\sum_{\sigma \in \Sym_k(\Lambda)}a_{\sigma(k)} e_{\sigma(k)}}^2=& \sum_{j \in P_2(k)} \sum_{\sigma_1, \sigma_2 \in \Sym_k(\Lambda)} a_{\sigma_1(k)}a_{\sigma_2(k)}c^{\sigma_1,\sigma_2}_{j} e_{\sigma_1\cdot \sigma_2 (j)} +C_0 e_0  \\
\bra*{\sum_{\sigma \in \Sym_k(\Lambda)}a_{\sigma(k)} e_{\sigma(k)}}^3=& \sum_{\ell \in P_3(\ell)} \sum_{\sigma_1, \sigma_2,\sigma_3 \in \Sym_k(\Lambda)} a_{\sigma_1(k)}a_{\sigma_2(k)} a_{\sigma_3(k)}c^{\sigma_1,\sigma_2,\sigma_3}_{\ell} e_{\sigma_1\cdot \sigma_2\cdot\sigma_3(\ell)} \\&+\sum_{\sigma \in \Sym_k(\Lambda)}C_{k}^\sigma e_k
\end{align}
where the constants $c^{\sigma_1,\sigma_2}_{j}, c^{\sigma_1,\sigma_2,\sigma_3}_{\ell}, C_0,C_{k}^\sigma \in \R$ depend only on $d$, $k$, and $\rho_\infty$ but are independent of the coefficients $a_{\sigma(k)} \in \R$. Note that, as before, there is no repetition of the terms in the sum. 
\end{proposition}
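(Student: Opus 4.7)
The plan is a direct computation using product-to-sum trigonometric identities, exploiting the tensor-product structure of the basis $e_k(x)=N_k\prod_i e_{k_i}(x_i)$. Because $k\in\N^d$, each factor $e_{k_i}$ is either the constant $1$ (when $k_i=0$) or $\cos(2\pi k_i x_i/L)$ (when $k_i>0$), so only the cosine identities
\[
\cos^2\theta=\tfrac12+\tfrac12\cos(2\theta),\qquad \cos^3\theta=\tfrac34\cos\theta+\tfrac14\cos(3\theta)
\]
are needed.

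For the square, I would first write $e_k^2=N_k^2\prod_{i\,:\,k_i>0}\cos^2(2\pi k_i x_i/L)$ and substitute the half-angle identity coordinate by coordinate. Expanding the resulting product gives a sum indexed by subsets of $\{i:k_i>0\}$, which is precisely the index set $j\in\{0,2k_i\}^d=P_2(k)\cup\{0\}$. Each choice carries a factor $(1/2)^{|\{i:k_i>0\}|}$, and using
\[
N_k^2=\frac{1}{L^d}\prod_i(2-\delta_{k_i,0})=\frac{2^{|\{i:k_i>0\}|}}{L^d},
\]
the prefactor simplifies as $N_k^2(1/2)^{|\{i:k_i>0\}|}=\rho_\infty$. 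Dividing by $N_j$ to form $e_j$ recovers $c_j=\rho_\infty/N_j$; the $j=0$ term is handled identically since $e_0=N_0$. For the cube I would argue analogously with $\cos^3\theta=\tfrac34\cos\theta+\tfrac14\cos(3\theta)$: expansion gives multi-indices $\ell\in\{k_i,3k_i\}^d$ with weights $(3/4)^r(1/4)^{|\{i:k_i>0\}|-r}$, where $r=|\{i:\ell_i=k_i\}|$. Collecting the factor $4^{-|\{i:k_i>0\}|}$ into $N_k^3$ gives a prefactor independent of $\ell$ equal (after division by $N_\ell$) to $\rho_\infty^2 3^r/(N_k N_\ell)$; the case $\ell=k$ corresponds to $r=d$ and is listed separately. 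The claim that $P_2(k)\cap P_3(k)=\emptyset$ is immediate, since elements of $P_2(k)$ have every nonzero component even (of the form $2k_i$), whereas elements of $P_3(k)$ have at least one odd component of the form $k_i$.

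For the orbit sums I would multiply out $(\sum_{\sigma}a_{\sigma(k)}e_{\sigma(k)})^2$ and $(\sum_{\sigma}a_{\sigma(k)}e_{\sigma(k)})^3$ and reduce each mixed product $e_{\sigma_1(k)}\cdots e_{\sigma_p(k)}$ using
\[
\cos A\cos B=\tfrac12\cos(A-B)+\tfrac12\cos(A+B),
\]
applied coordinatewise; this is just the product-to-sum identity iterated, and it shows that any product of $p$ such factors is again a linear combination of basis elements $e_{\sigma_1\cdot\sigma_2\cdots\sigma_p(j)}$ for $j\in P_p(k)$ (together with the low-order remainders $e_0$ in the square case and $e_{k}$ in the cube case arising from telescoping sign choices $\sigma_1\cdots\sigma_p=\mathrm{id}$ on a coordinate). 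The coefficients $c^{\sigma_1,\sigma_2}_j$, $c^{\sigma_1,\sigma_2,\sigma_3}_\ell$, $C_0$, $C_k^\sigma$ are then the concrete numerical values produced by the same bookkeeping as in the scalar case, and crucially they depend only on $d$, $k$, and $\rho_\infty$ (through $N_k$, $N_j$, $N_\ell$, $N_0$) and not on the amplitudes $a_{\sigma(k)}$, which appear as a simple multilinear prefactor.

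The only genuine difficulty is combinatorial: keeping track, in the orbit formulas, of which combinations of signs $\sigma_1,\dots,\sigma_p\in\Sym(\Lambda)$ yield a given output frequency $\sigma_1\cdots\sigma_p(j)$, and ensuring that the resulting representation contains no repeated basis element — i.e.\ that the quotient by $H_j$ has already been absorbed into the sum over $\Sym_k(\Lambda)$. This can be done either by enumerating representatives or by the same quotienting argument used to pass from $\Sym(\Lambda)$ to $\Sym_k(\Lambda)$ in the definition of the Fourier basis; no new idea is required beyond careful coordinatewise expansion.
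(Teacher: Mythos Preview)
Your proof is correct and follows the same approach as the paper, whose entire proof is the one sentence that the result is ``a careful application of the trigonometric identities $\cos^2 a = 2^{-1}(1+\cos 2a)$, $\cos^3 a = 4^{-1}(\cos 3a + 3\cos a)$, and $\sin^3 a = 4^{-1}(3\sin a - \sin 3a)$.'' The only small omission in your write-up is that for the orbit sums the functions $e_{\sigma(k)}$ involve $\sin$ in coordinates where $\sigma_i=-1$, so the mixed $\sin$--$\cos$ product-to-sum identities (and the $\sin^3$ identity the paper lists) are also needed, though your phrase ``product-to-sum identity iterated'' implicitly covers this.
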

\begin{proof}
The proof is simply a careful application of the trigonometric identities $\cos^2(a)= 2^{-1}(1+ \cos(2a))$, $\cos^3(a)=4^{-1}(\cos(3a)+3\cos(a))$, and $\sin^3(a) = 4^{-1}\bra*{3\sin(a) - \sin (3a)}$.
\end{proof}
We now proceed to the result concerning continuous transition points for $m=4$.
\begin{theorem}\label{m=4}
Let $W \in \HH_s^c$, such that $\beta_c<\infty$ is a transition point of $\cF_\beta^4$. Assume that 
\begin{align}
k^\sharp:= \arg\min\limits_{k \in \N^d, k \neq 0}\frac{\hat{W}(k)}{\Theta(k)} \, ,
\end{align}
is uniquely defined. Furthermore, we assume that $\hat{W}(k) \geq 0$ for all $k \neq k^\sharp$ and that
\begin{align}
\hat{W}(j) &> \max_{\sigma_1,\sigma_2 \in \Sym_{k^\sharp}(\Lambda)}\frac{6   \Theta(j)^5 \bra*{c_j^{\sigma_1,\sigma_2}}^2 \abs*{P_2(k^\sharp) \cup P_3(k^\sharp)}}{ \rho_\infty \Theta(k^\sharp) }\abs*{\hat{W}(k^\sharp)} \qquad \forall j \in P_2(k^\sharp) \tag{A2} \label{ass1} \\
\hat{W}(\ell) &> \max_{\sigma_1,\sigma_2, \sigma_3 \in \Sym_{k^\sharp}(\Lambda)} \frac{2 \Theta(\ell)^9 \Theta\bra{k^\sharp} \bra*{c_\ell^{\sigma_1,\sigma_2,\sigma_3}}^2 \abs*{P_2(k^\sharp) \cup P_3(k^\sharp)}}{ 3 \rho_\infty^2 }\abs*{\hat{W}(k^\sharp)} \qquad \forall \ell \in P_3(k^\sharp) \tag{A3} \label{ass2} \, ,
\end{align}
where the sets $P_2,P_3$ and the constants $c^{\sigma_1,\sigma_2}_{j}, c^{\sigma_1,\sigma_2,\sigma_3}_{\ell}$ are as defined in~\cref{trig}. Then $\beta_c=\beta_\sharp^4$ is a continuous transition point. \cre{Note that the constant $\Theta(k)$ for $k \in \N^d$ is as defined in~\eqref{NkTk}.}
\end{theorem}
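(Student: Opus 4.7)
By~\Dref{prop:strat!a}, once a transition point $\beta_c$ is known to exist it suffices to prove that $\rho_\infty$ is the unique minimiser of $\cF_{\beta_\sharp^4}^4$ on $\cP(\Omega)$. So fix any competing $\rho\in\cP(\Omega)\cap L^4(\Omega)$ with $\rho\neq\rho_\infty$ and set $\eta:=\rho-\rho_\infty$, which has zero mean. Because $m=4$, the map $\eta\mapsto\cF_{\beta_\sharp^4}^4(\rho_\infty+\eta)$ is a polynomial of degree four, and a direct computation gives the \emph{exact} expansion
\begin{align*}
\cF_{\beta_\sharp^4}^4(\rho)-\cF_{\beta_\sharp^4}^4(\rho_\infty)=Q(\eta)+\tfrac{4(\beta_\sharp^4)^{-1}\rho_\infty}{3}\intom{\eta^3}+\tfrac{(\beta_\sharp^4)^{-1}}{3}\intom{\eta^4},
\end{align*}
where $Q(\eta):=2(\beta_\sharp^4)^{-1}\rho_\infty^2\|\eta\|_{L^2}^2+\tfrac{1}{2}\iintom{W(x-y)\eta(x)\eta(y)}$. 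The goal is to show the right-hand side is strictly positive whenever $\eta\not\equiv 0$.

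The plan is to decompose $\eta=\eta_0+\eta_2+\eta_3+\eta_R$ in the orthonormal basis $\{e_k\}$, where $\eta_0$ lies in the $\Sym_{k^\sharp}(\Lambda)$-orbit of $e_{k^\sharp}$, $\eta_2$ and $\eta_3$ span the orbits of the modes in $P_2(k^\sharp)$ and $P_3(k^\sharp)$, and $\eta_R$ is the $L^2$-orthogonal (mean-zero) remainder. By~\eqref{Fourier:Interaction}, $Q$ is diagonal on these orbits: $Q(\eta_0)=0$ by the defining property of $\beta_\sharp^4$ together with the uniqueness of $k^\sharp$ as the minimiser of $\hat W/\Theta$; $Q(\eta_R)\geq 0$ follows from the hypothesis $\hat W(k)\geq 0$ for $k\neq k^\sharp$; and $Q(\eta_2),Q(\eta_3)$ admit explicit coercive lower bounds whose sizes are exactly those supplied by~\eqref{ass1} and~\eqref{ass2}.

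For the cubic and quartic terms I would invoke Proposition~\ref{trig}: $\eta_0^2$ is supported on the orbits of $P_2(k^\sharp)\cup\{0\}$ and $\eta_0^3$ on the orbits of $k^\sharp$ and of $P_3(k^\sharp)$. Parseval's identity combined with the mean-zero constraint then shows that the \emph{most dangerous} couplings between $\eta_0$ and the rest — those of schematic form $a^2 b$ in the cubic and $a^3 b$ in the quartic, which would otherwise destroy coercivity along the marginal direction — are \emph{only}
\begin{align*}
\intom{\eta_0^2\,\eta_2}\qquad\text{and}\qquad\intom{\eta_0^3\,\eta_3},
\end{align*}
with explicit coefficients $c_j^{\sigma_1,\sigma_2}$ and $c_\ell^{\sigma_1,\sigma_2,\sigma_3}$ read off from Proposition~\ref{trig}; in particular $\intom{\eta_0^2\eta_R}=\intom{\eta_0^3\eta_R}=\intom{\eta_0^2\eta_3}=\intom{\eta_0^3\eta_2}=0$ by orthogonality. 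Young's inequality then absorbs each such coupling into a fraction of $Q(\eta_2)$ (resp.\ $Q(\eta_3)$) plus a fraction of the positive quartic $\tfrac{(\beta_\sharp^4)^{-1}}{3}\intom{\eta_0^4}$; the quantitative content of~\eqref{ass1}--\eqref{ass2} is precisely the condition under which this absorption succeeds with a strict positive residual — the factors $\Theta(\cdot)^5,\Theta(\cdot)^9$ and the multiplicity $|P_2(k^\sharp)\cup P_3(k^\sharp)|$ arise from Parseval normalisations and a Cauchy--Schwarz across orbits. The remaining lower-order cross-terms ($ab^2$, $b^3$, and pure quartics in the $\eta_\ast$, $\ast\in\{2,3,R\}$) are easily controlled by the residuals of $Q(\eta_2)+Q(\eta_3)+Q(\eta_R)$ and of $\tfrac{(\beta_\sharp^4)^{-1}}{3}\intom{\eta^4}$.

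Summing these estimates yields a strictly positive lower bound on the expansion unless $\eta\equiv 0$, giving uniqueness of $\rho_\infty$ as the minimiser at $\beta_\sharp^4$, and \Dref{prop:strat!a} then identifies $\beta_c=\beta_\sharp^4$ as a continuous transition point. The main technical difficulty is the meticulous bookkeeping: one must enumerate every cubic and quartic cross-term via Proposition~\ref{trig}, verify by Parseval and the mean-zero condition that no dangerous coupling is missed beyond $\intom{\eta_0^2\eta_2}$ and $\intom{\eta_0^3\eta_3}$, and then tune the weights in Young's inequality so that the resulting absorption thresholds coincide exactly with the sharp hypotheses~\eqref{ass1}--\eqref{ass2}; without this precise matching the quantitative form of the theorem cannot be recovered.
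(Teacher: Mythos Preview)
Your overall strategy coincides with the paper's: reduce via \rdref{prop:strat!a} to proving that $\rho_\infty$ is the unique minimiser at $\beta_\sharp^4$, expand exactly (degree four), split off the $k^\sharp$-orbit component, and control the two genuinely dangerous couplings $\intom{\eta_0^2\eta_2}$ and $\intom{\eta_0^3\eta_3}$ against the interaction part of $Q$ on $P_2(k^\sharp)$, $P_3(k^\sharp)$ together with $\intom{\eta_0^4}$. For those two couplings your Young/discriminant argument is exactly the paper's, and it is correct that the thresholds it produces are precisely \eqref{ass1}--\eqref{ass2}.

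The gap is the sentence ``the remaining lower-order cross-terms ($ab^2$, $b^3$, and pure quartics \ldots) are easily controlled by the residuals of $Q(\eta_2)+Q(\eta_3)+Q(\eta_R)$ and of $\tfrac{(\beta_\sharp^4)^{-1}}{3}\intom{\eta^4}$.'' This is not easy, and as phrased it does not work. Writing $b:=\eta-\eta_0$, the leftover cubic and quartic cross-terms combine to
\[
\tfrac{(\beta_\sharp^4)^{-1}}{3}\intom{b^2\bigl[12\rho_\infty\eta_0+4\rho_\infty b+6\eta_0^2+4\eta_0 b+b^2\bigr]},
\]
which can be negative and is of the \emph{same order} as the entropy part of $Q(b)$; it cannot be absorbed by ``residuals'' left over after a Young step. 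The paper's resolution is an exact sum-of-squares identity: adding the \emph{full} entropy contribution $2(\beta_\sharp^4)^{-1}\rho_\infty^2\|b\|_{L^2}^2=\tfrac{(\beta_\sharp^4)^{-1}}{3}\intom{6\rho_\infty^2 b^2}$ to the bracket above yields
\[
6\rho_\infty^2+12\rho_\infty\eta_0+4\rho_\infty b+6\eta_0^2+4\eta_0 b+b^2
=2(\rho_\infty+\eta_0)^2+\bigl(b+2(\rho_\infty+\eta_0)\bigr)^2\ \geq\ 0,
\]
pointwise, with no smallness assumption on $\eta_0$ or $b$. This identity is what lets one spend the entropy part of $Q$ on the cross-terms while reserving the \emph{interaction} part of $Q$ (the $\hat W(k)/(2N_k)$ piece) solely for the two dangerous couplings---and it is exactly why \eqref{ass1}--\eqref{ass2} involve only $\hat W$ and not the entropy constant. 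Your proposal should make this identity explicit; once it is in place, the four-way split $\eta_0+\eta_2+\eta_3+\eta_R$ is unnecessary (the paper uses only $\eta_0$ and $b$) and the rest of your outline goes through verbatim.
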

\begin{proof}
We will rely on~\Dref{prop:strat!a} for the proof of this result. We need to show that, at $\beta=\beta_\sharp^4$, $\rho_\infty$ is the unique minimiser of $\cF_\beta^4$. Let $\rho \in \cP(\Omega) \in \Leb^\infty(\Omega)$ be any measure different from $\rho_\infty$. Then it is sufficient to show that $\cF_{\beta_\sharp^4}^4(\rho) > \cF_{\beta_\sharp^4}^4(\rho_\infty)$ (it is sufficient to check bounded densities from the result of~\cref{lem:eqbound}). We now define $\eta:= \rho -\rho_\infty$ and note that $\eta$ has the following properties
\begin{align}
\eta \in \Leb^\infty(\Omega), \qquad \eta \geq -\rho_\infty,  \qquad \intom{\eta}=0 \, .\label{etaprops}
\end{align}
We can compute the free energy of $\rho$ as follows
\begin{align}
\cF_{\beta_\sharp^4}^4(\rho)=& \frac{(\beta_\sharp^4)^{-1}}{3} \intom{\rho^4} -\frac{(\beta_\sharp^4)^{-1}}{3} + \frac{1}{2}\iintom{W(x-y) \rho(x) \rho(y)} \\
=& \frac{(\beta_\sharp^4)^{-1}}{3} \bra*{\intom{\rho_\infty^4} -1 + 4\intom{\rho_\infty^3 \eta} + 6\intom{\rho_\infty^2 \eta^2}  + 4\intom{\rho_\infty \eta^3} +\intom{ \eta^4}} \\& +\sum\limits_{k \in \N^d} \hat{W}(k)\frac{1}{2 N_k}\sum\limits_{\sigma \in \Sym_k(\Lambda)}|\hat{\eta}(\sigma(k))|^2 \, ,
\end{align}
where we have used~\eqref{Fourier:Interaction}. Simplifying further, by using the definition of $\beta_\sharp^4$ and the fact that $\eta$ has mean zero, we obtain
\begin{align}
\cF_{\beta_\sharp^4}^4(\rho)=&  \cF_{\beta_\sharp^4}^4(\rho_\infty) +\sum\limits_{k \in \N^d, k \neq k^\sharp} \bra*{6\frac{(\beta_\sharp^4)^{-1}}{3}\rho_\infty^2 + \hat{W}(k)\frac{1}{2 N_k}}\sum\limits_{\sigma \in \Sym_k(\Lambda)}|\hat{\eta}(\sigma(k))|^2  \\&+ \underbrace{4\frac{(\beta_\sharp^4)^{-1}}{3} \intom{\rho_\infty \eta^3}}_{I_1} + \underbrace{\frac{(\beta_\sharp^4)^{-1}}{3}\intom{ \eta^4} }_{I_2}  \, . \label{main}
\end{align}
We define $\eta_2:=\eta- f_{\eta,k^\sharp}$ where $f_{\eta,k^\sharp}=\sum_{\sigma \in \Sym_{k^\sharp}(\Lambda)}\hat{\eta}(\sigma(k^\sharp))e_{\sigma(k^\sharp)}$ and deal with the two terms $I_1$ and $I_2$ separately. We then have
\begin{align}
I_1=& 4\frac{(\beta_\sharp^4)^{-1}}{3} \intom{\rho_\infty \eta^3} \\
=& 4\frac{(\beta_\sharp^4)^{-1}}{3}\rho_\infty \left(\intom{\pra*{f_{\eta,k^\sharp}^3 
+ 3 f_{\eta,k^\sharp}^2 \eta_2 }} \right. \left.+ \intom{ \pra*{3f_{\eta,k^\sharp} \eta_2^2 + \eta_2^3}} \right)\\
=&4\frac{(\beta_\sharp^4)^{-1}}{3}\rho_\infty \intom{\pra*{3f_{\eta,k^\sharp}^2 \eta_2
+ 3 f_{\eta,k^\sharp} \eta_2^2  +  \eta_2^3 }} \, ,
\end{align}
where we have used the fact that 
\[
\intom{f_{\eta,k^\sharp}^3}=0 \, .
\]
 We now use the fact that $\eta$ has mean zero from~\eqref{etaprops} and~\cref{trig} to obtain
\begin{align}
I_1= & \frac{(\beta_\sharp^4)^{-1}}{3} \bra*{4 \rho_\infty \intom{\eta_2^3} + 12\rho_\infty \intom{f_{\eta,k^\sharp} \eta_2^2} }\\&
+4(\beta_\sharp^4)^{-1} \rho_\infty\sum_{j \in P_2(k^\sharp)} \sum_{\sigma_1, \sigma_2 \in \Sym_{k^\sharp}(\Lambda)}\hat{\eta}(\sigma_1(k^\sharp)) \hat{\eta}(\sigma_2(k^\sharp))c^{\sigma_1,\sigma_2}_{j} \hat{\eta_2}(\sigma_1\cdot\sigma_2(j)) \, . \label{I1}
\end{align}
For the second term we obtain
\begin{align}
I_2 =&\frac{(\beta_\sharp^4)^{-1}}{3}\intom{ \eta^4}  \\
=&\frac{(\beta_\sharp^4)^{-1}}{3}\intom{\pra*{f_{\eta,k^\sharp}^4 + 4 f_{\eta,k^\sharp}^3 \eta_2 + 6f_{\eta,k^\sharp}^2 \eta_2^2 + 4f_{\eta,k^\sharp} \eta_2^3 + \eta_2^4 }}  \, .
\end{align}
 Applying~\cref{trig} again, we obtain
\begin{align}
I_2 = &\frac{(\beta_\sharp^4)^{-1}}{3}\intom{ \eta^4}  \\
=&\frac{(\beta_\sharp^4)^{-1}}{3}\intom{\pra*{f_{\eta,k^\sharp}^4 + 6f_{\eta,k^\sharp}^2 \eta_2^2  +  4f_{\eta,k^\sharp} \eta_2^3 + \eta_2^4 }}   \\&+ 4 \frac{(\beta_\sharp^4)^{-1}}{3} \sum_{\ell \in P_3(\ell)} \sum_{\sigma_1, \sigma_2,\sigma_3 \in \Sym_{k^\sharp}(\Lambda)} \hat{\eta}\bra{\sigma_1(k^\sharp)}\hat{\eta}\bra{\sigma_2(k^\sharp)} \hat{\eta}\bra{\sigma_3(k^\sharp)}c^{\sigma_1,\sigma_2,\sigma_3}_{\ell} \hat{\eta_2}\bra{\sigma_1\cdot \sigma_2\cdot\sigma_3(\ell)}  \, , \label{I2}
\end{align}
where we have used the fact that $\hat{\eta_2}(\sigma(k^\sharp))=0$ for all $\sigma \in \Sym_{k^\sharp}(\Lambda)$. We now note that
\begin{align}
\sum\limits_{k \in \N^d, k \neq k^\sharp} \sum\limits_{\sigma \in \Sym_k(\Lambda)}|\hat{\eta}(\sigma(k))|^2= \norm{\eta_2}^2_{\Leb^2(\Omega)} \,. \label{symmetry}
\end{align}
Putting~\eqref{main},~\eqref{I1},~\eqref{I2}, and~\eqref{symmetry}, together we obtain
\begin{align}
\cF_{\beta_\sharp^4}^4(\rho)= &  \cF_{\beta_\sharp^4}^4(\rho_\infty) +\sum\limits_{k \in \N^d, k \neq k^\sharp} \bra*{\hat{W}(k)\frac{1}{2 N_k}}\sum\limits_{\sigma \in \Sym_k(\Lambda)}|\hat{\eta_2}(\sigma(k))|^2  \\&+4(\beta_\sharp^4)^{-1} \rho_\infty\sum_{j \in P_2(k^\sharp)} \sum_{\sigma_1, \sigma_2 \in \Sym_{k^\sharp}(\Lambda)}\hat{\eta}(\sigma_1(k^\sharp)) \hat{\eta}(\sigma_2(k^\sharp))c^{\sigma_1,\sigma_2}_{j} \hat{\eta_2}(\sigma_1\cdot\sigma_2(j))\\ &+  4 \frac{(\beta_\sharp^4)^{-1}}{3} \sum_{\ell \in P_3(\ell)} \sum_{\sigma_1, \sigma_2,\sigma_3 \in \Sym_{k^\sharp}(\Lambda)} \hat{\eta}\bra{\sigma_1(k^\sharp)}\hat{\eta}\bra{\sigma_2(k^\sharp)} \hat{\eta}\bra{\sigma_3(k^\sharp)}c^{\sigma_1,\sigma_2,\sigma_3}_{\ell} \hat{\eta_2}\bra{\sigma_1\cdot \sigma_2\cdot\sigma_3(\ell)}\\&+ \frac{(\beta_\sharp^4)^{-1}}{3} \int_{\Omega}\! \left[ 6\rho_\infty^2 + 12\rho_\infty f_{\eta,k^\sharp} + 4 \eta_2 \rho_\infty + 6f_{\eta,k^\sharp}^2 + 4f_{\eta,k^\sharp} \eta_2 + \eta_2^2 \right] \eta_2^2 \, \dx{x}  \\& + \frac{(\beta_\sharp^4)^{-1}}{3}\intom{f_{\eta,k^\sharp}^4} \,.
\end{align}
Note now that
\begin{align}
 &\left[ 6\rho_\infty^2 + 12\rho_\infty f_{\eta,k^\sharp}+ 4 \eta_2 \rho_\infty + 6f_{\eta,k^\sharp}^2 + 4f_{\eta,k^\sharp} \eta_2 + \eta_2^2 \right] \\
=& \left[2 \bra*{\rho_\infty  + f_{\eta,k^\sharp} }^2 + \bra*{\eta_2 + 2\bra*{\rho_\infty  + f_{\eta,k^\sharp} }}^2 \right] \geq 0 \, .
\end{align}
Thus, it follows that
\begingroup
\allowdisplaybreaks
\begin{align}
&\cF_{\beta_\sharp^4}^4(\rho) \\
\geq & \cF_{\beta_\sharp^4}^4(\rho_\infty) +\sum\limits_{k \in \N^d, k \neq k^\sharp} \bra*{\hat{W}(k)\frac{1}{2 N_k}}\sum\limits_{\sigma \in \Sym_k(\Lambda)}|\hat{\eta_2}(\sigma(k))|^2  \\&+4(\beta_\sharp^4)^{-1} \rho_\infty\sum_{j \in P_2(k^\sharp)} \sum_{\sigma_1, \sigma_2 \in \Sym_{k^\sharp}(\Lambda)}\hat{\eta}(\sigma_1(k^\sharp)) \hat{\eta}(\sigma_2(k^\sharp))c^{\sigma_1,\sigma_2}_{j} \hat{\eta_2}(\sigma_1\cdot\sigma_2(j))\\ &+  4 \frac{(\beta_\sharp^4)^{-1}}{3} \sum_{\ell \in P_3(\ell)} \sum_{\sigma_1, \sigma_2,\sigma_3 \in \Sym_{k^\sharp}(\Lambda)} \hat{\eta}\bra{\sigma_1(k^\sharp)}\hat{\eta}\bra{\sigma_2(k^\sharp)} \hat{\eta}\bra{\sigma_3(k^\sharp)}c^{\sigma_1,\sigma_2,\sigma_3}_{\ell} \hat{\eta_2}\bra{\sigma_1\cdot \sigma_2\cdot\sigma_3(\ell)}\\& + \frac{(\beta_\sharp^4)^{-1}}{3}\intom{f_{\eta,k^\sharp}^4}\\
\geq & \cF_{\beta_\sharp^4}^4(\rho_\infty) +\sum\limits_{k \in P_2(k^\sharp) \cup P_3(k^\sharp)} \bra*{\hat{W}(k)\frac{1}{2 N_k}}\sum\limits_{\sigma \in \Sym_k(\Lambda)}|\hat{\eta_2}(\sigma(k))|^2  \\&\\&+4(\beta_\sharp^4)^{-1} \rho_\infty\sum_{j \in P_2(k^\sharp)} \sum_{\sigma_1, \sigma_2 \in \Sym_{k^\sharp}(\Lambda)}\hat{\eta}(\sigma_1(k^\sharp)) \hat{\eta}(\sigma_2(k^\sharp))c^{\sigma_1,\sigma_2}_{j} \hat{\eta_2}(\sigma_1\cdot\sigma_2(j))\\ &+  4 \frac{(\beta_\sharp^4)^{-1}}{3} \sum_{\ell \in P_3(\ell)} \sum_{\sigma_1, \sigma_2,\sigma_3 \in \Sym_{k^\sharp}(\Lambda)} \hat{\eta}\bra{\sigma_1(k^\sharp)}\hat{\eta}\bra{\sigma_2(k^\sharp)} \hat{\eta}\bra{\sigma_3(k^\sharp)}c^{\sigma_1,\sigma_2,\sigma_3}_{\ell} \hat{\eta_2}\bra{\sigma_1\cdot \sigma_2\cdot\sigma_3(\ell)}\\& + \frac{(\beta_\sharp^4)^{-1}}{3}\intom{f_{\eta,k^\sharp}^4} \, ,
\label{eq:penineq}
\end{align}
\endgroup
where in the last step we have simply used the fact that $\hat{W}(k) \geq 0$ for all $k \neq k^\sharp$.  We now note that 
\begin{align}
\sum_{\sigma \in \Sym_k(\Lambda)} |\hat{\eta_2}(\sigma(k))|^2 =& \Theta\bra{k}^{-2} \sum_{\sigma_1, \sigma_2 \in \Sym_k(\Lambda)} |\hat{\eta_2}(\sigma_1 \cdot \sigma_2(k))|^2 \\ =& \Theta\bra{k}^{-4} \sum_{\sigma_1, \sigma_2, \sigma_3 \in \Sym_k(\Lambda)} |\hat{\eta_2}(\sigma_1 \cdot \sigma_2 \cdot \sigma_3(k))|^2 \,  ,
\end{align}
where we have used the fact that $\abs{\Sym_k(\Lambda)} =\Theta(k)^2$. Additionally, we have that 
\begin{align}
\intom{f_{\eta,k^\sharp}^4}=& \bra*{\sum_{\sigma \in \Sym_{k^\sharp}(\Lambda)}\abs*{\hat{\eta}(\sigma(k^\sharp))}^2}^2\intom{\bra*{\frac{1}{\bra*{\sum_{\sigma \in \Sym_{k^\sharp}(\Lambda)}\abs*{\hat{\eta}(\sigma(k^\sharp))}^2}^{1/2}}\sum_{\sigma \in \Sym_{k^\sharp}(\Lambda)}\hat{\eta}(\sigma(k^\sharp))e_{\sigma(k^\sharp)}}^4 }\\
\geq & \rho_\infty \bra*{\sum_{\sigma \in \Sym_{k^\sharp}(\Lambda)}\abs*{\hat{\eta}(\sigma(k^\sharp))}^2}^2  \, ,
\end{align}
where in the last step we applied Jensen's inequality and used the fact that the integrand has unit $\Leb^2(\Omega)$ norm. For any $k \in \N^d$, we define the following quantity
\begin{align}
\abs*{\hat{\eta}}_{k}^2=\sum_{\sigma \in \Sym_k(\Lambda)}\abs*{\hat{\eta}(\sigma(k))}^2 \,,
\end{align}
and note that
\begin{align}
\abs*{\hat{\eta}}_{k}^4 \geq \max_{\sigma_1,\sigma_2 \in \Sym_k(\Lambda)}\prod_{i=1}^2 \abs*{\hat{\eta}(\sigma_1(k))}^2 \abs*{\hat{\eta}(\sigma_2(k))}^2 \,.
\label{eq:seminormbound}
\end{align}
 Finally, we can rewrite the inequality in~\eqref{eq:penineq} as
\begin{align}
\cF_{\beta_\sharp^4}^4(\rho) \geq&  \cF_{\beta_\sharp^4}^4(\rho_\infty) + \sum_{j \in P_2(k^\sharp)} \sum_{\sigma_1,\sigma_2 \in \Sym_{k^\sharp}(\Lambda)}\bra*{A_j\abs*{ \hat{\eta_2}(\sigma_1 \cdot \sigma_2 (j))}^2 + B_j^{\sigma_1,\sigma_2} \hat{\eta_2}(\sigma_1 \cdot \sigma_2 (j)) + C_j} \\&+\sum_{\ell \in P_3(k^\sharp)}\sum_{\sigma_1,\sigma_2,\sigma_3 \in \Sym_{k^\sharp}(\Lambda)}\bra*{ A_\ell \abs*{\hat{\eta_2}(\sigma_1 \cdot \sigma_2\cdot \sigma_3 (\ell))}^2 + B_\ell^{\sigma_1,\sigma_2,\sigma_3} \hat{\eta_2}(\sigma_1 \cdot \sigma_2 \cdot\sigma_3 (\ell)) + C_\ell } \, , \label{final}
\end{align}
where
\begin{align}
&A_j=\frac{\hat{W}(j)}{2N_j}\Theta(j)^{-2} \,  &&A_\ell=\frac{\hat{W}(\ell)}{2N_\ell}\Theta(\ell)^{-4}  \\
&B_j^{\sigma_1,\sigma_2}=4(\beta_\sharp^4)^{-1} \rho_\infty c^{\sigma_1,\sigma_2}_{j} \prod_{i=1}^2 \hat{\eta}(\sigma_i(k^\sharp))  \, && B_\ell^{\sigma_1,\sigma_2,\sigma_3}= 4\frac{(\beta_\sharp^4)^{-1}}{3}  c^{\sigma_1,\sigma_2,\sigma_3}_{\ell}\prod_{i=1}^3\hat{\eta}\bra{\sigma_i(k^\sharp)} \\
&C_j= \frac{(\beta_\sharp^4)^{-1}}{ 3\Theta(j)^2\abs*{P_2(k^\sharp) \cup P_3(k^\sharp)}} \rho_\infty \abs*{\hat{\eta}}_{k^\sharp}^4   \, && C_\ell= \frac{(\beta_\sharp^4)^{-1}}{ 3\Theta(\ell)^4\abs*{P_2(k^\sharp) \cup P_3(k^\sharp)}} \rho_\infty \abs*{\hat{\eta}}_{k^\sharp}^4 
\end{align}
Assume that $\abs*{\hat{\eta}}_{k^\sharp} \neq 0$. Then~\eqref{ass1} and~\eqref{ass2} along with the expression for $\beta_\sharp^4$,~\eqref{eq:seminormbound}, and the fact that $\abs*{\hat{\eta}(k)} \leq N_{k} $,  imply that the discriminants of the quadratic expressions in~\eqref{final} are all negative, i.e. $\bra*{B_j^{\sigma_1,\sigma_2}}^2-4 A_jC_j <0,\bra*{B_\ell^{\sigma_1,\sigma_2,\sigma_3}}^2-4 A_\ell C_\ell <0$.  Indeed, we have that
\begin{align}
\frac{\bra*{B_j^{\sigma_1,\sigma_2}}^2}{4 A_jC_j} =&\frac{24 (\beta_\sharp^4)^{-1} \rho_\infty \Theta(j)^4 \bra*{c_j^{\sigma_1,\sigma_2}}^2 N_j\abs*{P_2(k^\sharp) \cup P_3(k^\sharp)}}{  \abs*{\hat{\eta}}_{k^\sharp}^4  \hat{W}(j) } \prod_{i=1}^2\abs*{\hat{\eta}(\sigma_i(k^\sharp))}^2 \\
\leq &\frac{6  \abs*{\hat{W}(k^\sharp)}  \Theta(j)^5 \bra*{c_j^{\sigma_1,\sigma_2}}^2 \abs*{P_2(k^\sharp) \cup P_3(k^\sharp)}}{ \rho_\infty \Theta(k^\sharp) \hat{W}(j) }  \stackrel{\eqref{ass1}}{<}1 \, . 
\end{align}
Similarly,
\begin{align}
\frac{\bra*{B_\ell^{\sigma_1,\sigma_2,\sigma_3}}^2}{4 A_\ell C_\ell } =&\frac{8 (\beta_\sharp^4)^{-1} \Theta(\ell)^8 \bra*{c_\ell^{\sigma_1,\sigma_2,\sigma_3}}^2 N_{\ell}\abs*{P_2(k^\sharp) \cup P_3(k^\sharp)}}{ 3 \rho_\infty \abs*{\hat{\eta}}_{k^\sharp}^4  \hat{W}(\ell) } \prod_{i=1}^3\abs*{\hat{\eta}(\sigma_i(k^\sharp))}^2  \\
\leq & \frac{2\abs{\hat{W}(k^\sharp)} \Theta(\ell)^9 \Theta\bra{k^\sharp} \bra*{c_\ell^{\sigma_1,\sigma_2,\sigma_3}}^2 \abs*{P_2(k^\sharp) \cup P_3(k^\sharp)}}{ 3  \rho_\infty^2  \hat{W}(\ell) }  \stackrel{\eqref{ass2}}{<}1 \, .
\end{align}
Thus, it follows that $\cF_{\beta_\sharp^4}^4(\rho)>\cF_{\beta_\sharp^4}^4(\rho_\infty)$. On the other hand if $\abs*{\hat{\eta}}_{k^\sharp} \neq 0$, the proof follows by noting that any contribution from the interaction energy is positive and that $\rho_\infty$ is the unique minimiser of $S_{\beta,4}(\rho)$. The fact that $\beta_c=\beta_\sharp^4$ is a consequence of~\cref{lem:ctp@cs}.
\end{proof}

\begin{remark}
Note that although the assumptions in~\cref{m=4} seem complicated, all they really require is that all Fourier coefficients of $W$, except the dominant negative mode $\hat{W}(k^\sharp)$ are nonnegative and that a finitely many of them ``positive enough'' compared to $\hat{W}(k^\sharp)$. Consider $d=1$, with $W(x)= w_1 e_1(x) + w_2 e_2(x) + w_3 e_3(x)$ with $w_1<0$ and $w_2,w_3>0$. If, for some explicitly computable positive constants $c_2,c_3>0$, $w_2 > c_2 \abs{w_1} $ and $w_3 > c_3 \abs{w_1}$, the conditions of~\cref{m=4} are satisfied and the transition point $\beta_c=\beta_\sharp^4$ is continuous. In this setting, $P_2(1)=\set{e_2}$ and $P_3(1)=\set{e_3}$. 
\end{remark}

\section{The mesa limit $m \to \infty$} \label{mesa}
A natural question to ask is ho w the sequence of free energies $\cF_\beta^m: \cP(\Omega) \to (-\infty,+\infty]$ behave in the limit as $m \to \infty$. We conjecture the following limit free energy, $\cF^\infty:\cP(\Omega) \to (-\infty,+\infty]$, 
\begin{align}
\cF^\infty (\rho)= 
\begin{cases}
\dfrac{1}{2}{\displaystyle \iintom{W(x-y) \rho(x) \rho(y)} }& \norm{\rho}_{\Leb^\infty(\Omega)} \leq 1 \\
+ \infty & \textrm{otherwise} 
\end{cases}
\, \label{eq:mesalim}.
\end{align}
This is analogous to the so-called mesa limit of the porous medium equation considered by Caffarelli and Friedman \cite{CF87}. It is also studied in~\cite{CKY18,CT20} for Newtonian interactions and~\cite{KPW19} for general drift-diffusion equations. We rederive the result in our setting.
\begin{theorem} \label{thm:mesa}
 Consider the sequence of functionals $\set{\cF_\beta^m}_{m \geq 1}$ defined on $\cP(\Omega) \cap \Leb^\infty(\Omega)$ equipped with the weak-$*$ topology.  Then
\[
\cF^\infty= \Gamma\mhyphen \lim\limits_{m \to \infty} \cF_\beta^m  \, ,
\]
 for any fixed $\beta>0$.
\label{thm:Gcon}
\end{theorem}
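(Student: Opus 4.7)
The plan is to verify the two defining inequalities of $\Gamma$-convergence in the weak-$*$ topology of $\Leb^\infty(\Omega)$. The interaction energy $\cE$ plays no real role: since $W \in C^2(\Omega)$, the convolutions $W \star \rho_m$ are uniformly bounded in $C^1(\Omega)$ as soon as $\rho_m$ has mass one, and pointwise convergence $W \star \rho_m \to W\star\rho$ follows from weak-$*$ convergence (since $W(x-\cdot) \in \Leb^1(\Omega)$); Arzel\`a--Ascoli upgrades this to uniform convergence, which is enough to pass to the limit in $\cE(\rho_m) = \tfrac12\intom{(W\star\rho_m)\rho_m}$ after the usual triangle splitting. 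All the work therefore lies in controlling the entropy $S_\beta^m$ through the dichotomy $\norm{\rho}_{\Leb^\infty(\Omega)} \lessgtr 1$.

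For the $\limsup$ inequality I would take the constant recovery sequence $\rho_m \equiv \rho$. When $\norm{\rho}_{\Leb^\infty(\Omega)} > 1$ the target $\cF^\infty(\rho) = +\infty$ and there is nothing to prove. When $\norm{\rho}_{\Leb^\infty(\Omega)} \leq 1$, the pointwise bound $\rho^m \leq \rho$ gives $\intom{\rho^m} \leq 1$, and Jensen's inequality applied to the normalized measure $\dx{x}/\abs{\Omega}$ yields the matching lower bound $\intom{\rho^m} \geq \abs{\Omega}^{1-m}$. The prefactor $\beta^{-1}/(m-1)$ then squeezes $S_\beta^m(\rho)$ to $0$, so $\cF_\beta^m(\rho) \to \cE(\rho) = \cF^\infty(\rho)$.

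For the $\liminf$ inequality, let $\rho_m \weakstar \rho$ in weak-$*$ $\Leb^\infty(\Omega)$, so that $\sup_m \norm{\rho_m}_{\Leb^\infty(\Omega)} < \infty$ by the uniform boundedness principle. I would split according to the size of $\norm{\rho}_{\Leb^\infty(\Omega)}$. If $\norm{\rho}_{\Leb^\infty(\Omega)} \leq 1$, the same Jensen bound gives $\intom{\rho_m^m} \geq \abs{\Omega}^{1-m}$, so $\liminf_m S_\beta^m(\rho_m) \geq 0$, and combined with $\cE(\rho_m) \to \cE(\rho)$ this closes the inequality. If instead $\norm{\rho}_{\Leb^\infty(\Omega)} > 1$, I would pick $\eps > 0$ and a measurable $A \subset \Omega$ with $\abs{A} > 0$ on which $\rho \geq 1 + \eps$; testing weak-$*$ convergence against $\chi_A \in \Leb^1(\Omega)$ gives $\int_A \rho_m \geq (1+\eps/2)\abs{A}$ for all large $m$, and a second application of Jensen on $A$ amplifies this to $\intom{\rho_m^m} \geq \abs{A}(1+\eps/2)^m$. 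The exponential blow-up beats the $1/(m-1)$ prefactor, so $S_\beta^m(\rho_m) \to +\infty$ while $\cE(\rho_m)$ stays uniformly bounded, forcing $\liminf \cF_\beta^m(\rho_m) = +\infty = \cF^\infty(\rho)$. The only slightly delicate point is the exponential-versus-polynomial balance in this second case; beyond that, the proof reduces to two applications of Jensen's inequality and the continuity of $\cE$ along weak-$*$ convergent sequences.
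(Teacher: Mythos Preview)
Your proof is correct and follows essentially the same strategy as the paper: constant recovery sequence, continuity of $\cE$ along weak-$*$ convergent sequences, and the dichotomy $\norm{\rho}_{\Leb^\infty(\Omega)} \lessgtr 1$ for the entropic term. The one noteworthy difference is in the $\liminf$ case with $\norm{\rho}_{\Leb^\infty(\Omega)} > 1$: where the paper splits $A$ into the level sets $A_m^{\pm} = \{x \in A : \rho_m \gtrless 1+\eps\}$ and argues that $\int_{A_m^+}\rho_m \geq \delta/2$ eventually (whence $\int \rho_m^m \geq (1+\eps)^{m-1}\delta/2$), you instead apply Jensen directly on $A$ to get $\int_A \rho_m^m \geq \abs{A}(1+\eps/2)^m$. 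Your route is a little cleaner and avoids the auxiliary sets, though both yield the same exponential lower bound; the paper's splitting has the minor advantage of retaining the full factor $(1+\eps)$ rather than $(1+\eps/2)$, which is irrelevant here.
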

\begin{proof}

\begin{enumerate}[wide, labelwidth=!, labelindent=0pt]
\item Recovery sequence: For each $\rho \in \cP(\Omega) \cap \Leb^\infty(\Omega)$ we choose $\rho_m=\rho$
as the recovery sequence. The interaction energy term remains unchanged as it is independent of $m$, while $(m-1)^{-1}$ converges to $0$ as $m \to \infty$. Assume first that $\norm{\rho}_{\Leb^\infty(\Omega)}>1$. It follows that there exists some $\eps>0$ and a set $A$ of positive measure susch that $\rho|_{A} > 1+ \epsilon$. Thus, we have
\[
\frac{\beta^{-1}}{m-1}\intom{\rho^m} \geq \frac{\beta^{-1}}{m-1}\abs{A}(1+\epsilon)^m \stackrel{m \to \infty}{\to} \infty \, ,
\] 
and thus $\cF_\beta^m(\rho) \to \infty$ for all $\norm{\rho}_{\Leb^\infty(\Omega)}>1$. Now, let us assume that $\norm{\rho}_{\Leb^\infty(\Omega)}\leq 1$. This gives us
\begin{align}
\frac{\beta^{-1}}{m-1}\intom{\rho^m}  \leq \frac{\beta^{-1}}{m-1} \norm{\rho}_\infty^{m-1} \stackrel{m \to \infty}{\to}0 \, ,
\end{align}
and thus completes the construction of the recovery sequence.

\item $\Gamma \mhyphen \liminf$: Assume that there exists $\set{\rho_m}_{m\geq1}$ such that $\rho_m \weak \rho$ in 
$\Leb^\infty$-weak-$*$. For $W \in C^2(\Omega)$, the interaction energy is continuous and so we can disregard its behaviour. We start with the case in which $\norm{\rho}_{\Leb^\infty(\Omega)} \leq 1$. In this case the entropic term, $S_\beta^m(\rho_m)$, can be controlled from below by 0 and thus the $\Gamma \mhyphen \liminf$ holds trivially. The other case left to treat is when 
$\norm{\rho}_{\Leb^\infty(\Omega)}>1$. This implies again that there exists some $\eps>0$ and a set of positive measure $A$ such that $\rho|_A >1+ \eps$.  It follows from the weak-$*$ convergence that
\begin{align}
\lim_{m \to \infty}\int_{A} \rho_m \dx{x} = (1+ \epsilon)\abs{A} + \delta \, ,
\end{align}
for some fixed positive constant $\delta>0$ independent of $m$. We define the sets $A^+_m:= \set{x \in A: \rho_m >(1+ \eps)}$ and $A^-_m: = A\setminus A^+_m$. There also exists $N \in \N$ such that for $m \geq N$, 
$\int_{A} \rho_m \dx{x} \geq (1+ \epsilon)\abs{A} + \delta/2$. Thus, for $m \geq N$ we have that
\begin{align}
\int_{A^+_m} \rho_m \dx{x} + \int_{A^-_m} \rho_m \dx{x} \geq  (1+ \epsilon)\abs{A^+_m} + (1+ \epsilon)\abs{A^-_m} + \delta/2
\end{align}
from which it follows that 
\begin{align}
\int_{A^+_m} \rho_m \dx{x} \geq \delta/2 \, .
\end{align}
 This gives us the estimate we need on the entropic term since
\begin{align}
\frac{\beta^{-1}}{m-1} \intom{\rho_m^m} \geq & \frac{\beta^{-1}}{m-1} \int_{A^+_m}\rho_m^m \dx{x}\\ 
\geq &\frac{\beta^{-1}}{m-1} (1+ \epsilon)^{m-1} \int_{A^+_m} \rho_m \dx{x} \\ \geq & \frac{\beta^{-1}}{m-1} (1+ \epsilon)^{m-1} \delta/2\, .
\end{align}
Passing to the limit as $m \to \infty$, the result follows.
\end{enumerate}
\end{proof}
We would now like to understand how the presence of phase transitions for finite $m$ affects the minimisers of $\cF^\infty$. This is discussed in the next result.
\begin{theorem}[Minimisers of the mesa problem] \label{thm:mesapt}
Let $\cF^\infty :\cP(\Omega) \to (-\infty,+\infty]$ be as defined in~\eqref{eq:mesalim}. Then
\begin{tenumerate}
\item If $\abs{\Omega}<1$, $\cF^\infty \equiv +\infty$.\label{thm:mept!a}
\item If $\abs{\Omega}=1$, $\cF^\infty(\rho)< +\infty$ if and only if $\rho=\rho_\infty$. Thus, $\rho_\infty$ is the unique minimiser of $\cF^\infty$. \label{thm:mept!b}
\item If $\abs{\Omega}>1$ and $W \in \HH_s$ and $W \not \equiv0 $, $\rho_\infty$ is the unique minimiser of $\cF^\infty$. On the other hand if $W \in \HH_s^c$ there exists  $\cP(\Omega) \ni\rho \neq \rho_\infty$ such that $\rho$ is the minimiser of $\cF^\infty$ with $\cF^\infty(\rho) < \cF^\infty(\rho_\infty)$. Furthermore, there exists a sequence, $\set{\rho_m}_{m \geq 1}$ of nontrivial minimisers of $\cF_\beta^m$
such that $\rho_m \weak \rho$ in $\Leb^\infty$-weak-$*$ as $m \to \infty$. \label{thm:mept!c}
\end{tenumerate}
\label{thm:mept}
\end{theorem}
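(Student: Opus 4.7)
The plan is to handle the three cases in sequence. Parts~\Dref{thm:mept!a}~and~\Dref{thm:mept!b} reduce to a constraint analysis: any $\rho \in \cP(\Omega)$ satisfies $\esup_{\Omega}\rho \geq \abs{\Omega}^{-1}$ by unit mass, so if $\abs{\Omega} < 1$ then necessarily $\norm{\rho}_{\Leb^\infty(\Omega)} > 1$ for every $\rho$ and $\cF^\infty \equiv +\infty$, while if $\abs{\Omega} = 1$ then $\norm{\rho}_{\Leb^\infty(\Omega)} \leq 1$ combined with $\intom{\rho} = 1 = \abs{\Omega}$ leaves no slack and forces $\rho \equiv \rho_\infty$ a.e., so $\rho_\infty$ is the unique finite-energy element and hence the unique minimiser.

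For part~\Dref{thm:mept!c} with $W \in \HH_s$ and $W \not\equiv 0$, I first note that $\abs{\Omega} > 1$ makes $\rho_\infty$ admissible. For any admissible $\rho \neq \rho_\infty$, I set $\eta := \rho - \rho_\infty \not\equiv 0$; using that $W$ has mean zero to kill the cross terms in $\cE(\rho)$, I arrive at
\begin{align}
\cF^\infty(\rho) - \cF^\infty(\rho_\infty) = \frac{1}{2}\iintom{W(x-y)\eta(x)\eta(y)} > 0,
\end{align}
where the strict positivity is precisely the final assertion of~\cref{def:Hstab}. Hence $\rho_\infty$ is the unique minimiser in this case.

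For part~\Dref{thm:mept!c} with $W \in \HH_s^c$, my plan splits into three steps. First, I will obtain a minimiser $\rho_*$ by the direct method: the admissible set $K := \set*{\rho \in \cP(\Omega): \norm{\rho}_{\Leb^\infty(\Omega)} \leq 1}$ is weak-$*$ compact in $\Leb^\infty(\Omega)$ by Banach--Alaoglu (closed and bounded in the weak-$*$ topology), and $\cE$ is weak-$*$ continuous because $W \in C^2(\Omega)$ gives $W \star \rho_n \to W \star \rho$ uniformly whenever $\rho_n \weakstar \rho$ (pointwise convergence against the continuous kernel, upgraded by Arzel\`a--Ascoli using equicontinuity of $\set*{W \star \rho_n}$). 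Second, I exhibit a competitor to show $\cF^\infty(\rho_*) < 0$: with $k^\sharp$ realising $\min_{k \in \N^d \setminus \set{0}} \hat{W}(k)/\Theta(k) < 0$, the state $\rho^\alpha := \rho_\infty + \alpha e_{k^\sharp}$ lies in $K$ for $\alpha > 0$ sufficiently small, and expanding $\cE$ with $W$ mean-zero produces $\cF^\infty(\rho^\alpha) = \frac{\alpha^2}{2}\iintom{W(x-y) e_{k^\sharp}(x)e_{k^\sharp}(y)}$, which by~\eqref{Fourier:Interaction} is a positive multiple of $\hat{W}(k^\sharp) < 0$; hence $\rho_* \neq \rho_\infty$.

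Third, for the convergence statement I will take the minimisers $\rho_m$ of $\cF_\beta^m$ supplied by~\cref{thm:exm}; by~\cref{cor:bbound} these satisfy a uniform $\Leb^\infty$-bound for $m$ in any interval bounded away from $1$, so Banach--Alaoglu extracts a subsequence $\rho_{m_k} \weakstar \bar\rho \in \cP(\Omega)$. The $\Gamma$-liminf half of~\cref{thm:mesa} yields $\cF^\infty(\bar\rho) \leq \liminf_{k \to \infty} \cF_\beta^{m_k}(\rho_{m_k})$, while minimality and the recovery sequence for $\rho_*$ (namely $\rho_*$ itself, since $\norm{\rho_*}_{\Leb^\infty(\Omega)} \leq 1$, as in the proof of~\cref{thm:mesa}) give $\cF_\beta^{m_k}(\rho_{m_k}) \leq \cF_\beta^{m_k}(\rho_*) \to \cF^\infty(\rho_*)$; combining these forces $\bar\rho$ to be a minimiser of $\cF^\infty$. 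Nontriviality $\rho_{m_k} \neq \rho_\infty$ for $k$ large follows by comparing $\cF_\beta^{m_k}(\rho_{m_k}) \leq \cF_\beta^{m_k}(\rho_*) \to \cF^\infty(\rho_*) < 0$ with $\cF_\beta^m(\rho_\infty) \to 0$ as $m \to \infty$ (which uses $\rho_\infty < 1$). The most delicate point is this third step: the interplay between the $\Gamma$-convergence of~\cref{thm:mesa} and the equicoercivity afforded by~\cref{cor:bbound} must be handled carefully to ensure both that the limit $\bar\rho$ is a bona fide minimiser and that the approximating sequence is eventually nontrivial.
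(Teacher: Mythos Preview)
Your proof is correct and follows essentially the same strategy as the paper: parts~(a)--(b) via the mass constraint, the $H$-stable case via~\cref{def:Hstab}, and the $H$-unstable case via the competitor $\rho_\infty + \alpha e_{k^\sharp}$ combined with the $\Gamma$-convergence of~\cref{thm:mesa} and the uniform bounds of~\cref{cor:bbound}.

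There is one organisational difference worth noting. You first establish existence of a minimiser $\rho_*$ of $\cF^\infty$ by the direct method and evaluate the competitor directly at the level of $\cF^\infty$, obtaining $\cF^\infty(\rho_*) < 0$; only then do you invoke $\Gamma$-convergence to connect the $\rho_m$ to a limiting minimiser. The paper instead runs the competitor estimate at the level of $\cF_\beta^m$, obtaining the uniform-in-$m$ gap $\cF_\beta^m(\rho_m) \leq \cF_\beta^m(\rho_\infty) - C_1\eps^2 + C_2\eps^3$, and then passes to the limit. Your ordering is arguably cleaner and makes the role of $\Gamma$-convergence more transparent; the paper's ordering has the advantage of directly producing nontrivial $\rho_m$ via~\cref{prop:tp} (using $\beta_\sharp^m \to 0$ when $\rho_\infty < 1$), whereas you deduce nontriviality a posteriori from the energy comparison $\cF_\beta^{m_k}(\rho_{m_k}) \to \cF^\infty(\rho_*) < 0 = \lim \cF_\beta^m(\rho_\infty)$. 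Both routes are valid; note only that in both your argument and the paper's the convergence $\rho_m \weakstar \rho$ is, strictly speaking, along a subsequence, with $\rho$ taken to be the resulting limit minimiser.
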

\begin{proof}
The proof of~\Dref{thm:mept!a} follows from the fact that if $\abs{\Omega}<1$, then for any $\rho \in \cP(\Omega) \cap \Leb^1(\Omega)$ there exists a set $A$ of positive measure such that $\rho(x) >1$ for all $x \in A$. Indeed, if this were not the case we would have that
\begin{align}
\intom{\rho} \leq \abs{\Omega} <1 \, ,
\end{align} 
which would be a contradiction. Thus, we have that $\norm{\rho}_{\Leb^\infty(\Omega)}>1$ for all $\rho \in \cP(\Omega) \cap \Leb^1(\Omega)$ and so $\cF^\infty \equiv \infty$.

The proof of~\Dref{thm:mept!b} is similar. If $\rho \neq \rho_\infty$, we can again find a set of positive measure $A$ such that $\rho(x)>1$ for all $x \in A$. We then repeat the same argument as in the previous case.

Assume now that $\abs{\Omega}>1$ and $W \in \HH_s, W \not \equiv 0$ (if $W$ is identically zero then clearly $\cF^\infty \equiv 0$). Since $W$ is mean-zero we have that 
\begin{align}
\cF^\infty(\rho_\infty)=0 \, .
\end{align}
On the other hand if $\cP(\Omega) \cap \Leb^\infty(\Omega) \ni \rho \neq \rho_\infty$, we know from~\cref{def:Hstab}, that
\begin{align}
\cF^\infty(\rho)=\frac{1}{2}\iintom{W(x-y) \rho(x) \rho(y)} >0 \, . 
\end{align}
Finally consider the case $W \in \HH_s^c$. Let $\beta>0$ be fixed and note that, since $\abs{\Omega}>1$, $\beta_\sharp^m \to 0$ as $m \to \infty$.  Clearly for $m$ large enough a nontrivial minimiser $\rho_m \in \cP(\Omega)$ exists for $\beta>0$ from the result of~\cref{prop:tp}. Consider the measure $\rho^\eps= \rho_\infty + \eps e_{k^\sharp}$ where $k^\sharp$ is as defined previously. We then have the following bound
\begin{align}
\cF_\beta^m(\rho_m) \leq \cF_\beta^m(\rho^\eps)= &  \cF_\beta^m(\rho_\infty) + \bra*{\beta^{-1}m \rho_\infty^{m-2} + \rho_\infty^{-1/2}\frac{\hat{W}(k^\sharp)}{\Theta(k^\sharp)} }\frac{\eps^2}{2}\norm{e_{k^\sharp}}_{\Leb^2(\Omega)}^2 \\
&+ \beta^{-1}m(m-2)\frac{\eps^{3}}{6}\intom{f^{m-3} e_{k^\sharp}^3} , 
 \end{align} 
 where the function $f(x) \in \bra{\rho_\infty, \rho^\eps(x)}$. Note that $\abs{f} \leq (\rho_\infty + \eps N_{k^\sharp})$. Thus, we have the bound
 \begin{align}
\cF_\beta^m(\rho_m) \leq \cF_\beta^m(\rho^\eps)\leq &  \cF_\beta^m(\rho_\infty) + \bra*{\beta^{-1}m \rho_\infty^{m-2} + \rho_\infty^{-1/2}\frac{\hat{W}(k^\sharp)}{\Theta(k^\sharp)} }\frac{\eps^2}{2}\norm{e_{k^\sharp}}_{\Leb^2(\Omega)}^2  \\
&+\beta^{-1}m(m-2)\frac{\eps^{3}}{6}(\rho_\infty + \eps N_{k^\sharp})^{m-3} N_{k^\sharp}^3 \abs{\Omega} \, ,
 \end{align}
 Additionally note  that if $\eps$ is small enough and $\rho_\infty<1$, the last term tends to $0$ as $m \to \infty$. Also since $W \in \HH_s^c$, the second term in the above expression is negative for $m$ large enough as $m \rho_\infty^m \to 0$ as $m \to \infty$. It follows from this that, for $m$ large enough, the following estimate holds
\begin{align}
\cF_\beta^m(\rho_m) \leq \cF_\beta^m(\rho^\eps)\leq &  \cF_\beta^m(\rho_\infty) -C_1 \eps^2 +C_2\eps^3  \, , \label{eq:Fgap}
\end{align}
where $C_1,C_2>0$ are independent of $m$. it hus follows from~\cref{thm:Gcon}, 
\eqref{eq:Fgap}, and the definition of $\Gamma$-convergence that
\begin{align}
\cF^\infty(\rho)< \cF^\infty(\rho_\infty) \, ,
\end{align}
where $\rho \in \cP(\Omega)$ is the minimiser of $\cF^\infty$. Thus, $\rho \neq \rho_\infty$ and the result follows.
 \end{proof}
\section{Numerical experiments}\label{numexp}
The numerical experiments in this section are meant to shed light on the qualitative features of the global bifurcation diagram of the system, while also serving as a source of possible conjectures that can be studied in future work. They were performed using a modified version on the numerical scheme in~\cite{CCH15}. 

\subsection{Discontinuous bifurcations for $m >2$ and $W=-\cos(2 \pi x/L)$}
\begin{figure}[]
\centering
\includegraphics[scale=0.6]{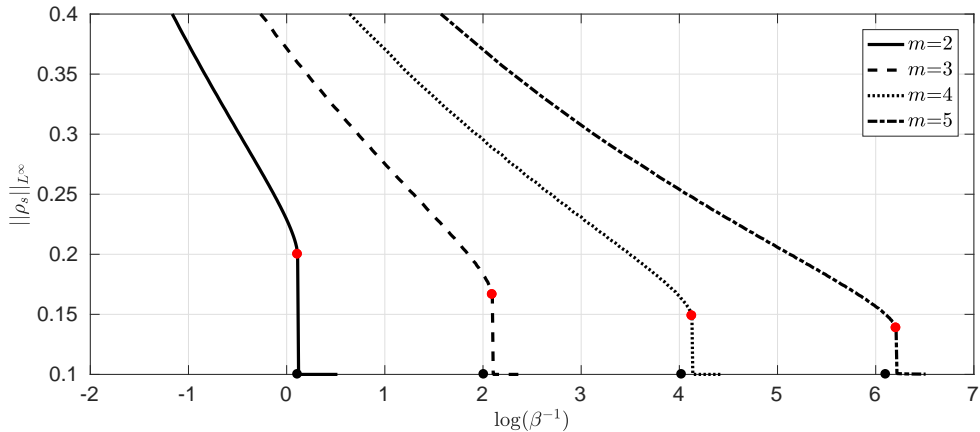}
\caption{Stationary solutions in the long-time limit for $m\geq2$ and $W=-\cos(2 \pi x/L)$. The black dot denotes the point of linear stability $\beta_\sharp^m$ while the red dot denotes the value of $\beta$ at which the support of the stationary solution is a strict subset of $\T$. Note that $\norm{\rho_{s}}_{\Leb^\infty}=0.1$ corresponds to the flat state $\rho_\infty$.}
\label{dctpmlarge}
\end{figure}
\cref{dctpmlarge} shows the branches of stationary solutions obtained in the long-time limit for $m \geq 2$
and $W=-\cos(2 \pi x/L)$. The black dot denotes the point of linear stability $\beta_\sharp^m$ while the 
red dot denotes the value of $\beta$ at which the support of the stationary solution is a strict subset of $\T$. Note that the diagram does not necessarily reflect the actual bifurcation diagram of the system as it is obtained from the long-time dynamics and thus will only see stable solutions. We already know that this choice of $W$ satisfies the conditions of~\cref{thm:bif} and so there will a bifurcation at $\beta_\sharp^m$ (the black points in~\cref{dctpmlarge}). One would expect this branch to turn to the right for $m \in (2,3)$ (cf.~\cref{critical}) and then turn back. We conjecture that the red points are all saddle-node bifurcations and correspond to discontinuous phase transitions for $m\geq2$ due to~\cref{lem:ctp@cs} and the fact that they lie ahead of the corresponding $\beta_\sharp^m$. 

\subsection{The mesa minimisers}
In~\cref{mesafig}, we plot the stationary solutions observed in the long-time limit for $m$ large and $\beta>\beta_c$. Since the stationary solutions are potentially minimisers of $\cF_\beta^m$ and the minimisers converge to the minimisers of $\cF^\infty$ as $m \to \infty$ (cf.~\cref{thm:mesa}), the plots in~\cref{mesafig} provide us with some information about the structure of the minimisers of the mesa problem. It seems to be that they converge to the indicator function of some fixed set. A natural next question one can ask is what happens to the continuity of phase transitions in the limit as $m \to \infty$.

\begin{figure}[]
\centering
\includegraphics[scale=0.6]{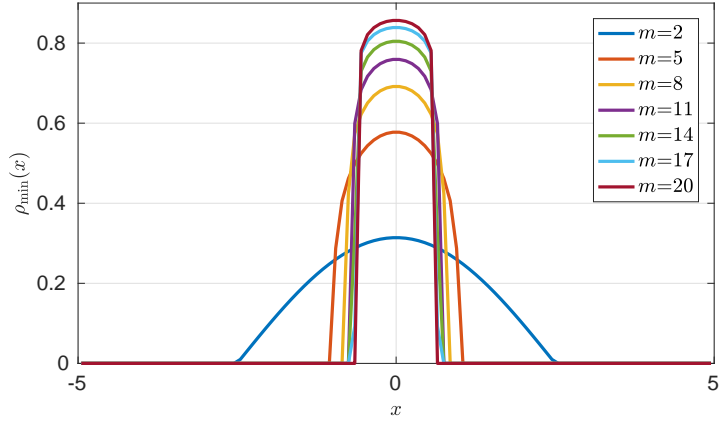}
\caption{Stationary solutions/minimisers for $m$ large and $\abs{\Omega}>1$. The limiting object seems to be the indicator function of some interval.}
\label{mesafig}
\end{figure}

\section{Proof of H\"older regularity}\label{sec:proofofholder}
We divide the proof into two parts. In~\cref{apriori}, we derive some a priori estimates that will be useful in the proof of regularity. In~\cref{proof}, we perform the so-called reduction of oscillation scheme and complete the proof of~\cref{holder}.  As mentioned earlier, readers interested only in bifurcations and phase transitions can skip directly to~\cref{bif}.

Before turning to the proof of~\cref{holder}, we introduce some notation. Since the equation~\eqref{eq:agdif} is invariant under translations of the co-ordinate axis, we define the parabolic cylinder
\begin{align}
Q(\tau,R)=\pra{-R,R}^d \times [-\tau,0] \, ,
\end{align}
centred at $(0,0)$ and note that we can move it to any point by adding $(x_0,t_0)$. We also used $K_R$
as a shorthand for $[-R,R]^d$. We denote the parabolic boundary by
\begin{align}
\partial_p Q(\tau,R)= \partial K_R \times (-\tau,0) \cup K_R \times \set{-\tau,0} \, . 
\end{align}
We use the following shorthand notation:
\[
w_+= \max(w,0), \qquad w_{-}=-\min(w,0), \qquad \rho^\ell_+= \min(\rho,\ell), \qquad \rho^\ell_-= -\min(-\rho,-\ell) \, .
\]
Additionally, we consider the cut-off functions $\zeta$ such that
\[
0 \leq \zeta \leq 1, \qquad \abs{\nabla \zeta}< +\infty,  \qquad \zeta(x,t)=0, x \notin  K_R \, .
\]
Through the rest of this section we will also use $f(x,t)$ to denote $W \star \rho(x,t)$. Note that
\begin{align}
\norm{\nabla f}_{\Leb^\infty(\Omega)} \leq \norm{\nabla W}_{\Leb^\infty(\Omega)}, \qquad \norm{D^2 f}_{\Leb^\infty(\Omega)} \leq \norm{D^2 W}_{\Leb^\infty(\Omega)} 
\end{align}
The reader should note that proof of regularity holds for any $f \in C^2(\Omega)$ that for which one can prove bounds of the kind shown above. We note before starting the proof that all estimates in the  proof have constants that depend continuously on $\beta>0$. Thus, the H\"older exponent $a$ and semi-norm $\abs{\rho}_a$ also depend continuously on $\beta>0$. 

\subsection{A priori estimates}\label{apriori}
 There are two a priori estimates that play a key role in the proof of H\"older  regularity: a Cacciopoli-type energy estimate and a logarithmic estimate. The proof of the energy estimate is essentially the same as~\cite[Proposition 2.4]{UrB08} and we state it without proof.
\begin{lemma}[Energy estimates]\label{lem:eest}
Pick $k, \ell \in \R_+$ and some cut-off function $\zeta$, such that $\zeta= 0$ on $\partial_p Q(\tau,R)$.  Then it holds for any weak solution of~\eqref{eq:agdif} that
\begin{align}~\label{eq:leep}
&\frac{1}{2} \pra*{\esup_{ t \in [-\tau,0]} \int_{K_R \times \set{t} } \bra{\rho^\ell_+ -k }_+^2 \zeta^2 \dx{x}  + \int_{Q(\tau,R)} (\rho^\ell_+)^{m-1 }\abs{\nabla (\rho^\ell_+ -k)_+ \zeta}^2 \dx{x} \dx{t} }  \\
\leq & \int_{Q(\tau,R)} (\rho^\ell_+ -k)_+^2 \zeta \zeta_t \dx{x} \dx{t} + 2(\ell -k)_+ \int_{Q(\tau,R)}(\rho- \ell)_+ \zeta \zeta_t \dx{x} \dx{t} \\&+ 2m\beta^{-1} \int_{Q(\tau,R)}(\rho^\ell_+ - k)_+^2(\rho^\ell_+)^{m-1}\abs{\nabla \zeta}^2 \dx{x} \dx{t}\\& +2m \beta^{-1} (\ell -k)_+ \int_{Q(\tau,R)} \bra*{\int_{\ell}^\rho s^{m-1} \dx{s}} (\abs{\nabla \zeta}^2 + \zeta \Delta \zeta) \chi_{\rho \geq \ell} \dx{x} \dx{t} \\&+  \int_{Q(\tau,R)} \abs{\nabla f}\abs{\zeta} \abs{\nabla \zeta} (\rho^\ell_+ - k)_+^2 \dx{x} \dx{t} +  \int_{Q(\tau,R)} \abs{\Delta f } (\rho^\ell_+ - k)_+^2  \zeta^2 \dx{x} \dx{t} \, .
\end{align}
Similarly we have,
\begin{align}~\label{eq:leem}
&\frac{1}{2} \pra*{\esup_{t\in [-\tau,0]} \int_{K_R \times \set{t} } \bra{\rho^\ell_- -k }_-^2 \zeta^2 \dx{x}  + \int_{Q(\tau,R)} (\rho^\ell_-)^{m-1 }\abs{\nabla (\rho^\ell_- -k)_- \zeta}^2 \dx{x} \dx{t} }   \\
\leq & \int_{Q(\tau,R)} (\rho^\ell_- -k)_-^2 \zeta \zeta_t \dx{x} \dx{t} + 2(\ell -k)_- \int_{Q(\tau,R)}(\rho- \ell)_- \zeta \zeta_t \dx{x} \dx{t} \\&+ 2m\beta^{-1} \int_{Q(\tau,R)}(\rho^\ell_- - k)_-^2(\rho^\ell_-)^{m-1}\abs{\nabla \zeta}^2 \dx{x} \dx{t} \\&-2m\beta^{-1} (\ell -k)_- \int_{Q(\tau,R)} \bra*{\int_{\ell}^\rho s^{m-1} \dx{s}} (\abs{\nabla \zeta}^2 + \zeta \Delta \zeta) \chi_{\rho \leq \ell} \dx{x} \dx{t} \\&+  \int_{Q(\tau,R)} \abs{\nabla f}\abs{\zeta} \abs{\nabla \zeta} (\rho^\ell_- - k)_-^2 \dx{x} \dx{t} +  \int_{Q(\tau,R)} \abs{\Delta f } (\rho^\ell_- - k)_-^2  \zeta^2 \dx{x} \dx{t} \, .
\end{align}
\end{lemma}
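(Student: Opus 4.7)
The plan is to follow the classical Caccioppoli-type argument for degenerate parabolic equations developed by DiBenedetto for the porous medium equation (see~\cite{UrB08,DiB79}), adapted to accommodate the nonlocal drift $\nabla f = \nabla W \star \rho$. I focus on the estimate~\eqref{eq:leep}; the companion estimate~\eqref{eq:leem} follows by symmetric manipulations applied to the below-level truncation. The starting point is to test the weak formulation~\eqref{eq:weakform} with $\phi = (\rho_+^\ell - k)_+ \zeta^2$, after first regularising $\rho$ via its Steklov averages in time and passing to the limit at the end, as is standard.

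For the parabolic term, the key identity is
\begin{align}
\rho_t \, (\rho_+^\ell - k)_+ = \partial_t\!\left[\tfrac{1}{2}(\rho_+^\ell - k)_+^2 + (\ell - k)_+ (\rho - \ell)_+\right],
\end{align}
verified by splitting according to the three regions $\{\rho \leq k\}$, $\{k < \rho < \ell\}$ and $\{\rho \geq \ell\}$. Integrating by parts in time and using $\zeta(\cdot, -\tau) = 0$ produces the essential-supremum term on the LHS of~\eqref{eq:leep} together with the first two $\zeta \zeta_t$ integrals on the RHS. For the diffusion term, I would write $\nabla(u \zeta^2) = \zeta^2 \nabla u + 2 u \zeta \nabla \zeta$ with $u := (\rho_+^\ell - k)_+$. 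On $\{k < \rho < \ell\}$ one has $\nabla u = \nabla \rho$ and $\rho^{m-1} = (\rho_+^\ell)^{m-1}$, so the first piece gives exactly the LHS gradient term. Young's inequality applied to the cross term absorbs a fraction of this gradient back into the LHS and produces the third RHS integral. The tail contribution from $\{\rho \geq \ell\}$ (where $u$ is constant but $\rho^{m-1}$ is not) is handled by an additional spatial integration by parts, yielding the fourth RHS term through the primitive $\int_\ell^\rho s^{m-1}\, ds$.

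The new ingredient is the drift $\int \rho \nabla f \cdot \nabla(u \zeta^2)$. I would introduce the primitive $H(\rho) := \int_0^\rho s\,\chi_{\{k < s < \ell\}}\, ds$ so that $\rho \nabla u = \nabla H(\rho)$ holds in the Sobolev sense. An integration by parts transfers the derivative onto $\zeta^2 \nabla f$, producing $\int H(\rho)(2\zeta \nabla \zeta \cdot \nabla f + \zeta^2 \Delta f)$; combined with the cross term $2\int \rho u \zeta \nabla f \cdot \nabla \zeta$ and the tail correction from $\{\rho \geq \ell\}$, and after reabsorbing lower-order pieces into the $(\ell-k)_+$ boundary terms already present, one recovers the two stated drift integrals. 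The main obstacle is precisely this reorganisation: one must arrange the algebra so that the quadratic weight in the drift contribution is the truncated $(\rho_+^\ell - k)_+^2$ rather than the naive $\rho^2$; the primitive-based integration by parts just described is the natural modification of the classical porous-medium Caccioppoli scheme that accomplishes this, and it is the sole place where the $C^2$-regularity of $W$ enters in an essential way.
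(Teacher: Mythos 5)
Your proposal follows exactly the route the paper itself prescribes: the paper states the lemma without a written proof, remarking only that one tests the Steklov-averaged weak formulation against $((\rho^\ell_{\pm})_h-k)_{\pm}\zeta^2$, integrates by parts, applies the bounds of Urbano's Proposition~2.4 (and the porous-medium analogue in~\cite{AR}), and passes to the limit $h\to 0$ — which is precisely your plan, including the standard time-derivative identity and the Young's-inequality absorption for the diffusion term. Your additional detail on the drift term (the primitive $H(\rho)$ and the integration by parts onto $\zeta^2\nabla f$) is a reasonable elaboration of the "similar bounds" the paper leaves implicit, so the approaches coincide.
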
 
\cre{We note that Urbano~\cite[Proposition 2.4]{UrB08} proves the above energy estimate for the $p$-Laplace equation, $\partial_t \rho- \Delta_p \rho =0$. The proof in our setting follows the same technique. We test the weak formulation in~\cref{thm:weak2} (see page~\pageref{thm:weak2}) against $\phi= ((\rho_{\pm}^\ell)_h-k)_{\pm}\zeta^2$, for some cut-off function $\zeta$ supported in $Q(\tau,R)$ and integrate by parts. Applying similar bounds as in~\cite[Proposition 2.4]{UrB08} and then passing to the limit as $h \to 0$, we obtain the desired energy estimate. We also refer the reader to~\cite[Proposition 2.7]{AR} where the proof of the energy estimate is carried out for the porous medium equation, $\partial_t \rho - \Delta \rho^m=0$, which is closer in structure to~\eqref{eq:agdif}.} We now move on to the logarithmic estimate. The proof of this needs to be adapted from the classical estimate in the presence of the drift term $\nabla \cdot (\nabla f \rho)$. Before stating and proving it, we introduce the following function
\begin{align}
\psi^{\pm}(s)= \psi^{\pm}_{k,c}(s) := \bra*{\ln \bra*{\frac{H^\pm_{s,k}}{(H^\pm_{s,k}+c) - (s-k)_\pm}   }}_+, \quad 0<c< H^\pm_{s,k} ,
\end{align}
where $s$ is a bounded, measurable function on $Q(\tau,R)$ and 
\begin{align}
H^\pm_{s,k}=  \esup_{Q(\tau,R)} \abs{(s-k)_\pm} \,.
\end{align} 
The function has certain useful properties, namely,
\begin{align}
0 \leq \psi^\pm(s) \\
(\psi^+)'(s) \geq 0, \quad (\psi^-)'(s) \leq 0 \\
(\psi^{\pm})''= ((\psi^\pm)')^2 \, .
\end{align}
We also need to define the Steklov  average for any $\rho \in \Leb^1(\Omega \times [0,T])$ for any $0<h<T$ as follows
\begin{align}
\rho_h:=
\begin{cases}
h^{-1}\int_t^{t+h} \rho(\cdot, \tau) \dx{\tau} &  0 \leq t \leq T-h \\
0 & \textrm{otherwise}
\end{cases}
\, .
\end{align}
The Steklov average has certain nice properties which we state without proving.
\begin{lemma}\cite[Lemma 2.2]{UrB08}
Let $\rho \in \Leb^q([0,T]; \Leb^r(\Omega))$ then $\rho_h$ converges to $\rho$ in $\rho \in \Leb^q([0,T]; \Leb^r(\Omega))$ as $h \to 0$ for $q,r \in (1,\infty)$. Additionally, if $\rho \in C([0,T]; \Leb^2(\Omega))$, then $\rho_h(\cdot,t)$ converges to $\rho(\cdot,t)$  in $\Leb^q(\Omega)$ for $t \in [0,T]$.
\end{lemma}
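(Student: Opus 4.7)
The plan is to treat the Steklov averaging $\rho \mapsto \rho_h$ as a bounded linear operator on $\Leb^q([0,T]; \Leb^r(\Omega))$ and exploit density plus a Lebesgue-differentiation argument, in the classical pattern. First I would verify uniform boundedness: by Jensen's inequality applied inside the time integral and then Minkowski's integral inequality in space,
\begin{align}
\norm{\rho_h(\cdot,t)}_{\Leb^r(\Omega)} \leq h^{-1}\int_t^{t+h}\norm{\rho(\cdot,\tau)}_{\Leb^r(\Omega)} \dx{\tau},
\end{align}
and another application of Jensen in time shows $\norm{\rho_h}_{\Leb^q([0,T];\Leb^r(\Omega))} \leq \norm{\rho}_{\Leb^q([0,T];\Leb^r(\Omega))}$ (after extending $\rho$ by zero past $T$, which only affects a vanishing set as $h \to 0$). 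Thus the family $\set{S_h}_{h>0}$ of averaging operators is equibounded on $\Leb^q([0,T]; \Leb^r(\Omega))$.

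Next I would establish the convergence on a dense subspace. For $\rho \in C_c^\infty(\Omega \times (0,T))$ the integrand is continuous, so the fundamental theorem of calculus gives pointwise (in $(x,t)$) convergence $\rho_h \to \rho$, and dominated convergence then upgrades this to convergence in $\Leb^q([0,T]; \Leb^r(\Omega))$. An $\varepsilon/3$-argument using density of $C_c^\infty$ in $\Leb^q([0,T]; \Leb^r(\Omega))$ for $q,r \in (1,\infty)$ and the equiboundedness established above yields convergence for arbitrary $\rho$; this is the standard Banach--Steinhaus-type reasoning for mollifiers.

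For the second statement, with $\rho \in C([0,T]; \Leb^2(\Omega))$ and $t$ fixed, I would use the continuity of $\tau \mapsto \rho(\cdot,\tau)$ in $\Leb^2(\Omega)$ together with Minkowski's integral inequality:
\begin{align}
\norm{\rho_h(\cdot,t) - \rho(\cdot,t)}_{\Leb^2(\Omega)} \leq h^{-1}\int_t^{t+h} \norm{\rho(\cdot,\tau) - \rho(\cdot,t)}_{\Leb^2(\Omega)} \dx{\tau}.
\end{align}
Given $\varepsilon>0$, continuity provides $\delta>0$ with $\norm{\rho(\cdot,\tau)-\rho(\cdot,t)}_{\Leb^2(\Omega)} < \varepsilon$ whenever $\abs{\tau-t}<\delta$, so for $h<\delta$ the right-hand side is at most $\varepsilon$. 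This gives $\rho_h(\cdot,t) \to \rho(\cdot,t)$ in $\Leb^2(\Omega)$ for every $t \in [0,T]$ (reading the stated $\Leb^q(\Omega)$ as $\Leb^2(\Omega)$, which is all the continuity hypothesis supplies without further integrability).

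There is no real obstacle here; the only care needed is handling the boundary behaviour near $t = T$ (where the definition sets $\rho_h \equiv 0$), but this affects only an interval of length $h$ which is harmless after integration in $t$, and is irrelevant for the pointwise statement when $t<T$. The result is entirely classical and is included here only to set up the proof of the logarithmic estimate in the next lemma.
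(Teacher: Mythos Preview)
The paper does not prove this lemma at all: it is stated with the citation \cite[Lemma 2.2]{UrB08} and the preceding sentence explicitly says ``The Steklov average has certain nice properties which we state without proving.'' Your argument is correct and entirely standard --- equiboundedness of the averaging operators via Jensen/Minkowski, convergence on a smooth dense class, then an $\varepsilon/3$ argument --- and is precisely the classical proof one finds in the cited reference. Your observation that the $\Leb^q(\Omega)$ in the second assertion should be read as $\Leb^2(\Omega)$ is also right; that is a typo in the paper's statement.
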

Using this we have the following alternative notion of a weak solution of
\begin{defn}\label{thm:weak2}
A weak solution of~\eqref{eq:agdif} is a bounded measurable function 
\[
\rho \in C([0,T]; \Leb^2(\Omega)) 
\]
with
\[
\rho^m \in \Leb^2([0,T]; \SobH^1(\Omega)) \, ,
\]
such that
\begin{align}
\int_{\Omega \times \set{t}} \partial_t (\rho_h) \phi + m\beta^{-1} (\rho^{m-1}\nabla \rho)_h \cdot \nabla \phi +
(\rho \nabla W \star \rho)_h \cdot \nabla \phi \, \dx{x}=0 \, ,
\label{eq:steklov}
\end{align}
for all $\phi \in \SobH^1_0(\Omega)$, $h \in (0,T)$, $t \in (0,T]$ and $\rho(x,0)=\rho_0$.
\end{defn}
\begin{proposition}\cite{UrB08}
The notion of weak solution introduced in~\cref{thm:weak1} and~\cref{thm:weak2} are equivalent.
\end{proposition}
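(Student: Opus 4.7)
The plan is to derive each formulation from the other by appropriate choices of test functions, with the Steklov averaging and its convergence properties playing the key role.

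First I would show that a solution in the sense of \cref{thm:weak1} satisfies \cref{thm:weak2}. Fix $\psi \in \SobH^1_0(\Omega)$, $h \in (0,T)$, and $t \in (0, T-h)$. The idea is to insert into \eqref{eq:weakform} a test function of the form $\phi(x,s) = \chi_{[t, t+h]}(s) \psi(x)$. Since this is not admissible as written (it is not $H^1$ in time), I would first approximate $\chi_{[t,t+h]}$ by a smooth sequence of piecewise linear cutoffs $\chi_\varepsilon$, apply \eqref{eq:weakform} with $\phi = \chi_\varepsilon \psi$, and then pass to the limit $\varepsilon \to 0$. Continuity of $\rho$ in $t$ with values in $\Leb^2(\Omega)$ handles the time boundary contributions, and dividing through by $h$ immediately produces \eqref{eq:steklov} once one recognises the Steklov averages of $\rho^{m-1}\nabla\rho$ and $\rho\,\nabla W\star\rho$.

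Next I would treat the converse. Given $\phi \in \SobH^1([0,T]; \Leb^2(\Omega)) \cap \Leb^2([0,T]; \SobH^1_0(\Omega))$, the plan is to test \eqref{eq:steklov} against $\phi(\cdot, s)$ and integrate over $s \in [0, T-h]$. An integration by parts in time gives
\[
\int_0^{T-h}\!\!\!\intom{\partial_s \rho_h \, \phi}\dx{s} = \left[\intom{\rho_h \phi}\right]_0^{T-h} - \int_0^{T-h}\!\!\!\intom{\rho_h\, \partial_s \phi}\dx{s},
\]
after which one lets $h \to 0$. Using the convergence results for Steklov averages recalled just before the proposition, $\rho_h \to \rho$ in $\Leb^2([0,T];\Leb^2(\Omega))$ and $(\rho^{m-1}\nabla \rho)_h \to \rho^{m-1}\nabla\rho$, $(\rho\,\nabla W\star\rho)_h \to \rho\, \nabla W \star \rho$ in the appropriate Bochner spaces, while $\rho_h(\cdot,s) \to \rho(\cdot,s)$ in $\Leb^2(\Omega)$ for each $s$ because $\rho \in C([0,T];\Leb^2(\Omega))$. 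These ingredients together yield \eqref{eq:weakform}.

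The main obstacle, in my view, is ensuring that the nonlinear flux $m\beta^{-1}\rho^{m-1}\nabla\rho = \beta^{-1}\nabla \rho^m$ and the drift $\rho\,\nabla W \star \rho$ lie in spaces where the Steklov average converges strongly enough to pass to the limit after pairing with $\nabla\phi$. This is guaranteed by the regularity built into the definition of weak solution: $\rho^m \in \Leb^2([0,T];\SobH^1(\Omega))$ together with $\rho \in \Leb^\infty(\Omega_T)$ and $W \in C^2(\Omega)$ place both quantities in $\Leb^2(\Omega_T)$, which is exactly what is needed. Everything else reduces to standard mollification and dominated convergence arguments.
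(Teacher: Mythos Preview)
The paper does not actually supply a proof of this proposition; it simply states the equivalence and attributes it to~\cite{UrB08}. Your sketch is the standard argument for this kind of result and is correct in outline: approximate the time-indicator by $H^1$ cutoffs to pass from the integrated formulation to the Steklov-averaged one, and conversely integrate the Steklov formulation in time, integrate by parts, and use the convergence lemma for Steklov averages together with the regularity $\rho^m\in\Leb^2([0,T];\SobH^1(\Omega))$, $\rho\in\Leb^\infty(\Omega_T)$, $W\in C^2(\Omega)$ to pass to the limit $h\to 0$. There is nothing to compare against beyond the cited reference, and your approach matches the one found there.
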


\begin{lemma}[Logarithmic estimates] \label{lem:logest}
Let $\rho$ be a nonnegative weak solution of~\eqref{eq:agdif} and $\zeta$ be a time-independent cut-off function, then it holds that
\begin{align}
\int_{K_R \times \set{t}} ((\psi^\pm)^2)(\rho) \zeta^2\dx{x} \leq & \int_{K_R \times \set{-\tau}} ((\psi^\pm)^2)(\rho) \zeta^2\dx{x}  - 2m\beta^{-1}\int_{-\tau}^t \int_{K_R \times \set{s}}  (\rho^{m-1}\abs{\nabla \rho}^2  ((\psi^\pm)'(\rho))^2 \zeta^2) \dx{x} \dx{s} \\&+2m\beta^{-1} \int_{-\tau}^t \int_{K_R \times\set{s}} \rho^{m-1} \psi^\pm(\rho)\abs{\nabla \zeta}^2 \dx{x} \dx{s}\\& +2\int_{-\tau}^t \int_{K_R \times \set{s}}  \rho\abs{\nabla f} \abs{ \nabla \rho} \abs{((\psi^\pm)'(\rho))^2(1+(\psi^\pm(\rho))} \zeta^2 \dx{x} \dx{s} \\&+ 4\int_{-\tau}^t \int_{K_R \times \set{s}}  \rho\abs{\nabla f} \abs{\nabla \zeta}\abs{ ((\psi^\pm)'(\rho)) \psi^\pm(\rho)}  \abs{\zeta} \dx{x} \dx{s} \, .
\end{align}
for any $-\tau \leq t \leq 0$.
\end{lemma}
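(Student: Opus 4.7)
The proof will follow the standard recipe for deriving logarithmic estimates for degenerate parabolic equations (see e.g.~\cite{UrB08,DiB79}), suitably adapted to accommodate the nonlocal drift $\nabla\cdot(\rho\nabla f)$. I focus on $\psi^+$; the case of $\psi^-$ is analogous with obvious sign changes.

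The plan is to use the Steklov-averaged weak formulation \eqref{eq:steklov} from \cref{thm:weak2} and test it against
\[
\phi = 2\,\psi^+(\rho_h)\,(\psi^+)'(\rho_h)\,\zeta^2,
\]
which is admissible in $\SobH^1_0(\Omega)$ since $\psi^+$ is Lipschitz and vanishes where $\zeta$ does. The chain rule gives
\[
\partial_t\rho_h\cdot\phi \;=\; \partial_t\bigl((\psi^+)^2(\rho_h)\bigr)\,\zeta^2,
\]
so that integrating in $s\in[-\tau,t]$ and using $\zeta_t\equiv 0$ produces exactly the boundary terms
$\int_{K_R\times\{t\}}(\psi^+)^2(\rho)\zeta^2\dx{x}-\int_{K_R\times\{-\tau\}}(\psi^+)^2(\rho)\zeta^2\dx{x}$ after passing $h\to 0$ via the convergence properties of Steklov averages recalled just before \cref{thm:weak2}.

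Next I would expand the diffusion contribution. Using $(\psi^+)''=((\psi^+)')^2$, one has
\[
\nabla\phi \;=\; 2\bigl((\psi^+)'(\rho_h)\bigr)^2\bigl(1+\psi^+(\rho_h)\bigr)\nabla\rho_h\,\zeta^2 \;+\; 4\,\psi^+(\rho_h)(\psi^+)'(\rho_h)\,\zeta\,\nabla\zeta.
\]
After letting $h\to 0$, the diffusion term $m\beta^{-1}(\rho^{m-1}\nabla\rho)\cdot\nabla\phi$ produces the \emph{good} term
$2m\beta^{-1}\rho^{m-1}|\nabla\rho|^2((\psi^+)')^2(1+\psi^+)\zeta^2$, which is placed on the left, together with a cross-term $4m\beta^{-1}\rho^{m-1}\psi^+(\psi^+)'\,\nabla\rho\cdot\zeta\nabla\zeta$. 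This cross-term is then absorbed via the Young inequality
\[
4m\beta^{-1}\rho^{m-1}\psi^+(\psi^+)'\,\nabla\rho\cdot\zeta\nabla\zeta \;\leq\; 2m\beta^{-1}\rho^{m-1}((\psi^+)')^2\psi^+|\nabla\rho|^2\zeta^2 + 2m\beta^{-1}\rho^{m-1}\psi^+|\nabla\zeta|^2,
\]
whose first summand is swallowed by the $\psi^+$-part of the good term, leaving on the right the $\rho^{m-1}\psi^+|\nabla\zeta|^2$ contribution that appears in the statement, while the $(\psi^+)'^2\zeta^2$ part (without the $(1+\psi^+)$ factor) remains on the left with the claimed sign.

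Finally I would treat the nonlocal drift $\rho\nabla f = \rho\,\nabla W\star\rho$, which is where the proof diverges from the classical scalar case. Rather than integrating by parts (which would produce an unwanted $\Delta f$ term), I would directly estimate
\[
\bigl|\rho\nabla f\cdot\nabla\phi\bigr| \;\leq\; 2\rho|\nabla f||\nabla\rho|\,((\psi^+)')^2(1+\psi^+)\,\zeta^2 \;+\; 4\rho|\nabla f||(\psi^+)'\psi^+|\,|\nabla\zeta|\,|\zeta|,
\]
which, after integration in space and time, gives precisely the last two terms on the right-hand side of the stated inequality. The main technical obstacle is justifying the chain rule and these manipulations for the Steklov averages (where $\nabla\rho_h$ and $(\rho^{m-1}\nabla\rho)_h$ are not quite the same), but this is handled in the standard way by first proving the identity for $\rho_h$, using the strong convergence $\rho_h\to\rho$ in $L^2_tL^2_x$ and $\nabla\rho^m\in L^2_tL^2_x$ together with the uniform $L^\infty$-bound on $\rho$, and then passing to the limit $h\to 0$.
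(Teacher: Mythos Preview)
Your proposal is correct and follows essentially the same route as the paper: you test the Steklov-averaged formulation against $((\psi^\pm)^2)'(\rho_h)\zeta^2$, expand using $(\psi^\pm)''=((\psi^\pm)')^2$, absorb the diffusion cross-term by Young's inequality into the $\psi^+$-part of the good term, and bound the drift contribution directly rather than integrating by parts. The only cosmetic point is that your displayed Young inequality should really be read as a bound on $|4m\beta^{-1}\rho^{m-1}\psi^+(\psi^+)'\nabla\rho\cdot\zeta\nabla\zeta|$ (you need the lower bound on the cross-term), but your subsequent description makes clear you understand this.
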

\begin{proof}
We start by testing~\eqref{eq:steklov} against $((\psi^\pm)^2)'(\rho_h) \zeta^2$ and integrating by parts to obtain
\begin{align}
\int_{\Omega \times \set{t}} \partial_t (\rho_h) ((\psi^\pm)^2)'(\rho_h) \zeta^2  + m\beta^{-1} (\rho^{m-1}\nabla \rho)_h \cdot \nabla(((\psi^\pm)^2)'(\rho_h) \zeta^2) +
(\rho \nabla f)_h \cdot \nabla (((\psi^\pm)^2)'(\rho_h) \zeta^2) \, \dx{x}=0 \, ,
 \label{eq:logtest}
\end{align}
Consider the first term on the LHS and integrating from $-\tau$ to $t$
\begin{align}
 &\int_{-\tau}^t\int_{\Omega \times \set{s}} \partial_s (\rho_h) ((\psi^\pm)^2)'(\rho_h) \zeta^2\dx{x}\dx{s}\\ =& \int_{-\tau}^t\int_{\Omega \times \set{s}} \partial_s  ((\psi^\pm)^2)(\rho_h) \zeta^2\dx{x}\dx{s} \\
 =& \int_{\Omega \times \set{t}} ((\psi^\pm)^2)(\rho_h) \zeta^2\dx{x} - \int_{\Omega \times \set{-\tau}} ((\psi^\pm)^2)(\rho_h) \zeta^2\dx{x} \, .
\end{align} 
Passing to the limit as $h \to 0$ we obtain that
\begin{align}
\int_{-\tau}^t\int_{\Omega \times \set{s}} \partial_s (\rho_h) ((\psi^\pm)^2)'(\rho_h) \zeta^2\dx{x}\dx{s} 
\to  \int_{\Omega \times \set{t}} ((\psi^\pm)^2)(\rho) \zeta^2\dx{x} - \int_{\Omega \times \set{-\tau}} ((\psi^\pm)^2)(\rho) \zeta^2\dx{x} \, .
\end{align}
Now consider the second term on the LHS of~\eqref{eq:logtest} (after passing to the limit as $h \to 0$)
\begin{align}
&\beta^{-1}\int_{-\tau}^t \int_{\Omega \times \set{s}} m (\rho^{m-1}\nabla \rho) \cdot \nabla(((\psi^\pm)^2)'(\rho) \zeta^2) \dx{x} \dx{s}\\ =&  2m\beta^{-1}\int_{-\tau}^t \int_{\Omega \times \set{s}}  (\rho^{m-1}\abs{\nabla \rho}^2  ((\psi^\pm)'(\rho))^2(1+(\psi^\pm(\rho)) \zeta^2) \dx{x} \dx{s}\\& +4 m\beta^{-1} \int_{-\tau}^t \int_{\Omega \times \set{s}}  (\rho^{m-1}\nabla \rho (\psi^\pm)'(\rho)\psi^\pm(\rho) \zeta \cdot \nabla \zeta) \dx{x} \dx{s}  \\
\geq &   2m\beta^{-1}\int_{-\tau}^t \int_{\Omega \times \set{s}}  (\rho^{m-1}\abs{\nabla \rho}^2  ((\psi^\pm)'(\rho))^2 \zeta^2) \dx{x} \dx{s} \\&- 2m\beta^{-1} \int_{-\tau}^t \int_{\Omega \times\set{s}} \rho^{m-1} \psi^\pm(\rho)\abs{\nabla \zeta}^2 \dx{x} \dx{s} \, ,
\end{align} 
where the last expression follows from Youngs inequality. Finally we consider the last term on the LHS of~\eqref{eq:logtest} (after passing to the limit as $h \to 0$)
\begin{align}
&\int_{-\tau}^t \int_{\Omega \times \set{s}}  (\rho\nabla f) \cdot \nabla(((\psi^\pm)^2)'(\rho) \zeta^2) \dx{x} \dx{s}\\ = & 2\int_{-\tau}^t \int_{\Omega \times \set{s}}  \rho\nabla f \cdot \nabla \rho ((\psi^\pm)'(\rho))^2(1+(\psi^\pm(\rho)) \zeta^2 \dx{x} \dx{s} \\&+ 4\int_{-\tau}^t \int_{\Omega \times \set{s}}  \rho\nabla f \cdot \nabla \zeta ((\psi^\pm)'(\rho)) \psi^\pm(\rho) \zeta  \dx{x} \dx{s} \\
\geq & -2\int_{-\tau}^t \int_{\Omega \times \set{s}}  \rho\abs{\nabla f} \abs{ \nabla \rho} \abs{((\psi^\pm)'(\rho))^2(1+(\psi^\pm(\rho))} \zeta^2 \dx{x} \dx{s} \\&- 4\int_{-\tau}^t \int_{\Omega \times \set{s}}  \rho\abs{\nabla f} \abs{\nabla \zeta}\abs{ ((\psi^\pm)'(\rho)) \psi^\pm(\rho)}  \abs{\zeta} \dx{x} \dx{s} \, .
\end{align}
Putting it all together we obtain
\begin{align}
\int_{\Omega \times \set{t}} ((\psi^\pm)^2)(\rho) \zeta^2\dx{x} \leq & \int_{\Omega \times \set{-\tau}} ((\psi^\pm)^2)(\rho) \zeta^2\dx{x}  - 2m\beta^{-1}\int_{-\tau}^t \int_{\Omega \times \set{s}}  (\rho^{m-1}\abs{\nabla \rho}^2  ((\psi^\pm)'(\rho))^2 \zeta^2) \dx{x} \dx{s} \\&+2m\beta^{-1} \int_{-\tau}^t \int_{\Omega \times\set{s}} \rho^{m-1} \psi^\pm(\rho)\abs{\nabla \zeta}^2 \dx{x} \dx{s}\\& +2\int_{-\tau}^t \int_{\Omega \times \set{s}}  \rho\abs{\nabla f} \abs{ \nabla \rho} \abs{((\psi^\pm)'(\rho))^2(1+(\psi^\pm(\rho))} \zeta^2 \dx{x} \dx{s} \\&+ 4\int_{-\tau}^t \int_{\Omega \times \set{s}}  \rho\abs{\nabla f} \abs{\nabla \zeta}\abs{ ((\psi^\pm)'(\rho)) \psi^\pm(\rho)}  \abs{\zeta} \dx{x} \dx{s} \, .
\end{align}
Taking into account the support of $\zeta$, one obtains the result of the lemma.
\end{proof}
\subsection{Proof of~\cref{holder}}\label{proof}
We now get to the meat of the regularity argument, i.e. the reduction of oscillation. We assume again that $\rho$ is a nonnegative weak solution of~\eqref{eq:agdif}. We pick a cylinder $Q(4R^{2- \eps},2R)$ that lies inside $\Omega_T$ (shifted to $(0,0)$) for $0<R<1$. Then we can define
\begin{align}
\mu^+ = \esup\limits_{Q(4R^{2- \eps},2R)} \rho ,  \qquad \mu^- = \einf\limits_{Q(4R^{2- \eps},2R)} \rho ,
\end{align}
along with
\[
\omega= \mu^+ -\mu^- = \eosc\limits_{Q(4R^{2- \eps},2R)} \rho \, .
\]
We then define the rescaled cylinder
\begin{align}
Q(\omega^{1-m}R^{2},R) \subset Q(4R^{2- \eps},2R) \, ,
\end{align} 
which holds true if
\begin{align}
\alpha \omega^{m-1} > R^\eps \, .
\label{eq:driftalpha}
\end{align}
For a fixed $\eps>0, \alpha \in (0,1)$ if the above inequality does not hold true for any $R$ that can be made arbitrarily small, it follows that $\omega$ is comparable to the radius of the cylinder and thus we have H\"older continuity already. The proof of this statement is by contradiction. Let $\omega_R:= \eosc\limits_{Q(4R^{2- \eps},2R)} \rho$. Then for any point $(x,t) \in \Omega_T$  we set $R:= d_{\T^d}(x,0) + \abs{t}^{1/2}$, the parabolic distance to the origin. Thus, we have
\begin{align}
\abs{\rho(x,t)-\rho(0,0)} \leq \omega_R \leq\alpha^{-\frac{1}{m-1}} R^{\frac{\eps}{m-1}} = \alpha^{-\frac{1}{m-1}}  \bra*{d_{\T^d}(x,0) + \abs{t}^{1/2}}^{\frac{\eps}{m-1}} \, .
\end{align}
 We will specify the value of $\alpha$ later.  We thus have by this inclusion that
\[
\eosc_{Q(w^{1-m}R^2,R)} \rho \leq \omega \, .
\]
We will also assume throughout the remainder of this proof that $\mu^- <\omega/4 $, as otherwise the equation is uniformly parabolic in $Q(4R^{2- \eps}, 2R)$. Before we proceed we pick some $\nu_0 \in (0,1)$ and divide our analysis into two cases.
\paragraph{\bf Case 1} 
\begin{align}\label{eq:case1}
\frac{\abs{(x,t) \in Q(\omega^{1-m}R^{2},R) : \rho < \mu^- + \omega/2}}{\abs{Q(\omega^{1-m}R^{2},R) }} \leq \nu_0 \, ,
\end{align}
or 
\paragraph{\bf Case 2} 
\[
\frac{\abs{(x,t) \in Q(\omega^{1-m}R^{2},R) : \rho \geq \mu^- + \omega/2}}{\abs{Q(\omega^{1-m}R^{2},R) }} <1- \nu_0 \, ,
\]
or equivalently
\begin{align}
\frac{\abs{(x,t) \in Q(\omega^{1-m}R^{2},R) : \rho \geq \mu^+ - \omega/2}}{\abs{Q(\omega^{1-m}R^{2},R) }} <1- \nu_0 \, .
\label{eq:case2}
\end{align}
We now treat the two cases independently.

\subsubsection{Reduction of oscillation in case 1}
In the first case, we start by proving the following result.
\begin{lemma}\label{lem:redcase1}
Assume that $\mu^-< \omega/4$ and that~\eqref{eq:case1} holds for some $\nu_0$(to be chosen), then
\[
\rho(x,t) > \mu^- + \frac{\omega}{4} \textrm{ a.e. in } Q\bra*{\omega^{1-m}\bra*{\frac{R}{2}}^2, \frac{R}{2}} \, .
\]
\end{lemma}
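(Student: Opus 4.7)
My approach is a De Giorgi-type iteration in nested parabolic sub-cylinders, based on the truncation-from-below energy estimate~\eqref{eq:leem}. The intrinsic time scale $\omega^{1-m}$ is chosen so that the rescaled equation is effectively uniformly parabolic: on the level sets of interest the diffusion coefficient $\rho^{m-1}$ is of order $\omega^{m-1}$, which balances the time derivative after the change of variables $t \mapsto \omega^{1-m}t$.

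Explicitly, I would take the decreasing levels $k_n := \mu^- + \frac{\omega}{4} + \frac{\omega}{2^{n+2}}$ and the shrinking radii $R_n := \frac{R}{2}+\frac{R}{2^{n+1}}$, giving nested cylinders $Q_n := Q(\omega^{1-m}R_n^2,R_n)$ that converge to $Q(\omega^{1-m}(R/2)^2,R/2)$, together with cutoffs $\zeta_n$ supported in $Q_n$, equal to $1$ on $Q_{n+1}$, with $\abs{\nabla \zeta_n} \lesssim 2^n/R$ and $\abs{\partial_t \zeta_n} \lesssim 2^{2n}\omega^{m-1}/R^2$. Setting $Y_n := \abs{\set{\rho<k_n}\cap Q_n}/\abs{Q_n}$ and applying~\eqref{eq:leem} with $k=k_n$ and $\ell=\mu^-$ (so that $\rho^\ell_- = \rho$ on the active set), the resulting $\Leb^\infty_t\Leb^2_x$ and gradient bounds combine via the parabolic Sobolev embedding to yield the familiar recursion
\[
Y_{n+1} \leq C\, b^n \, Y_n^{1+2/d},
\]
for universal $C,b>1$. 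A standard geometric-convergence lemma then forces $Y_n \to 0$ whenever $Y_0 \leq \nu_0$ is small enough, which is exactly~\eqref{eq:case1}; the conclusion follows since $k_n \searrow \mu^- + \omega/4$. The smallness threshold $\nu_0$ is fixed once and for all by the iteration lemma and depends only on $d$, $m$, $\beta$, $\alpha$, and $\norm{W}_{C^2}$.

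The two genuinely delicate points are the following. First, the coercive term in~\eqref{eq:leem} carries the degenerate weight $\rho^{m-1}$, which can be arbitrarily small on the active set; handling this is exactly DiBenedetto's intrinsic-scaling trick, converting the degenerate estimate into an effectively uniform one once time is measured on the $\omega^{m-1}$ scale. Second, and the main obstacle here, the nonlocal drift contributes the last two terms of~\eqref{eq:leem} involving $\norm{\nabla f}_{\Leb^\infty} \leq \norm{\nabla W}_{\Leb^\infty}$ and $\norm{\Delta f}_{\Leb^\infty} \leq \norm{\Delta W}_{\Leb^\infty}$; these have no $\omega^{m-1}$ prefactor and so do not naturally fit the intrinsic scaling. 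After the rescaling they produce contributions of order $R^\eps/(\alpha\omega^{m-1})$ in the recursion, which by the running hypothesis~\eqref{eq:driftalpha} are uniformly bounded by a universal constant and can thus be absorbed into $C\, b^n$. This is precisely the role of the alternative~\eqref{eq:driftalpha}: in the opposite regime the desired H\"older estimate is already automatic, so here we may freely use $\alpha\omega^{m-1} > R^\eps$ to stop the drift from spoiling the geometric structure of the iteration.
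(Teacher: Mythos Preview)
Your overall architecture (nested cylinders, De Giorgi iteration on the rescaled equation, geometric convergence lemma, and absorbing the drift via~\eqref{eq:driftalpha}) matches the paper. However, there is a genuine gap in how you handle the degeneracy of the coercive term, and it is precisely at the point where you write ``applying~\eqref{eq:leem} with $k=k_n$ and $\ell=\mu^-$ (so that $\rho^\ell_- = \rho$ on the active set)''.

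With your choice $\ell=\mu^-$ one has $\rho^\ell_-=\rho$ everywhere, and the left-hand side of~\eqref{eq:leem} gives you only
\[
\int_{Q_n} \rho^{m-1}\,\abs{\nabla(\rho-k_n)_-\,\zeta_n}^2\,\dx{x}\,\dx{t}.
\]
On the active set $\{\rho<k_n\}$ the weight $\rho^{m-1}$ may vanish (recall $\mu^-$ can be $0$), so you cannot extract an unweighted gradient bound. Intrinsic scaling does \emph{not} cure this: the rescaling $\bar t=\omega^{m-1}t$ makes the cylinder isotropic and balances the $\zeta_t$-terms against the $\abs{\nabla\zeta}^2$-terms on the right, but it produces a useful left-hand side only \emph{after} you have replaced $\rho^{m-1}$ by something $\gtrsim \omega^{m-1}$. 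That step is missing, and without it the recursion $Y_{n+1}\le C b^n Y_n^{1+\kappa}$ simply does not follow.

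The paper closes this gap by choosing $\ell=\mu^-+\omega/4$ instead. Then $\rho^\ell_-=\max(\rho,\ell)\ge \omega/4$ pointwise, so the coercive weight satisfies $(\rho^\ell_-)^{m-1}\ge(\omega/4)^{m-1}$, and one cleanly obtains
\[
\omega^{m-1}\int_{Q_n}\abs{\nabla(\rho^\ell_- -k_n)_-\,\zeta_n}^2\,\dx{x}\,\dx{t}
\]
on the left. This is exactly the factor that the time rescaling absorbs. The price is that several extra terms appear on the right of~\eqref{eq:leem} (those involving $(\ell-k)_-$ and $\int_\ell^\rho s^{m-1}\,\dx s$), but they are all of the correct order $\omega^{m+1}2^{2n}/R^2$ and are estimated routinely. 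Once you make this single change, the rest of your outline (including the treatment of the drift terms via~\eqref{eq:driftalpha}) is correct and coincides with the paper's argument, up to the cosmetic difference that the parabolic embedding used there gives exponent $1+2/(d+2)$ rather than $1+2/d$.
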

\begin{proof}
We start by considering the sequence
\[
R_n= \frac{R}{2} + \frac{R}{2^{n+1}} \qquad n=0,1, \cdots
\]
such that $R_0=R$ and $R_n \to R/2$ as $n \to \infty$. We then construct a sequence of nested shrinking cylinders $Q(\omega^{1-m}R_n^2, R_n)$ along with cut-off functions $\zeta_n$ satisfying
\cb{
\begin{align}
0 \leq \zeta_n \leq 1, &\qquad \zeta_n=1 \textrm{ in } Q(\omega^{1-m}R^2_{n+1},R_{n+1}),
\qquad 
\zeta_n=0 \textrm{ on } \cb{\partial_p} Q(\omega^{1-m}R^2_{n},R_{n}),  \\
\abs{\nabla \zeta_n} \leq \frac{2^{n+2}}{R}, &\qquad 0 \leq (\zeta_n)_t \leq \frac{2^{2n+2}}{R^2} \omega^{m-1 },  \qquad
\Delta \zeta_n  \leq \frac{2^{2n+5}}{R^2}.
\end{align}
}
We now apply the energy estimate of~\cref{lem:eest} in $Q(\omega^{1-m}R_n^2, R_n)$ with $\ell= \mu^- + \omega/4$, and $k_n=\mu^- + \omega/4 + \omega/(2^{n+1})$ for the function $(\rho^\ell_- -k_n )_-$. We will bound the terms on the LHS and RHS separately. Considering first the terms on the LHS we have
\begin{align}
&\frac{1}{2} \left[\esup\limits_{- R_n^2 \omega^{1-m} < t <0 } \int_{K_{R_n} \times \set{t} } \bra{\rho^\ell_- -k_n }_-^2 \zeta_n^2 \dx{x}  \right.\\&\left.+ \int_{Q(\omega^{1-m}R_n^2, R_n)}(\rho^\ell_-)^{m-1 } \abs{\nabla (\rho^\ell_- -k_n)_- \zeta_n}^2 \dx{x} \dx{t}  \right]  \\
\geq & 2^{1-2m} \left[\esup_{- R_n^2 \omega^{1-m} < t <0 } \int_{K_{R_n} \times \set{t} } \bra{\rho^\ell_- -k_n }_-^2 \zeta_n^2 \dx{x} \right. \\& \left.+ \omega^{m-1}\int_{Q(\omega^{1-m}R_n^2, R_n)}\abs{\nabla (\rho^\ell_- -k_n)_- \zeta_n}^2 \dx{x} \dx{t} \right] \, , 
\end{align}
where we have used the fact that $\rho^\ell_-= \max\bra*{\rho, \mu^- + \omega/4} \geq \mu^- + \omega/4 \geq \omega/4$. For the RHS we first note the following facts:
\begin{enumerate}
\item $0 \leq \mu^- \leq \omega/4 $ which implies that $\rho \leq 5 \omega/4$, $\ell \leq \omega/2$, and $\rho^\ell_- \leq 5 \omega/4$  .
\item $\ell = \mu^- + \omega/4< k_n$ which implies that $\chi_{[\rho \leq \ell]} \leq \chi_{[\rho \leq k_n]}=\chi_{[(\rho- k_n)_->0]} $.
\item If $\rho^\ell_-= \rho$, then $\chi_{[(\rho^\ell_-- k_n)_->0]}=\chi_{[(\rho- k_n)_->0]}  $. On the other hand if  $\rho^\ell_-= \ell$, we have that $\rho \leq \ell < k_n$ we have that $\chi_{[(\rho- k_n)_->0]}  =0 =\chi_{[(\ell- k_n)_->0]} = \chi_{[(\rho^\ell_- - k_n)_->0]} $.
\item $(l-k_n)_{-}=\omega/(2^{n+1}) \leq \omega/2$, $(\rho^\ell_{-}-k_n)_{-}\leq \omega/2^{n+1} \leq \omega/2$,  $(\rho- \ell)_{-} \leq \omega/4$ .
\end{enumerate}
We now proceed to bound individual terms on the RHS of~\eqref{eq:leem}. For the first term we have:
\begin{align}
 &\int_{Q(\omega^{1-m}R_n^2, R_n)} (\rho^\ell_- -k_n)_-^2 \zeta_n (\zeta_n)_t \dx{x} \dx{t} +
+ 2(\ell -k_n)_- \int_{Q(\omega^{1-m}R_n^2, R_n)}(\rho- \ell)_- \zeta_n (\zeta_n)_t \dx{x} \dx{t} \\
\leq & \frac{\omega^2}{2}\omega^{m-1} \frac{2^{2n+2}}{R^2} \int_{Q(\omega^{1-m}R_n^2, R_n)} \chi_{[(\rho^\ell_-- k_n)_->0]}\dx{x} \dx{t} \, .
\end{align}
For the second term:
\begin{align}
 &2m\beta^{-1} \int_{Q(\omega^{1-m}R_n^2, R_n)}(\rho^\ell_- - k_n)_-^2(\rho^\ell_-)^{m-1}\abs{\nabla \zeta_n}^2 \dx{x} \dx{t} \\\leq & m \beta^{-1}\bra*{\frac{5}{4}}^{m-1}\omega^2 \omega^{m-1}\frac{2^{2n+3}}{R^2}
 \int_{Q(\omega^{1-m}R_n^2, R_n)} \chi_{[(\rho^\ell_-- k_n)_->0]}\dx{x} \dx{t} \, .
\end{align}
For the third term:
\begin{align}
& -2m\beta^{-1} (\ell -k_n)_- \int_{Q(\omega^{1-m}R_n^2, R_n)} \bra*{\int_{\ell}^\rho s^{m-1} \dx{s}} (\abs{\nabla \zeta_n}^2 + \zeta_n \Delta \zeta_n) \chi_{\rho \leq \ell} \dx{x} \dx{t}
\\\leq & m\beta^{-1}\frac{\omega^2}{4}\omega^{m-1}\frac{2^{2n+5}}{R^2}\int_{Q(\omega^{1-m}R_n^2, R_n)} \chi_{[(\rho^\ell_-- k_n)_->0]}\dx{x} \dx{t} \, .
\end{align}
For the final two terms we have:
\begin{align}
&\int_{Q(\omega^{1-m}R_n^2, R_n)} \abs{\nabla f}\abs{\zeta_n} \abs{\nabla \zeta_n} (\rho^\ell_- - k_n)_-^2 \dx{x} \dx{t} +  \int_{Q(\omega^{1-m}R_n^2, R_n)} \abs{\Delta f } (\rho^\ell_- - k_n)_-^2  \zeta_n^2 \dx{x} \dx{t}
\\\leq & \bra*{\frac{2^{n+2}}{R}\norm{\nabla f}_{\Leb^\infty(\Omega)} + \norm{\Delta f}_{\Leb^\infty(\Omega)} }\frac{\omega^2}{4} \int_{Q(\omega^{1-m}R_n^2, R_n)} \chi_{[(\rho^\ell_-- k_n)_->0]}\dx{x} \dx{t} \\
=& \frac{2^{2n}}{R^2} \omega^{m-1}\bra*{\frac{\omega^{1-m}R}{2^{n-2}}\norm{\nabla f}_{\Leb^\infty(\Omega)} + \norm{\Delta f}_{\Leb^\infty(\Omega)}\omega^{1-m}R^2 2^{-2n} }\frac{\omega^2}{4} \int_{Q(\omega^{1-m}R_n^2, R_n)} \chi_{[(\rho^\ell_-- k_n)_->0]}\dx{x} \dx{t} 
\\\leq & \frac{2^{2n}}{R^2} \omega^{m-1}\bra*{4L^{1-\eps}\norm{\nabla f}_{\Leb^\infty(\Omega)} + \norm{\Delta f}_{\Leb^\infty(\Omega)} L^{2-\eps }} \frac{\omega^2}{4} \int_{Q(\omega^{1-m}R_n^2, R_n)} \chi_{[(\rho^\ell_-- k_n)_->0]}\dx{x} \dx{t}  \,,
\end{align}
where in the last step we have used the fact that $R^\eps\omega^{1-m}<\alpha<1$ and that $R <L$. Putting the bounds for the LHS and RHS of~\eqref{eq:leem} together we obtain
\begin{align}
&\pra*{\esup_{- R_n^2 \omega^{1-m} < t <0 } \int_{K_{R_n} \times \set{t} } \bra{\rho^\ell_- -k }_-^2 \zeta_n^2 \dx{x}  + \omega^{m-1}\int_{Q(\omega^{1-m}R_n^2, R_n)}\abs{\nabla (\rho^\ell_- -k)_- \zeta_n}^2 \dx{x} \dx{t} } \\\leq & C\bra*{m, L,\beta,\norm{\nabla f}_{\Leb^\infty(\Omega)}, \norm{\Delta f}_{\Leb^\infty(\Omega)}} \frac{2^{2n}}{R^2}\omega^{m-1} \frac{\omega^2}{4} \int_{Q(\omega^{1-m}R_n^2, R_n)} \chi_{[(\rho^\ell_-- k_n)_->0]}\dx{x} \dx{t} \, .
\end{align}
Let $\bar{t}=\omega^{m-1}t$ and define the following rescaled functions
\begin{align}
\bar{\rho}^\ell_-( \cdot,\bar{t}) =\rho^\ell_-( \cdot,t) , \qquad \bar{\zeta_n}( \cdot,\bar{t}) =\zeta_n( \cdot,t)  \, .
\end{align}
In these new variables the inequality simplifies to
\begin{align}
&\pra*{\esup_{- R_n^2  < \bar{t} <0 } \int_{K_{R_n} \times \set{\bar{t}} } \bra{\bar{\rho}^\ell_- -k_n }_-^2 \bar{\zeta_n}^2 \dx{x}  + \int_{Q(R_n^2, R_n)}\abs{\nabla (\bar{\rho}^\ell_- -k_n)_- \bar{\zeta_n}}^2 \dx{x} \dx{t} } \\\leq & C \frac{2^{2n}}{R^2} \frac{\omega^2}{4} A_n \, ,
\label{eq:rescaledineq}
\end{align}
where
\begin{align}
A_n:= \int_{Q(R_n^2, R_n)} \chi_{[(\bar{\rho}^\ell_-- k_n)_->0]}\dx{x} \dx{t} \, .
\end{align}
Furthermore we have
\begin{align}
\frac{1}{2^{2n+2}}\frac{\omega^2}{4}A_{n+1}=&\abs{k_n-k_{n+1}}^2 A_{n+1} \\
=&\int_{Q(R_{n+1}^2, R_{n+1})} \abs{k_n-k_{n+1}}^2\chi_{[(\bar{\rho}^\ell_-- k_{n+1})_->0]}\dx{x} \dx{t} \\
\leq & \int_{Q(R_{n+1}^2, R_{n+1})} \abs{k_n-\bar{\rho}^\ell_-}^2\chi_{[(\bar{\rho}^\ell_-- k_{n+1})_->0]}\dx{x} \dx{t} \\
\leq & \norm*{\bra*{k_n-\bar{\rho}^\ell_-}_-}^2_{\Leb^2(Q(R_{n+1}^2, R_{n+1}))} \\
\leq &  C_d A_n^{2/(2+d)}\norm{\bra*{k_n-\bar{\rho}^\ell_-}_-}^2_{V^2(Q(R_{n+1}^2, R_{n+1}))} \, ,
\end{align}
where in the last step we have used the embedding into the parabolic space $V^2$(cf.~\cref{embedding}). Thus, we have
\begin{align}
\frac{1}{2^{2n+2}}\frac{\omega^2}{4}A_{n+1} \leq & C_d \pra*{\esup_{- R_{n+1}^2  < \bar{t} <0 } \int_{K_{R_{n+1}} \times \set{\bar{t}} } \bra{\bar{\rho}^\ell_- -k }_-^2  \dx{x}  + \int_{R_{n+1}^2, R_{n+1})}\abs{\nabla (\bar{\rho}^\ell_- -k)_- }^2 \dx{x} \dx{t} }  \\
\leq & C_d A_n^{2/(2+d)} \pra*{\esup_{- R_n^2  < \bar{t} <0 } \int_{K_{R_n} \times \set{\bar{t}} } \bra{\bar{\rho}^\ell_- -k }_-^2 \bar{\zeta_n}^2 \dx{x}  + \int_{Q(R_n^2, R_n)}\abs{\nabla (\bar{\rho}^\ell_- -k)_- \bar{\zeta_n}}^2 \dx{x} \dx{t} } \\\leq & C \frac{2^{2n}}{R^2} \frac{\omega^2}{4} A_n^{1+ 2/(d+2)} \, ,
\end{align}
where we have used the fact that $\bar{\zeta_n}=1$ on $Q(R_{n+1}^2,R_{n+1})$ and have used~\eqref{eq:rescaledineq}.  Thus, we have
\begin{align}
\frac{A_{n+1}}{\abs*{Q(R_{n+1}^2, R_{n+1})}} \leq & C\abs*{Q(R_{n+1}^2, R_{n+1})}^{2/(2+d)}\frac{4^{2n+1}}{R^2}\bra*{ \frac{A_{n}}{\abs*{Q(R_{n+1}^2, R_{n+1})}}}^{1+2/(d+2)} \\
\leq & C 4^{2n} \bra*{\frac{\abs*{Q(R_{n}^2, R_{n})}}{\abs*{Q(R_{n+1}^2, R_{n+1})}} \frac{A_{n}}{\abs*{Q(R_{n}^2, R_{n})}}}^{1+2/(d+2)} \leq C 4^{2n} \bra*{ \frac{A_{n}}{\abs*{Q(R_{n}^2, R_{n})}}}^{1+2/(d+2)}  \, ,
\end{align}
where we use the fact that $\abs*{Q(R_{n}^2, R_{n})}= R_{n+1}^{d+2} \leq R^{d+2}$ and $R_n/R_{n+1} \leq 2$.
Setting 
\begin{align}
X_{n}:=\bra*{ \frac{A_{n}}{\abs*{Q(R_{n}^2, R_{n})}}} \, ,
\end{align}
we have the recursive inequality
\begin{align}
X_{n+1} \leq C 4^{2n} X_n^{1+ 2/(2+d)} \, ,
\end{align}
with the constant $C$ independent of $\omega, R,n$ and dependent only $d,m,\beta,f$. Setting $\nu_0=C^{-(d+2)/2} 4^{-(d+2)^2/2}$, we see that $X_0 \leq \nu_0$ is equivalent~\eqref{eq:case1} to being satisfied with constant $\nu_0$, since $k_0= \omega/2$. Thus, for this choice, $X_n \to 0$ by the geometric convergence lemma (cf.~\cref{convergence}). It follows then, after changing variables, that $\rho^\ell_- > \mu^- + \omega/4$ a.e. in $Q\bra*{\omega^{1-m}\bra*{\frac{R}{2}}^2, \frac{R}{2}}$. The result follows  by noting that $\rho^\ell_- > \mu^- + \omega/4=\ell$ implies that $\rho^\ell_- =\rho$.
\end{proof}
\begin{corollary}[Reduction of oscillation in case 1]
Assume that~\eqref{eq:case1} holds with constant $\nu_0$ as specified in the proof of~\cref{lem:redcase1}. Then there exists a $\sigma_1 \in(0,1)$, independent of $\omega$, $R$, such that
\begin{align}
\eosc_{Q\bra*{\omega^{1-m}\bra*{\frac{R}{2}}^2, \frac{R}{2}}}\rho \leq \sigma_1 \omega \, .
\end{align}
\end{corollary}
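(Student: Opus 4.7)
The plan is to deduce the corollary as an essentially immediate consequence of the pointwise lower bound established in \cref{lem:redcase1}. The key observation is that the small cylinder $Q(\omega^{1-m}(R/2)^2, R/2)$ is contained in the larger cylinder $Q(\omega^{1-m}R^2, R)$ (and a fortiori in $Q(4R^{2-\varepsilon}, 2R)$), so the essential supremum of $\rho$ on the small cylinder is still bounded above by $\mu^+$, while \cref{lem:redcase1} gives us that the essential infimum on the small cylinder is now bounded below by $\mu^- + \omega/4$.

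Concretely, I would argue as follows. Since $\nu_0$ is the constant fixed in the proof of \cref{lem:redcase1}, that lemma applies and yields
\begin{align}
\einf_{Q(\omega^{1-m}(R/2)^2, R/2)} \rho \;\geq\; \mu^- + \frac{\omega}{4}.
\end{align}
On the other hand, by monotonicity of the essential supremum with respect to domain inclusion,
\begin{align}
\esup_{Q(\omega^{1-m}(R/2)^2, R/2)} \rho \;\leq\; \mu^+.
\end{align}
Subtracting these two bounds gives
\begin{align}
\eosc_{Q(\omega^{1-m}(R/2)^2, R/2)} \rho \;\leq\; \mu^+ - \mu^- - \frac{\omega}{4} \;=\; \omega - \frac{\omega}{4} \;=\; \frac{3}{4}\,\omega.
\end{align}
Thus $\sigma_1 = 3/4 \in (0,1)$ works, and it is clearly independent of $\omega$ and $R$, since it was produced by the universal choice of $\nu_0$ in \cref{lem:redcase1}.

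There is no genuine obstacle here; the whole content sits in \cref{lem:redcase1}, and the corollary is just the packaging of the pointwise lower bound into a quantitative reduction of oscillation. The only thing to be a little careful about is the order of quantifiers: one wants $\sigma_1$ to depend only on the structural data ($d$, $m$, $\beta$, $W$) through the fixed choice of $\nu_0$, and not on the particular $\omega$, $R$ under consideration; this is automatic because $\nu_0$ was chosen once and for all in the proof of \cref{lem:redcase1} without reference to $\omega$ or $R$.
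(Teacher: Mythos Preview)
Your proof is correct and essentially identical to the paper's own argument: both apply \cref{lem:redcase1} to get $\einf \rho \geq \mu^- + \omega/4$ on the smaller cylinder, bound the essential supremum there by $\mu^+$ via inclusion, and subtract to obtain $\sigma_1 = 3/4$.
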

\begin{proof}
We have by the result of the previous lemma that
\begin{align}
\einf_{Q\bra*{\omega^{1-m}\bra*{\frac{R}{2}}^2, \frac{R}{2}}}\rho \geq \mu^- + \omega/4 \, .
\end{align}
Thus, we have that
\begin{align}
\eosc_{Q\bra*{\omega^{1-m}\bra*{\frac{R}{2}}^2, \frac{R}{2}}}\rho =&\esup_{Q\bra*{\omega^{1-m}\bra*{\frac{R}{2}}^2, \frac{R}{2}}}\rho -\einf_{Q\bra*{\omega^{1-m}\bra*{\frac{R}{2}}^2, \frac{R}{2}}}\rho \\
\leq & \mu^+ - \mu^- - \omega/4  \\
\leq & \frac{3}{4} \omega \, .
\end{align}
Thus, the result holds with $\sigma_1=\frac{3}{4}$.
\end{proof}
\subsubsection{Reduction of oscillation in case 2}
We now assume that~\eqref{eq:case2} holds but with the constant $\nu_0$ fixed from the previous argument. We argue now that if~\eqref{eq:case2} is satisfied then there exists some $t_0$,
\begin{align}
t_0 \in \pra*{-\omega^{1-m}R^2, -\frac{\nu_0}{2}\omega^{1-m}R^2} \, ,
\end{align}
such that
\begin{align}
\abs*{\set*{x \in K_{R}: \rho(x,t_0) > \mu^+ -\frac{\omega}{2}}} \leq \frac{1- \nu_0}{1- \nu_0/2}\abs{K_R} \, .
\end{align}
We prove this by contradiction. Assume this is not the case then
\begin{align}
&\abs*{\set*{x \in Q \bra*{\omega^{1-m}R^2,R}: \rho(x,t)> \mu^+ - \frac{\omega}{2}}}\\
\geq & \int_{- \omega^{1-m}R^2}^{-\frac{\nu_0}{2}\omega^{1-m}R^2} \abs{x \in K_R: \rho(x,s)> \mu^+- \omega/2} \dx{s} \\
 > & \bra*{-\frac{\nu_0}{2}\omega^{1-m}R^2 + \omega^{1-m}R^2} \bra*{\frac{1- \nu_0}{1- \nu_0/2}}\abs{K_R} \\
=& (1- \nu_0) \abs*{Q(\omega^{1-m}R^2,R)} \, ,
\end{align}
which contradicts~\eqref{eq:case2}. We now proceed to prove the following lemma.
\begin{lemma} \label{lem:driftest}
Assume that~\eqref{eq:case2} holds. Then there exists a $q \in \N$, depending only on the data, such that
\begin{align}
\abs*{\set*{x \in K_R: \rho(x,t) > \mu^+ - \frac{\omega}{2^q}}} \leq \bra*{1- \bra*{\frac{\nu_0}{2}}^2}\abs{K_R} \, ,
\end{align}
for all $t \in \pra*{-\frac{\nu_0}{2}\omega^{1-m}R^2,0}$ and $\alpha$ in~\eqref{eq:driftalpha} chosen to be small, depending only on $\nu_0$, $m$, $d$, $\beta$, $W$, $M$ but independent of $R$ and $\omega$.
\end{lemma}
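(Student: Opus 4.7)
The plan is to apply the logarithmic estimate of~\cref{lem:logest} to $\rho$ on the slab $K_R\times[t_0,0]$, with $k=\mu^+-\omega/2$, $c=\omega/2^{q+1}$ for a large integer $q$ to be chosen, and a time-independent cut-off $\zeta$ that equals $1$ on $K_{(1-\sigma)R}$ and vanishes on $\partial K_R$, with $\abs{\nabla\zeta}\leq C/(\sigma R)$. The function $\psi^+_{k,c}$ satisfies $\psi^+\leq q\log 2$ throughout the cylinder, while on the super-level set $E_t:=\set*{x\in K_R:\rho(x,t)>\mu^+-\omega/2^{q+1}}$ it admits the lower bound $\psi^+\geq c_q:=(q-2)\log 2$. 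This lower bound relies on the reduction to $H^+:=\esup(\rho-k)_+\geq\omega/2-\omega/2^{q+1}$, which is without loss of generality since otherwise $E_t=\emptyset$ and the conclusion is trivial. Consequently, an $L^2$ control of $\psi^+$ at time $t$ translates into a measure estimate for $E_t$.

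The initial datum contribution at $t_0$ is handled using the measure estimate derived just above: since $\psi^+(\rho(\cdot,t_0))>0$ only where $\rho(\cdot,t_0)>\mu^+-\omega/2+\omega/2^{q+1}$, a subset of $\set*{\rho(\cdot,t_0)>\mu^+-\omega/2}$, we obtain
\[
\int_{K_R\times\set{t_0}}(\psi^+)^2\zeta^2\dx{x}\leq (q\log 2)^2\frac{1-\nu_0}{1-\nu_0/2}\abs{K_R}.
\]
The non-drift positive term $\rho^{m-1}\psi^+\abs{\nabla\zeta}^2$ on the right-hand side of the logarithmic estimate is bounded using $\rho\leq 5\omega/4$, $\psi^+\leq q\log 2$, and the slab length at most $\omega^{1-m}R^2$; its integrated contribution is of order $Cq\abs{K_R}/\sigma^2$, which after division by $c_q^2$ becomes small for large $q$.

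The drift-type terms are the main obstacle. For the integrand involving $\abs{\nabla\rho}$ we use Young's inequality,
\[
\rho\abs{\nabla f}\abs{\nabla\rho}((\psi^+)')^2(1+\psi^+)\zeta^2\leq \tfrac{m\beta^{-1}}{2}\rho^{m-1}\abs{\nabla\rho}^2((\psi^+)')^2\zeta^2+\tfrac{\beta}{2m}\rho^{3-m}\abs{\nabla f}^2((\psi^+)')^2(1+\psi^+)^2\zeta^2,
\]
so that the first piece is absorbed into the negative dissipation. The remainder is controlled using the pointwise lower bound $\rho>k+c\geq\omega/2$ on the support of $(\psi^+)'$ (which holds because $\mu^+\geq\omega$), giving $\rho^{3-m}\leq C_m\omega^{3-m}$ uniformly in $m>1$. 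Combined with $\abs{(\psi^+)'}\leq 2^{q+1}/\omega$, $\psi^+\leq q\log 2$, $\abs{\nabla f}\leq\norm{\nabla W}_\infty$, and the slab length $\leq\omega^{1-m}R^2$, the residual is of order $C_{q,m,W}\omega^{2-2m}R^{d+2}$. The purely cut-off drift contribution $\int\rho\abs{\nabla f}\abs{\nabla\zeta}(\psi^+)'\psi^+\zeta$ is estimated similarly and turns out to be even smaller. Dividing by $c_q^2\abs{K_R}$ and invoking the intrinsic scaling~\eqref{eq:driftalpha} squared, $\omega^{2-2m}<\alpha^2R^{-2\eps}$, together with $R\leq L$ and $\eps<1$, the drift residual collapses to $C_{q,m,W}\alpha^2 L^{2-2\eps}/c_q^2$.

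Finally, we split $\abs{E_t}\leq\abs{E_t\cap K_{(1-\sigma)R}}+Cd\sigma\abs{K_R}$; dividing the $L^2$ bound at time $t$ by $c_q^2$ then yields an inequality of the form $\abs{E_t}\leq[(q\log 2/c_q)^2(1-\nu_0)/(1-\nu_0/2)+\text{entropy}/c_q^2+\text{drift}/c_q^2+Cd\sigma]\abs{K_R}$. Since $(1-\nu_0)/(1-\nu_0/2)\leq 1-\nu_0/(2-\nu_0)$ leaves a positive slack of $\nu_0/(2-\nu_0)-\nu_0^2/4$ before reaching $1-(\nu_0/2)^2$, we choose, in order: $\sigma$ small (depending on $\nu_0, d$) so that $Cd\sigma$ is below a quarter of this slack; then $q$ large (depending on $\sigma, \nu_0, d$) so that both $(q\log 2/c_q)^2-1$ and the entropy residual eat up further quarters of the slack; and finally $\alpha$ small (depending on all of the above, as well as $m$, $W$, and $M$) so that the drift residual consumes the last quarter. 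This produces $\abs{E_t}\leq(1-(\nu_0/2)^2)\abs{K_R}$ for all $t\in[-\nu_0\omega^{1-m}R^2/2,0]$, as desired. The principal technical difficulty lies in the drift residual: absorbing the $\abs{\nabla\rho}$-term into the degenerate dissipation introduces a factor $\rho^{3-m}$ that would blow up as $\rho\to0$ for $m>3$, and the observation that $\rho$ remains above $\omega/2$ on the support of $(\psi^+)'$ is what saves the argument, after which the smallness of $\alpha$ together with the squared intrinsic scaling closes the estimate.
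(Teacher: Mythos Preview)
Your proposal is correct and follows essentially the same route as the paper's proof: apply the logarithmic estimate with $k=\mu^+-\omega/2$ and $c=\omega/2^{q+1}$, use the measure bound at $t_0$ for the initial term, absorb the $|\nabla\rho|$ drift contribution into the dissipation via Young's inequality, control the residual via $\rho\gtrsim\omega$ on the support of $(\psi^+)'$ together with the intrinsic relation $\omega^{1-m}R^\eps<\alpha$, and then fix $\sigma$, $q$, $\alpha$ in that order. The only cosmetic differences are the paper's WLOG $H^+>\omega/4$ (yielding a lower bound $(n-1)\ln 2$) versus your $H^+\geq\omega/2-\omega/2^{q+1}$ (yielding $(q-2)\ln 2$), and the paper applies a second Young's inequality to the cut-off drift term where you bound it directly---your direct bound is in fact cleaner, since that term contains no $|\nabla\rho|$.
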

\begin{proof}
The proof of this lemma relies on the~\cref{lem:logest} with the function $\psi^+(u)$ on the cylinder $Q(-t_0,R)$. We choose
\begin{align}
k = \mu^+ - \frac{\omega}{2}, \qquad c= \frac{\omega}{2^{n+1}} \, ,
\end{align}
where the constant $n>1$ will be chosen later. It is fine to apply it to this function as we can assume that
\begin{align}
H^+_{\rho,k}= \esup_{Q(-t_0,R)}\abs*{\bra*{\rho-\mu^+ + \frac{\omega}{2}}_+} > \frac{\omega}{4} \geq \frac{\omega}{2^{n+1}} \, ,
\end{align}
otherwise the proof of the lemma would be complete with $q=2$. Indeed, we would have for all $t \in [t_0,0]$:
\begin{align}
\abs*{\set*{x \in K_R: \rho(x,t) > \mu^+ - \frac{\omega}{4}}} = \abs*{\set*{x \in K_R: \rho(x,t) -\mu^+ + \frac{\omega}{2} >  \frac{\omega}{4}} } =0 \, .
\end{align}
Before we write down the inequality, we need to further understand the properties of the function $\psi^+(\rho)$ defined on the cylinder $Q(-t_0,R)$. Note first that
\begin{align}
\psi^+(\rho)=
\begin{cases}
\ln \bra*{\frac{H^+_{\rho,k}}{H^+_{\rho,k} - \rho + k + \frac{\omega}{2^{n+1}} }} & \rho >  k + \frac{\omega}{2^{n+1}} \\
0 &  \rho \leq  k + \frac{\omega}{2^{n+1}}
\end{cases}
\, .
\end{align}
Furthermore in $Q(-t_0,R)$, we have that
\begin{align}
\rho -k \leq H^+_{\rho,k} \leq \frac{\omega}{2} \, . 
\end{align}
Therefore
\begin{align}
\psi^+(\rho) \leq  \ln \bra*{\frac{H^+_{\rho,k}}{H^+_{\rho,k} - \rho + k + \frac{\omega}{2^{n+1}} }} \leq \ln(2^n) \leq n \ln(2) \, .
\end{align}
Furthermore, we need to study the properties of $(\psi^+)'(\rho)$:
\begin{align}
(\psi^+)'(\rho)=
\begin{cases}
\frac{1}{H^+_{\rho,k}- \rho +k +\frac{\omega}{2^{n+1}}} & \rho >  k + \frac{\omega}{2^{n+1}} \\
0 &  \rho \leq  k + \frac{\omega}{2^{n+1}} 
\end{cases}
\, .
\end{align}
Thus, we have
\begin{align}
0\leq (\psi^+)'(\rho) \leq \frac{2^{n+1}}{\omega} \, .
\end{align}
We now proceed to writing down the estimate
\begin{align}
\int_{K_R \times \set{t}} ((\psi^+)^2)(\rho) \zeta^2\dx{x} \leq & \int_{K_R \times \set{t_0}} ((\psi^+)^2)(\rho) \zeta^2\dx{x}  - 2m\beta^{-1}\int_{t_0}^t \int_{K_R \times \set{s}}  (\rho^{m-1}\abs{\nabla \rho}^2  ((\psi^+)'(\rho))^2 \zeta^2) \dx{x} \dx{s} \\&+2m\beta^{-1} \int_{t_0}^t \int_{K_R \times\set{s}} \rho^{m-1} \psi^+(\rho)\abs{\nabla \zeta}^2 \dx{x} \dx{s}\\& +2\int_{t_0}^t \int_{K_R \times \set{s}}  \rho\abs{\nabla f} \abs{ \nabla \rho} \abs{((\psi^+)'(\rho))^2(1+(\psi^+(\rho))} \zeta^2 \dx{x} \dx{s} \\&+ 4\int_{t_0}^t \int_{K_R \times \set{s}}  \rho\abs{\nabla f} \abs{\nabla \zeta}\abs{ ((\psi^+)'(\rho)) \psi^+(\rho)}  \abs{\zeta} \dx{x} \dx{s} \, .
\label{eq:logest1}
\end{align}
for any $t_0 \leq t \leq 0$. We choose a time-independent cut-off function $0 \leq \zeta \leq 1$ such that
\begin{align}
\zeta \equiv 1,\quad  x \in K_{(1- \delta)R} , \qquad \abs{\nabla \zeta} \leq (\delta R)^{-1} \, .
\end{align}
Consider now the  first term involving $f$ on the RHS of~\eqref{eq:logest1}
\begin{align}
&2\int_{t_0}^t \int_{K_R \times \set{s}}  \rho\abs{\nabla f} \abs{ \nabla \rho} \abs{((\psi^+)'(\rho))^2(1+(\psi^+(\rho))} \zeta^2 \dx{x} \dx{s} \\
\leq & \lambda 2m\beta^{-1}\int_{t_0}^t \int_{K_R \times \set{s}}  (\rho^{m-1}\abs{\nabla \rho}^2  ((\psi^+)'(\rho))^2 \zeta^2) \dx{x} \dx{s}\\& +\frac{1}{2 \lambda m \beta^{-1}} \int_{t_0}^t \int_{K_R} \rho^{3-m} \abs{\nabla f}^2 \abs{((\psi^+)'(\rho))^2(1+(\psi^+(\rho))^2} \zeta^2 \dx{x} \dx{s} \, ,
\end{align}
where we have simply applied Young's inequality and the constant $\lambda \in (0,1/2) $. We derive a similar bound for the second term involving $f$ as follows
\begin{align}
&4\int_{t_0}^t \int_{K_R \times \set{s}}  \rho\abs{\nabla f} \abs{\nabla \zeta}\abs{ ((\psi^+)'(\rho)) \psi^+(\rho)}  \abs{\zeta} \dx{x} \dx{s}
\\\leq & \lambda 2m\beta^{-1}\int_{t_0}^t \int_{K_R \times \set{s}}  (\rho^{m-1}\abs{\nabla \rho}^2  ((\psi^+)'(\rho))^2 \zeta^2) \dx{x} \dx{s}\\& +\frac{2}{ \lambda m \beta^{-1}} \int_{t_0}^t \int_{K_R} \rho^{3-m} \abs{\nabla f}^2 (\psi^+(\rho))^2 \abs{\nabla\zeta}^2 \dx{x} \dx{s} \, .
\end{align}
Putting it all together we can get rid of the negative term in~\eqref{eq:logest1} and take the $\esup$ to obtain:
\begin{align}
\esup_{t \in [t_0,0]}\int_{K_R \times \set{t}} ((\psi^+)^2)(\rho) \zeta^2\dx{x} \leq & \int_{K_R \times \set{t_0}} ((\psi^+)^2)(\rho) \zeta^2\dx{x}  \\&+2m\beta^{-1} \int_{t_0}^0 \int_{K_R \times\set{s}} \rho^{m-1} \psi^+(\rho)\abs{\nabla \zeta}^2 \dx{x} \dx{s}\\&+ \frac{1}{2 \lambda m \beta^{-1}} \int_{t_0}^0 \int_{K_R} \rho^{3-m} \abs{\nabla f}^2 \abs{((\psi^+)'(\rho))^2(1+(\psi^+(\rho))^2} \zeta^2 \dx{x} \dx{s} \\&+\frac{2}{ \lambda m \beta^{-1}} \int_{t_0}^0 \int_{K_R} \rho^{3-m} \abs{\nabla f}^2 (\psi^+(\rho))^2 \abs{\nabla\zeta}^2 \dx{x} \dx{s} \, .
\label{eq:logest2}
\end{align}
We proceed to bound each of the terms individually. For the first term on the RHS of~\eqref{eq:logest2}
we obtain:
\begin{align}
\int_{K_R \times \set{t_0}} ((\psi^+)^2)(\rho) \zeta^2\dx{x} \leq n^2 \ln(2)^2 \frac{1- \nu_0}{1- \nu_0/2}\abs{K_R}  \, .
\end{align}
For the second term we use the fact that $\rho \leq 5 \omega/4$  to obtain:
\begin{align}
2m\beta^{-1} \int_{t_0}^0 \int_{K_R \times\set{s}} \rho^{m-1} \psi^+(\rho)\abs{\nabla \zeta}^2 \dx{x} \dx{s} \leq & 2 m \beta^{-1} \bra*{\frac{5}{4}}^{m-1} \omega^{m-1 }(\delta R)^{-2}\abs{t_0}n \ln(2) \abs{K_R} \\
\leq &  2 m \beta^{-1} \bra*{\frac{5}{4}}^{m-1}\ln(2) \delta^{-2}n  \abs{K_R} \, .
\end{align}
For the third term we use the fact that $5/4 \omega\geq \rho \geq \omega/2$ on the supports of $\psi^+(\rho)$ and $(\psi^+)'(\rho)$ to obtain:
\begin{align}
&\frac{1}{2 \lambda m \beta^{-1}} \int_{t_0}^0 \int_{K_R} \rho^{3-m} \abs{\nabla f}^2 \abs{((\psi^+)'(\rho))^2(1+(\psi^+(\rho))^2} \zeta^2 \dx{x} \dx{s} \\
\leq &  C \frac{1}{2 \lambda m \beta^{-1}} \omega^{3-m} \omega^{1-m} R^2 \norm{\nabla f}_{\Leb^\infty(\Omega)}^2 2^{n+1} \omega^{-2} (1 + n \ln(2))^2 \abs{K_R} \\
=& \frac{C}{2 \lambda m \beta^{-1}} \omega^{1-m} \omega^{1-m} R^2 \norm{\nabla f}_{\Leb^\infty(\Omega)}^2 2^{n+1}  (1 + n \ln(2))^2 \abs{K_R} \, .
\end{align}
Similarly for the final term we obtain
\begin{align}
&\frac{2}{ \lambda m \beta^{-1}} \int_{t_0}^0 \int_{K_R} \rho^{3-m} \abs{\nabla f}^2 (\psi^+(\rho))^2 \abs{\nabla\zeta}^2 \dx{x} \dx{s}  \\
\leq &  \frac{2C}{ \lambda m \beta^{-1}} \omega^2 \omega^{1- m} \omega^{1-m} R^2 \norm{\nabla f}_{\Leb^\infty(\Omega)}^2   n^2 \ln(2)^2 \abs{K_R} \, .
\end{align}
For the LHS of~\eqref{eq:case2}, consider the set
\begin{align}
S_t= \set*{x \in K_{(1- \delta)R}: \rho(x,t) > \mu^+ - \omega/2^{n+1}} \subset K_R, \qquad t \in(t_0, 0) \, .
\end{align}
It is clear that $\zeta=1$ on $S_t$ and, since $-\rho+k + \omega/2^{n+1}<0$, the function
\begin{align}
\frac{H^+_{\rho,k}}{H^+_{\rho,k} - \rho + k + \frac{\omega}{2^{n+1}} } \, ,
\end{align}
is decreasing in $H^+_{\rho,k}$. Thus, in $S_t$ we have
\begin{align}
&\frac{H^+_{\rho,k}}{H^+_{\rho,k} - \rho + k + \frac{\omega}{2^{n+1}} } \\
\geq & \frac{\omega/2}{\omega/2 - \rho + k + \frac{\omega}{2^{n+1}} } \\
\geq & \frac{\omega/2}{\omega/2+\omega/2^{n+1} - \omega/2 + \omega/2^{n+1}   }=2^{n-1}.
\end{align}
Thus, we have 
\begin{align}
\esup_{t \in [t_0,0]}\int_{K_R \times \set{t}} ((\psi^+)^2)(\rho) \zeta^2\dx{x} \geq (n-1)^2 \ln(2)\abs{S_t} \, .
\end{align}
Putting all the terms back together we obtain and bounding $\omega^2$ by $M^2$,
\begin{align}
\abs{S_t} \leq & \bra*{\bra*{\frac{n}{n-1}}^2 \frac{1- \nu_0}{1- \nu_0/2} + C(m,\beta) \delta^{-2}\frac{n}{(n-1)^2}  } \abs{K_R} \\ 
&+ \bra*{C_1\bra*{m,\beta,\lambda, \norm{\nabla f}_{\Leb^\infty(\Omega)}}\omega^{1-m} \omega^{1-m} R^2  2^{n+1}  \bra*{\frac{1 + n \ln(2)}{n-1}}^2  } \abs{K_R} \\
& +\bra*{C_2\bra*{m,\beta,\lambda, \norm{\nabla f}_{\Leb^\infty(\Omega)},M}\omega^{1-m} \omega^{1-m} R^2  \bra*{\frac{n}{n-1}}^2 } \abs{K_R} \, . 
\end{align}
Finally, we obtain the estimate we need
\begin{align}
&\abs*{\set*{x \in K_R: \rho(x,t) > \mu^+ - \frac{\omega}{2^q}}} \\
\leq & \abs{S_t} + \abs{K_R \setminus K_{(1-\delta)R}} \\
\leq & \bra*{\bra*{\frac{n}{n-1}}^2 \frac{1- \nu_0}{1- \nu_0/2} + C(m,\beta) \delta^{-2}\frac{n}{(n-1)^2}  + d \delta} \abs{K_R} \\ 
&+ \bra*{C_1\bra*{m,\beta,\lambda, \norm{\nabla f}_{\Leb^\infty(\Omega)}}R^\eps\omega^{1-m} R^\eps\omega^{1-m} L^{2 -2 \eps}  2^{n+1}  \bra*{\frac{1 + n \ln(2)}{n-1}}^2  } \abs{K_R} \\
& +\bra*{C_2\bra*{m,\beta,\lambda, \norm{\nabla f}_{\Leb^\infty(\Omega)},M} R^\eps\omega^{1-m} R^\eps\omega^{1-m} L^{2- 2\eps}  \bra*{\frac{n}{n-1}}^2 } \abs{K_R} \, ,
\end{align}
where one should note that $R \leq L$ and the term $R^\eps\omega^{1-m}$ can be controlled by $\alpha$ through~\eqref{eq:driftalpha}.
Note that for the term in the first set of brackets we can choose $d \delta \leq 3 \nu_0^2/16$ and $n$ large enough such that
\begin{align}
\bra*{\frac{n}{n-1}}^2 \leq (1- \nu_0/2)(1+ \nu_0)  , \qquad C(m,\beta)\frac{n}{(n-1)^2}\delta^{-2} \leq 3 \nu_0^2/16 \, ,
\end{align}
because $(1- \nu_0/2)(1+ \nu_0) >1$. Now that $n$ and $\delta$ have been fixed we note that the constant $\alpha$ in~\eqref{eq:driftalpha} can be made small enough (independent of $\omega$ and $R$) so that terms in the other two brackets are lesser that $3\nu_0^2/16$.
This gives us
\begin{align}
\abs*{\set*{x \in K_R: \rho(x,t) > \mu^+ - \frac{\omega}{2^q}}} \leq \bra*{1- \nu_0^2 + 3 \nu_0^2/4}\abs{K_R}= \bra*{1- \frac{\nu_0^2}{4} }\abs{K_R} \, .
\end{align}
The proof follows by setting $q=n+1$ and noting that $\pra{t_0,0} \supset\pra*{-\frac{\nu_0}{2}\omega^{1-m}R^2,0}$.
\end{proof}
We now proceed to prove that $\rho$ is strictly lesser than its supremum in a smaller parabolic cylinder.
\begin{lemma}\label{lem:redcase2}
Assume that~\eqref{eq:case2} holds. Then there exists some $s_0 \in \N$ large enough, independent of $\omega$, such that
\[
\rho(x,t) < \mu^+ - \frac{\omega}{2^{s_0}} \textrm{ a.e. }(x,t) \in Q \bra*{\frac{\nu_0}{2}\omega^{1-m}\bra*{\frac{R}{2}}^2, \frac{R}{2}} \, .
\]
\end{lemma}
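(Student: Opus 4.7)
The plan is a two-stage De~Giorgi iteration, parallel in structure to the proof of~\cref{lem:redcase1} but with the ``initial smallness'' of the bad set replaced by the per-time-slice measure bound of~\cref{lem:driftest}. A useful simplification is that since $\mu^- < \omega/4$ we have $\mu^+ > 3\omega/4$, so on the set $\{\rho > \mu^+ - \omega/2\}$ the solution satisfies $\rho \ge \omega/4$; consequently the degeneracy factor $\rho^{m-1}$ in the energy estimate of~\cref{lem:eest} is comparable to $\omega^{m-1}$ and the equation behaves like a uniformly parabolic equation on the region of interest.

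\textbf{Stage 1: spacetime measure shrinkage.} On $Q_0 := Q(\tfrac{\nu_0}{2}\omega^{1-m}R^2, R)$ we apply the energy estimate of~\cref{lem:eest} with $\ell = \mu^+$ and $k = k_s := \mu^+ - \omega/2^s$ for $s \ge q$, and combine it slicewise with the De~Giorgi--Poincar\'e isoperimetric inequality. The crucial input is that, by~\cref{lem:driftest}, for every $t$ in the time interval of $Q_0$ the ``complement of the bad set'' $\set{x \in K_R : \rho(x,t) \le \mu^+ - \omega/2^q}$ has measure at least $(\nu_0^2/4)\abs{K_R}$, and this uniform lower bound is exactly what activates the isoperimetric inequality. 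The resulting recursion on $A_s := \abs*{\set{(x,t) \in Q_0 : \rho > k_s}}$, of the classical ``shrinking bad set'' form, can be summed over $s = q, q+1, \ldots, q + s_1$ and yields $A_{q+s_1}/\abs{Q_0} \to 0$ as $s_1 \to \infty$, at a rate independent of $\omega$ and $R$. The drift contributions from $\nabla \cdot (\rho \nabla f)$ are absorbed into the diffusion energy by Young's inequality together with the smallness of $\alpha$ in~\eqref{eq:driftalpha}, exactly as in the proof of~\cref{lem:driftest}.

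\textbf{Stage 2: De~Giorgi iteration.} Choose $s_1$ so large that $X_0$ below falls below the threshold for geometric convergence, and set $s_0 := q + s_1 + 1$. Introduce the nested cylinders $Q_n := Q(\tau_0, R_n)$ with $R_n := R/2 + R/2^{n+1}$ and $\tau_0 := \tfrac{\nu_0}{2}\omega^{1-m}(R/2)^2$, cut-offs $\zeta_n$ analogous to those in the proof of~\cref{lem:redcase1}, and approaching levels $k_n := \mu^+ - \omega/2^{s_0} - \omega/2^{s_0+n}$. The energy estimate combined with the parabolic Sobolev embedding of $V^2$ into $\Leb^{2(d+2)/d}$ (cf.~\cref{embedding}) produces the familiar recursion
\[
X_{n+1} \le C\,4^{n}\,X_n^{1+2/(d+2)}, \qquad X_n := \frac{\abs*{\set{(x,t) \in Q_n : \rho > k_n}}}{\abs{Q_n}},
\]
with $C$ independent of $\omega$ and $R$. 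The geometric convergence lemma (\cref{convergence}) then gives $X_n \to 0$, so $\rho \le \mu^+ - \omega/2^{s_0}$ a.e.\ in $Q(\tau_0, R/2)$, which contains the cylinder in the statement.

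The main obstacle is Stage~1: turning the per-time-slice measure bound of~\cref{lem:driftest} into spacetime smallness via the slicewise De~Giorgi--Poincar\'e inequality requires careful bookkeeping so that the constants remain independent of $\omega$, in particular reconciling the factor $\rho^{m-1}$ with the intrinsic rescaling $\tau_0 \propto \omega^{1-m}$ and absorbing the drift terms produced by $\nabla f$. Once this measure-shrinkage step is in place, the Stage~2 iteration is essentially identical to the one carried out in Case~1.
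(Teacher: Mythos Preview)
Your proposal is correct and follows essentially the same two-stage scheme as the paper: the paper presents the De~Giorgi iteration (your Stage~2) first, with $\ell=\mu^+-\omega/2^{s_0}$ and levels $k_n=\mu^+-\omega/2^{s_0}-\omega/2^{n+s_0}$ on nested cylinders $Q(\tfrac{\nu_0}{2}\omega^{1-m}R_n^2,R_n)$, reduces to verifying the initial smallness $X_0\le\nu_0^*$, and then establishes that smallness (your Stage~1) by applying the energy estimate at levels $k=\mu^+-\omega/2^s$ on $Q(\tfrac{\nu_0}{2}\omega^{1-m}R^2,2R)$, invoking the De~Giorgi isoperimetric lemma slicewise with the measure bound of~\cref{lem:driftest}, and summing the resulting telescoping inequalities $\abs{B_{s+1}}^2\le C\nu_0^{-4}\abs{Q}\,\abs{B_s\setminus B_{s+1}}$ over $s=q+1,\dots,s_0-2$ to force $\abs{B_{s_0-1}}^2\le C\nu_0^{-4}(s_0-q-3)^{-1}\abs{Q}^2$. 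The only differences are cosmetic (order of presentation, and the paper shrinks the time interval with $R_n$ while you keep it fixed at $\tau_0$).
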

\begin{proof}
The proof is similar to that of~\cref{lem:redcase1} and relies on the energy estimates in~\cref{lem:eest}.
We start by considering the sequence
\[
R_n= \frac{R}{2} + \frac{R}{2^{n+1}} \qquad n=0,1, \cdots
\]
such that $R_0=R$ and $R_n \to R/2$ as $n \to \infty$. We then construct a sequence of nested shrinking cylinders $Q(\nu_0 2^{-1}\omega^{1-m}R_n^2, R_n)$ along with cut-off functions $\zeta_n$ satisfying
\begin{align}
0 \leq \zeta_n \leq 1, &\qquad \zeta_n=1 \textrm{ in } Q(\nu_0 2^{-1}\omega^{1-m}R^2_{n+1},R_{n+1}),
\qquad 
\zeta_n=0 \textrm{ on } \cb{\partial_p} Q(\nu_0 2^{-1}\omega^{1-m}R^2_{n},R_{n}),  \\
\abs{\nabla \zeta_n} \leq \frac{2^{n-1}}{R}, &\qquad 0 \leq (\zeta_n)_t \leq \frac{2^{2n-2}}{R^2} \omega^{m-1 },  \qquad
\Delta \zeta_n  \leq \frac{2^{2n-2}}{R^2}.
\end{align}
We now apply the energy estimate of~\cref{lem:eest} in $Q(\nu_0 2^{-1}\omega^{1-m}R_n^2, R_n)$ with $\ell= \mu^+ - \omega/2^{s_0}$, and $k_n=\mu^+ - \omega/(2^{s_0}) - \omega/(2^{n+s_0})$ for the function $(\rho^\ell_+ -k_n )_+$. We will bound the terms on the LHS and RHS separately. Considering first the terms on the LHS we have
\begin{align}
&\frac{1}{2} \left[\esup\limits_{- R_n^2 \omega^{1-m}\nu_0 2^{-1} < t <0 } \int_{K_{R_n} \times \set{t} } \bra{\rho^\ell_+ -k_n }_+^2 \zeta_n^2 \dx{x}  \right.\\&\left.+ \int_{Q(\nu_0 2^{-1}\omega^{1-m}R_n^2, R_n)}(\rho^\ell_+)^{m-1 } \abs{\nabla (\rho^\ell_+ -k_n)_+ \zeta_n}^2 \dx{x} \dx{t}\right]  \\
\geq & 2^{-m} \left[\esup_{-\nu_0 2^{-1} R_n^2 \omega^{1-m} < t <0 } \int_{K_{R_n} \times \set{t} } \bra{\rho^\ell_+ -k_n }_+^2 \zeta_n^2 \dx{x} \right.\\& \left.+ \omega^{m-1}\int_{Q(\nu_0 2^{-1}\omega^{1-m}R_n^2, R_n)}\abs{\nabla (\rho^\ell_+ -k_n)_+ \zeta_n}^2 \dx{x} \dx{t} \right] \, , 
\end{align}
where we have used the fact that when $\abs{\nabla (\rho^\ell_+ -k)_+ \zeta_n}$ is nonzero, $\rho^\ell_+ \geq k_n \geq \omega/2$. For the RHS we first note the following facts:
\begin{enumerate}
\item $0 \leq \mu^- \leq \omega/4 $ which implies that $\rho \leq 5 \omega/4$,  and $\rho^\ell_+ \leq 5 \omega/4$ .
\item $\ell = \mu^- - \omega/2^{s_0}> k_n$ which implies that $\chi_{[\rho \geq \ell]} \leq \chi_{[\rho \geq k_n]}=\chi_{[(\rho- k_n)_+>0]} $.
\item If $\rho^\ell_+= \rho$, then $\chi_{[(\rho^\ell_+- k_n)_+>0]}=\chi_{[(\rho- k_n)_+>0]}  $. On the other hand if  $\rho^\ell_+= \ell$, we have that $\rho \geq \ell \geq k_n$. Thus, we have that $\chi_{[(\rho- k_n)_+>0]} =\chi_{[(\rho^\ell_+ - k_n)_+>0]} $. 
\item $(l-k_n)_{+}=\omega/(2^{n+s_0}) \leq \omega/2^{s_0-1}$, $(\rho^\ell_{+}-k_n)_{+}\leq \omega/2^{n+s_0} \leq \omega/2^{s_0-1}$,  $(\rho- \ell)_{+} \leq \omega/2^{s_0-1}$. 
\end{enumerate}
Applying, essentially the same bounds as~\cref{lem:redcase1}, we obtain
\begin{align}
&\pra*{\esup_{-\nu_0 2^{-1} R_n^2 \omega^{1-m} < t <0 } \int_{K_{R_n} \times \set{t} } \bra{\rho^\ell_+ -k }_+^2 \zeta_n^2 \dx{x}  + \omega^{m-1}\int_{Q(\nu_0 2^{-1}\omega^{1-m}R_n^2, R_n)}\abs{\nabla (\rho^\ell_+ -k)_+ \zeta_n}^2 \dx{x} \dx{t} } \\\leq & C\bra*{m, L,\beta,\norm{\nabla f}_{\Leb^\infty(\Omega)}, \norm{\Delta f}_{\Leb^\infty(\Omega)}} \frac{2^{2n}}{R^2}\omega^{m-1} \frac{\omega^2}{2^{2s_0-2}} \int_{Q(\nu_0 2^{-1}\omega^{1-m}R_n^2, R_n)} \chi_{[(\rho^\ell_+- k_n)_+>0]}\dx{x} \dx{t} \, .
\end{align}
Let $\bar{t}=\nu_0^{-1} 2\omega^{m-1}t$ and define the following rescaled functions
\begin{align}
\bar{\rho}^\ell_+( \cdot,\bar{t}) =\rho^\ell_+( \cdot,t) , \qquad \bar{\zeta_n}( \cdot,\bar{t}) =\zeta_n( \cdot,t)  \, .
\end{align}
In these new variables the inequality simplifies to
\begin{align}
&\pra*{\esup_{- R_n^2  < \bar{t} <0 } \int_{K_{R_n} \times \set{\bar{t}} } \bra{\bar{\rho}^\ell_+ -k_n }_+^2 \bar{\zeta_n}^2 \dx{x}  + \frac{\nu_0}{2}\int_{Q(R_n^2, R_n)}\abs{\nabla (\bar{\rho}^\ell_+ -k_n)_+ \bar{\zeta_n}}^2 \dx{x} \dx{t} } \\\leq & C \frac{2^{2n}}{R^2} \frac{\nu_0}{2} \frac{\omega^2}{2^{2s_0-2}} A_n \, ,
\end{align}
where
\begin{align}
A_n:= \int_{Q(R_n^2, R_n)} \chi_{[(\bar{\rho}^\ell_+- k_n)_+>0]}\dx{x} \dx{t} \, .
\end{align}
Since $\nu_0 \in(0,1)$ it simplifies to,
\begin{align}
&\pra*{\esup_{- R_n^2  < \bar{t} <0 } \int_{K_{R_n} \times \set{\bar{t}} } \bra{\bar{\rho}^\ell_+ -k_n }_+^2 \bar{\zeta_n}^2 \dx{x}  + \int_{Q(R_n^2, R_n)}\abs{\nabla (\bar{\rho}^\ell_+ -k_n)_+ \bar{\zeta_n}}^2 \dx{x} \dx{t} } \\\leq & C \frac{2^{2n}}{R^2}  \frac{\omega^2}{2^{2s_0-2}} A_n \, .
\end{align}
Furthermore we have
\begin{align}
\frac{1}{2^{2n+2}}\frac{\omega^2}{2^{2s_0-2}}A_{n+1}=&\abs{k_n-k_{n+1}}^2 A_{n+1} \\
=&\int_{Q(R_{n+1}^2, R_{n+1})} \abs{k_n-k_{n+1}}^2\chi_{[(\bar{\rho}^\ell_+- k_{n+1})_+>0]}\dx{x} \dx{t} \\
\leq & \int_{Q(R_{n+1}^2, R_{n+1})} \abs{k_n-\bar{\rho}^\ell_+}^2\chi_{[(\bar{\rho}^\ell_+- k_{n+1})_+>0]}\dx{x} \dx{t} \\
\leq & \norm*{\bra*{k_n-\bar{\rho}^\ell_+}_+}^2_{\Leb^2(Q(R_{n+1}^2, R_{n+1}))} \\
\leq &  C_d A_n^{2/(2+d)}\norm{\bra*{k_n-\bar{\rho}^\ell_+}_+}^2_{V^2(Q(R_{n+1}^2, R_{n+1}))} \, ,
\end{align}
where in the last step we have used the emebedding into the parabolic space $V^2$ (cf.~\cref{embedding}). Thus, as in~\cref{lem:redcase1} we have
\begin{align}
\frac{1}{2^{2n+2}}\frac{\omega^2}{2^{2s_0-2}}A_{n+1} \leq & C \frac{2^{2n}}{R^2} \frac{\omega^2}{2^{2s_0-2}} A_n^{1+ 2/(d+2)} \, .
\end{align}
This can be simplified to
\begin{align}
X_{n+1} \leq C 4^{2n} X_n^{1+2/(d+2)} \, ,
\end{align}
where
\begin{align}
X_n= \frac{A_n}{\abs{Q(R_n^2,R_n)}} \, ,
\end{align}
and the constant $C$ independent of $\omega, R,n$ and dependent only $d,m,\beta,f$. Thus, if
\begin{align}
X_0 \leq C^{-(d+2)/2}4^{(d+2)^2/2}:=\nu_0^* \, ,
\label{eq:X0bound}
\end{align}
by the geometric convergence lemma (cf.~\cref{convergence}), $X_n \to 0$ and the result follows as in the proof of~\cref{lem:redcase1}. Thus, all that remains to be shown is~\eqref{eq:X0bound} holds. Before we do this we introduce the following notation
\begin{align}
B_\sigma(t)= \set*{x \in K_R: \rho(x,t) > \mu^+ - \frac{\omega}{2^\sigma}} \, ,
\end{align}
and
\begin{align}
B_\sigma= \set*{(x,t) \in Q \bra*{\frac{\nu_0}{2}\omega^{1-m}R^2,R}: \rho(x,t) > \mu^+ - \frac{\omega}{2^\sigma}} \, .
\end{align}
In this notation~\eqref{eq:X0bound} reads as
\begin{align}
\abs{B_{s_0-1}} \leq \nu_0^* \abs*{Q \bra*{\frac{\nu_0}{2}\omega^{1-m}R^2,R}} \, .
\end{align}
The above inequality means that the subset  of $Q \bra*{\frac{\nu_0}{2}\omega^{1-m}R^2,R}$ where $\rho$
is close to its supremum can be made arbitrarily small. To show this, we apply the energy estimate of~\cref{lem:eest} to the function $(\rho^{\mu^+}_+- k)_+$ with
\begin{align}
k= \mu^+ - \frac{\omega}{2^{s}}, \qquad q < s<s_0 \, ,
\end{align}
with a cut-off function $\zeta$ defined in $Q \bra*{\frac{\nu_0}{2}\omega^{1-m}R^2,2R}$ such that
\begin{align}
\zeta &\equiv 1, \textrm{ in }Q \bra*{\frac{\nu_0}{2}\omega^{1-m}R^2,R}, \qquad &\zeta=0 \textrm{ on }\partial_p Q \bra*{\frac{\nu_0}{2}\omega^{1-m}R^2,2R} \, , \\
\abs{\nabla \zeta} \leq & \frac{1}{R} , \qquad &0 \leq \zeta_t \leq \frac{\omega^{m-1}}{R^2} \, . 
\end{align}
We delete the first term on the LHS and bound the rest as follows:
\begin{align}
&\frac{1}{2} \pra*{\esup\limits_{- R^2 \omega^{1-m}\nu_0 2^{-1} < t <0 } \int_{K_{2R} \times \set{t} } \bra{\rho -k }_+^2 \zeta^2 \dx{x}  + \int_{Q(\nu_0 2^{-1}\omega^{1-m}R^2, 2R)}(\rho)^{m-1 } \abs{\nabla (\rho -k)_+ \zeta}^2 \dx{x} \dx{t} }   \\
\geq & 2^{-m} \omega^{m-1}\int_{Q(\nu_0 2^{-1}\omega^{1-m}R^2, R)}\abs{\nabla (\rho -k)_+ \zeta}^2 \dx{x} \dx{t}  =\geq 2^{-m} \omega^{m-1}\int_{Q(\nu_0 2^{-1}\omega^{1-m}R^2, R)}\abs{\nabla (\rho -k)_+ }^2 \dx{x} \dx{t}\, ,
\end{align}
where we have used the fact that when $\abs{\nabla(\rho- k)_+ \zeta}$ is nonzero then $\rho>k > \omega/2$.
For the terms on the RHS we bound them as in~\cref{lem:redcase1} (note that two of the terms are zero because $\rho \leq \ell=\mu^+$ a.e. $(x,t)$). Thus, we have the bound
\begin{align}
 &2^{-m} \omega^{m-1}\int_{Q(\nu_0 2^{-1}\omega^{1-m}R^2, R)}\abs{\nabla (\rho -k)_+ }^2 \dx{x} \dx{t}
 \\\leq &  C\bra*{m, L,\beta,\norm{\nabla f}_{\Leb^\infty(\Omega)}, \norm{\Delta f}_{\Leb^\infty(\Omega)}} \frac{\omega^{m-1}}{R^2} \frac{\omega^2}{2^{2s-2}} \int_{Q(\nu_0 2^{-1}\omega^{1-m}R^2, 2R)} \chi_{[(\rho- k)_+>0]}\dx{x} \dx{t}\\
 \leq & C \frac{\omega^{m-1}}{R^2} \frac{\omega^2}{2^{2s-2}} \abs*{Q(\nu_0 2^{-1}\omega^{1-m}R^2, 2R)}
\end{align}
Since $\abs*{Q(\nu_0 2^{-1}\omega^{1-m}R^2, 2R)}= 2^{d+1} \abs*{Q(\nu_0 2^{-1}\omega^{1-m}R^2, R)}$, multiplying my $\omega^{1-m}$ this reduces to
\begin{align}
\int_{Q(\nu_0 2^{-1}\omega^{1-m}R^2, R)}\abs{\nabla (\rho -k)_+ }^2 \dx{x} \dx{t} \leq\frac{C}{R^2} \frac{\omega^2}{2^{2s-2}} \abs*{Q(\nu_0 2^{-1}\omega^{1-m}R^2, R)} \, .
\end{align}
Note now that $B_s \subset Q(\nu_0 2^{-1}\omega^{1-m}R^2, R)$ and, in $B_s$, $\abs{\nabla(\rho-k)_+}=\abs{\nabla(\rho-k)}=\abs{\nabla \rho}$. Thus, the above inequality gives us
\begin{align}
\int_{B_s}\abs{\nabla \rho}^2 \dx{x} \dx{t} \leq \frac{C}{R^2} \frac{\omega^2}{2^{2s-2}} \abs*{Q(\nu_0 2^{-1}\omega^{1-m}R^2, R)} \, .
\label{eq:1step}
\end{align}
We now apply the lemma of De Giorgi (cf.~\cref{degiorgi}) with $k_1=\mu^+ - \omega/2^s$ and $k_2= \mu^+ - \omega/2^{s+1}$, to obtain that for all $t \in \pra*{-\nu_0 2^{-1} \omega^{1-m}R^2,0}$
\begin{align}
\frac{\omega}{2^{s+1}}\abs{B_{s+1}(t)} \leq C \frac{R^{d+1}}{\abs{K_R \setminus B_s(t)}}\int_{B_{s}(t) \setminus B_{s+1}(t)}\abs{\nabla \rho} \dx{x} \, .
\label{eq:digiorgi1}
\end{align}
Since $q \leq s-1$, by~\cref{lem:driftest}, it follows that $\abs{B_{s-1}(t)} \leq\abs{B_{q}(t)}  \leq (1- \nu_0^2/4)\abs{K_R}$ for all $t \in \pra*{-\nu_0 2^{-1} \omega^{1-m}R^2,0}$. Thus, for all such $t$ it follows that
\begin{align}
&\abs{K_R \setminus B_s(t)}=\abs*{\set*{x \in K_R : \rho(x,t) < \mu^+ - \frac{\omega}{2^s}}} \\\geq & \abs*{\set*{x \in K_R : \rho(x,t) < \mu^+ - \frac{\omega}{2^{s-1}}}}\\= & \abs{K_R}- \abs{B_{s-1}(t)} \geq \frac{\nu_0^2}{4} \abs{K_R} \, . 
\end{align}
Thus,~\eqref{eq:digiorgi1} can be rewritten as
\begin{align}
\frac{\omega}{2^{s+1}}\abs{B_{s+1}(t)} \leq C \frac{R^{d+1}}{\abs{K_R}\nu_0^2}\int_{B_{s}(t) \setminus B_{s+1}(t)}\abs{\nabla \rho} \dx{x} \, .
\end{align}
for $t \in \pra*{-\nu_0 2^{-1} \omega^{1-m}R^2,0}$. We integrate the above inequality over $ \pra*{-\nu_0 2^{-1} \omega^{1-m}R^2,0}$ to obtain
\begin{align}
\frac{\omega}{2^{s+1}}\abs{B_{s+1}} \leq & C \frac{R}{\abs{K_R}\nu_0^2}\int_{B_{s} \setminus B_{s+1}}\abs{\nabla \rho} \dx{x} \dx{t} \\
\leq & C \frac{R}{\nu_0^2}\bra*{\int_{B_{s} \setminus B_{s+1}}\abs{\nabla \rho}^2 \dx{x} \dx{t}}^{1/2} \abs*{B_{s} \setminus B_{s+1}}^{1/2}\\
\leq &   \frac{C}{\nu_0^2}\frac{\omega}{2^s}  \abs*{Q(\nu_0 2^{-1}\omega^{1-m}R^2, R)}^{1/2} \abs*{B_{s} \setminus B_{s+1}}^{1/2} \, ,
\end{align}
where in the last step we have applied~\eqref{eq:1step}. Squaring both sides we obtain
\begin{align}
\abs{B_{s+1}}^2 \leq \frac{C}{\nu_0^4}\abs*{Q(\nu_0 2^{-1}\omega^{1-m}R^2, R)} \abs*{B_{s} \setminus B_{s+1}} \, .
\end{align}
Since $q<s<s_0$, we sum the above inequality for $s=q+1, \dots, s_0-2$ to obtain
\begin{align}
\sum_{s=q+1}^{s_0-2}\abs{B_{s+1}}^2 \leq \frac{C}{\nu_0^4}\abs*{Q(\nu_0 2^{-1}\omega^{1-m}R^2, R)} \sum_{s=q+1}^{s_0-2}\abs*{B_{s} \setminus B_{s+1}} \, .
\end{align}
Note that $\sum_{s=q+1}^{s_0-2}\abs*{B_{s} \setminus B_{s+1}} \leq \abs*{Q(\nu_0 2^{-1}\omega^{1-m}R^2, R)}$. Additionally, $B_{s_0-1} \subset B_s$ for all $s=q+1, \dots, s_0-2 $. Thus, we have
\begin{align}
\abs{B_{s_0-1}}^2 \leq \frac{C}{\nu_0^4((s_0-q-3) )}\abs*{Q(\nu_0 2^{-1}\omega^{1-m}R^2, R)}^2 \, .
\end{align}
For $s_0 \in \N$ sufficiently large independent of $\omega$, $R$, \eqref{eq:X0bound} is satisfied and the result follows.
\end{proof}
Finally we can state the reduction of oscillation result in case 2.
\begin{corollary}[Reduction of oscillation in case 2]
Assume that~\eqref{eq:case1} holds with constant $\nu_0$ as specified in the proof of~\cref{lem:redcase1}. Then there exists a $\sigma_2 \in(0,1)$, independent of $\omega$, $R$, such that
\begin{align}
\eosc_{Q\bra*{\nu_0 2^{-1}\omega^{1-m}\bra*{\frac{R}{2}}^2, \frac{R}{2}}}\rho \leq \sigma_2 \omega \, .
\end{align}
\end{corollary}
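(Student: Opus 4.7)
The plan is to deduce this reduction-of-oscillation statement as an immediate consequence of~\cref{lem:redcase2}, mirroring exactly the argument used to derive the case~1 corollary from~\cref{lem:redcase1}. I would first note that under assumption~\eqref{eq:case2}, the hypotheses of~\cref{lem:redcase2} are in force, so there exists $s_0 \in \N$, depending only on the data but independent of $\omega$ and $R$, such that
\begin{align}
\rho(x,t) < \mu^+ - \frac{\omega}{2^{s_0}} \quad \textrm{a.e. in } Q\bra*{\tfrac{\nu_0}{2}\omega^{1-m}\bra*{\tfrac{R}{2}}^2, \tfrac{R}{2}} \, .
\end{align}

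Taking the essential supremum over this smaller cylinder, this immediately yields
\begin{align}
\esup_{Q(\frac{\nu_0}{2}\omega^{1-m}(R/2)^2, R/2)} \rho \ \leq\ \mu^+ - \frac{\omega}{2^{s_0}} \, .
\end{align}
On the other hand, since this cylinder is contained in $Q(\omega^{1-m}R^2,R)$, the infimum can only increase, so
\begin{align}
\einf_{Q(\frac{\nu_0}{2}\omega^{1-m}(R/2)^2, R/2)} \rho \ \geq\ \mu^- \, .
\end{align}
Subtracting the two bounds and using $\omega = \mu^+ - \mu^-$ gives
\begin{align}
\eosc_{Q(\frac{\nu_0}{2}\omega^{1-m}(R/2)^2, R/2)} \rho \ \leq\ \bra*{1 - \frac{1}{2^{s_0}}}\omega \, ,
\end{align}
so I would close the argument by setting $\sigma_2 := 1 - 2^{-s_0} \in (0,1)$, which depends on $s_0$ (hence on the data through $\nu_0$, $m$, $d$, $\beta$, $W$, and $M$) but is manifestly independent of $\omega$ and $R$.

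There is no genuine obstacle here, since all the hard analysis, namely the Caccioppoli-type iteration in~\cref{lem:redcase2} together with the logarithmic estimate of~\cref{lem:driftest} used to supply the information needed to start De Giorgi's lemma, has already been carried out; this corollary is purely the bookkeeping step that converts a one-sided pointwise bound into a two-sided oscillation decay. The only subtlety worth flagging is that one must verify that the smaller cylinder $Q(\tfrac{\nu_0}{2}\omega^{1-m}(R/2)^2, R/2)$ is indeed contained in the original cylinder $Q(\omega^{1-m}R^2, R)$ (which holds since $\nu_0/2<1$ and $R/2<R$), so that $\mu^\pm$ continue to bound $\rho$ from above and below on the smaller set. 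Together with the case~1 corollary, this completes the reduction-of-oscillation dichotomy needed to iterate to H\"older continuity in~\cref{holder}.
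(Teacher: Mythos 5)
Your proposal is correct and is essentially identical to the paper's own proof: both invoke \cref{lem:redcase2} to bound the essential supremum on the smaller cylinder by $\mu^+ - \omega/2^{s_0}$, bound the infimum below by $\mu^-$, and conclude with $\sigma_2 = 1 - 2^{-s_0}$. (You also correctly read the hypothesis as \eqref{eq:case2} rather than the \eqref{eq:case1} appearing in the statement, which is evidently a typo in the paper.)
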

\begin{proof}
We know from~\cref{lem:redcase2} that there exists some $s_0 \in \N$ such that
\begin{align}
\esup_{Q\bra*{\nu_0 2^{-1}\omega^{1-m}\bra*{\frac{R}{2}}^2, \frac{R}{2}}} \rho\leq \mu^+ - \frac{\omega}{2^{s_0}} \, .
\end{align}
Thus
\begin{align}
\eosc_{Q\bra*{\nu_0 2^{-1}\omega^{1-m}\bra*{\frac{R}{2}}^2, \frac{R}{2}}} \rho \leq & \mu^+ - \frac{\omega}{2^{s_0}} - \mu^- \\
\leq & \bra*{1- \frac{1}{2^{s_0}}}\omega \, .
\end{align}
Thus, for $\sigma_2=\bra*{1- \frac{1}{2^{s_0}}}$ the result follows.
\end{proof}
We combine the two cases into one:
\begin{lemma}[Total reduction of oscillation]\label{osc}
Fix some $0<R<L$ such that $Q(4R^{2-\eps}, 2R) \subset \Omega_T$. Assume that $\eosc_{Q(4R^{2-\eps}, 2R)} \rho \leq \omega$ and $\alpha \omega^{m-1}> R^{\eps}$ and that $\mu^{-}> \omega/4$. Then there exists a constant $\sigma \in (0,1)$, depending only on the data (and continuously on $\beta>0$), and independent of $\omega$ and $R$, such that
\begin{align}
\eosc_{Q\bra*{\nu_0 2^{-1}\omega^{1-m}\bra*{\frac{R}{2}}^2, \frac{R}{2}}} \rho \leq \sigma \omega \, .
\end{align}
\end{lemma}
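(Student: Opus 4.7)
The plan is to observe that the hypothesis $\mu^->\omega/4$ places us outside the degenerate regime, so the diffusion operator in~\eqref{eq:agdif} is effectively uniformly parabolic on $Q(4R^{2-\eps},2R)$ and the oscillation reduction follows from a classical De Giorgi argument without genuine intrinsic scaling.  Concretely, since $\omega/4<\mu^-\leq\rho\leq\mu^+\leq M$ on $Q(4R^{2-\eps},2R)$ and
\begin{equation*}
\mu^+/\mu^-\leq 1+\omega/\mu^-<5,
\end{equation*}
the nonlinearity $m\rho^{m-1}$ takes values in an interval whose ratio of endpoints is bounded by $5^{m-1}$, a constant depending only on $m$.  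This means both cases of the two-alternative argument used in~\cref{lem:redcase1,lem:redcase2} become strictly easier, because the degeneracy that forced the use of intrinsic scaling has disappeared.

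The first concrete step is to renormalise.  Setting $u(x,\tilde t)=\rho(x,(\mu^-)^{1-m}\tilde t)/\mu^-$ produces a function with $u\in[1,5]$ satisfying
\begin{equation*}
\partial_{\tilde t}u=\beta^{-1}\Delta u^m+(\mu^-)^{1-m}\nabla\cdot(u\,\nabla f),
\end{equation*}
which is uniformly parabolic in divergence form, with ellipticity constants depending only on $m$ and $\beta$, and with a bounded measurable drift.  Under this change of variables the intrinsic cylinder $Q(\nu_02^{-1}\omega^{1-m}(R/2)^2,R/2)$ becomes $Q(\nu_02^{-1}(\mu^-/\omega)^{m-1}(R/2)^2,R/2)$, whose time-length is bounded below by $\nu_0 4^{-m}R^2/2$ thanks to $\mu^->\omega/4$, and the outer cylinder $Q(4R^{2-\eps},2R)$ dilates correspondingly.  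So the problem reduces to proving a fixed fractional decrease of oscillation for $u$ on a smaller parabolic cylinder inside a larger one, with all constants independent of $\omega$ and $R$.

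I would obtain this oscillation reduction for $u$ by rerunning the two-alternative scheme of \cref{lem:redcase1,lem:redcase2} in this much simpler setting, where $u$ is bounded above and below by universal constants.  The energy estimate of~\cref{lem:eest} and the logarithmic estimate of~\cref{lem:logest} apply verbatim to the equation for $u$, and both the Case~1 De Giorgi iteration at the midpoint level and the Case~2 iteration starting from~\cref{lem:driftest} close with constants depending only on $m,d,\beta,W,M$.  Undoing the rescaling and setting $\sigma$ equal to the maximum of the two reduction factors (so $\sigma\in(0,1)$) yields the claim, and continuity of $\sigma$ in $\beta>0$ follows from the continuity of all the constants tracked through the De Giorgi iteration.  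The main obstacle, as in the preceding lemmas, is controlling the nonlocal drift: its rescaled coefficient $(\mu^-)^{1-m}\nabla f$ can be large when $\omega$ is small, but this is absorbed exactly as in \cref{lem:driftest,lem:redcase1,lem:redcase2} by taking $\alpha$ in~\eqref{eq:driftalpha} small enough—depending only on $m,d,\beta,W,M$ and continuously on $\beta$—so that all drift-generated error terms remain strictly subordinate to the principal parabolic terms in each iteration.
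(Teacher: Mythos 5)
Your proposal does not reproduce the paper's argument, and as written it does not prove the lemma as it is actually used. In the paper this lemma is purely a synthesis step: the alternatives \eqref{eq:case1} and \eqref{eq:case2} are exhaustive, the corollary following \cref{lem:redcase1} gives $\eosc\,\rho\le\sigma_1\omega$ on $Q\bra*{\omega^{1-m}(R/2)^2,R/2}$, the corollary following \cref{lem:redcase2} gives $\eosc\,\rho\le\sigma_2\omega$ on $Q\bra*{\nu_0 2^{-1}\omega^{1-m}(R/2)^2,R/2}$, and since the latter cylinder is contained in the former the claim follows at once with $\sigma=\max\set{\sigma_1,\sigma_2}$. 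The hypothesis on $\mu^-$ is, up to an evident sign typo, the degeneracy assumption $\mu^-\le\omega/4$ under which those two alternatives were established; this is confirmed by how the lemma is invoked in the proof of \cref{holder}, where $\mu^-_k$ is \emph{assumed smaller} than $\omega_k/4$ and the complementary nondegenerate case is dispatched separately by classical parabolic regularity, not by this lemma. By taking the printed inequality literally and basing everything on uniform parabolicity, you prove (at best) that complementary case and never combine the two corollaries the lemma exists to combine; in the degenerate regime actually covered by the lemma, $\rho$ can be of order $\omega$, so your premise that $m\rho^{m-1}$ is uniformly elliptic with constants depending only on $m,\beta$ simply fails.

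Even read as an argument for the nondegenerate regime, there is a concrete gap. After your rescaling $\tilde t=(\mu^-)^{m-1}t$, $u=\rho/\mu^-$, the target cylinder becomes $Q\bra*{\nu_0 2^{-1}(\mu^-/\omega)^{m-1}(R/2)^2,R/2}$; you note its time-length is bounded \emph{below}, but it is not bounded above: $(\mu^-/\omega)^{m-1}$ is arbitrarily large when $\omega\ll\mu^-$, and the constraint \eqref{eq:driftalpha} only caps it by a multiple of $R^{-\eps}$, which blows up as $R\to0$. The two-alternative machinery of \cref{lem:redcase1,lem:redcase2,lem:driftest} is calibrated to cylinders whose time-length is $\omega^{1-m}$ times the squared radius with $\rho$ comparable to $\omega$ (the bounds $\rho\le 5\omega/4$, $\rho^\ell_-\ge\omega/4$, $(\zeta_n)_t\lesssim\omega^{m-1}R^{-2}$ are used throughout), so it cannot be applied ``verbatim'' to a cylinder of unbounded aspect ratio: a single De Giorgi run yields a fixed reduction factor only on cylinders of the natural scaling, and covering the long cylinder by short ones does not help, since oscillation bounds on overlapping pieces do not chain to a fixed fractional reduction on their union. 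The absorption of the possibly large drift coefficient $(\mu^-)^{1-m}\nabla f$ by choosing $\alpha$ small is likewise asserted rather than carried out. In short, the paper's proof is a two-line combination of the already-proved Case 1 and Case 2 corollaries under the degeneracy assumption, while your route neither uses them nor closes in the regime you chose.
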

\begin{proof}
The proof follows from the fact that $Q\bra*{\nu_0 2^{-1}\omega^{1-m}\bra*{\frac{R}{2}}^2, \frac{R}{2}} \subset Q\bra*{\omega^{1-m}\bra*{\frac{R}{2}}^2, \frac{R}{2}}$ and setting $\sigma= \max\set{\sigma_1,\sigma_2}$.
\end{proof}
We can now complete the proof of~\cref{holder}:
\begin{proof}[Proof of~\cref{holder}]
We now show that there exist constants $\gamma>1$, $a \in (0,1)$, depending only on the data ($W$, $\beta$, $m$, $d$, $M$), such that for all $0 \leq r \leq L$ we have
\begin{align}\label{controlosc}
\eosc_{Q\bra*{\omega^{1-m}r^2, r}} \rho \leq \gamma \omega \bra*{\frac{2r}{L}}^{a} \, .
\end{align}
where $\omega= c_1 M$ and $c_1$ is chosen to be large enough so that $\alpha \omega^{m-1} >L^{\eps}$. We choose as our starting point the cylinder $Q(4(L/2)^{2-\eps},L) \subset \Omega_T$. We start by defining
\begin{align}
R_k= c_0^k L/2 \, , \qquad c_0= \frac{1}{2}\sigma^{(m-1)/\eps}\frac{\nu_0}{2} <\frac{1}{2} \, , \qquad \omega_k= \sigma^k \omega \, ,
\end{align}
for $k =0,1, \dots$ and $\eps \leq (m-1)$. We  already have that $\alpha \omega^{m-1}>R_0^\eps$ for all $0 \leq r \leq R$. This implies that
\begin{align}
\omega_k^{1-m} R_k^\eps=& \sigma^{k(1-m)} c_0^{k\eps} \omega^{1-m} R_0^\eps \\
< & \alpha \bra*{\frac{\nu_0}{4}}^{k \eps} < \alpha. 
\end{align}
Additionally, we also have that
\begin{align}
\sigma =& \sigma^{1+ \frac{1-m}{\eps}} \sigma^{\frac{m-1}{\eps}} \\
& > c_0 \, .
\end{align}
It follows that
\begin{align}
\eosc_{Q(\omega^{1-m}R_0^2,R_0)} \rho \leq\eosc_{Q(4R_0^{2- \eps},R_0)} \rho \leq M \leq c_1 M=\omega\, . 
\end{align}
Furthermore, we have
\begin{align}
\eosc_{Q(\omega^{1-m}R_1^2,R_1)} \rho \leq\eosc_{Q(\omega^{1-m} \nu_0 2^{-1}(R/2)^2,R/2)}\rho \leq \sigma \omega \, ,
\end{align}
where we have applied~\cref{osc}. We can repeat the procedure starting at $R_k$ with $\omega_k= \sigma^k \omega$ and $\mu^-_k:= \einf_{Q(\omega^{1-m}R_k^2,R_k)} \rho$ assumed to be smaller than $\omega_k/4$.  If this is not the case, then the equation  is uniformly parabolic in $Q(\omega^{1-m}R_k^2,R_k)$ and by parabolic regularity theory (cf.~\cite{LSU68}),~\eqref{controlosc} holds for some constants $\gamma'>1,a' \in(0,1)$, depending only on the data. The dependence of the constants on $\beta>0$ is continuous. 

Assuming $\mu^-_k>\omega_k/4$ and applying the results of \cref{osc}  to $R_{k+1}$ we obtain
\begin{align}
\eosc_{Q(\omega^{1-m}R_{k+1}^2,R_{k+1})} \rho =&\eosc_{Q(\sigma^{k(1-m)}\omega^{1-m}\sigma^{(m-1)(k+2/\eps)}\nu_0^2 2^{-2}(R_k/2)^2,R_k/2)}\rho \\
\leq & \eosc_{Q(\omega_k^{1-m} \nu_0 2^{-1}(R_k/2)^2,R_k/2)}\rho \leq\sigma  \omega_k\, .
\end{align}
By induction it follows that
\begin{align}
\eosc_{Q(\omega^{1-m}R_{k}^2,R_k)} \rho \leq \sigma^k \omega \, .
\end{align}
Additionally, for all $0 \leq r \leq L$ we have that
\begin{align}
c_0^{k+1}(L/2) \leq r \leq c_0^{k}(L/2) \, ,
\end{align}
for some $k$. Picking $a= \log_{c_0} \sigma>0$, we derive
\begin{align}
\sigma^{k+1} \leq \bra*{\frac{2r}{L}}^a \, .
\end{align}
Thus, we have
\begin{align}
\eosc_{Q\bra*{\omega^{1-m}r^2, r}} \rho \leq \gamma \omega \bra*{\frac{2r}{L}}^{a} \, ,
\end{align}
 where $\gamma= \max\set{\sigma^{-1},\gamma'} >1$ and $a= \min\set{\log_{c_0} \sigma,a'} \in(0,1)$ since $\sigma>\sigma_1 >1/2>c_0$. Note that~\eqref{controlosc} implies that $\rho$ is continuous. One can see this by mollifying with some standard mollifier $\varphi^\eps$ and applying Arzel\`a--Ascoli to show that the limit as $\eps \to 0$ is continuous.

 Now that we have control on the oscillation of the solution we can proceed to the proof of H\"older regularity. Consider a weak solution $\rho(x,t)$ defined on $\Omega_T$. We would like the H\"older regularity to be uniform in space and time so we consider only those points such that $(x,t) +Q(4(L/2)^{2-\eps},L) \subset \Omega_T^\circ$. The local regularity near $\partial_p \Omega_T$ can be derived in a similar manner. Fix two points $(x,t)$ and $(y,t)$ for some $t$ large enough,  and consider the recursive scheme starting from $K:=(x,t)+Q(4(L/2)^{2-\eps},L) \subset \Omega_T$.  Setting  $r=d_{\T^d}\bra{x,y}$ and applying~\eqref{controlosc}, we obtain
\begin{align}\label{1}
\abs{\rho(x,t)-\rho(y,t)} \leq \eosc_{(x,t)+Q\bra*{\omega^{1-m}r^2, r}} \rho \leq \gamma \omega \bra*{\frac{2r}{L}}^{a} \leq \gamma 2^a c_1 M L^{-a} d_{\T^d}(x,y)^a \, .
\end{align}
For the time regularity we consider two points $(x,t_1),(x,t_2) \in \Omega_T, t_1>t_2 $ assuming that $\abs{t_1-t_2}^{1/2} \leq \omega^{1-m}(L/2)^2$. We consider the recursive scheme starting from $K:=(x,t_1)+Q(4(L/2)^{2-\eps},L) \subset \Omega_T$. Setting $r= \omega^{(m-1)/2}\abs{t_1-t_2}^{1/2}$, we obtain
\begin{align}\label{2}
\abs{\rho(x,t_2)-\rho(x,t_1)} \leq \eosc_{(x,t_1)+Q\bra*{\omega^{1-m}r^2, r}} \rho \leq \gamma \omega \bra*{\frac{2r}{L}}^{a} \leq \gamma 2^a (c_1 M)^{(2 + a(m-1))/2} L^{-a}  \abs{t_1-t_2}^{a/2} \, .
\end{align} 
For $\abs{t_1-t_2}^{1/2} > \omega^{1-m}(L/2)^2$, the proof is easier since
\begin{align}\label{3}
\abs{\rho(x,t_2)-\rho(x,t_1)} \leq 2M \leq 2M \abs{t_1-t_2}^{a/2} (L/2)^{-a} (c_1 M)^{(m-1)/2} \, .
\end{align}
Combining~\eqref{1},~\eqref{2}, and~\eqref{3}  together we have the required H\"older regularity away from the boundary:
\begin{align}\label{holdernorm}
\abs{\rho(x,t_1)- \rho(y,t_2)} \leq & C_{h} \bra{d_{\T^d}(x,y)^a +\abs{t_1-t_2}^{a/2}} \\
\leq &  C_h \bra{d_{\T^d}(x,y) +\abs{t_1-t_2}^{1/2}}^a \, ,
\end{align}
where $a\in (0,1)$ depends continuously on $\beta>0$ and $C_h$ depends on $M$, $L$, $m$, $\gamma$, and $d$. The regularity near the parabolic boundary can be derived in a similar manner.
\end{proof}
\begin{remark}
\cb{We note that the proof of~\cref{cor:holder} follows from the fact that the constant $C_h$ is uniform in time as long as we are far enough from the initial data $\rho_0$, i.e. if $0<C<t_1<t_2<\infty$ for some constant $C>0$.}
\end{remark}
\subsection*{Acknowledgements}
The authors would like to thank Felix Otto and Yao Yao for useful discussions during the course of this work. We are also grateful to the reviewers for their careful reading of the manuscript and their useful suggestions.
\begin{appendix}
\section{Some useful results}
In this section we state some useful lemmas and inequalities which we will use in the proof of~\cref{holder}.
\begin{lemma}[Geometric convergence lemma]\label{convergence}
Let $\set{X_n}_{n \in \N}$ be a sequence of nonnegative real numbers satisfying the recurrence inequality
\begin{align}
X_{n+1} \leq C b^n X_n^{1+a} \, ,
\end{align}
for some $C,b>1$ and $a>0$. If $X_0 \leq C^{-1/a}b^{-1/a^2}$, then $\lim\limits_{n \to \infty}X_n=0$.
\end{lemma}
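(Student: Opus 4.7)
The plan is to prove the claim by showing the stronger statement that $X_n \leq X_0 \, b^{-n/a}$ for all $n \geq 0$ by induction on $n$. Since $b>1$ and $a>0$, the factor $b^{-n/a} \to 0$ as $n \to \infty$, and this will immediately imply the conclusion.

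First I would fix the Ansatz $X_n \leq X_0 \, b^{-n/a}$ and check the base case $n=0$, which is trivial. For the inductive step, assume the bound at level $n$; then using the recurrence and the inductive hypothesis,
\begin{align}
X_{n+1} \leq C b^{n} X_{n}^{1+a} \leq C b^{n}\bra*{X_0 b^{-n/a}}^{1+a} = C X_0^{1+a}\, b^{\,n - n(1+a)/a} = C X_0^{1+a}\, b^{-n/a}.
\end{align}
To close the induction I would need $C X_0^{1+a}\, b^{-n/a} \leq X_0\, b^{-(n+1)/a}$, which after cancellation is equivalent to $C X_0^{a} \leq b^{-1/a}$, i.e.
\begin{align}
X_0 \leq C^{-1/a}\, b^{-1/a^2}.
\end{align}
This is precisely the hypothesis on $X_0$, so the induction closes.

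There is no real obstacle here: the only nontrivial part is guessing the correct geometric rate $b^{-1/a}$, which is forced by matching exponents in the recurrence. I would finish by noting that since $b>1$, we have $b^{-n/a} \to 0$, so $0 \leq X_n \leq X_0 b^{-n/a} \to 0$, giving $\lim_{n\to\infty} X_n = 0$ as claimed.
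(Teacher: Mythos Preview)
Your proof is correct and is the standard induction argument for this classical lemma. The paper itself states this lemma in the appendix without proof (it is a well-known technical tool from the De Giorgi--Nash--Moser theory), so there is no proof in the paper to compare against; your argument is exactly the one typically given in the literature.
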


Let $\Omega \subset \T^d$ be a smooth, convex, open subdomain. Then we have the following lemma due to De Giorgi~\cite{DeG57}:
\begin{lemma}\label{degiorgi}
Given a  function $v \in W^{1,1}(\Omega)$ and real numbers $k_1<k_2$ we define
\begin{align}
\pra*{v \gtrless k_i}&:= \set*{x \in \Omega: v(x) \gtrless k_i} \\
\pra*{k_1<v <k_2}&:= \set*{x \in \Omega: k_1 <v(x) < k_2} \, .
\end{align}
Then there exists a constant $C=C(d)$ such that
\begin{align}
(k_2-k_1)\abs*{\pra*{v >k_2}} \leq C \frac{R^{d+1}}{\abs*{\pra*{v <k_1}}} \int_{\pra*{k_1<v <k_2}} \abs*{\nabla v} \dx{x} \, ,
\end{align}
where $R= \mathrm{diam}(\Omega).$
\end{lemma}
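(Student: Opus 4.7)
The plan is to reduce this to a Poincar\'e--Wirtinger inequality on the convex domain $\Omega$, applied to a suitable truncation of $v$. Specifically, I would introduce the truncation $w := (\min(v,k_2) - k_1)_+$, which takes the constant value $0$ on $[v \le k_1]$, the constant value $k_2 - k_1$ on $[v \ge k_2]$, and interpolates linearly between them on $[k_1 < v < k_2]$. In particular $w \in W^{1,1}(\Omega)$ with $\nabla w = \nabla v \cdot \chi_{[k_1 < v < k_2]}$, so $\int_\Omega |\nabla w|\,dx = \int_{[k_1 < v < k_2]} |\nabla v|\,dx$.

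The first main step is to apply the Poincar\'e--Wirtinger inequality on the convex set $\Omega$:
\[
\int_\Omega \abs{w - \bar w}\,dx \;\le\; C(d)\,R \int_\Omega \abs{\nabla w}\,dx \;=\; C(d)\,R \int_{[k_1 < v < k_2]} \abs{\nabla v}\,dx,
\]
where $\bar w := |\Omega|^{-1}\int_\Omega w\,dx$ and $R=\operatorname{diam}(\Omega)$. The constant $C(d)$ in this inequality depends only on the dimension; this is the standard mean-value Poincar\'e inequality for convex domains, so I would just cite it.

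Next I would bound the left-hand side from below. Set $A := |[v < k_1]|$ and $B := |[v > k_2]|$. On $[v \ge k_2]$ one has $w - \bar w = (k_2 - k_1) - \bar w$, so
\[
\int_\Omega \abs{w - \bar w}\,dx \;\ge\; B\bigl((k_2-k_1) - \bar w\bigr).
\]
To turn this into something involving $A$, note that $w \le (k_2-k_1)\chi_{[v \ge k_1]}$, so $\bar w \le (k_2-k_1)(|\Omega| - A)/|\Omega|$, equivalently
\[
(k_2-k_1) - \bar w \;\ge\; (k_2-k_1)\,\frac{A}{|\Omega|}.
\]
Combining these two bounds with the Poincar\'e inequality gives
\[
(k_2-k_1)\,\frac{AB}{|\Omega|} \;\le\; C(d)\,R \int_{[k_1 < v < k_2]} \abs{\nabla v}\,dx.
\]

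The final step is to absorb $|\Omega|$. Since $\Omega$ is convex with diameter $R$, it is contained in a ball (or cube) of side comparable to $R$, so $|\Omega| \le c_d R^d$. Multiplying through by $|\Omega|/A$ and using this volume bound yields
\[
(k_2-k_1)\,B \;\le\; C(d)\,\frac{R\,|\Omega|}{A}\int_{[k_1<v<k_2]}\abs{\nabla v}\,dx \;\le\; C\,\frac{R^{d+1}}{A}\int_{[k_1<v<k_2]}\abs{\nabla v}\,dx,
\]
which is the claim. The only delicate ingredient is the convex-domain Poincar\'e--Wirtinger inequality with a constant that is explicit in the diameter; the rest is a routine arrangement of truncations and means.
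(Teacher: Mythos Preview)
Your argument is correct. The truncation $w=(\min(v,k_2)-k_1)_+$ has exactly the properties you claim, the $L^1$ Poincar\'e--Wirtinger inequality on convex domains with constant proportional to the diameter is indeed available (in fact with optimal constant $R/2$, by Acosta--Dur\'an), and the lower bound $\int_\Omega|w-\bar w|\,dx\ge B\bigl((k_2-k_1)-\bar w\bigr)\ge (k_2-k_1)AB/|\Omega|$ together with $|\Omega|\le c_d R^d$ gives the stated conclusion.

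There is nothing to compare against in the paper: this lemma is stated in the appendix as a classical result due to De~Giorgi and is merely cited, not proved. For context, the textbook route (as in DiBenedetto's \emph{Degenerate Parabolic Equations}) typically derives the inequality directly from the potential representation $|u(x)-\bar u|\le \tfrac{R^d}{d|\Omega|}\int_\Omega |x-y|^{1-d}|\nabla u(y)|\,dy$ valid on convex domains, rather than invoking Poincar\'e--Wirtinger as a black box; but this is essentially the same mechanism, since that potential estimate is precisely what underlies the convex Poincar\'e inequality you cite. Your packaging is slightly cleaner and more transparent about where convexity and the diameter enter.
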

Consider now the parabolic space $V^2(\Omega_T)$, equipped with the norm
\begin{align}
\norm{\rho}_{V^2(\Omega_T)}^2:= \esup_{0 \leq t\leq T} \norm{\rho}_{\Leb^2(\Omega)}^2(t) + \norm{\nabla \rho}_{\Leb^2(\Omega_T)}^2 \, .
\end{align}
We then have the following embedding~\cite[page 9]{DiB93}:
\begin{lemma}\label{embedding}
Let $\rho \in V^2(\Omega_T)$. Then there exists a constant $C_d$ depending only on $d$ such that
\begin{align}
\norm{\rho}_{\Leb^2(\Omega_T)}^2 \leq C_d \abs*{\set*{\abs{\rho}>0}}^{2/(2+d)} \norm{\rho}_{V^2(\Omega_T)}^2 \, .
\end{align}
\end{lemma}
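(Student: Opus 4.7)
The plan is to derive the stated inequality as a combination of the classical parabolic Sobolev embedding $V^2(\Omega_T) \hookrightarrow L^{2(d+2)/d}(\Omega_T)$ with a H\"older inequality restricted to the support $\{|\rho|>0\}$. This is the standard DiBenedetto-style argument, and I would carry it out in three short steps.

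First, I would fix $p = 2(d+2)/d$ and apply the Gagliardo--Nirenberg inequality slicewise in time. With $\alpha = d/(d+2)$, so that $p\alpha = 2$ and $p(1-\alpha) = 4/d$, we have at each $t$:
\begin{align*}
\int_\Omega \abs{\rho(x,t)}^{p} \dx{x} \le C_d \bra*{\int_\Omega \abs{\nabla \rho(x,t)}^2 \dx{x}} \bra*{\int_\Omega \abs{\rho(x,t)}^2 \dx{x}}^{2/d}.
\end{align*}
Integrating this estimate over $t \in [0,T]$ and pulling the slowly-varying $L^2$ factor out via the essential supremum yields
\begin{align*}
\norm{\rho}_{L^{p}(\Omega_T)}^{p} \le C_d \bra*{\esup_{0 \le t \le T} \norm{\rho(\cdot,t)}_{L^2(\Omega)}^{4/d}} \int_0^T \norm{\nabla \rho(\cdot,t)}_{L^2(\Omega)}^2 \dx{t} \le C_d \norm{\rho}_{V^2(\Omega_T)}^{p}.
\end{align*}
Taking $p$-th roots gives the embedding $\norm{\rho}_{L^p(\Omega_T)} \le C_d^{1/p} \norm{\rho}_{V^2(\Omega_T)}$.

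Second, I would apply H\"older's inequality on the product space $\Omega_T$ with exponents $p/2 = (d+2)/d$ and its conjugate $(d+2)/2$ to the integrand $\abs{\rho}^2 = \abs{\rho}^2 \chi_{\{|\rho|>0\}}$:
\begin{align*}
\norm{\rho}_{L^2(\Omega_T)}^2 \le \norm{\rho}_{L^p(\Omega_T)}^2 \abs*{\set*{|\rho|>0}}^{1 - 2/p} = \norm{\rho}_{L^p(\Omega_T)}^2 \abs*{\set*{|\rho|>0}}^{2/(d+2)}.
\end{align*}
Chaining this with the embedding of the previous step produces exactly the desired inequality. The only subtle point is that the pure Gagliardo--Nirenberg inequality (without the $\norm{\rho}_{L^2}$ lower-order term on the right) requires either that $\rho$ be compactly supported in $\Omega$ (or in $\R^d$) or that one work with a homogeneous version; in the applications above (\cref{lem:redcase1}, \cref{lem:redcase2}) the functions to which the lemma is applied are of the form $(\rho^\ell_\pm - k)_\pm \bar\zeta_n$ with $\bar\zeta_n$ vanishing on the lateral boundary of the sub-cylinder, so extending by zero puts us squarely in the applicable setting. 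The main obstacle to prove, strictly speaking, is not really an obstacle but a bookkeeping issue: verifying that the Gagliardo--Nirenberg constant depends only on $d$, which follows by scaling.
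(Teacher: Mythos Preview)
Your proof is correct and is precisely the standard argument. The paper itself does not prove this lemma: it is stated as a known embedding with a citation to \cite[page~9]{DiB93}, and DiBenedetto's proof there (Chapter~I, Proposition~3.1) proceeds exactly as you do---slicewise Gagliardo--Nirenberg to get $V^2 \hookrightarrow L^{2(d+2)/d}$, followed by H\"older against the characteristic function of the support. Your remark about needing zero boundary data for the homogeneous Gagliardo--Nirenberg step is well placed and matches how the lemma is actually invoked in \cref{lem:redcase1,lem:redcase2}.
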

\section{Bifurcation theory}
 We state here the Crandall--Rabinowitz theorem (cf. ~\cite{nirenberg2001topics,kielhofer2006bifurcation}) for bifurcations with a one-dimensional kernel.
\begin{theorem}\label{thm:cr}
	Consider a separable Hilbert space $X$ with $U \subset X$ an open neighbourhood of 0, and a nonlinear $C^2$ map, $F: U \times V \to X$, where $V$ is an open subset of $\R_+$ such that $F(0,\kappa)=0$ for all $\kappa \in V$. Assume the following conditions are satisfied for some $\kappa_* \in V$:
	\begin{enumerate}
		\item $D_x(0,\kappa_*)F$ is a Fredholm operator with index zero and has a one-dimensional kernel.
		\item $D^2_{x\kappa}(0,\kappa_*)F[\hat{v_0}] \notin \Ima(D_x(0,\kappa_*))$, where $\hat{v_0} \in \ker (D_x(0,\kappa_*)), \norm*{\hat{v_0}}
			=1$ \, .
	\end{enumerate}
	Then, there exists a nontrivial $C^1$ curve through $(0,\kappa_*)$ such that for some $\delta>0$,
	\begin{align}
           \{(x(s),\kappa(s))\ : s\in (-\delta,\delta), x(0)=0, \kappa(0)=\kappa_* \} \ ,
	\end{align}
and $F(x(s),\kappa(s))=0$. Additionally, for some neighbourhood of $(0,\kappa_*)$,
 this is the only such solution (apart from the trivial solution) and it has the following form:
\begin{equation}
x(s)=s\hat{v_0} +\Psi(s\hat{v_0},\psi(s)) \,, \qquad \kappa(s)=\psi(s) \ ,
\end{equation}
where $\Psi: \ker (D_x(0,\kappa_*)) \times \R_+ \to\bra*{\ker (D_x(0,\kappa_*)) }^\perp$ is a $C^1$ map and $\psi:(-\delta,\delta ) \to V$ is a $C^1$ function such that $\psi(0)=\kappa_*$. Furthermore if $D_\kappa \Psi(v_0,\kappa_*)=0$, we obtain a simplified expression of the form
\begin{align}
x(s)=s\hat{v_0} + r_1(s\hat{v_0},\psi(s)) \, ,
\end{align} 
such that $\lim\limits_{|s| +|\psi(s)-\kappa_*| \to 0} \frac{\norm*{r_1(s\hat{v_0},\psi(s))}}{|s| +|\psi(s)-\kappa_*|}=0$. 
\end{theorem}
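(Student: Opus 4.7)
The plan is to reduce the infinite-dimensional bifurcation problem to a scalar equation via the Lyapunov--Schmidt method. Since $L := D_x F(0,\kappa_*)$ is Fredholm of index zero with one-dimensional kernel $\mathrm{span}(\hat v_0)$, I would decompose $X = \mathrm{span}(\hat v_0) \oplus Y$ and $X = \mathrm{Im}(L) \oplus Z$, where $Y$ is a closed complement of $\ker L$ and $Z$ is one-dimensional. Let $Q : X \to \mathrm{Im}(L)$ denote the continuous projection along $Z$. Writing $x = s\hat v_0 + y$ with $y \in Y$, the equation $F(x,\kappa)=0$ splits into the projected equation $QF(s\hat v_0+y,\kappa)=0$, valued in $\mathrm{Im}(L)$, and the complementary bifurcation equation $(I-Q)F(s\hat v_0+y,\kappa)=0$, valued in the one-dimensional space $Z$.

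Next I would apply the implicit function theorem to the projected equation near $(s,y,\kappa)=(0,0,\kappa_*)$. Its $y$-derivative at this point is $QL|_Y : Y \to \mathrm{Im}(L)$, which is an isomorphism: injectivity follows because $y \in Y$ and $QLy = 0$ force $Ly \in Z \cap \mathrm{Im}(L)=\{0\}$, hence $y \in \ker L \cap Y = \{0\}$; surjectivity is immediate since any $z \in \mathrm{Im}(L)$ has the form $Ly$ for some $y \in Y$ after subtracting the kernel component. This yields a $C^2$ map $y = y(s,\kappa)$ with $y(0,\kappa_*) = 0$, and in fact $y(0,\kappa) \equiv 0$ near $\kappa_*$ by uniqueness (since $y \equiv 0$ solves the projected equation at $s=0$). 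Substituting back gives the scalar bifurcation equation
\[
h(s,\kappa) := (I-Q)F\bigl(s\hat v_0 + y(s,\kappa),\kappa\bigr) = 0,
\]
and because $h(0,\kappa) \equiv 0$ one may factor $h(s,\kappa) = s\,\tilde h(s,\kappa)$ with $\tilde h$ of class $C^1$; nontrivial zeros of $F$ correspond exactly to zeros of $\tilde h$.

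A short computation then yields $\tilde h(0,\kappa_*) = \partial_s h(0,\kappa_*) = (I-Q)\bigl[L(\hat v_0 + \partial_s y(0,\kappa_*))\bigr] = 0$, using $L\hat v_0 = 0$ and $L\partial_s y \in \mathrm{Im}(L)$. Differentiating once more in $\kappa$, and using that $\partial_\kappa y(0,\kappa_*)=0$ (from $y(0,\kappa)\equiv 0$) and $\partial_s y(0,\kappa_*)=0$ (obtained by differentiating $QF(s\hat v_0 + y,\kappa)=0$ in $s$ at $(0,\kappa_*)$ and inverting $QL|_Y$), one finds
\[
\partial_\kappa \tilde h(0,\kappa_*) = (I-Q)\bigl[D^2_{x\kappa}F(0,\kappa_*)\hat v_0\bigr],
\]
which is nonzero in $Z$ precisely by hypothesis~(2). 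A second application of the implicit function theorem then produces the desired $C^1$ curve $\kappa = \psi(s)$ with $\psi(0)=\kappa_*$ solving $\tilde h(s,\psi(s))=0$, and $x(s) = s\hat v_0 + y(s,\psi(s))$ furnishes the bifurcating branch.

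The main obstacle I anticipate is not existence, which is essentially automatic once the splittings are set up, but rather identifying the precise form of the remainder $\Psi$ together with local uniqueness. Local uniqueness follows by tracing through both applications of the implicit function theorem, since every zero of $F$ near $(0,\kappa_*)$ decomposes uniquely via the two direct-sum splittings. To obtain the stated form $x(s) = s\hat v_0 + \Psi(s\hat v_0, \psi(s))$ with sublinear remainder $r_1$, one reinterprets $y(s,\psi(s))$ as a function on $\mathrm{span}(\hat v_0)\times V$ and Taylor-expands about $(0,\kappa_*)$; the added hypothesis $D_\kappa \Psi(v_0,\kappa_*)=0$ together with the vanishing of $\partial_s y(0,\kappa_*)$ and $\partial_\kappa y(0,\kappa_*)$ absorbs all first-order terms into $r_1$, giving the $o(|s|+|\psi(s)-\kappa_*|)$ estimate.
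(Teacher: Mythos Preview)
Your Lyapunov--Schmidt reduction is the standard route to Crandall--Rabinowitz and the sketch is essentially correct. Note, however, that the paper does not prove this theorem at all: it is stated in the appendix as a classical result with citations to Nirenberg and Kielh\"ofer, and is used only as a black box in the proof of the bifurcation result (Theorem~\ref{thm:bif}). So there is nothing to compare against; your outline simply reproduces the argument one would find in those references.
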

\end{appendix}

\vspace{-0.5cm}

\bibliographystyle{myalpha}
\bibliography{biblio}
\end{document}